\documentclass[11pt]{article}
\usepackage[top=1in, bottom=1in, left=1in, right=1in]{geometry}
\usepackage[linktocpage,colorlinks,linkcolor=blue,anchorcolor=blue,citecolor=blue,urlcolor=blue,pagebackref]{hyperref}
\usepackage{euscript,mdframed}
\usepackage{microtype,todonotes,relsize}
\usepackage{amsmath,amsthm}
\usepackage{algpseudocode}

\usepackage{accents}
\usepackage{comment,url,graphicx,relsize}
\usepackage{amssymb,amsfonts,amsmath,amsthm,amscd,dsfont,mathrsfs,mathtools,nicefrac, bm}
\usepackage{float,psfrag,epsfig,color,xcolor,url}
\usepackage{epstopdf,bbm,mathtools,enumitem}
\usepackage[ruled,vlined,linesnumbered]{algorithm2e}
\usepackage{tablefootnote}
\graphicspath{{Plots/}}
\usepackage{diagbox}
\usepackage{tikz}
\usetikzlibrary{calc,patterns,angles,quotes}
\usepackage{makecell}
\usepackage{multirow}
\usepackage{booktabs}
\usepackage[authoryear,round]{natbib}
\usepackage{color}
\usepackage{parskip}
\usepackage{subcaption}

\newtheorem{theorem}{Theorem}
\newtheorem{lemma}{Lemma}

\newtheorem{assumption}{Assumption}
\newtheorem{remark}{Remark}

\newtheorem{definition}{Definition}

\newcommand{\inner}[2]{\langle{#1},{#2}\rangle}

\newcommand{\R}{\mathbb{R}}

\newcommand{\cD}{\mathcal{D}}

\newcommand{\cM}{\mathcal{M}}

\newcommand{\cT}{\mathcal{T}}

\newcommand{\E}{\mathbb{E}}
\newcommand{\Exp}{{\mathtt{Exp}}}
\newcommand{\grad}{{\mathtt{grad}}}
\newcommand{\hess}{{\mathtt{Hess}}}
\newcommand{\Retr}{{\mathtt{Retr}}}
\newcommand{\dx}{{d_{\mathcal{M}_x}}}
\newcommand{\dy}{{d_{\mathcal{M}_y}}}

\title{Adaptive Single-Loop Methods for Stochastic Minimax Optimization on Riemannian Manifolds}
\author{
{Hongye Wang} \thanks{School of Information Management and Engineering, Shanghai University of Finance and Economics. \texttt{ishongyewang@gmail.com}}
\and
{Chang He} \thanks{School of Information Management and Engineering, Shanghai University of Finance and Economics. \texttt{ischanghe@gmail.com}}
\and 
{Bo Jiang} \thanks{School of Information Management and Engineering, Shanghai University of Finance and Economics. \texttt{isyebojiang@gmail.com}}
}

\begin{document}

\maketitle
\begin{abstract}
Stochastic minimax optimization on Riemannian manifolds has recently attracted significant attention due to its broad range of applications, such as 
robust training of neural networks and robust maximum likelihood estimation. Existing optimization methods for these problems typically require selecting stepsizes based on prior knowledge of specific problem parameters, such as Lipschitz-type constants  and (geodesic) strong concavity constants. Unfortunately, these parameters are often unknown in practice. To overcome this issue, we develop single-loop  adaptive methods that automatically adjust stepsizes using cumulative Riemannian (stochastic) gradient norms. We first propose a deterministic single-loop Riemannian adaptive gradient descent ascent method and show that it attains an $\epsilon$-stationary point within $\mathcal{O}(\epsilon^{-2})$ iterations. This deterministic method is of independent interest and lays the foundation for our subsequent stochastic method. In particular,
we propose the Riemannian stochastic adaptive gradient descent ascent method, which finds an $\epsilon$-stationary point in $\mathcal{O}(\epsilon^{-6})$ iterations. Under additional second-order smoothness, this iteration complexity is further improved to $\mathcal{O}(\epsilon^{-4})$, which even outperforms the corresponding complexity result in Euclidean space. Some numerical experiments on real-world applications are conducted, including the regularized robust maximum likelihood estimation problem, and the robust training of neural networks with orthonormal weights. The results are encouraging and demonstrate the effectiveness of adaptivity in practice.
\end{abstract}

\section{Introduction}
Over the past decades, 
optimization on 
Riemannian manifolds has attracted growing research interest in machine learning, scientific computing, and signal processing, due to the natural geometric structure inherent to many real-world applications in these domains \citep{absil2009optimization,boumal2011rtrmc, vandereycken2013low, huang2018orthogonal, chakraborty2020manifoldnet, cherian2016riemannian,boumal2023introduction}.
In order to accommodate a wider spectrum of practical applications, this paper considers the following \textit{stochastic minimax optimization problem}:
\begin{equation}\label{eq.stochastic problem}
\min_{x \in \cM_x}\max_{y\in\cM_y} \ f(x,y) \coloneqq \mathbb{E}_{\xi \sim \cD} [f(x,y; \xi)],
\end{equation}
where objective function $f(x,y)$ is nonconvex in $x$ and geodesically strongly concave in $y$. The two variables $x$ and $y$ lie on complete Riemannian manifolds $\cM_x$ and $\cM_y$, with dimensions $d_x$ and $d_y$ respectively. 
To further underscore the practical significance of the considered problem, in the following, we introduce two concrete machine learning tasks as motivating examples.
\subsection{Motivating examples}
\paragraph{Robust training of neural networks.} 
Given a input-label distribution $\mathcal{D}$, the forward mapping $h(\cdot)$ of neural networks and the loss function $\mathcal{L}(\cdot,\cdot)$, the training of neural networks can be formulated as:
\begin{equation}
    \min_{\{X_k\}_{k=1}^K : X_k \in \mathrm{St}(d_k, d_{k+1})}
\E_{(a_i,b_i)\in \cD} [\mathcal{L}\left(h(a_i; \{X_k\}_{k=1}^K), b_i\right)],
\end{equation}
where $\mathrm{St}(d_k, d_{k+1})=\left\{X_k\in\R^{d_k\times d_{k+1}}:X_k^\top X_k=I\right\}$ denotes the Stiefel manifold with  $X_k$ being the weights of the $k$-th layers. The reason to enforce the orthonormality on network weights is to  enhance both the stability and speed of convergence during training process \citep{bansal2018can, cogswell2015reducing, huang2018orthogonal, wang2020orthogonal}. In practice, the true distribution $\mathcal{D}$ is unknown and only approximated by a sampling distribution $\Tilde{\cD}$, which can make the model sensitive to small input perturbations. To promote robustness against such sensitivity, we introduce an auxiliary perturbation variable $y$ on a sphere and minimize the worst-case loss \citep{madry2017towards}:
\begin{equation}\label{eq.robust nn}
    \min_{\{X_k\}_{k=1}^K : X_k \in \mathrm{St}(d_k, d_{k+1})}
\max_{y \in \mathbb{S}^{d-1}(r)} 
\E_{(a_i,b_i)\in \Tilde{\cD}} [\mathcal{L}\left(h(a_i + y; \{X_k\}_{k=1}^K), b_i\right)],
\end{equation}
where $\mathbb{S}^{d-1}(r)=\left\{x\in \R^d: \|x\|_2=r\right\}$ is the sphere manifold of radius $r$. Thus, this formulation is a specific instance of the problem defined in \eqref{eq.stochastic problem}.

\paragraph{Robust Maximum Likelihood Estimation.}
The robust maximum likelihood estimation problem, introduced in \cite{bertsimas2019robust}, is to obtain statistically reliable estimators that remain stable under data uncertainty or perturbations. Later, \cite{hosseini2020alternative} proposed the following minimax reformulation to describe this problem:
\begin{equation}\label{eq.like}
    \min_{x\in \mathbb{S}^{d}}\max_{Y\in\mathcal{P}(d+1)}-\frac{n}{2}\log(\det(Y))-\frac{n}{2}\E_{a_i\in \cD}[([a_i,1]^\top-x)^\top Y^{-1}([a_i,1]^\top-x)].
\end{equation}
where $\mathbb{S}^{d}(1)=\{x\in \R^{d+1}:x^\top x=1\}$ denotes the sphere manifold of radius $1$, $a_i\in \R^d$ being the sampled data from distribution $\cD$, and $\mathcal{P}(d+1)=\{X\in \R^{(d+1)\times (d+1)}:X\succ 0 \}$ is the symmetric positive definite manifold. Here, $n$ and $d$ denote the number of samples and the feature dimension, respectively. Compared with the canonical maximum likelihood estimation, the robust formulation additionally minimizes over $x$ to account for possible perturbations in the data, making it a special case of problem \eqref{eq.stochastic problem}.

\subsection{Related works}
Given the powerful modeling capability of 
problem \eqref{eq.stochastic problem}, several recent studies have probed stochastic minimax optimization on Riemannian manifolds and proposed corresponding optimization methods. However, practical implementation of those methods remains challenging, primarily because they require various unknown problem parameters. For instance, \cite{han2023riemannian} proposed Hamiltonian gradient methods for objective functions whose Hamiltonian functions satisfy the Riemannian PL condition; however, this method requires knowledge of a Lipschitz-type constant.
Similarly, other methods such as the Riemannian corrected extragradient method and Riemannian gradient descent ascent depend on both Lipschitz-type constants and (geodesic) strong concavity constants \citep{jordan2022first,huang2023gradient,wu2023decentralized}.
Therefore, designing \textit{adaptive optimization methods} that do not rely on prior knowledge of these parameters is of great importance.

Recently, several adaptive methods have been developed for Riemannian minimization problems to remove the dependence on specific problem parameters, including \cite{becigneul2018riemannian, kasai2019riemannian, grapiglia2023adaptive, sakai2024general, yagishita2025simple, bento2025riemannian}. However, the development of adaptive methods for Riemannian minimax optimization remains largely unexplored. To the best of our knowledge, the most related work is the adaptive Riemannian hypergradient descent (AdaRHD) method in \cite{shi2025adaptive}, which was originally designed for bilevel optimization problems. In principle, AdaRHD could be applied to minimax optimization, but its convergence guarantees were established only for deterministic settings, making it inapplicable to the stochastic problem \eqref{eq.stochastic problem}.
In addition, it employs a double-loop scheme, which is computationally inefficient. This clear gap in the literature motivates our goal: to design a \textit{single-loop adaptive method} for \textit{stochastic} minimax optimization on Riemannian manifolds that has both strong theoretical guarantees and encouraging practical performance.

\subsection{Main contributions}
In this paper, we propose two adaptive single-loop methods for the Riemannian minimax problem. To eliminate the dependence on specific problem parameters, our methods automatically adjust their stepsizes using cumulative Riemannian (stochastic) gradient norms information
. In particular, we first design a Riemannian adaptive gradient descent ascent (RAGDA) method for the deterministic case of problem \eqref{eq.stochastic problem}. Building on this method, we then develop its stochastic version named Riemannian stochastic adaptive gradient descent ascent (RSAGDA) method for problem \eqref{eq.stochastic problem}. To the best of our knowledge, they are the first single-loop adaptive algorithms for solving Riemannian minimax optimization in both deterministic and stochastic settings.

Theoretically, we establish a suite of iteration complexity bounds for the proposed methods. Specifically, 
we show that under mild assumptions, RAGDA attains an $\epsilon$-stationary point for the deterministic version of problem \eqref{eq.stochastic problem} within $\mathcal{O}(\epsilon^{-2})$ iterations. In the stochastic setting, under standard stochasticity assumptions, RSAGDA finds an $\epsilon$-stationary point of problem \eqref{eq.stochastic problem} in $\mathcal{O}(\epsilon^{-6})$ iterations. 
Furthermore, under an additional second-order smoothness assumption, we show that the complexity of RSAGDA can be improved to
$\mathcal{O}(\epsilon^{-4})$, which even outperforms the corresponding complexity $\mathcal{O}(\epsilon^{-(4+\tilde{\epsilon})})$ for some small $\tilde{\epsilon}>0$ in Euclidean space \citep{li2022tiada}.
Such improvement stems from a sharper analysis that allows and optimizes a more flexible choice of algorithmic hyperparameters.
Numerically, we compare our methods against other baseline methods\footnote{See Section \ref{sec.experiments} for more details.} on the regularized robust maximum likelihood estimation problem and robust neural networks training with orthonormal weights. The experimental results demonstrate the effectiveness of the adaptive mechanism in our method.

\section{Preliminaries: Riemannian geometry}
In this section, we review the fundamental concepts and tools of Riemannian optimization. For a more comprehensive treatment of the underlying theory, we refer the reader to the works of \cite{absil2009optimization, boumal2023introduction}.

A Riemannian manifold $\cM$ is a smooth manifold endowed with a smooth inner product $\langle \cdot, \cdot \rangle_x:\cT_x \cM \times \cT_x \cM\rightarrow \R$ on the tangent space $\cT_x \cM$ (Definition 3.14 in \cite{boumal2023introduction}) for each point $x\in \cM$. The associated norm is denoted by $\lVert u \rVert_x := \sqrt{\langle u, u \rangle_x}$ for $u \in \cT_x \cM$, with the subscript omitted when it is clear from context. A retraction on a manifold $\cM$ is a smooth map $\Retr_x: \cT_x\cM\rightarrow\cM$ such that $\Retr_x(0) = x$ and $\operatorname{D}\Retr_x(0) = \operatorname{id}_{\cT_x\cM}$, {where $\operatorname{D}$ denotes the differential operator
and $\operatorname{id}_{\cT_x\cM}$ is the identity mapping on $\cT_x\cM$. The exponential map $\Exp_x : \cT_x\cM \to \cM$ is a canonical example of a retraction, as it maps a tangent vector to the point reached by moving along the geodesic (locally shortest curve). If $\cM$ is complete, the exponential map is globally defined. Given a geodesic $\gamma:[0,1]\to\cM$ with $\gamma(0) = x$, $\gamma(1) = y$ and $\dot{\gamma}(0) = g_x \in \cT_x \cM$, one has $y = \Exp_x(g_x) \in \cM$. If the smooth inverse of the exponential map exists, the Riemannian distance between two points $x,y \in \cM$ can be expressed as $d(x,y)=\|\Exp^{-1}_x(y)\|_x=\|\Exp^{-1}_y(x)\|_y$. The parallel transport  $\Gamma_{x_1}^{x_2} : \cT_{x_1} \mathcal{M} \to \cT_{x_2} \mathcal{M}$ 
is the linear isometry between tangent spaces such that
$\langle u, v \rangle_{x_1} = \langle \Gamma_{x_1}^{x_2} u, \Gamma_{x_1}^{x_2} v \rangle_{x_2}$. 
Moreover, the Cartesian product of Riemannian manifolds 
$\cM_x \times \cM_y$ is also a Riemannian manifold.

For a differentiable function $f:\cM\to\R$, the Riemannian gradient at $x \in \cM$ is the tangent vector $\grad f(x) \in \cT_x \mathcal{M}$ satisfying $\langle \grad f(x), u \rangle_x = \operatorname{D}f(x)[u]$ for all $u \in \cT_x \mathcal{M}$, where $\operatorname{D}f(x)[u]$ denotes the derivative of $f$ at $x$ in the direction of $u$. If $f$ is twice differentiable, the Riemannian Hessian $\hess f(x)$ is given by the covariant derivative of the Riemannian gradient. Consider a bifunction $f:\cM_x \times \cM_y \to \mathbb{R}$. We denote by $\grad_x f(x,y)$ and $\grad_y f(x,y)$ the Riemannian gradients with respect to $x$ and $y$, respectively, and by $\hess_x f(x,y)$ and $\hess_y f(x,y)$ their corresponding Riemannian Hessians.
The cross-derivatives are defined as linear maps $\grad_{xy}^2 f(x,y) : \cT_y \mathcal{M}_y \to \cT_x \mathcal{M}_x$, 
$\grad_{yx}^2 f(x,y) : \cT_x \mathcal{M}_x \to \cT_y \mathcal{M}_y$, defined, for example, by $\grad_{xy}^2 f(x,y)[v] = \operatorname{D}_y \grad_x f(x,y)[v]$ for any $v \in \cT_y \mathcal{M}_y$, where $\operatorname{D}_y$ denotes the differential with respect to $y$ (the definition of $\grad_{yx}^2 f$ is analogous). For a linear operator $\operatorname{T}: \cT_x\cM_x \to \cT_y\cM_y$, the operator norm of $\operatorname{T}$ is defined by 
$
\|\operatorname{T}\| := \sup_{u \in \cT_x \mathcal{M}_x : \|u\|_x = 1} \|\operatorname{T}[u]\|_y.
$

Having defined the notion of the Riemannian gradient, we now introduce the concept of an $\epsilon$-stationary point, which serves as the convergence criterion that our proposed methods aim to achieve. This definition is consistent with its Euclidean counterpart \citep{li2022tiada}. 
\begin{definition}[$\epsilon$-stationary point]\label{def.stationary}
    Given an accuracy $\epsilon > 0$, a point $(x,y)$ is called an $\epsilon$-stationary point of the deterministic minimax problem \eqref{eq.deterministic problem} if $ \|\grad_xf(x,y)\|\le \epsilon$ and $\|\grad_yf(x,y)\|\le \epsilon$. It is an $\epsilon$-stationary point of the stochastic minimax problem \eqref{eq.stochastic problem} if $\mathbb{E}[\|\grad_xf(x,y)\|^2]\le \epsilon^2$ and $\mathbb{E}[\|\grad_yf(x,y)\|^2]\le \epsilon^2$.
\end{definition}
To facilitate the convergence analysis of our methods, some standard properties of the objective function are required. 
In particular, we assume that the function \(f(x, y)\) is sufficiently smooth and exhibits a certain curvature structure with respect to \(y\). We formally state these properties below.

\begin{assumption}[Lipschitz Smoothness]\label{ass.l-smooth}
The function \( f(x, y) \) is \emph{\(L_1\)-Lipschitz smooth} with respect to both \(x \in \mathcal{M}_x\) and \(y \in \mathcal{M}_y\), i.e., it is continuously differentiable in both variables and there exists a constant \( L_1 > 0 \) such that, for all \( x_1, x_2 \in \mathcal{M}_x \) and \( y_1, y_2 \in \mathcal{M}_y \),
\begin{align*}
\max\{
&\|\Gamma_{x_1}^{x_2}\grad_x f(x_1,y_1) - \grad_x f(x_2,y_2)\|,\\
&\|\Gamma_{y_1}^{y_2}\grad_y f(x_1,y_1) - \grad_y f(x_2,y_2)\|
\} \\
\le &L_1\left(d_{\cM_x}(x_1,x_2) + d_{\cM_y}(y_1,y_2)\right),
\end{align*}
where \(\Gamma_{x_1}^{x_2}\) and \(\Gamma_{y_1}^{y_2}\) denote the parallel transport operators on \(\mathcal{M}_x\) and \(\mathcal{M}_y\), respectively.
\end{assumption}

The smoothness assumption is a standard requirement in the Riemannian optimization literature \citep{lin2020gradient, lin2020near, huang2023gradient}, as it ensures that the Riemannian gradient does not vary too rapidly. This property plays a central role in deriving descent-type inequalities, establishing stability of iterative updates, and controlling the accumulation of gradient norms in convergence analysis.

\begin{assumption}[Geodesic Strong Concavity]\label{ass.g-convex}
The function \( f(x, y) \) is \emph{\(\mu\)-geodesically strongly concave} in \(y\) for some constant \(\mu > 0\), i.e., for any fixed \( x \in \mathcal{M}_x \) and for all \( y_1, y_2 \in \mathcal{M}_y \),
\begin{align*}
    &f(x, y_2)\\ \le &f(x, y_1) + \langle \grad_y f(x, y_1), \Exp_{y_1}^{-1}(y_2) \rangle_{y_1}- \frac{\mu}{2} d_{\mathcal{M}_y}(y_1, y_2)^2.
\end{align*}
\end{assumption}

The geodesic strong concavity assumption, widely adopted in nonconvex minimax problems \citep{yang2022nest, li2022tiada, huang2023gradient}, extends the classical notion of strong concavity to the Riemannian setting. 
It is particularly well-suited for Hadamard manifolds, where the non-positive curvature ensures favorable concavity properties. 
Moreover, it guarantees the existence and uniqueness of the maximizer $y^*(x)$ for any fixed $x$ \citep{han2024framework, shi2025adaptive}.

Let $y^*(x) = \arg\min_{y \in \cM_y} f(x,y)$ denote the solution of the inner maximization problem, and define the corresponding primal function as $\Phi(x) = f(x, y^*(x))$.  To guarantee the well-posedness of the minimax problem and the validity of first-order optimality conditions at $y^*(x)$, we impose the following assumption.
\begin{assumption}\label{ass.optimal exist and stationary condition hold}
The value function $\Phi(x) = \max_{y \in \cM_y} f(x, y)$ is upper bounded by a constant $\Phi_{\max} \in \mathbb{R}$, and satisfies $\Phi^*=\min_{x \in \cM_x} \Phi(x) > -\infty$. Moreover, for any $x \in \cM_x$, the maximizer $y^*(x)$ lies in the interior of $\cM_y$, and the Riemannian gradient of $\Phi(x)$ is uniformly bounded: $\|\grad \Phi(x)\| \le G$ for some constant $G > 0$.
\end{assumption}

For manifolds, we adopt the following standard assumptions.
\begin{assumption}\label{ass.manifold}
The primal manifold $\cM_x$ is complete, and the dual manifold $\cM_y$ is a Hadamard manifold with sectional curvature lower bounded by $\tau < 0$. Moreover, we assume that for each iterate $y_t$, the curvature constant $\frac{\sqrt{|\tau|}d(y_t,y_t^*)}{\tanh(\sqrt{|\tau|}d(y_t,y_t^*))}$ is uniformly upper bounded by a constant $\zeta > 0$.
\end{assumption}
The completeness of $\cM_x$ and $\cM_y$ ensures the iterates remain in a geodesically complete space. The
Hadamard structure of $M_y$ allows for well-defined notions of geodesic concavity, which are essential
in our dual analysis \citep{ zhang2016first, zhang2016riemannian, han2024framework, shi2025adaptive}.

To justify the use of retractions in place of exponential maps, we adopt the following standard approximation condition, which controls the deviation between exponential and retraction mappings.
\begin{assumption}[Retraction Accuracy]\label{ass.retraction}
There exist constants $\bar{c} \ge 1$ and $c_R \ge 0$ such that for any point $z_1 \in \cM_x$ (or $\cM_y$) and tangent vector $u \in \cT_{z_1}\cM_x$ (or $\cT_{z_1}\cM_y$) with $z_2 = \Retr_{z_1}(u)$, we have:
\[
d^2(z_1, z_2) \le \bar{c} \|u\|^2, \quad 
\|\Exp_{z_1}^{-1}(z_2) - u\| \le c_R \|u\|^2.
\]
\end{assumption}
This assumption is reasonable since the retraction serves as a first-order approximation of the exponential map, which is commonly leveraged in theoretical analyses \citep{kasai2018riemannian, han2024framework, shi2025adaptive}.
\section{Riemannian adaptive gradient descent ascent}\label{sec.RAGDA}
Before addressing the stochastic setting, we first study the corresponding deterministic problem:
\begin{equation}\label{eq.deterministic problem}
\min_{x \in \cM_x}\max_{y\in\cM_y}f(x,y).
\end{equation}
and propose the Riemannian adaptive gradient descent ascent method, a single-loop scheme to solve it. 
We opt to begin with the deterministic setting for several reasons.
First, studying the 
deterministic problem yields fundamental design principles that underpin the subsequent development of our stochastic method. Second, the deterministic minimax formulation itself is of independent interest, with numerous applications in
robust maximum likelihood estimation \citep{bertsimas2019robust, zhang2023sion}, 
subspace robust Wasserstein distance \citep{lin2020projection, huang2021riemannian, jiang2023riemannian}, and distributionally robust principal component analysis \citep{zhang2023sion}. Finally, to the best of our knowledge, the only existing adaptive method for deterministic Riemannian minimax optimization adopts a double-loop structure \citep{shi2025adaptive}, which limits its practicality. Thus, designing a simple single-loop adaptive method that is computationally efficient for problem \eqref{eq.deterministic problem} constitutes a nontrivial and valuable contribution to the field.
\subsection{Algorithm design}
We now describe the update rules of the proposed single-loop adaptive method for solving the minimax problem \eqref{eq.deterministic problem}. 
The overall procedure is summarized in Algorithm~\ref{alg.RAGDA}.
At each iteration $t$, the algorithm performs a Riemannian gradient descent step on the primal variable $x \in \cM_x$ and a gradient ascent step on the dual variable $y \in \cM_y$, with gradients
$g^x_t = \grad_x f(x_t, y_t)$ and $g^y_t = \grad_y f(x_t, y_t)$ respectively. The accumulated squared gradient norms are tracked by $v^x_{t+1} = v^x_t + \|g^x_t\|^2$ and $v^y_{t+1} = v^y_t + \|g^y_t\|^2$ based on which the adaptive step sizes are computed. The updates are then given by
\[
x_{t+1} = \Retr_{x_t}\!\left(-\frac{\eta^x}{\max\{v^x_{t+1}, v^y_{t+1}\}^{\alpha}} g^x_t\right), \quad
y_{t+1} = \Retr_{y_t}\!\left(\frac{\eta^y}{(v^y_{t+1})^{\beta}} g^y_t\right).
\]
The use of $\max\{v^x_{t+1}, v^y_{t+1}\}$ in the primal update coordinates the learning rates of $x$ and $y$, preventing overly aggressive primal steps when the dual gradients remain large. This update scheme enables a single-loop implementation with theoretical guarantees, in contrast to AdaRHD \citep{shi2025adaptive}, which adapts steps using only $v^x_{t+1}$ and thus requires a double-loop scheme. The parameters $\eta^x$ and $\eta^y$ are introduced to enforce a suitable time-scale separation between the updates of the primal and dual variables, a well-established practice
in minimax optimization \citep{li2022tiada, lin2020gradient, lin2025two}. Distinct decay exponents $\alpha$ and $\beta$ are introduced to allow flexibility to regulate the rate of stepsize reduction for the primal and dual variables, generalizing the common choice of $1/2$ \citep{ward2020adagrad, shi2025adaptive}. As shown in our analysis, varying the values of these exponents yields different convergence guarantees. In addition, retractions instead of exponential maps are used to enhance computational efficiency, a common practice in Riemannian optimization \citep{absil2009optimization}. Notably, our adaptive update mechanism is inspired by \cite{li2022tiada} and aligns with the philosophy of cumulative-gradient-based adaptation in adaptive optimization methods \citep{ward2020adagrad, li2022tiada, shi2025adaptive}.
\begin{algorithm}[htbp]
\caption{RAGDA (Riemannian adaptive gradient descent ascent method)}
\label{alg.RAGDA}
\KwIn{$(x_0, y_0)$, $v^x_0 > 0$, $v^y_0 > 0$, $\eta^x > 0$, $\eta^y > 0$, $\alpha > 0$, $\beta > 0$.}
\For{$t = 0, 1, 2, \dots$}{
    Let $g^x_t = \grad_x f(x_t, y_t)$ and $g^y_t = \grad_y f(x_t, y_t)$\;
    $v^x_{t+1} = v^x_t + \|g^x_t\|^2$, $v^y_{t+1} = v^y_t + \|g^y_t\|^2$\;\label{alg.line3}
    $\eta_t = \frac{\eta^x}{\max\{v^x_{t+1}, v^y_{t+1}\}^\alpha}$\;
    $x_{t+1} = \Retr_{x_t}\left(- \eta_t g^x_t \right)$\;
    $\gamma_t = \frac{\eta^y}{(v^y_{t+1})^\beta}$\;
    $y_{t+1} = \Retr_{y_t}\left( \gamma_t g^y_t \right)$\;
}
\end{algorithm}
\subsection{Convergence analysis}
In this subsection, we aim to establish the convergence guarantees of the proposed algorithm RAGDA.


The central idea of our analysis is to control the growth of the accumulated the squared norms of the gradients $\sum_{t=0}^{T-1}\|\grad_x f(x_t, y_t)\|$, $\sum_{t=0}^{T-1}\|\grad_y f(x_t, y_t)\|^2$. Intuitively, if both of them can be uniformly bounded by some constant, then the corresponding gradient norms must diminish over time, implying that 
$\|\grad_x f(x_t, y_t)\| \to 0$ and $\|\grad_y f(x_t, y_t)\| \to 0$. This behavior aligns precisely with the notion of approaching an $\epsilon$-stationary point (Definition \ref{def.stationary}) as defined earlier. 

To formalize this intuition, we begin by controlling the dual auxiliary sequence $\{v_t^y\}$, which upper bounds the accumulated squared dual gradient norms, namely,
$
\sum_{t=0}^{T-1}\|\grad_y f(x_t, y_t)\|^2 \le v_T^y,
$
as defined in line~\ref{alg.line3} of Algorithm~\ref{alg.RAGDA}.
\begin{lemma}\label{le.bound sqaure norm y and vTy}
Under Assumptions~\ref{ass.l-smooth}, \ref{ass.g-convex}, \ref{ass.optimal exist and stationary condition hold}, \ref{ass.manifold}, and \ref{ass.retraction}, for any $T \ge t_0 + 1$, where $t_0$ denotes the first iteration such that $(v_{t_0+1}^y)^\beta > c_1$, where $c_1 := \max \left\{ \frac{4 \eta^y \mu L_1}{\mu + L_1},\; \eta^y (\mu + L_1)\left(\zeta \bar{c} + \frac{2 G c_R}{\mu} \right) \right\}$,
 the following holds:
\begin{equation}\label{eq.vTy bound}
    \begin{cases}
        \sum_{t=0}^{T-1}\gamma_t\|\grad_yf(x_t,y_t)\|^2\le c_4 + c_3 (\eta^x)^2 \left( \frac{1 + \log v_T^x - \log v_0^x}{(v_0^x)^{2\alpha - \beta - 1}} \cdot \mathbf{1}_{2\alpha - \beta < 1} + \frac{(v_T^x)^{1-2\alpha + \beta}}{1-2\alpha + \beta} \cdot \mathbf{1}_{2\alpha - \beta < 1} \right)\\
        v_T^y\le \left(\frac{c_4}{\eta^y} + \frac{c_3}{\eta^y} (\eta^x)^2 \left( \frac{1 + \log v_T^x - \log v_0^x}{(v_0^x)^{2\alpha - \beta - 1}} \cdot \mathbf{1}_{2\alpha - \beta < 1} + \frac{(v_T^x)^{1-2\alpha + \beta}}{1-2\alpha + \beta} \cdot \mathbf{1}_{2\alpha - \beta < 1} \right)\right)^{\frac{1}{1-\beta}}
    \end{cases}
    \end{equation}
Here, $c_2 = 4(\mu + L_1) \left( \frac{1}{\mu^2} + \frac{\bar{c}\eta^y}{(v_{t_0}^y)^\beta} \right) c_1^{1/\beta}$, $
c_3 = (\mu + L_1) \left( \frac{2\kappa^2\bar{c}}{(v_0^y)^\beta} + \frac{(\mu + L_1)\kappa^2\bar{c}}{\eta^y \mu L_1} \right)$, $
c_4 = c_2 + \frac{\eta^y c_1^{\frac{1-\beta}{\beta}}}{1-\beta}$, $\kappa = L_1/\mu$ is the condition number and $\mathbf{1}_A$ is the indicator function of event A, taking value 1 on A and 0 otherwise.
\end{lemma}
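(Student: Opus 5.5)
The plan is to track the dual error $\delta_t := d^2(y_t, y_t^*)$ with $y_t^* := y^*(x_t)$, and to follow the TiAda-type argument of \cite{li2022tiada}, adapted to the Riemannian setting.

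\textbf{One-step recursion for $\delta_t$.} Write $y_{t+1} = \Retr_{y_t}(\gamma_t g_t^y)$. Assumption~\ref{ass.retraction} lets us replace the retraction by $\Exp_{y_t}(\gamma_t g_t^y)$ at the cost of an error $c_R \gamma_t^2 \|g_t^y\|^2$. Applying the trigonometric comparison inequality on the Hadamard manifold $\cM_y$, with curvature constant $\zeta$ from Assumption~\ref{ass.manifold}, gives
\[
d^2(y_{t+1}, y_t^*) \le \delta_t - 2\gamma_t \langle g_t^y, \Exp_{y_t}^{-1}(y_t^*) \rangle + \zeta \bar{c}\, \gamma_t^2 \|g_t^y\|^2 + \text{(retraction error)}.
\]
The retraction error is controlled through $d(y_t, y_t^*) \le \|g_t^y\|/\mu$ and a gradient bound; this is where the term $2Gc_R/\mu$ enters.

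Next, use strong concavity together with $L_1$-smoothness via the standard coercivity inequality
\[
-\langle g_t^y, \Exp_{y_t}^{-1}(y_t^*) \rangle \le -\frac{\mu L_1}{\mu + L_1}\, \delta_t - \frac{1}{\mu + L_1}\, \|g_t^y\|^2 .
\]
This gives
\[
d^2(y_{t+1}, y_t^*) \le \Bigl(1 - \tfrac{2\gamma_t \mu L_1}{\mu + L_1}\Bigr)\delta_t - \gamma_t\Bigl(\tfrac{2}{\mu + L_1} - \gamma_t\bigl(\zeta \bar{c} + \tfrac{2Gc_R}{\mu}\bigr)\Bigr)\|g_t^y\|^2 .
\]
For $t \ge t_0$ we have $(v_{t+1}^y)^\beta > c_1$, so the bracket multiplying $\|g_t^y\|^2$ is at least $1/(\mu + L_1)$ and the contraction factor lies in $(0,1)$; this is exactly why $c_1$ is defined as it is.

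\textbf{Moving the target and summing.} The maximizer drifts from $y_t^*$ to $y_{t+1}^*$. Using $d(y_t^*, y_{t+1}^*) \le \kappa\, d(x_t, x_{t+1}) \le \kappa \sqrt{\bar{c}}\, \eta_t \|g_t^x\|$ and Young's inequality with weight proportional to $\gamma_t \mu L_1/(\mu + L_1)$, we obtain
\[
\delta_{t+1} \le \Bigl(1 - \tfrac{\gamma_t \mu L_1}{\mu + L_1}\Bigr)\delta_t - \frac{\gamma_t}{\mu + L_1}\|g_t^y\|^2 + \Bigl(1 + \tfrac{\mu + L_1}{\gamma_t \mu L_1}\Bigr)\kappa^2 \bar{c}\, \eta_t^2 \|g_t^x\|^2 .
\]
Rearranging and summing from $t_0$ to $T-1$ telescopes the $\delta$ terms to at most $\delta_{t_0}$. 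The first-phase iterations $t < t_0$ contribute at most $\sum_{t<t_0} \gamma_t \|g_t^y\|^2 \le \eta^y \int (v)^{-\beta}\, dv \le \eta^y c_1^{(1-\beta)/\beta}/(1-\beta)$, since $v_{t_0}^y \le c_1^{1/\beta}$. The term $\delta_{t_0}$ is bounded by $\|g_{t_0}^y\|^2/\mu^2$ plus one step of drift, yielding $c_2$; together these give $c_4$.

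\textbf{Main obstacle.} The hard step is the drift term $\frac{\eta_t^2}{\gamma_t}\|g_t^x\|^2$. Because $\max\{v^x_{t+1}, v^y_{t+1}\} \ge v^x_{t+1}$ and $v^y_{t+1} \ge v^y_0$, we have
\[
\frac{\eta_t^2}{\gamma_t} \le \frac{(\eta^x)^2}{\eta^y}\, (v_{t+1}^x)^{-2\alpha}\, \max\{v_{t+1}^x, v_{t+1}^y\}^{\beta}.
\]
I would use the maximum to bound this by $(v_{t+1}^x)^{\beta - 2\alpha}$, possibly at the cost of a factor. Then $\sum_t \|g_t^x\|^2 (v_{t+1}^x)^{\beta - 2\alpha}$ is bounded by the integral $\int_{v_0^x}^{v_T^x} u^{\beta - 2\alpha}\, du$. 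When $2\alpha - \beta \ge 1$ this integral is at most $(1 + \log(v_T^x/v_0^x))/(v_0^x)^{2\alpha - \beta - 1}$; otherwise it is $(v_T^x)^{1 - 2\alpha + \beta}/(1 - 2\alpha + \beta)$. This explains the two indicator terms, and the first indicator in the statement is presumably meant to read $2\alpha - \beta \ge 1$. The plain $\gamma_t$ part of the coefficient is bounded by $\eta^y/(v_0^y)^\beta$, which produces the first summand of $c_3$.

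\textbf{Bound on $v_T^y$.} This follows from the first bound. Since $\gamma_t \ge \eta^y/(v_T^y)^\beta$,
\[
v_T^y \le (v_T^y)^\beta \cdot \frac{1}{\eta^y}\sum_{t=0}^{T-1}\gamma_t \|g_t^y\|^2 ,
\]
treating $v_0^y$ as absorbed. Dividing by $(v_T^y)^\beta$ and raising both sides to the power $1/(1-\beta)$ gives the second line of \eqref{eq.vTy bound}.
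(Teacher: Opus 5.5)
This is essentially the paper's argument. It uses the same one-step recursion (trigonometric bound, retraction error via $d(y_t,y_t^*)\le G/\mu$, and the coercivity inequality of Lemma~\ref{le.l smooth + mu convex}) and the same threshold $c_1$. It also uses Young's inequality with $\lambda_t$ of order $a:=\gamma_t\mu L_1/(\mu+L_1)$; the paper takes $\lambda_t=a/(1-2a)$, so that $1+1/\lambda_t\le 1/a$. The rest also matches: telescoping from $t_0$, and Lemma~\ref{le.adaptive bound} for both the first phase and the $x$-sum. Your derivation of the $v_T^y$ bound from $\gamma_t\ge\eta^y/(v_T^y)^\beta$ is an equivalent variant of the paper's use of the lower half of Lemma~\ref{le.adaptive bound}. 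Absorbing $v_0^y$ is legitimate, because $\eta^y(v_0^y)^{1-\beta}+\sum_{t<t_0}\gamma_t\|g_t^y\|^2\le\frac{\eta^y}{1-\beta}(v_{t_0}^y)^{1-\beta}$. You are also right that the first indicator should read $\mathbf{1}_{2\alpha-\beta\ge 1}$.

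Two steps need repair as written.

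First, your display for $\eta_t^2/\gamma_t$ replaces $\max\{v_{t+1}^x,v_{t+1}^y\}^{-2\alpha}$ by $(v_{t+1}^x)^{-2\alpha}$ before the maximum has been used. From $(v_{t+1}^x)^{-2\alpha}\max\{v_{t+1}^x,v_{t+1}^y\}^{\beta}$, no constant factor recovers $(v_{t+1}^x)^{\beta-2\alpha}$ when $v_{t+1}^y\gg v_{t+1}^x$. Cancel first instead: $(v_{t+1}^y)^\beta\max\{v_{t+1}^x,v_{t+1}^y\}^{-2\alpha}\le\max\{v_{t+1}^x,v_{t+1}^y\}^{\beta-2\alpha}\le(v_{t+1}^x)^{\beta-2\alpha}$. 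This uses $2\alpha>\beta$ and loses no factor. It is exactly the paper's step, and it is the reason the primal stepsize is built from the maximum.

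Second, bounding $\delta_{t_0}$ by $\|g_{t_0}^y\|^2/\mu^2$ does not produce $c_2$. The quantity $\|g_{t_0}^y\|^2$ is only bounded by $v_{t_0+1}^y$, which is by definition the first accumulator exceeding $c_1^{1/\beta}$. You must step back to $t_0-1$, as in Lemma~\ref{le.bound dist of yt0 and yt0star}: $d^2(y_{t_0},y_{t_0}^*)\le 4d^2(y_{t_0-1},y_{t_0-1}^*)+4d^2(y_{t_0-1},y_{t_0})+2d^2(y_{t_0-1}^*,y_{t_0}^*)$, together with $\|g_{t_0-1}^y\|^2\le v_{t_0}^y\le c_1^{1/\beta}$. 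The resulting drift piece $2\kappa^2\bar c\,\eta_{t_0-1}^2\|g_{t_0-1}^x\|^2$ is not a constant and cannot go into $c_2$. It must be folded into the $c_3$-sum. That is why the paper's sum starts at $t_0-1$, and why $c_3$ carries the summand $2\kappa^2\bar c/(v_0^y)^\beta$.
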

Lemma~\ref{le.bound sqaure norm y and vTy} shows that, after up to $t_0$ iterations, the accumulated primal gradients effectively control the growth of the accumulated dual gradients and yield an explicit bound on $v_T^y$. More importantly, the bound reveals how the dual dynamics are influenced by the primal updates through the term involving $v_T^x$.
This interaction highlights the necessity of coordinating the primal and dual stepsizes, which motivates the use of $\max\{v_{t+1}^x, v_{t+1}^y\}$ in the primal update rule.

We next derive explicit upper bounds on the accumulated squared primal gradient norms \\
$
\sum_{t=0}^{T-1}\|\grad_x f(x_t, y_t)\|^2
$
by controlling the associated primal auxiliary sequence $v_T^x$, following the same rationale as for $v_T^y$. Since the adaptive primal stepsize depends on $\max\{v_T^x, v_T^y\}$, the analysis naturally splits into two complementary cases depending on the relative magnitude of $v_T^x$ and $v_T^y$. We first consider the regime $v_T^y \le v_T^x$, in which $v_T^x$ can be bounded in a self-consistent manner, with the influence of the dual sequence appearing only through bounded constants. 
The resulting uniform bound on $v_T^x$ is stated in the following lemma. 
\begin{lemma}\label{le.bound vTx case 1}
    Under the same conditions as Lemma \ref{le.bound sqaure norm y and vTy}, if $v_T^y\le v_T^x$ and $0<\beta<\alpha<1$, it follows
    \begin{equation}\label{eq.vTx bound case 1}
    \begin{split}
        v_T^x \leq &7v_0^x + 7 \left( \frac{2 \Delta \Phi}{\eta^x} \right)^{\frac{1}{1-\alpha}} + 7 \left( \frac{\left(4Gc_R+2L_{\Phi}\bar{c}\right)\eta^x e^{(1-\alpha)(1-\log v_0^x)/2}}{e(1-\alpha)(v_0^x)^{2\alpha-1}} \right)^{\frac{2}{1-\alpha}} \cdot \mathbf{1}_{2\alpha \geq 1}\\
   &+ 7 \left( \frac{\left(2Gc_R+L_{\Phi}\bar{c}\right)\eta^x}{1-2\alpha} \right)^{\frac{1}{\alpha}} \cdot \mathbf{1}_{2\alpha < 1}+ 7 \left( \frac{c_4 c_5}{\eta^x} \right)^{\frac{1}{1-\alpha}} \\
&+ 7 \left( \frac{2c_3 c_5 \eta^x e^{(1-\alpha)(1-\log v_0^x)/2}}{e(1-\alpha)\left(v_0^x\right)^{2\alpha-\beta-1}} \right)^{\frac{2}{1-\alpha}} \cdot \mathbf{1}_{2\alpha-\beta \ge 1} 
+ 7 \left( \frac{c_3 c_5 \eta^x}{1-2\alpha+\beta} \right)^{\frac{1}{\alpha-\beta}} \cdot \mathbf{1}_{2\alpha-\beta < 1}.
    \end{split}
\end{equation}
where $\Delta \Phi = \Phi(x_0) - \Phi^*$, $L_{\Phi}=L_1+\kappa L_1$ and  $c_5 = \frac{\eta^x \kappa^2}{\eta^y (v_{t_0}^y)^{\alpha - \beta}}$.
\end{lemma}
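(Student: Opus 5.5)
Under $v_T^y\le v_T^x$, the plan is a descent argument on the primal function $\Phi$, following TiAda, together with the dual bound of Lemma~\ref{le.bound sqaure norm y and vTy}. Several standard facts come first. One is that $\Phi$ is $L_\Phi$-smooth with $L_\Phi=L_1+\kappa L_1$ and $\grad\Phi(x)=\grad_x f(x,y^*(x))$. Another is that $y^*(\cdot)$ is $\kappa$-Lipschitz. The third is strong concavity, which gives $d(y_t,y_t^*)\le \|\grad_y f(x_t,y_t)\|/\mu$.

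Using Assumption~\ref{ass.retraction} (the $c_R$ and $\bar c$ terms), the $L_\Phi$-smoothness expansion along $\Exp^{-1}_{x_t}(x_{t+1})$ gives
\[
\Phi(x_{t+1})\le \Phi(x_t)-\eta_t\langle \grad\Phi(x_t),g_t^x\rangle+\bigl(Gc_R+\tfrac{L_\Phi\bar c}{2}\bigr)\eta_t^2\|g_t^x\|^2 .
\]
Writing $\grad\Phi(x_t)=g_t^x+(\grad\Phi(x_t)-g_t^x)$, Lipschitz smoothness gives $\|\grad\Phi(x_t)-g_t^x\|\le L_1 d(y_t,y_t^*)$. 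Young's inequality then yields
\[
\tfrac{\eta_t}{2}\|g_t^x\|^2\le \Phi(x_t)-\Phi(x_{t+1})+\tfrac{\eta_t}{2}\kappa^2\|g_t^y\|^2+(Gc_R+\tfrac{L_\Phi\bar c}{2})\eta_t^2\|g_t^x\|^2 .
\]
Summing over $t<T$ and using $\eta_t\ge \eta^x/(v_T^x)^\alpha$ (valid since $\max\{v_T^x,v_T^y\}=v_T^x$), the left side is at least $\frac{\eta^x}{2(v_T^x)^\alpha}(v_T^x-v_0^x)$.

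Each right-hand term is then bounded separately.
\begin{itemize}
\item The telescoping term is at most $\Delta\Phi$.
\item For the quadratic term, $\eta_t\le \eta^x/(v^x_{t+1})^\alpha$, and the AdaGrad sum lemma gives $\sum_t \|g_t^x\|^2/(v_{t+1}^x)^{2\alpha}$. This is at most $(v_T^x)^{1-2\alpha}/(1-2\alpha)$ if $2\alpha<1$, and at most $(1+\log v_T^x-\log v_0^x)/(v_0^x)^{2\alpha-1}$ if $2\alpha\ge1$.
\item The dual cross term is the delicate one. For $t\ge t_0$, write $\eta_t=\frac{\eta^x}{\eta^y}\gamma_t\frac{(v^y_{t+1})^\beta}{\max\{v^x_{t+1},v^y_{t+1}\}^\alpha}\le \frac{\eta^x}{\eta^y(v^y_{t_0})^{\alpha-\beta}}\gamma_t$, using $\alpha>\beta$. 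So this term is at most $c_5\sum_t\gamma_t\|g_t^y\|^2$, which Lemma~\ref{le.bound sqaure norm y and vTy} bounds by $c_4+c_3(\eta^x)^2(\cdots)$.
\end{itemize}
The $t_0$ prefix must be absorbed into the constants. This is plausible because $c_4$ already contains the $t_0$-dependent $c_2$ and the $c_1^{(1-\beta)/\beta}$ term. I expect this bookkeeping to be the main obstacle, together with the exact constants in the $c_5$ reduction.

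Multiplying by $2(v_T^x)^\alpha/\eta^x$ gives an inequality of the form $v_T^x\le v_0^x+\sum_{i=1}^{6} A_i (v_T^x)^{p_i}$ with exponents $p_i<1$:
\begin{itemize}
\item $\frac{2\Delta\Phi}{\eta^x}(v_T^x)^\alpha$;
\item $\frac{c_4c_5}{\eta^x}(v_T^x)^\alpha$;
\item $(\cdots)(v_T^x)^{1-\alpha}$ when $2\alpha<1$;
\item $(\cdots)(v_T^x)^{\alpha}\log v_T^x$ when $2\alpha\ge1$;
\item $(\cdots)(v_T^x)^{1-\alpha+\beta}$ when $2\alpha-\beta<1$;
\item $(\cdots)(v_T^x)^{\alpha}\log v_T^x$ when $2\alpha-\beta\ge 1$.
\end{itemize}
Every exponent is below $1$ because $\beta<\alpha<1$. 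The logarithms are handled with $\log z\le \frac{2}{e(1-\alpha)}z^{(1-\alpha)/2}$ after shifting by $\log v_0^x$, which produces the $e^{(1-\alpha)(1-\log v_0^x)/2}$ factors and leaves $(v_T^x)^{(1+\alpha)/2}$.

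The conclusion uses the elementary fact that if $z\le \sum_{i=1}^{k}a_i z^{p_i}$ with $p_i<1$, then $z\le k\max_i\{ k a_i\}^{1/(1-p_i)}\le \sum_i (k a_i)^{1/(1-p_i)}$-type bounds. Here there are at most seven terms counting $v_0^x$: either $z\le 7v_0^x$, or some term dominates, giving $z\le 7 a_i z^{p_i}$. This produces the stated powers $\frac{1}{1-\alpha}$, $\frac{2}{1-\alpha}$, $\frac1\alpha$ and $\frac{1}{\alpha-\beta}$.

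The factor $7$ inside the parentheses is presumably dropped or absorbed by the paper. I would check the constants against the displayed form and adjust the Young splitting (for instance, the factor $2$ in $2\Delta\Phi$ and $4Gc_R+2L_\Phi\bar c$) accordingly.
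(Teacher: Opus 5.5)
Your plan follows the paper's proof step for step. The steps are: the $\Phi$-descent inequality with the $c_R,\bar c$ retraction terms and Young's inequality (the paper's Lemma \ref{le.bound square norm x}); reducing the cross term to $c_5\sum_t\gamma_t\|\grad_y f(x_t,y_t)\|^2$ and applying Lemma \ref{le.bound sqaure norm y and vTy}; multiplying by $(v_T^x)^\alpha/\eta^x$ using $v_T^y\le v_T^x$; the estimate $x^{-m}(c+\log x)\le e^{cm}/(em)$ with $m=(1-\alpha)/2$, $c=1-\log v_0^x$; and a final root inequality. The two points you left open are exactly where the paper's written argument is not sound. So do not try to reverse-engineer its displayed constants.

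\emph{The prefix.} The comparison $\eta_t\le \frac{\eta^x}{\eta^y(v_{t_0}^y)^{\alpha-\beta}}\gamma_t$ needs $v^y_{t+1}\ge v^y_{t_0}$, i.e.\ $t\ge t_0-1$. The paper simply invokes ``$0\le t_0\le t+1$'' for every $t$. The resulting gap for $t\le t_0-2$ is not covered by $c_4c_5$; that constant only handles the prefix of the dual sum itself, not the failure of this pointwise comparison. There are two easy repairs.
\begin{itemize}
\item Use $\max\{v^x_{t+1},v^y_{t+1}\}^\alpha\ge (v_0^y)^{\alpha-\beta}(v^y_{t+1})^\beta$, which is valid for all $t$. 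This amounts to replacing $v_{t_0}^y$ by $v_0^y$ in $c_5$.
\item Alternatively, bound the terms $t\le t_0-2$ directly by $\frac{\kappa^2\eta^x}{2}\sum_{t\le t_0-2}\frac{\|\grad_y f(x_t,y_t)\|^2}{(v^y_{t+1})^\alpha}\le\frac{\kappa^2\eta^x}{2(1-\alpha)}c_1^{(1-\alpha)/\beta}$. This uses Lemma \ref{le.adaptive bound} and $(v^y_{t_0})^\beta\le c_1$, and only adds a constant to the coefficient of $(v_T^x)^\alpha$.
\end{itemize}

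\emph{The factor $7$.} Your dominating-term argument is correct and gives $v_T^x\le 7v_0^x+\sum_i(7A_i)^{1/(1-p_i)}$. The paper gets $7A_i^{1/(1-p_i)}$ from the claim that $x\le\sum_{i=1}^n b_i x^{\alpha_i}$ implies $x\le n\sum_i b_i^{1/(1-\alpha_i)}$. That claim is false: take $n=2$, $b_1=b_2=1$, $\alpha_1=\alpha_2=3/4$; then $x=16$ satisfies $x\le 2x^{3/4}$ with equality but exceeds $2(1+1)=4$. Re-splitting Young's inequality cannot recover the displayed form, because the loss happens in this last step, not in the coefficients $A_i$.

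With these two modifications your argument proves the lemma with somewhat larger constants. That is also all the paper's argument actually establishes. The boundedness of $v_T^x$, which is what the lemma is used for, is unaffected.
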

The following Lemma~\ref{le.bound vTx case 2} addresses the complementary case in which the dual auxiliary dominates, i.e., $v_T^x < v_T^y$. 
In this case, the bound on $v_T^x$ is obtained indirectly by leveraging the explicit control of $v_T^y$ established in Lemma~\ref{le.bound sqaure norm y and vTy}.
\begin{lemma}\label{le.bound vTx case 2}
    Under the same conditions as Lemma \ref{le.bound sqaure norm y and vTy}, if $v_T^x< v_T^y$, it follows
    \begin{equation}\label{eq.vTx bound case 2-2}
    \begin{split}
     v_T^x
\leq &2v_0^x +2\left[\left( \frac{2\Delta \Phi + c_4 c_5}{\eta^x (v_0^x)^{1-2\alpha+\beta}} + \left(2Gc_R+L_{\Phi}\bar{c}\right)\eta^x \left( \frac{e^{(1-2\alpha+\beta)(1-\log v_0^x)}}{e(1-2\alpha+\beta)(v_0^x)^{2\alpha-1}} \cdot \mathbf{1}_{2\alpha \geq 1} + \frac{1}{(1-2\alpha)(v_0^x)^\beta} \cdot \mathbf{1}_{2\alpha < 1} \right)\right.\right.\\
&\left.\left.+ \frac{c_3 c_5 \eta^x}{1-2\alpha+\beta} \right)  \left( \frac{c_4}{\eta^y(v_0^x)^{1-2\alpha+\beta}} + \frac{c_3 (\eta^x)^2}{\eta^y(1-2\alpha+\beta)} \right)^{\frac{\alpha}{1-\beta}}\right]^{ \frac{1}{1-(1-2\alpha+\beta)\left(1+\frac{\alpha}{1-\beta}\right)}}\cdot\mathbf{1}_{2\alpha-\beta<1} \\
&+ 2\left[ \left( \frac{2\Delta \Phi + c_4 c_5}{\eta^x (v_0^x)^{1/4}} + \frac{\left(8Gc_R+4L_{\Phi}\bar{c}\right)\eta^x e^{(1-\log v_0^x)/4}}{e(v_0^x)^{2\alpha-1}} + \frac{4c_3 c_5 \eta^x e^{(1-\log v_0^x)/4}}{e(v_0^x)^{2\alpha-\beta-1}} \right)\right. \\
&\left. \left( \frac{c_4}{\eta^y(v_0^x)^{\frac{(1-\beta)}{4\alpha}}} + \frac{4c_3 \alpha (\eta^x)^2 e^{(1-\beta)(1-\log v_0^x)/(4\alpha)}}{\eta^ye(1-\beta)(v_0^x)^{2\alpha-\beta-1}} \right)^{\frac{\alpha}{1-\beta}}\right]^2\cdot\mathbf{1}_{2\alpha-\beta\ge1}.
    \end{split}
\end{equation}
\end{lemma}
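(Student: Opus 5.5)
We start from the primal descent inequality for $\Phi$ that underlies Lemma~\ref{le.bound vTx case 1}. Since $\Phi$ is $L_\Phi$-smooth with $L_\Phi=L_1+\kappa L_1$, Assumption~\ref{ass.retraction} gives
\[
\Phi(x_{t+1})\le \Phi(x_t)-\eta_t\inner{\grad\Phi(x_t)}{g_t^x}+\left(Gc_R+\tfrac{L_\Phi\bar c}{2}\right)\eta_t^2\|g_t^x\|^2 .
\]
We then split $\grad\Phi(x_t)=g_t^x+(\grad\Phi(x_t)-g_t^x)$ and use $\|\grad\Phi(x_t)-g_t^x\|\le L_1 d(y_t,y^*(x_t))\le \kappa\|g_t^y\|$. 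This is the relation $\|\grad_y f\|\ge\mu d$ from strong concavity together with Lipschitz smoothness. Telescoping yields
\[
\sum_{t<T}\eta_t\|g_t^x\|^2\le 2\Delta\Phi+(4Gc_R+2L_\Phi\bar c)\sum_{t<T}\eta_t^2\|g_t^x\|^2+\sum_{t<T}\eta_t\kappa^2\|g_t^y\|^2 .
\]
For the last term we write $\eta_t=\gamma_t\cdot\frac{\eta^x (v^y_{t+1})^\beta}{\eta^y\max\{v^x_{t+1},v^y_{t+1}\}^\alpha}\le \gamma_t\frac{\eta^x}{\eta^y (v^y_{t+1})^{\alpha-\beta}}$, which is at most $c_5\gamma_t/\kappa^2$ for $t\ge t_0$. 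Lemma~\ref{le.bound sqaure norm y and vTy} then bounds it by $c_5\bigl(c_4+c_3(\eta^x)^2 S(v_T^x)\bigr)$, where $S$ is the $\log$ term or the $(v_T^x)^{1-2\alpha+\beta}$ term.

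In the regime $v_T^x<v_T^y$, the left side is lower bounded using $\eta_t\ge \eta^x/(v_T^y)^\alpha$, giving $\sum\eta_t\|g^x_t\|^2\ge \eta^x (v_T^x-v_0^x)/(v_T^y)^\alpha$. The quadratic term is handled by the standard AdaGrad summation $\sum \|g_t^x\|^2/(v^x_{t+1})^{2\alpha}$, using $\max\{v^x,v^y\}\ge v^x$. For $2\alpha<1$ this gives a bound of order $(v_T^x)^{1-2\alpha}$. For $2\alpha\ge1$ it gives $1+\log(v_T^x/v_0^x)$, and we convert the logarithm into a small power using $\log u\le u^{s}/(es)$ with $s=1-2\alpha+\beta$ or $s=1/4$. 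This produces the $e^{(\cdot)(1-\log v_0^x)}$ constants in the statement. Collecting terms gives
\[
v_T^x-v_0^x\le A\,(v_T^x)^{p}\,(v_T^y)^{\alpha},
\]
where $p=1-2\alpha+\beta$ in the case $2\alpha-\beta<1$ and $p=1/4$ otherwise. All lower-order powers are normalized by $v_0^x$ so that a single power $(v_T^x)^p$ is factored out, and $A$ is the first bracket in the statement.

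Next we substitute the second bound of Lemma~\ref{le.bound sqaure norm y and vTy} into $(v_T^y)^\alpha$. After the same normalization this gives $(v_T^y)^\alpha\le B^{\alpha/(1-\beta)}(v_T^x)^{q\alpha/(1-\beta)}$. In the case $2\alpha-\beta<1$ we have $q=1-2\alpha+\beta$. In the other case the log term is turned into the power $(v_T^x)^{(1-\beta)/(4\alpha)}$, so that $q\alpha/(1-\beta)=1/4$. The result is
\[
v_T^x-v_0^x\le A B^{\alpha/(1-\beta)}(v_T^x)^{r},
\]
with $r=(1-2\alpha+\beta)\bigl(1+\tfrac{\alpha}{1-\beta}\bigr)$ or $r=1/2$. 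We finish with the dichotomy: either $v_T^x\le 2v_0^x$, or $v_T^x\le 2AB^{\alpha/(1-\beta)}(v_T^x)^r$, which gives $v_T^x\le 2(AB^{\alpha/(1-\beta)})^{1/(1-r)}$. This matches both indicator branches, since $1/(1-r)=2$ when $r=1/2$.

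The main obstacle is the exponent bookkeeping. Every logarithmic or power term must be rewritten as a multiple of exactly $(v_T^x)^p$, using $v_T^x\ge v_0^x$ to absorb smaller powers; this is the origin of the $(v_0^x)^{-(\cdot)}$ factors. We also need $r<1$ for the final absorption, which holds only in the relevant parameter ranges. A second delicate point is the iterations $t<t_0$ in the $c_5$ step. We handle them by using $(v^y_{t+1})^{\alpha-\beta}\ge (v^y_{t_0})^{\alpha-\beta}$ for $t\ge t_0$, and by noting that the $t<t_0$ contributions are already absorbed into $c_4$ via Lemma~\ref{le.bound sqaure norm y and vTy}.
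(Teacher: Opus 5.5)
Your route is the paper's own. You use the telescoped descent bound of Lemma~\ref{le.bound square norm x}, the lower bound $\sum_{t<T}\eta_t\|g^x_t\|^2\ge \eta^x(v_T^x-v_0^x)/(v_T^y)^\alpha$, and the $v_T^y$ bound of Lemma~\ref{le.bound sqaure norm y and vTy}. You then normalize every power and logarithm by $v_0^x$ so that a single power $(v_T^x)^r$ survives. Your $A$, $B$, $p$, $q$, $r$ agree with the paper's, up to the coefficient slip noted below.

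\textbf{The last step fails.} Write $K=AB^{\alpha/(1-\beta)}$. The inequality $v_T^x\le 2K(v_T^x)^r$ gives $v_T^x\le (2K)^{1/(1-r)}=2^{1/(1-r)}K^{1/(1-r)}$, not $2K^{1/(1-r)}$. Since $r>0$, this understates the constant. The discrepancy is unbounded, because $r\to1$ as $\alpha\downarrow\beta$ in the branch $2\alpha-\beta<1$.

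Worse, once $r$ is near $1$ the stated form $2v_0^x+2K^{1/(1-r)}$ cannot be deduced from $v_T^x-v_0^x\le K(v_T^x)^r$ by any argument. For $r=0.9$ and $K=v_0^x=1$, the value $v_T^x=6$ satisfies $6-1\le 6^{0.9}\approx 5.02$, yet it exceeds $4$.

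The paper's proof has the same defect. It invokes ``$x\le\sum_{i=1}^n b_i x^{\alpha_i}\Rightarrow x\le n\sum_i b_i^{1/(1-\alpha_i)}$'', which is false in general; the valid version is $x\le\max_i (n b_i)^{1/(1-\alpha_i)}$. So your argument, like the paper's, actually establishes $v_T^x\le\max\{2v_0^x,(2K)^{1/(1-r)}\}$. This is still a $T$-independent constant, which is all Theorem~\ref{the.deterministic convergence} needs. In the branch $2\alpha-\beta\ge 1$, where $r=1/2$, the stated form can be recovered: $K\sqrt{v_T^x}\le v_T^x/2+K^2/2$ gives $v_T^x\le 2v_0^x+K^2$.

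\textbf{Iterations $t<t_0$.} Your handling of these is not right as written. For $t\le t_0-2$ one can have $v^y_{t+1}<v^y_{t_0}$, so $\kappa^2\eta_t\le c_5\gamma_t$ fails. These terms are therefore not ``already absorbed into $c_4$''. Lemma~\ref{le.bound sqaure norm y and vTy} controls $\sum_{t<t_0}\gamma_t\|g^y_t\|^2$, but the conversion factor $\eta^x\kappa^2/(\eta^y (v^y_{t+1})^{\alpha-\beta})$ exceeds $c_5$ for those $t$.

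You can instead bound them directly:
\[
\kappa^2\eta^x\sum_{t<t_0}\frac{\|g^y_t\|^2}{(v^y_{t+1})^{\alpha}}\le \frac{\kappa^2\eta^x}{1-\alpha}(v^y_{t_0})^{1-\alpha}\le \frac{\kappa^2\eta^x}{1-\alpha}c_1^{(1-\alpha)/\beta}.
\]
This adds an extra constant. The paper's Lemma~\ref{le.bound square norm x} glosses over the same point by asserting $t_0\le t+1$ for all $t$.

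\textbf{Coefficient slip.} Your coefficient $(4Gc_R+2L_\Phi\bar c)$ should be $(2Gc_R+L_\Phi\bar c)$. That is what your own first display gives after Young's inequality and multiplying by $2$, and it is the constant appearing in the statement.
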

We have now established the key technical components of our convergence analysis.
Lemma~\ref{le.bound sqaure norm y and vTy} controls the accumulated dual gradient norms via the auxiliary sequence $v_T^y$, while Lemmas~\ref{le.bound vTx case 1} and~\ref{le.bound vTx case 2} provide bounds on the primal auxiliary sequence $v_T^x$ across two complementary cases.
Together, these results ensure uniform boundedness of both auxiliary sequences and hence control the accumulated squared norms of the primal and dual gradients, implying the convergence to an $\epsilon$-stationary point.
The main complexity result for the deterministic setting is stated in Theorem \ref{the.deterministic convergence}. We refer the reader to Appendix B for complete proofs.
\begin{theorem}\label{the.deterministic convergence}
    Under Assumptions~\ref{ass.l-smooth}, \ref{ass.g-convex}, \ref{ass.optimal exist and stationary condition hold}, \ref{ass.manifold}, and \ref{ass.retraction}, Algorithm \ref{alg.RAGDA} with deterministic gradients satisfies that for any $0 < \beta < \alpha < 1$, after $T$ iterations,
\[
\min_{t=0.\cdots,T-1}\{\|\grad_x f(x_t, y_t)\| +  \|\grad_y f(x_t, y_t)\|\} \leq \mathcal{O}\left(\frac{1}{T^\frac{1}{2}}\right).
\]
\end{theorem}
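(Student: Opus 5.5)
The plan is to turn the uniform bounds of Lemmas~\ref{le.bound sqaure norm y and vTy}, \ref{le.bound vTx case 1} and \ref{le.bound vTx case 2} into a $T$-independent constant bounding the accumulated squared gradient norms. The conclusion then follows from an averaging argument.

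\textbf{Step 1: a $T$-independent bound on $v_T^x$.} Fix $0<\beta<\alpha<1$. Both bounds for $v_T^x$ have right-hand sides depending only on problem constants, on $v_0^x,v_0^y,\eta^x,\eta^y,\alpha,\beta$, and on $t_0$ through $v_{t_0}^y\ge v_0^y$. None depends on $T$. In the regime $v_T^y\le v_T^x$, Lemma~\ref{le.bound vTx case 1} gives $v_T^x\le C_x^{(1)}$. In the regime $v_T^x<v_T^y$, Lemma~\ref{le.bound vTx case 2} gives $v_T^x\le C_x^{(2)}$.

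For the $\mathbf{1}_{2\alpha-\beta<1}$ branch of Lemma~\ref{le.bound vTx case 2}, I will check that the outer exponent $1/(1-(1-2\alpha+\beta)(1+\frac{\alpha}{1-\beta}))$ is finite and positive. Writing $s=1-2\alpha+\beta\in(0,1)$, this reduces to $s(1-\beta+\alpha)<1-\beta$, i.e. $\alpha(1-2\alpha+\beta)<(1-\beta)(2\alpha-\beta)$. Both sides are linear in $\beta$ for fixed $\alpha$, with equality at $\beta=\alpha$ and strict inequality at $\beta=0$, so the inequality holds for all $\beta<\alpha$. I will record this as the only delicate algebraic point. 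Setting $C_x=\max\{C_x^{(1)},C_x^{(2)}\}$ yields $v_T^x\le C_x$ for all $T\ge t_0+1$.

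\textbf{Step 2: a $T$-independent bound on $v_T^y$.} Substitute $v_T^x\le C_x$ into the second line of \eqref{eq.vTy bound}. The logarithmic and power terms in $v_T^x$ are monotone, so this gives $v_T^y\le C_y$ with $C_y$ independent of $T$.

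If $T\le t_0$, note that $t_0$ is finite or infinite. If $t_0$ is infinite, then $(v_t^y)^\beta\le c_1$ for all $t$, so $v_T^y\le c_1^{1/\beta}$ trivially. For $v_T^x$ in that situation, I would rerun the argument of Lemmas~\ref{le.bound vTx case 1} and \ref{le.bound vTx case 2} with the bound $v^y\le c_1^{1/\beta}$; alternatively I will simply take $T$ large and absorb finitely many initial iterations into the constants. Either way, $v_T^x+v_T^y\le C:=\max\{C_x+C_y,\,c_1^{1/\beta}+C_x\}$.

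\textbf{Step 3: conclude.} By the update rule,
\[
\sum_{t=0}^{T-1}\bigl(\|\grad_x f(x_t,y_t)\|^2+\|\grad_y f(x_t,y_t)\|^2\bigr)\le v_T^x+v_T^y\le C.
\]
Hence
\[
\min_{t}\bigl(\|\grad_x f\|+\|\grad_y f\|\bigr)^2\le 2\min_t\bigl(\|\grad_x f\|^2+\|\grad_y f\|^2\bigr)\le \frac{2C}{T},
\]
which gives the $\mathcal{O}(T^{-1/2})$ rate.

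The main obstacle is Step 1, specifically confirming that every exponent appearing in Lemmas~\ref{le.bound vTx case 1} and \ref{le.bound vTx case 2} is well defined exactly when $0<\beta<\alpha<1$. The $1/(\alpha-\beta)$ exponent in Lemma~\ref{le.bound vTx case 1} needs $\beta<\alpha$, and the outer exponent in Lemma~\ref{le.bound vTx case 2} was handled above. The dependence on $t_0$ is benign, since $v_{t_0}^y\ge v_0^y$ only enters through denominators in $c_2$ and $c_5$.
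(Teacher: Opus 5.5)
Your proposal is the paper's argument. You bound $v_T^x$ by a $T$-independent constant via Lemmas~\ref{le.bound vTx case 1} and \ref{le.bound vTx case 2}, according to which of $v_T^x, v_T^y$ is larger, then plug this into Lemma~\ref{le.bound sqaure norm y and vTy} to bound $v_T^y$, and use $\sum_{t<T}\|\cdot\|^2\le v_T$ to get an $\mathcal{O}(1/T)$ bound on the smallest squared gradient norm. Your treatment of $T\le t_0$ and $t_0=\infty$ is extra care the paper leaves implicit.

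One correction to your exponent check: $(1-\beta)(2\alpha-\beta)$ is quadratic in $\beta$, not linear, so comparing endpoints proves nothing. The conclusion still holds, because the difference factors as $(1-\beta)(2\alpha-\beta)-\alpha(1-2\alpha+\beta)=(\alpha-\beta)(1+2\alpha-\beta)>0$; the paper instead gets it from $1-2\alpha+\beta\le 1-\alpha$.
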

\begin{remark}
    The convergence result in Theorem~\ref{the.deterministic convergence} implies that
for any $\epsilon>0$, choosing
$T = \mathcal{O}(\epsilon^{-2})$ guarantees that
there exists an iteration index $t \in \{0,\ldots,T-1\}$ such that
$(x_t,y_t)$ is an $\epsilon$-stationary point as defined in
Definition~\ref{def.stationary}.
\end{remark}
\section{Riemannian stochastic adaptive gradient descent ascent}\label{sec.RSAGDA}
In this section, we extend the RAGDA to the stochastic setting, aiming to solve the following stochastic minimax problem:
\begin{equation}
\min_{x \in \cM_x}\max_{y\in\cM_y} f(x,y) = \mathbb{E}_{\xi \sim \cD}[f(x,y;\xi)],
\end{equation}
where $\cM_x$ and $\cM_y$ are complete Riemannian manifolds, and $\xi$ is a random variable drawn from an unknown distribution $\mathcal{D}$. While the method retains the same overall structure as in the deterministic setting and remains parameter-free, the presence of stochasticity necessitates new proof techniques to establish convergence guarantees.
\subsection{Riemannian stochastic adaptive gradient descent ascent method: RSAGDA}
Compared to the deterministic setting in the previous section, our stochastic method remains structurally similar, with the only essential change being the replacement of exact gradient oracles by stochastic ones. The complete procedure is presented in Algorithm~\ref{alg.RSAGDA}. Although this modification may appear minor, it introduces significant technical challenges for convergence analysis due to the inherent randomness.
\begin{algorithm}[htbp]
\caption{RSAGDA (Riemannian stochastic adaptive gradient descent ascent method)}
\label{alg.RSAGDA}
\KwIn{$(x_0, y_0)$, $v^x_0 > 0$, $v^y_0 > 0$, $\eta^x > 0$, $\eta^y > 0$, $\alpha > 0$, $\beta > 0$ with $\alpha > \beta$}

\For{$t = 0, 1, 2, \dots$}{
    Sample i.i.d. $\xi^x_t$, $\xi^y_t$ from $\mathcal{D}$\;
    Let $g^x_t = \grad_x f(x_t, y_t; \xi^x_t)$, $g^y_t = \grad_y f(x_t, y_t; \xi^y_t)$\;
    $v^x_{t+1} = v^x_t + \|g^x_t\|^2$, $v^y_{t+1} = v^y_t + \|g^y_t\|^2$\;
    $\eta_t = \frac{\eta^x}{\max\{v^x_{t+1}, v^y_{t+1}\}^\alpha}$\;
    $x_{t+1} = \Retr_{x_t}(-\eta_t g^x_t)$\;
    $\gamma_t = \frac{\eta^y}{(v^y_{t+1})^\beta}$\;
    $y_{t+1} = \Retr_{y_t}(\gamma_t g^y_t)$\;
}
\end{algorithm}

For comparison, AdaRHD \citep{shi2025adaptive} also employs adaptive step sizes for Riemannian minimax problems. However, their analysis is restricted to the deterministic setting. In contrast, our method also applies to the stochastic setting and establishes rigorous theoretical convergence guarantees.
\subsection{Convergence analysis}\label{subsec.stochastic convergence}
To address the stochasticity in the problem, we impose some additional yet standard assumptions on the stochastic gradients to facilitate the convergence analysis. In particular, we assume they are unbiased and their norms are uniformly bounded, which in turn implies a uniform bound on the variance.
\begin{assumption}\label{ass.grad bound}
    The stochastic Riemannian gradients are unbiased estimators of the true Riemannian gradients, i.e., $\E_{\xi}[\grad_x f(x,y;\xi)]=\grad_xf(x,y)$, $\E_{\xi}[\grad_y f(x,y;\xi)]=\grad_yf(x,y)$. Moreover, there exists a constant $G > 0$ such that for all $(x, y)$ and $\xi$, the stochastic gradients are uniformly bounded: $\max\{\|\grad_x f(x,y;\xi)\|,\|\grad_y f(x,y;\xi)\|\}\le G$.
\end{assumption}
The unbiasedness assumption is standard in the stochastic optimization literature \citep{ghadimi2013stochastic, bonnabel2013stochastic}. Moreover, the boundedness of stochastic gradients is commonly assumed in many adaptive methods \citep{reddi2019convergence, defossez2020simple, zou2019sufficient, levy2021storm+, li2022tiada}. 

The central idea of our stochastic analysis follows the same principle as the deterministic setting—controlling the growth of the accumulated squared norms of the gradients. However, since the updates are stochastic, we analyze these quantities in expectation, i.e., $\E\big[\sum_{t=0}^{T-1}\|\grad_x f(x_t, y_t)\|^2\big]$ and $\E\big[\sum_{t=0}^{T-1}\|\grad_y f(x_t, y_t)\|^2\big]$. This requires substantially different bounding techniques to establish convergence. Firstly, we provide a key lemma that forms the foundation for realizing this idea. 

\begin{lemma}\label{le.bound conclusion}
Suppose Assumptions~\ref{ass.l-smooth}, \ref{ass.g-convex}, \ref{ass.optimal exist and stationary condition hold}, \ref{ass.manifold}, \ref{ass.retraction}, \ref{ass.grad bound} hold and $0< 2\beta\le \alpha<1$. Then for any constant \(C\), the following bound holds
\begin{align*}
&\mathbb{E} \left[ \sum_{t=0}^{T-1} (f(x_t,y_t^*) - f(x_t,y_t)) \right] \\
\leq &\frac{2{\kappa}^2 (\eta^x)^2}{\mu (\eta^y)^2 \, C^{2\alpha - 2\beta}} \mathbb{E} \left[ \sum_{t=0}^{T-1} \|\grad_x f(x_t,y_t)\|^2 \right] + \frac{(\zeta\bar{c}+\frac{2G}{\mu}c_R)\eta^y}{2(1-\beta)} \mathbb{E} \left[ (v_T^y)^{1-\beta} \right] \\
& + \left( \frac{1}{\mu} + \frac{\eta^y}{(v_0^y)^{\beta}} \right) \frac{4{\kappa} \eta^x G^2}{\eta^y (v_0^y)^{\alpha}} \mathbb{E} \left[ (v_T^y)^{\beta} \right] \\
& + \frac{{\kappa}^2\bar{c} \left( \mu \eta^y C^{\beta} + 2C^{2\beta} \right) (\eta^x)^2}{2\mu (\eta^y)^2} \mathbb{E} \left[ \frac{1 + \log v_T^x - \log v_0^x}{(v_0^x)^{2\alpha - 1}} \cdot \mathbf{1}_{\alpha \geq 0.5} + \frac{(v_T^x)^{1-2\alpha}}{1-2\alpha} \cdot \mathbf{1}_{\alpha < 0.5} \right] \\
& + \left(\zeta\bar{c}{\kappa}^2 + \frac{(2\kappa\sqrt{\bar{c}}(v_0^y)^{\alpha}+\kappa Gc_R\eta^x)^2(\eta^x)^{\beta/\alpha}}{\mu \eta^y (v_0^y)^{2\alpha}}\right)
\frac{(\eta^x)^{2-\beta/\alpha}}{2(1 - \alpha)\eta^y (v_0^y)^{\alpha - 2\beta}} \mathbb{E} \left[ (v_T^x)^{1-\alpha} \right] \\
& + \frac{(v_0^y)^{\beta} G^2}{2\mu^2 \eta^y} + \frac{(2\beta G)^{\frac{1}{1-\beta} + 2} G^2}{4\mu^{\frac{1}{1-\beta} + 3} (\eta^y)^{\frac{1}{1-\beta} + 2} (v_0^y)^{2-2\beta}}.
\end{align*}
\end{lemma}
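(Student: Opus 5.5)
The plan is to follow the TiAda-style analysis of \cite{li2022tiada}, transported to manifolds. The engine is a one-step recursion for the dual optimality gap built from geodesic strong concavity and the Hadamard structure. Write $y_t^*=y^*(x_t)$ and $\delta_t := d_{\cM_y}^2(y_t,y_t^*)$. The steps are as follows.

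\textbf{Step 1: one-step recursion for $\delta_t$.}
\begin{itemize}
\item Apply the trigonometric comparison inequality on Hadamard manifolds with curvature lower bound $\tau$, which is where the constant $\zeta$ of Assumption~\ref{ass.manifold} enters, to the geodesic triangle $(y_t, y_{t+1}, y_t^*)$.
\item Replace the exponential map by the retraction using Assumption~\ref{ass.retraction}. The mismatch $\Exp^{-1}_{y_t}(y_{t+1})-\gamma_t g_t^y$ is $O(c_R\gamma_t^2\|g^y_t\|^2)$; bounding its inner product with $\Exp^{-1}_{y_t}(y_t^*)$ via $d(y_t,y_t^*)\le \|\grad_y f(x_t,y_t)\|/\mu\le G/\mu$ produces the factor $\frac{2G}{\mu}c_R$.
\item This gives
\[
\delta'_{t+1}\le \delta_t - 2\gamma_t\langle g_t^y, \Exp^{-1}_{y_t}(y_t^*)\rangle + (\zeta\bar c+\tfrac{2G}{\mu}c_R)\gamma_t^2\|g_t^y\|^2 ,
\]
where $\delta'_{t+1}=d^2(y_{t+1},y_t^*)$.
\item Take conditional expectations and use unbiasedness together with Assumption~\ref{ass.g-convex}:
\[
-\langle \grad_y f, \Exp^{-1}_{y_t}(y_t^*)\rangle \le -(f(x_t,y_t^*)-f(x_t,y_t)) - \tfrac{\mu}{2}\delta_t .
\]
This isolates the target term $f(x_t,y_t^*)-f(x_t,y_t)$ with weight $\gamma_t$.
\item The correlation between $\gamma_t$ and $g_t^y$ is the usual obstacle. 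I would handle it by replacing $\gamma_t$ with the $\mathcal F_{t-1}$-measurable $\eta^y/(v_t^y)^\beta$ and paying for the difference $\frac{\eta^y}{(v_t^y)^\beta}-\frac{\eta^y}{(v_{t+1}^y)^\beta}$, which telescopes. This produces the $\frac{4\kappa\eta^x G^2}{\eta^y (v_0^y)^\alpha}\E[(v_T^y)^\beta]$-type term and the constant $\frac{(v_0^y)^\beta G^2}{2\mu^2\eta^y}$.
\end{itemize}

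\textbf{Step 2: transferring from $y_t^*$ to $y_{t+1}^*$.}
\begin{itemize}
\item Use Young's inequality: $\delta_{t+1}\le (1+\lambda_t)\delta'_{t+1}+(1+1/\lambda_t)d^2(y_t^*,y_{t+1}^*)$.
\item Use the $\kappa$-Lipschitzness of $y^*$ together with Assumption~\ref{ass.retraction}: $d^2(y^*_t,y^*_{t+1})\le \kappa^2\bar c\,\eta_t^2\|g_t^x\|^2$.
\item Choose $\lambda_t\approx \mu\gamma_t/2$. Then the cross term costs $\kappa^2\bar c\,\eta_t^2\|g^x_t\|^2/(\mu\gamma_t)$, which is of order $\frac{(\eta^x)^2}{\eta^y}\frac{(v^y_{t+1})^\beta}{\max\{v^x,v^y\}^{2\alpha}}\|g_t^x\|^2$.
\end{itemize}

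\textbf{Step 3: dividing by $\gamma_t$, summing, and bounding the accumulated quantities.} Divide the recursion by $\gamma_t$ and sum over $t$. Telescoping $\delta_t/\gamma_t$ with the increasing $1/\gamma_t$ leaves residual terms; these are controlled by $\delta_t\le G^2/\mu^2$ and by $\sum \gamma_t\|g^y_t\|^2\le \frac{\eta^y}{1-\beta}(v_T^y)^{1-\beta}$, which gives the $\frac{(\zeta\bar c+\frac{2G}{\mu}c_R)\eta^y}{2(1-\beta)}\E[(v_T^y)^{1-\beta}]$ term. The main difficulty is the coupled quantity $\sum_t \frac{(v^y_{t+1})^{\beta}}{\max\{v^x_{t+1},v^y_{t+1}\}^{2\alpha}}\|g^x_t\|^2$. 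I would introduce the free threshold $C$ and split on whether $v^y_{t+1}\le C$:
\begin{itemize}
\item \emph{Case $v^y_{t+1}\le C$.} Bound $(v^y)^\beta\le C^\beta$ or $C^{2\beta}$ and $\max\{\cdot\}^{-2\alpha}\le (v^x_{t+1})^{-2\alpha}$. The standard AdaGrad sum $\sum \|g_t^x\|^2/(v^x_{t+1})^{2\alpha}$ then yields the $\log$ term when $\alpha\ge 1/2$ and the $(v_T^x)^{1-2\alpha}$ term otherwise, with prefactor $\kappa^2\bar c(\mu\eta^y C^\beta+2C^{2\beta})$.
\item \emph{Case $v^y_{t+1}> C$.} Use $(v^y)^{\beta}/\max^{2\alpha}\le C^{-(2\alpha-2\beta)}$, which relies on $2\beta\le\alpha$. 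The stochastic $\|g_t^x\|^2$ is then converted to the true gradient norm in expectation, giving the $\frac{2\kappa^2(\eta^x)^2}{\mu(\eta^y)^2C^{2\alpha-2\beta}}\E\sum\|\grad_x f\|^2$ term. The residual from the time-varying step, bounded by H\"older as $(\eta^x)^{\beta/\alpha}$ powers times $\E[(v^x_T)^{1-\alpha}]$ via $\sum \|g^x_t\|^2/(v^x_{t+1})^\alpha\le (v_T^x)^{1-\alpha}/(1-\alpha)$, produces the $(v_T^x)^{1-\alpha}$ term.
\end{itemize}

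The final constant, with exponent $\frac{1}{1-\beta}+2$, arises from handling the early iterations where $\gamma_t$ is still large compared with $\mu$. I would bound these by Young's inequality, trading $\gamma_t^2\|g^y\|^2$ against $\mu\gamma_t\delta_t$.

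The main obstacle I expect is Step 3's decoupling of the stochastic stepsizes from the gradients while keeping the exponent bookkeeping, the $2\beta\le\alpha$ condition and the threshold $C$, consistent with the stated constants.
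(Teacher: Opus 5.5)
The genuine gap is in your regime $v^y_{t+1}>C$, and it prevents you from reaching the stated bound. Because you pass from $y_t^*$ to $y_{t+1}^*$ by Young's inequality with $\lambda_t\approx\mu\gamma_t/2$, the drift $d^2(y_t^*,y_{t+1}^*)\le\kappa^2\bar c\,\eta_t^2\|g^x_t\|^2$ enters with weight $1/(\mu\gamma_t^2)$ after dividing by $2\gamma_t(1+\lambda_t)$, and it carries the \emph{stochastic} $g^x_t$. Once that weight is bounded by a multiple of $C^{-(2\alpha-2\beta)}$, you are left with $\sum_t\|g^x_t\|^2$. This cannot be ``converted to the true gradient norm in expectation'', because $\E\|g^x_t\|^2=\E\|\grad_x f(x_t,y_t)\|^2+\E\|g^x_t-\grad_x f(x_t,y_t)\|^2$, so you incur a variance term growing linearly in $T$. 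That term is not in the statement, and it would break Lemma~\ref{le.bound EvTx}, where $C$ is tuned to absorb a sum of \emph{true} gradient norms.

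For $t\ge t_3$ the paper (Lemma~\ref{le.v>C}) does not use Young. It applies the comparison inequality to the triangle $(y_t^*,y_{t+1},y_{t+1}^*)$, so the drift appears quadratically only as $\zeta d^2(y_t^*,y_{t+1}^*)$, without the $1/\lambda_t$ amplification, plus a cross term. The paper then handles the cross term in three pieces.
\begin{itemize}
\item The linear part $\operatorname{D}y^*(x_t)[-\eta_t g^x_t]$ is split as $g^x_t=\grad_x f+(g^x_t-\grad_x f)$. Young with $\lambda_t=\mu\gamma_t/4$ is applied only to the true-gradient piece, which gives the $C^{-(2\alpha-2\beta)}\E\sum\|\grad_x f\|^2$ term.
\item The noise piece is killed by writing $\eta_t=\eta_{t-1}+(\eta_t-\eta_{t-1})$. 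Here $\eta_{t-1}$ is $\mathcal F_{t-1}$-measurable and $y_{t+1},\gamma_t$ do not depend on $\xi^x_t$; the telescoping remainder is bounded by $M\propto 1/\gamma_{T-1}$. This is the real source of the $\E[(v_T^y)^\beta]$ term, as its $\kappa\eta^x$ prefactor shows.
\item The remaining first-order piece is handled by Young with weights $\eta_t^{\beta/\alpha}$ and $\eta_t^{2-\beta/\alpha}$, keeping the coefficient of $d^2(y_{t+1},y_t^*)$ at $O(\gamma_t)$. The leftover $\sum_t\eta_t^{2-\beta/\alpha}\gamma_t^{-1}\|g^x_t\|^2$ is $O((v_T^x)^{1-\alpha})$ because $(v^y_{t+1})^{2\beta-\alpha}\le(v_0^y)^{2\beta-\alpha}$. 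That is where $2\beta\le\alpha$ is needed; your $C^{-(2\alpha-2\beta)}$ bound only needs $\beta\le\alpha$.
\end{itemize}
Your Young route could instead be closed pathwise via $(v^y_{t+1})^{2\beta}/\max\{v^x_{t+1},v^y_{t+1}\}^{2\alpha}\le(v^x_{t+1})^{-(2\alpha-2\beta)}$. That proves a different inequality, with no $C$ and an $\E[(v_T^x)^{1-2\alpha+2\beta}]$-type term, not the stated one.

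Several attributions are also off.
\begin{itemize}
\item On the dual side no lagged stepsize is needed, and it is not where the $\E[(v_T^y)^\beta]$ term or the constant $\frac{(v_0^y)^\beta G^2}{2\mu^2\eta^y}$ come from. Dividing pathwise by $\gamma_t(1+\lambda_t)$ before taking expectations (Lemma~\ref{le.stochastic auxiliary lemma}) leaves $\langle g^y_t,\Exp^{-1}_{y_t}(y_t^*)\rangle$ stepsize-free, so unbiasedness applies directly.
\item Both final constants come from the telescoping sum in Lemma~\ref{le.4}. The first is the initial term. For the second, the $-\mu\delta_t$ contraction absorbs the growth of $1/\gamma_t$ except on iterations with $(v^y_{t+1})^\beta-(v^y_t)^\beta>\mu\eta^y/2$, and Bernoulli's inequality shows these occur only a bounded number of times.
\item Bounding that residual crudely via $\delta_t\le G^2/\mu^2$, as you suggest, would instead produce an $\E[(v_T^y)^\beta]$ term.
\end{itemize}
Your regime $v^y_{t+1}\le C$ and the $\E[(v_T^y)^{1-\beta}]$ term do match the paper.
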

Lemma~\ref{le.bound conclusion} establishes a key inequality that upper bounds the expected cumulative dual suboptimality
$\E\big[\sum_{t=0}^{T-1}(f(x_t,y_t^*)-f(x_t,y_t))\big]$
in terms of the expected accumulated primal gradient norms and several moments of the auxiliary sequences $v_T^x$ and $v_T^y$. Using the smoothness and concavity of $f$ with respect to $y$, the inequality in Lemma~\ref{le.bound conclusion} can be converted into a bound on the accumulated squared dual gradient norms, leading to Lemma~\ref{le.bound EvTy}.
\begin{lemma}\label{le.bound EvTy}
    Under the same conditions as Lemma \ref{le.bound conclusion}, the following bound holds
\begin{align*}
&\mathbb{E}\left[ \sum_{t=0}^{T-1}\|\grad_y f(x_t,y_t)\|^2 \right] \\
\leq &\frac{4\kappa^3\left(\eta^x\right)^2}{\left(\eta^y\right)^2C^{2\alpha-2\beta}} \mathbb{E} \left[ \sum_{t=0}^{T-1} \|\grad_x f(x_t,y_t)\|^2 \right] + \frac{L_1(\zeta\bar{c}+\frac{2G}{\mu}c_R)\eta^y}{(1-\beta)} \mathbb{E} \left[ (v_T^y)^{1-\beta} \right] \\
& + \left( \frac{1}{\mu} + \frac{\eta^y}{(v_0^y)^{\beta}} \right) \frac{8L_1{\kappa} \eta^x G^2}{\eta^y (v_0^y)^{\alpha}} \mathbb{E} \left[ (v_T^y)^{\beta} \right] \\
& + \frac{{\kappa}^3\bar{c} \left( \mu \eta^y C^{\beta} + 2C^{2\beta} \right) (\eta^x)^2}{ (\eta^y)^2} \mathbb{E} \left[ \frac{1 + \log v_T^x - \log v_0^x}{(v_0^x)^{2\alpha - 1}} \cdot \mathbf{1}_{\alpha \geq 0.5} + \frac{(v_T^x)^{1-2\alpha}}{1-2\alpha} \cdot \mathbf{1}_{\alpha < 0.5} \right] \\
& + \left(\zeta\bar{c}{\kappa}^2 + \frac{(2\kappa\sqrt{\bar{c}}(v_0^y)^{\alpha}+\kappa Gc_R\eta^x)^2(\eta^x)^{\beta/\alpha}}{\mu \eta^y (v_0^y)^{2\alpha}}\right)
\frac{L_1(\eta^x)^{2-\beta/\alpha}}{(1 - \alpha)\eta^y (v_0^y)^{\alpha - 2\beta}} \mathbb{E} \left[ (v_T^x)^{1-\alpha} \right] \\
& + \frac{\kappa(v_0^y)^{\beta} G^2}{\mu \eta^y} + \frac{2L_1(2\beta G)^{\frac{1}{1-\beta} + 2} G^2}{4\mu^{\frac{1}{1-\beta} + 3} (\eta^y)^{\frac{1}{1-\beta} + 2} (v_0^y)^{2-2\beta}}.
\end{align*}
\end{lemma}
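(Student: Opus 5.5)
Lemma~\ref{le.bound EvTy} follows from Lemma~\ref{le.bound conclusion} by a pointwise inequality that turns dual suboptimality into squared dual gradient norm. The key fact is that for each fixed $x$, the function $f(x,\cdot)$ is $\mu$-geodesically strongly concave and $L_1$-smooth on the Hadamard manifold $\cM_y$, and its maximizer $y^*(x)$ lies in the interior of $\cM_y$ (Assumption~\ref{ass.optimal exist and stationary condition hold}), so $\grad_y f(x,y^*(x))=0$. From this I will derive
\[
\|\grad_y f(x,y)\|^2 \le 2L_1\big(f(x,y^*(x)) - f(x,y)\big).
\]

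This inequality comes from a descent-lemma argument along geodesics. Set $h(\cdot)=-f(x,\cdot)$, which is $L_1$-smooth in the parallel-transport sense of Assumption~\ref{ass.l-smooth}. Take $z=\Exp_y\!\big(\tfrac{1}{L_1}\grad_y f(x,y)\big)$, which is well defined because $\cM_y$ is complete. Integrating the Lipschitz bound on the transported gradient along this geodesic gives
\[
h(z)\le h(y) - \frac{1}{2L_1}\|\grad_y f(x,y)\|^2 .
\]
Since $h(z)\ge h(y^*(x))$, rearranging gives the claimed inequality. Alternatively, one can combine geodesic strong concavity with smoothness to get the same inequality, possibly with the constant $2L_1$ replaced by one involving $\kappa$. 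In that case I would use whichever constant matches the stated coefficients.

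Next, apply the inequality at $(x_t,y_t)$ with $y_t^*=y^*(x_t)$, sum over $t=0,\dots,T-1$, and take expectations:
\[
\E\Big[\sum_{t=0}^{T-1}\|\grad_y f(x_t,y_t)\|^2\Big] \le 2L_1\,\E\Big[\sum_{t=0}^{T-1}\big(f(x_t,y_t^*)-f(x_t,y_t)\big)\Big].
\]
Then substitute the bound from Lemma~\ref{le.bound conclusion}, whose hypotheses are exactly those assumed here. Multiplying each term by $2L_1$ reproduces the stated coefficients:
\begin{itemize}
\item $2L_1\cdot\frac{2\kappa^2}{\mu}=4\kappa^3$.
\item $2L_1\cdot\frac{(\zeta\bar c+\frac{2G}{\mu}c_R)\eta^y}{2(1-\beta)}$ gives the $L_1(\cdot)\eta^y/(1-\beta)$ term.
\item $4\kappa\mapsto 8L_1\kappa$.
\item $\frac{\kappa^2}{2\mu}\mapsto \kappa^3$.
\item $\frac{1}{2(1-\alpha)}\mapsto \frac{L_1}{1-\alpha}$.
\item $\frac{(v_0^y)^\beta G^2}{2\mu^2\eta^y}\mapsto \frac{\kappa (v_0^y)^\beta G^2}{\mu\eta^y}$.
\item The last constant picks up the factor $2L_1$ directly.
\end{itemize}

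I expect the main obstacle to be a clean justification of the pointwise inequality on the manifold, in particular the step where Lipschitz continuity of the transported gradient yields the quadratic upper bound along the geodesic. This can be imported as the standard Riemannian descent lemma under Assumption~\ref{ass.l-smooth}, restricted to the $y$-variable with $x$ fixed, so that $d_{\cM_x}(x,x)=0$. The remaining bookkeeping of constants is routine.
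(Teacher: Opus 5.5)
Your proposal is correct and matches the paper's proof. The paper likewise uses $\|\grad_y f(x_t,y_t)\|^2 \le 2L_1\big(f(x_t,y_t^*)-f(x_t,y_t)\big)$ and multiplies the bound of Lemma~\ref{le.bound conclusion} by $2L_1$, and your term-by-term constant check reproduces the stated coefficients. Your geodesic descent-lemma justification needs only $L_1$-smoothness and global maximality of $y_t^*$, and it is more explicit than the paper's one-line appeal to ``smoothness and concavity in $y$''.
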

We next turn to the primal side and derive a corresponding bound on the accumulated squared primal gradient norms in the following lemma.
\begin{lemma}\label{le.bound EvTx}
    Under the same conditions as Lemma \ref{le.bound conclusion}, the following bound holds
    \begin{align*}
    &\mathbb{E} \left[ \sum_{t=0}^{T-1} \left\| \grad_x f(x_t, y_t) \right\|^2 \right]\\
\leq &
4\mathbb{E} \left[ \frac{\Delta \Phi}{\eta^x} \max \left\{ (v_T^x)^\alpha, (v_T^y)^\alpha \right\} \right]
+ \frac{2(L_{\Phi}\bar{c}+2Gc_R) \eta^x}{1 - \alpha} \mathbb{E} \left[ (v_T^x)^{1 - \alpha} \right]\\
& + \frac{2L_1\kappa (\zeta\bar{c}+\frac{2G}{\mu}c_R)\eta^y}{1 - \beta} \mathbb{E} \left[ (v_T^y)^{1 - \beta} \right]
+ \left( \frac{1}{\mu} + \frac{\eta^y}{(v_0^y)^\beta} \right) \cdot \frac{16L_1\kappa^2\eta^x G^2}{\eta^y (v_0^y)^\alpha} \mathbb{E} \left[ (v_T^y)^\beta \right]\\
&+ \frac{2\kappa^4\bar{c} ( \eta^y C^\beta + 2C^{2\beta}) (\eta^x)^2}{\mu(\eta^y)^2}\mathbb{E} \left[ \frac{1 + \log v_T^x - \log v_0^x}{(v_0^x)^{2\alpha - 1}} \cdot \mathbb{I}_{\alpha \geq 0.5}
+ \frac{(v_T^x)^{1 - 2\alpha}}{1 - 2\alpha} \cdot \mathbb{I}_{\alpha < 0.5} \right]\\
&+ \left(\zeta\bar{c}{\kappa}^2 + \frac{(2\kappa\sqrt{\bar{c}}(v_0^y)^{\alpha}+\kappa Gc_R\eta^x)^2(\eta^x)^{\beta/\alpha}}{\mu \eta^y (v_0^y)^{2\alpha}}\right)
\frac{2L_1\kappa(\eta^x)^{2-\beta/\alpha}}{(1 - \alpha)\eta^y (v_0^y)^{\alpha - 2\beta}} \mathbb{E} \left[ (v_T^x)^{1 - \alpha} \right]
+ \frac{2\kappa^2 (v_0^y)^\beta G^2}{\mu \eta^y}\\
&+ \frac{L_1 \kappa \left( 2 \beta G \right)^{\frac{1}{1 - \beta} + 2} G^2}
{ \mu^{\frac{1}{1 - \beta} + 3} \left( \eta^y \right)^{\frac{1}{1 - \beta} + 2} \left( v_0^y \right)^{2 - 2 \beta}}.
\end{align*}
\end{lemma}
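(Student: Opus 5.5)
The plan is to run a descent argument on the primal function $\Phi(x)=f(x,y^*(x))$ along the stochastic iterates, sum it over $t$, and then absorb the dual-error term using Lemma~\ref{le.bound conclusion}.

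\textbf{Step 1: one-step descent.} I will use that $\Phi$ is $L_\Phi$-smooth with $L_\Phi=L_1+\kappa L_1$ and $\grad\Phi(x)=\grad_x f(x,y^*(x))$, which is standard under Assumptions~\ref{ass.l-smooth}--\ref{ass.g-convex}. Write $u_t=\Exp_{x_t}^{-1}(x_{t+1})$. Assumption~\ref{ass.retraction} gives $u_t=-\eta_t g_t^x+e_t$ with $\|e_t\|\le c_R\eta_t^2\|g_t^x\|^2$, and also $d^2(x_t,x_{t+1})\le \bar c\,\eta_t^2\|g^x_t\|^2$. Combining these with $\|\grad\Phi\|\le G$ yields
\[
\Phi(x_{t+1})\le \Phi(x_t)-\eta_t\langle \grad\Phi(x_t),g_t^x\rangle+\bigl(Gc_R+\tfrac{L_\Phi\bar c}{2}\bigr)\eta_t^2\|g_t^x\|^2 .
\]

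\textbf{Step 2: weighting and telescoping.} The stepsize $\eta_t$ is correlated with $g_t^x$, so taking expectations directly is not possible. Instead I will divide the inequality by $\eta_t$ and sum over $t$. Summation by parts turns the left side into $\sum_t(\Phi(x_t)-\Phi(x_{t+1}))/\eta_t$. Since $1/\eta_t$ is nondecreasing and $\Phi-\Phi^*\le \Delta\Phi$-type bounds hold (using $\Phi\le\Phi_{\max}$ as needed), this sum is at most a multiple of $\frac{\Delta\Phi}{\eta^x}\max\{v^x_T,v^y_T\}^\alpha$. This produces the first term of the lemma.

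The quadratic term becomes $\sum_t\eta_t\|g_t^x\|^2$. Using $\eta_t\le \eta^x/(v^x_{t+1})^\alpha$ and the standard estimate $\sum_t \|g_t^x\|^2/(v^x_{t+1})^\alpha\le (v_T^x)^{1-\alpha}/(1-\alpha)$, it gives the $\frac{2(L_\Phi\bar c+2Gc_R)\eta^x}{1-\alpha}\E[(v_T^x)^{1-\alpha}]$ term.

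\textbf{Step 3: the inner product, which is the main obstacle.} Split the inner product as
\[
\langle\grad\Phi,g^x_t\rangle=\langle \grad_xf(x_t,y_t),g_t^x\rangle+\langle\grad\Phi(x_t)-\grad_xf(x_t,y_t),g^x_t\rangle .
\]
For the first piece, conditioning on $\mathcal F_t$ gives $\|\grad_x f(x_t,y_t)\|^2$ in expectation. Because the sum has already been normalized by $\eta_t$, the weights are deterministic-free, so unbiasedness (Assumption~\ref{ass.grad bound}) applies termwise without any decoupling. This is the reason for dividing before summing.

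For the second piece, Young's inequality bounds it by $\tfrac14\|g^x_t\|^2+\|\grad\Phi(x_t)-\grad_xf(x_t,y_t)\|^2$. The last quantity is at most $L_1^2 d^2(y_t,y_t^*)\le \frac{2L_1^2}{\mu}(f(x_t,y_t^*)-f(x_t,y_t))$ by strong concavity. Taking expectations, and using $\E\|g_t^x\|^2\ge\E\|\grad_x f(x_t,y_t)\|^2$ in the reverse direction only through the bounded variance, I will rearrange to isolate $\E\sum_t\|\grad_xf(x_t,y_t)\|^2$ with a factor $1/2$ loss. This step accounts for the leading constants $2$ and $4$.

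\textbf{Step 4: substitute Lemma~\ref{le.bound conclusion}.} The remaining term is $\frac{2L_1\kappa}{1}\E\sum_t(f(x_t,y^*_t)-f(x_t,y_t))$, and I will bound it by Lemma~\ref{le.bound conclusion}. Each term of that lemma multiplied by $2L_1\kappa$ reproduces the corresponding term here: the $(v^y_T)^{1-\beta}$, $(v_T^y)^\beta$, logarithmic/$(v^x_T)^{1-2\alpha}$, $(v_T^x)^{1-\alpha}$ and constant terms. The term $\E\sum\|\grad_xf\|^2$ appearing in Lemma~\ref{le.bound conclusion} carries the coefficient $\frac{4\kappa^4(\eta^x)^2}{(\eta^y)^2C^{2\alpha-2\beta}}$. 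Choosing $C$ large enough makes this coefficient at most $1/4$, so the term can be absorbed into the left side, which explains the doubled constants. The free parameter $C$ then remains in the stated bound.

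The delicate point is Step 3. There, the $\mathcal F_t$-measurability of $\eta_t$ fails because $\eta_t$ depends on $g^x_t$. If dividing by $\eta_t$ does not cleanly remove this dependence, my fallback is to replace $\eta_t$ by the $\mathcal F_t$-measurable proxy $\eta^x/\max\{v^x_t,v^y_t\}^\alpha$. The discrepancy would then be controlled by $\|g_t^x\|^2$ times differences of powers of $v_t$, and these differences telescope into the $(v_T^y)^\beta$ and $(v_T^x)^{1-\alpha}$ terms.
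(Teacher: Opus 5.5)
Your overall architecture matches the paper's proof. Both use the descent inequality for $\Phi$, divide by $\eta_t$ before taking expectations, apply summation by parts to the $\Phi$-differences, and use the AdaGrad-type sum lemma for $\sum_t\eta_t\|g^x_t\|^2$. Both also bound $\|\grad_x f(x_t,y_t)-\grad\Phi(x_t)\|^2\le 2L_1\kappa\,(f(x_t,y_t^*)-f(x_t,y_t))$ and absorb through Lemma~\ref{le.bound conclusion} with a suitable $C$.

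\textbf{The gap is in Step~3.} You apply Young's inequality to $\langle\grad\Phi(x_t)-\grad_x f(x_t,y_t),g^x_t\rangle$ with the \emph{stochastic} gradient, which produces $\tfrac14\|g^x_t\|^2$. The inequality has already been divided by $\eta_t$, so this term carries no decaying stepsize. Moreover $\E\|g^x_t\|^2=\|\grad_x f(x_t,y_t)\|^2+\E\|g^x_t-\grad_x f(x_t,y_t)\|^2$. Only the first part can be absorbed into the left-hand side. The variance part accumulates to as much as $TG^2/4$ (without splitting, you are left with $\tfrac14\E[v^x_T-v^x_0]$). That term does not appear in the lemma, and it would make the rate in Theorem~\ref{the.stochastic convergence} vacuous. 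Your remark about using $\E\|g^x_t\|^2\ge\|\grad_x f(x_t,y_t)\|^2$ ``in the reverse direction'' does not close this. That inequality points the wrong way for absorption, and bounded variance only yields an additive $O(T)$ term.

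\textbf{The fix, which is what the paper does,} is to take the conditional expectation of the whole term $-\langle\grad\Phi(x_t),g^x_t\rangle$ \emph{before} any Young step. After division by $\eta_t$ no random weight remains, and $\grad\Phi(x_t)$ and $\grad_x f(x_t,y_t)$ are $\mathcal{F}_t$-measurable. So unbiasedness gives $-\langle\grad\Phi(x_t),\grad_x f(x_t,y_t)\rangle=-\|\grad_x f(x_t,y_t)\|^2+\langle\grad_x f(x_t,y_t)-\grad\Phi(x_t),\grad_x f(x_t,y_t)\rangle$. Young's inequality now bounds this by $-\tfrac12\|\grad_x f(x_t,y_t)\|^2+\tfrac12\|\grad_x f(x_t,y_t)-\grad\Phi(x_t)\|^2$, with no variance term.

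To reproduce the stated constants exactly, multiply by $2$ and sum. Then choose $C=\bigl(8L_1\kappa^3(\eta^x)^2/(\mu(\eta^y)^2)\bigr)^{1/(2\alpha-2\beta)}$, so that the $\E\sum_t\|\grad_x f(x_t,y_t)\|^2$ term inherited from Lemma~\ref{le.bound conclusion} has coefficient exactly $\tfrac12$, and rearrange. Your $\tfrac14$/$1$ split with ``coefficient at most $\tfrac14$'' gives an analogous bound with different constants and a larger admissible $C$.

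Your fallback with an $\mathcal{F}_t$-measurable proxy stepsize is unnecessary. Dividing by $\eta_t$ already removes the correlation problem for the inner product; the only issue is doing Young's inequality before taking the expectation.
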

The upper bound provided in Lemma~\ref{le.bound EvTx} is derived by exploiting the descent property of the function~$\Phi$. It consists of the telescoping decrease $\Delta \Phi = \Phi(x_0) - \Phi^*$ on $\Phi$, auxiliary sequences $v_T^x$ and $v_T^y$ that capture the cumulative growth of the primal and dual gradients, and the terms induced by using Lemma~\ref{le.bound conclusion}.

With the above bounds on the accumulated squared primal and dual gradient norms in expectation, we are able to establish the key ingredients required for the stochastic convergence analysis and prove the convergence guarantees of our method in the stochastic setting.
The detailed proofs of the convergence results in this section can be found in Appendix~C.

\begin{theorem}\label{the.stochastic convergence}
Suppose Assumptions~\ref{ass.l-smooth}, \ref{ass.g-convex}, \ref{ass.optimal exist and stationary condition hold}, \ref{ass.manifold}, \ref{ass.retraction}, and \ref{ass.grad bound} hold. Then, for any \( 0 < 2\beta \le \alpha < 1 \), Algorithm \ref{alg.RSAGDA} with stochastic gradient oracles guarantees that after \( T \) iterations,
\[
\min_{t=0,\cdots,T-1}\left\{ \mathbb{E} \left[ \|\grad_x f(x_t, y_t)\|^2 + \|\grad_y f(x_t, y_t)\|^2 \right]\right\} \leq \mathcal{O}\left(T^{\alpha - 1} + T^{-\alpha} + T^{\beta - 1} + T^{-\beta}\right).
\]
In particular, by setting \( \alpha = 2/3 \) and \( \beta = 1/3 \), we obtain the convergence rate
\[
\min_{t=0,\cdots,T-1}\left\{ \mathbb{E} \left[ \|\grad_x f(x_t, y_t)\|^2 + \|\grad_y f(x_t, y_t)\|^2 \right]\right\} \leq \mathcal{O}\left(\frac{1}{T^{1/3}}\right).
\]
\end{theorem}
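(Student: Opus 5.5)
The plan is to combine Lemma~\ref{le.bound EvTy} and Lemma~\ref{le.bound EvTx} into one self-referential inequality for
\[
S_T:=\E\Big[\sum_{t=0}^{T-1}\big(\|\grad_x f(x_t,y_t)\|^2+\|\grad_y f(x_t,y_t)\|^2\big)\Big],
\]
and then to bound the right-hand side with crude deterministic bounds on the auxiliary sequences. By Assumption~\ref{ass.grad bound}, every stochastic gradient has norm at most $G$, so deterministically
\[
v_T^x\le v_0^x+G^2T,\qquad v_T^y\le v_0^y+G^2T .
\]
Hence every moment in the lemmas is polynomial in $T$:
\begin{itemize}
\item $\E[(v_T^x)^{1-\alpha}]=\mathcal{O}(T^{1-\alpha})$ and $\E[(v_T^y)^{1-\beta}]=\mathcal{O}(T^{1-\beta})$;
\item $\E[(v_T^y)^{\beta}]=\mathcal{O}(T^{\beta})$;
\item $\E[\max\{(v_T^x)^\alpha,(v_T^y)^\alpha\}]=\mathcal{O}(T^{\alpha})$;
\item the logarithmic/indicator term is $\mathcal{O}(\log T)$ when $\alpha\ge 1/2$ and $\mathcal{O}(T^{1-2\alpha})\le \mathcal{O}(T^{1-\alpha})$ when $\alpha<1/2$.
\end{itemize}

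First, substitute these bounds into Lemma~\ref{le.bound EvTx}. This gives
\[
\E\Big[\sum_t\|\grad_x f(x_t,y_t)\|^2\Big]=\mathcal{O}\big(T^{\alpha}+T^{1-\alpha}+T^{1-\beta}+T^{\beta}+\log T+1\big).
\]
This bound does not involve the primal sum on the right, so it closes directly.

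Next, insert the primal bound into the first term of Lemma~\ref{le.bound EvTy}. That term carries the factor $\frac{4\kappa^3(\eta^x)^2}{(\eta^y)^2C^{2\alpha-2\beta}}$. Since $C$ is a free constant and $\alpha>\beta$, I fix $C$ large enough that this coefficient is at most $1$, say. Then $C^\beta$ and $C^{2\beta}$ are constants independent of $T$, and the remaining terms again contribute $\mathcal{O}(T^{1-\beta}+T^\beta+T^{1-\alpha}+\log T+1)$. Adding the two bounds gives
\[
S_T=\mathcal{O}\big(T^{\alpha}+T^{1-\alpha}+T^{\beta}+T^{1-\beta}\big),
\]
where the $\log T$ and constant terms are absorbed because $\alpha,\beta\in(0,1)$. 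Dividing by $T$ and using $\min_t a_t\le \frac1T\sum_t a_t$ with linearity of expectation gives the rate $\mathcal{O}(T^{\alpha-1}+T^{-\alpha}+T^{\beta-1}+T^{-\beta})$. With $\alpha=2/3$ and $\beta=1/3$ (which satisfy $2\beta\le\alpha$), every exponent equals $-1/3$, giving the $T^{-1/3}$ rate.

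The main obstacle is checking that the constants hidden in the lemmas, apart from the explicit powers of $v_T$, are genuinely independent of $T$. In particular I need to make sure $C$ can be chosen as a fixed constant and that nothing like $v_{t_0}^y$ enters in the stochastic lemmas. If $C$ instead had to be tied to the random quantity $v_T^y$, I would first try the fallback of bounding $C^{2\beta}\le (v_0^y+G^2T)^{2\beta}$. Multiplying by the $\mathcal{O}(\log T)$ or $\mathcal{O}(T^{1-2\alpha})$ factor then gives $T^{2\beta}\log T$ or $T^{1-2\alpha+2\beta}$, and since $2\beta\le\alpha$ both remain within $\mathcal{O}(T^{\alpha}+T^{1-\alpha})$ up to logarithms, so the stated rate still follows.
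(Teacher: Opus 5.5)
Your proposal is correct and is essentially the paper's proof. You bound $v_T^x\le v_0^x+G^2T$ and $v_T^y\le v_0^y+G^2T$ deterministically via Assumption~\ref{ass.grad bound}, substitute into Lemmas~\ref{le.bound EvTx} and~\ref{le.bound EvTy} (using the primal bound for the first term of the latter), and divide by $T$; that is exactly what the paper does. Your worry about $C$ resolves as you hoped: the paper fixes $C=\bigl(8L_1\kappa^3(\eta^x)^2/(\mu(\eta^y)^2)\bigr)^{1/(2\alpha-2\beta)}$, a $T$-independent constant, and no $v_{t_0}^y$ enters the stochastic lemmas.
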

\begin{remark}
        The convergence result in Theorem~\ref{the.stochastic convergence} implies that
    for any $\epsilon>0$, choosing
    $T = \mathcal{O}(\epsilon^{-6})$ guarantees that
    there exists an iteration index $t \in \{0,\ldots,T-1\}$ such that
    $(x_t,y_t)$ is an $\epsilon$-stationary point as defined in
    Definition~\ref{def.stationary}.
\end{remark}
\subsection{Improved convergence analysis}\label{subsec.improved convergence}
In the previous subsection, we have established the convergence guarantee for Algorithm~\ref{alg.RSAGDA} under mild conditions, including first-order smoothness and bounded stochastic gradients. Under these assumptions, the best achievable convergence rate is $\mathcal{O}(T^{-1/3})$ in terms of the squared norm of the gradient after $T$ iterations. In this subsection, we show that this convergence rate can be further improved to $\mathcal{O}(T^{-1/2})$ by leveraging second-order smoothness  of the objective function. This additional assumption allows for a more precise local approximation of the function, which we incorporate into the convergence analysis to achieve a sharper bound. Importantly, the resulting improvements are obtained by refining the previous arguments, without requiring substantial changes to the overall proof structure. The detailed convergence analysis under this setting is provided in Appendix D.

Specifically, we assume that the Riemannian Hessian and cross derivatives of the objective with respect to \( y \) are Lipschitz continuous, as stated below.
\begin{assumption}\label{ass.second smooth}
    We assume that the Riemannian cross derivatives $\grad_{xy}f(x,y)$ and Hessian $\hess_y f(x,y)$ are $L_2$-Lipschitz continuous. Specifically, for any $(x_1, y_1), (x_2, y_2) \in \cM_x \times \cM_y$, we have
    \begin{align*}
        \|\Gamma_{y_1}^{y_2}\grad_{yx}f(x,y_1)-\grad_{yx}f(x,y_2)\|_{y_2}\le L_2d_{\cM_y}(y_1,y_2),
    \end{align*}
    \begin{align*}
        \|\grad_{yx}f(x_1,y)-\grad_{yx}f(x_2,y)\Gamma_{x_1}^{x_2}\|_{y}\le L_2d_{\cM_x}(x_1,x_2) ,
    \end{align*}
    and 
    \begin{align*}
        \|\Gamma_{y_1}^{y_2}\hess_y f(x,y_1)\Gamma_{y_2}^{y_1} - \hess_y f(x,y_2)\|_{y_2}\le L_2d_{\cM_y}(y_1,y_2).
    \end{align*}
    \begin{align*}
        \|\hess_y f(x_1,y) - \hess_y f(x_2,y)\|_{y}\le L_2d_{\cM_x}(x_1,x_2).
    \end{align*}
\end{assumption}
This assumption is commonly used in Riemannian bilevel optimization \citep{dutta2024riemannian, han2024framework, shi2025adaptive}. With this assumption, the Lipschitz smoothness property of $y^*(x)$ (Lemma D.1 in Appendix D) can be established, which enables a more accurate local approximation and lays the foundation for improved convergence guarantees.

Theorem \ref{the.stochastic convergence} indicates that the convergence rate is dependent on $\alpha$ and $\beta$ with their values constrained by the condition $0 < 2\beta \le \alpha < 1$ in Lemma \ref{le.bound conclusion}.
Now, through a more accurate local approximation in our analysis, this requirement can be relaxed to $0 < \beta \le \alpha < 1$. This relaxation broadens the feasible range of $(\alpha, \beta)$. The following lemma states the result under the relaxed condition.
\begin{lemma}\label{le.imporve bound conclusion}
Suppose the assumptions in Lemma \ref{le.bound conclusion} and Assumption \ref{ass.second smooth} hold, and $0< \beta\le \alpha<1$. Then for any constant \(C\), the following bound holds
\begin{align*}
&\mathbb{E} \left[ \sum_{t=0}^{T-1} (f(x_t,y_t^*) - f(x_t,y_t)) \right] \\
\leq &\frac{4{\kappa}^2 (\eta^x)^2}{\mu (\eta^y)^2 \, C^{2\alpha - 2\beta + \delta}} \mathbb{E} \left[ \sum_{t=0}^{T-1} \|\grad_x f(x_t,y_t)\|^2 \right] + \frac{(\zeta\bar{c}+\frac{2G}{\mu}c_R)\eta^y}{2(1-\beta)} \mathbb{E} \left[ (v_T^y)^{1-\beta} \right] \\
& + \left( \frac{1}{\mu} + \frac{\eta^y}{(v_0^y)^{\beta}} \right) \frac{4{\kappa} \eta^x G^2}{\eta^y (v_0^y)^{\alpha}} \mathbb{E} \left[ (v_T^y)^{\beta} \right] \\
& + \frac{{\kappa}^2\bar{c} \left( \mu \eta^y C^{\beta} + 2C^{2\beta} \right) (\eta^x)^2}{2\mu (\eta^y)^2} \mathbb{E} \left[ \frac{1 + \log v_T^x - \log v_0^x}{(v_0^x)^{2\alpha - 1}} \cdot \mathbf{1}_{\alpha \geq 0.5} + \frac{(v_T^x)^{1-2\alpha}}{1-2\alpha} \cdot \mathbf{1}_{\alpha < 0.5} \right] \\
& + \left( \zeta\bar{c}{\kappa}^2 + \frac{(L_y\bar{c}+2c_R{\kappa})^2 G^2 (\eta^x)^2}{\mu \eta^y (v_0^y)^{2\alpha - \beta}} \right) \frac{(\eta^x)^2}{2(1-\alpha) \eta^y (v_0^y)^{\alpha - \beta}} \mathbb{E} \left[ (v_T^x)^{1-\alpha} \right] \\
& + \frac{(v_0^y)^{\beta} G^2}{2\mu^2 \eta^y} + \frac{(2\beta G)^{\frac{1}{1-\beta} + 2} G^2}{4\mu^{\frac{1}{1-\beta} + 3} (\eta^y)^{\frac{1}{1-\beta} + 2} (v_0^y)^{2-2\beta}}.
\end{align*}
for any $\delta\le \log(2)/\log(TG)$.
\end{lemma}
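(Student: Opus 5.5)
The plan is to re-run the proof of Lemma~\ref{le.bound conclusion} line by line and change only the single step where the drift of the maximizer $y_t^*=y^*(x_t)$ between consecutive iterates is controlled. That step forces $2\beta\le\alpha$.

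Concretely, start from the one-step dual recursion on $d^2(y_{t+1},y_{t+1}^*)$. It is obtained from the comparison inequality on the Hadamard manifold $\cM_y$, with curvature constant $\zeta$ from Assumption~\ref{ass.manifold}, together with the retraction error (Assumption~\ref{ass.retraction}) and $\mu$-strong concavity (Assumption~\ref{ass.g-convex}). It gives a contraction of $d^2(y_t,y_t^*)$, a descent term $\gamma_t(f(x_t,y_t^*)-f(x_t,y_t))$, a noise term $\gamma_t^2\|g_t^y\|^2$ producing the $(v_T^y)^{1-\beta}$ term, and a cross term
\[
\inner{\Exp^{-1}_{y_{t+1}}(y_{t+1}^*)}{\Exp^{-1}_{y_{t+1}}(y_t^*)}
\]
coming from the change $x_t\to x_{t+1}$. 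In Lemma~\ref{le.bound conclusion} this cross term was handled with Young's inequality using only the $\kappa$-Lipschitzness of $y^*$, which gives $d(y_t^*,y_{t+1}^*)\le\kappa\sqrt{\bar c}\,\eta_t\|g_t^x\|$. The resulting term $\kappa^2\eta_t^2\|g^x_t\|^2/\gamma_t$ is where $2\beta\le\alpha$ entered.

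Under Assumption~\ref{ass.second smooth}, I would first establish the auxiliary fact that $y^*$ is $L_y$-smooth. This means $\|\Gamma\,\mathrm{D}y^*(x_1)-\mathrm{D}y^*(x_2)\Gamma\|\le L_y d(x_1,x_2)$, where $\mathrm{D}y^*(x)=-(\hess_y f)^{-1}\grad^2_{yx}f$ is obtained by differentiating the optimality condition $\grad_y f(x,y^*(x))=0$ via the implicit function theorem. Combined with Assumption~\ref{ass.retraction}, this gives the second-order expansion
\[
\Exp^{-1}_{y_t^*}(y_{t+1}^*)=-\eta_t\,\mathrm{D}y^*(x_t)[g^x_t]+e_t,\qquad \|e_t\|\le (L_y\bar c+2c_R\kappa)\eta_t^2\|g^x_t\|^2 ,
\]
which matches the constant appearing in the statement. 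The cross term then splits into two parts. The first is a linear part $\eta_t\inner{\Exp^{-1}_{y_t^*}(y_t)}{\mathrm{D}y^*(x_t)[g^x_t]}$, in which I replace the stochastic $g^x_t$ by its conditional expectation $\grad_x f(x_t,y_t)$. The mismatch comes from the dependence of $\eta_t$ on $g_t^x$, and I handle it by substituting the decoupled stepsize $\eta^x/\max\{v^x_t,v^y_t\}^\alpha$; the difference gives the $(v_T^y)^\beta$ term with $G^2$, exactly as before. The second is a quadratic part, bounded by $\|e_t\|\,d(y_t,y_t^*)$ plus $\kappa^2\eta_t^2\|g^x_t\|^2$, which produces the $(v_T^x)^{1-\alpha}$ term. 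Only first powers of $\eta_t/\gamma_t$ remain in it, which is why only $\alpha\ge\beta$ is needed.

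The linear part is handled by Young's inequality with weight $\mu\gamma_t/4$ against $d^2(y_t,y_t^*)$. This leaves $\kappa^2\eta_t^2\|\grad_xf\|^2/\gamma_t$, and I split it by a threshold on $\max\{v^x_{t+1},v^y_{t+1}\}$. When the maximum is at least $C$, the ratio $\eta_t^2/\gamma_t^2\le (\eta^x/\eta^y)^2 C^{-(2\alpha-2\beta)}$, giving the first term of the statement. When it is below $C$, the sum is bounded by the $C^\beta,C^{2\beta}$ term involving $\sum\eta_t^2\|g_t^x\|^2$, i.e.\ the $\log$ or $(v_T^x)^{1-2\alpha}$ expression. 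The main obstacle I expect is the extra factor $C^{-\delta}$ (with constant $4$ instead of $2$) in the first term. I would obtain it by using $\max\{v_{t+1}^x,v_{t+1}^y\}\le v_0+TG^2$ and $\delta\le\log 2/\log(TG)$, which give $(TG)^\delta\le 2$. Hence on the relevant range $C^{2\alpha-2\beta}\ge C^{2\alpha-2\beta+\delta}/2$, and the factor $2$ is absorbed into the constant $4$; this conversion must be checked carefully when $C\le 1$. The remaining terms (the initial-distance $(v_0^y)^\beta G^2/\mu^2\eta^y$ and the burn-in constant with exponent $\tfrac1{1-\beta}+2$) are inherited unchanged from the proof of Lemma~\ref{le.bound conclusion}. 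Dividing the recursion by $\gamma_t$, telescoping and taking total expectation then concludes the proof.
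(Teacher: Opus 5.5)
Your architecture matches the paper's proof, which combines Lemmas~\ref{le.v<C}, \ref{le.4} and~\ref{le.v>C'}. It linearizes the drift of $y_t^*$ through $\operatorname{D}y^*(x_t)$ and bounds the remainder by $(L_y\bar c+2c_R\kappa)\eta_t^2$-terms via Lemma~\ref{le.ystar lips smooth}. It handles the true-gradient part by Young's inequality plus the threshold $C$, and the noise part by the lagged stepsize $\eta_{t-1}$ as in \eqref{eq.bound term G}.

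However, you misidentify which step changes, and this leads to one step that does not work as you describe it. In Lemma~\ref{le.v>C} the linearization and Terms~(F) and~(G) were already there. The condition $2\beta\le\alpha$ entered only through the first-order remainder, Term~(D) in \eqref{eq.equation contain term D}, which forced the weight $\eta_t^{2-\beta/\alpha}$ in Term~(E). The paper replaces only that bound, re-choosing $\lambda_t$ and $\tau$.

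The paper keeps the pre-threshold iterations $t<t_3$ exactly as in Lemma~\ref{le.v<C}, without any linearization. There it uses $d^2(y_{t+1}^*,y_t^*)\le\kappa^2\bar c\,\eta_t^2\|g_t^x\|^2$ with the \emph{stochastic} $g_t^x$ and Young weight $\lambda_t=\mu\gamma_t/2$. The resulting factor $\frac{1}{2\gamma_t}\big(1+\frac{2}{\mu\gamma_t}\big)$ is what produces both the $C^\beta$ and the $C^{2\beta}$ pieces.

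Your route is to linearize everywhere and then split $\kappa^2\eta_t^2\|\grad_x f(x_t,y_t)\|^2/\gamma_t^2$ at the threshold. This does not give the stated below-threshold term. Lemma~\ref{le.adaptive bound} needs $\|g_t^x\|^2=v_{t+1}^x-v_t^x$ in the numerator. The comparison $\E[\eta_t^2\|\grad_x f(x_t,y_t)\|^2]\le\E[\eta_t^2\|g_t^x\|^2]$ is not available, because $\eta_t$ is a decreasing function of $\|g_t^x\|$; decoupling through $\eta_{t-1}$ works only at the price of a different constant. Moreover, this term alone would yield only a $C^{2\beta}$ coefficient, not the $C^\beta$ one. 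The fix is simply to reuse Lemma~\ref{le.v<C} verbatim for the pre-threshold phase.

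Your treatment of $\delta$ is a valid alternative to the paper's. The paper builds $\delta$ into $\lambda_t=\mu\eta^y/(8(v_{t+1}^y)^{\beta-\delta})$, which is why its coefficient becomes $4$. You instead keep the old coefficient $2C^{-(2\alpha-2\beta)}$ and weaken it afterwards to $4C^{-(2\alpha-2\beta+\delta)}$ using $C^\delta\le 2$.

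Both versions need the accumulator raised to the power $\delta$ to be at most $2$ on the post-threshold phase. That means $(v_0+TG^2)^\delta\le 2$ with $v_0=\max\{v_0^x,v_0^y\}$, rather than $(TG)^\delta\le 2$; this imprecision is shared with the paper. For $\delta\ge 0$ the case $C\le 1$ you worry about is immediate, since then $C^\delta\le 1$. The binding range is $1<C\le v_0+TG^2$; beyond it the post-threshold sum is empty.
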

The above new result also introduces an additional term $\delta$. It allows us to adapt our previous analysis under the relaxed condition $0 < \beta \le \alpha < 1$, while maintaining the convergence rate dictated by the choice of $\alpha$ and $\beta$, namely $\mathcal{O}\left(T^{\alpha - 1} + T^{-\alpha} + T^{\beta - 1} + T^{-\beta}\right)$ as stated in Theorem~\ref{the.stochastic convergence}.
\begin{theorem}\label{the.improved stochastic convergence}
Suppose Assumptions~\ref{ass.optimal exist and stationary condition hold}, \ref{ass.manifold}, \ref{ass.retraction}, \ref{ass.l-smooth}, \ref{ass.g-convex}, \ref{ass.grad bound} and \ref{ass.second smooth} hold. Then, for any \( 0 < \beta \le \alpha < 1 \), Algorithm \ref{alg.RSAGDA} with stochastic gradient oracles guarantees that after \( T \) iterations,
\[
\min_{t=0,\cdots,T-1}\left\{ \mathbb{E} \left[ \|\grad_x f(x_t, y_t)\|^2 + \|\grad_y f(x_t, y_t)\|^2 \right]\right\} \leq \mathcal{O}\left(T^{\alpha - 1} + T^{-\alpha} + T^{\beta - 1} + T^{-\beta}\right).
\]
In particular, by setting \( \alpha = 1/2 \) and \( \beta = 1/2\), we obtain the convergence rate
\[
\min_{t=0,\cdots,T-1}\left\{ \mathbb{E} \left[ \|\grad_x f(x_t, y_t)\|^2 + \|\grad_y f(x_t, y_t)\|^2 \right]\right\} \leq \mathcal{O}\left(\frac{1}{T^{1/2}} \right).
\]
\end{theorem}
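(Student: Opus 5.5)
The proof will follow the route of Theorem~\ref{the.stochastic convergence}, with Lemma~\ref{le.imporve bound conclusion} used in place of Lemma~\ref{le.bound conclusion}. First, I will redo Lemma~\ref{le.bound EvTy} and Lemma~\ref{le.bound EvTx} using the improved dual-suboptimality bound. Smoothness and strong concavity give $\|\grad_y f(x_t,y_t)\|^2\le 2L_1(f(x_t,y_t^*)-f(x_t,y_t))$, which yields the dual analogue. For the primal analogue I will use the descent inequality for $\Phi$ together with $\|\grad_x f(x_t,y_t)-\grad\Phi(x_t)\|^2\le L_1^2 d(y_t,y_t^*)^2\le \frac{2L_1^2}{\mu}(f(x_t,y_t^*)-f(x_t,y_t))$. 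Only the coefficient multiplying $\E[\sum_t\|\grad_x f(x_t,y_t)\|^2]$ changes, from order $C^{-(2\alpha-2\beta)}$ to $C^{-(2\alpha-2\beta+\delta)}$. In addition, the $(v_T^x)^{1-\alpha}$ term no longer carries the factor $(\eta^x)^{-\beta/\alpha}$ that forced $2\beta\le\alpha$. The relaxation to $\beta\le\alpha$ enters exactly here.

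Second, I will choose $C$ so that the $\E[\sum_t\|\grad_x f\|^2]$ term on the right-hand side of the primal bound can be absorbed into the left-hand side. If $\alpha>\beta$, a large constant $C$ makes the coefficient at most $1/2$. If $\alpha=\beta$, the exponent reduces to $\delta>0$. I then take $\delta=\log 2/\log(TG)$ and $C$ of order $(TG)^{c}$ for a suitable power $c$. This gives $C^{\delta}=2^{c}$, a constant large enough to beat $\kappa^4(\eta^x)^2/(\eta^y)^2$, while the factors $C^{\beta}$ and $C^{2\beta}$ multiplying the log term stay polynomially controlled. \textbf{This balancing when $\alpha=\beta=1/2$ is the step I expect to be the main obstacle.} There the $C^{2\beta}=C$ factor multiplies $\log v_T^x$, and a polynomial $C$ would destroy the rate. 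I will therefore try the alternative choice $C=\max\{1,(4\kappa^4(\eta^x)^2/(\eta^y)^2)^{1/\delta}\}$ restricted to the regime where it stays $\mathcal O(1)$. If that fails, I will accept extra $\log T$ factors that $\mathcal O(\cdot)$ is meant to hide.

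Third, I will bound the moments. By Assumption~\ref{ass.grad bound}, $v_T^x,v_T^y\le v_0+TG^2$, and Jensen's inequality gives $\E[(v_T^y)^{1-\beta}]\le (v_0^y+TG^2)^{1-\beta}=\mathcal O(T^{1-\beta})$, together with analogous bounds for $(v_T^x)^{1-\alpha}$, $(v_T^y)^\beta$, and $(v_T^x)^{1-2\alpha}$ or $\log v_T^x$. The term $\E[\max\{v_T^x,v_T^y\}^\alpha]\Delta\Phi/\eta^x$ is $\mathcal O(T^{\alpha})$. Hence
\[
\E\Big[\sum_{t=0}^{T-1}\|\grad_x f(x_t,y_t)\|^2+\|\grad_y f(x_t,y_t)\|^2\Big]\le \mathcal O\big(T^{\alpha}+T^{1-\alpha}+T^{\beta}+T^{1-\beta}\big).
\]
The $T^\beta$ contribution comes from the $\E[(v_T^y)^\beta]$ term.

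Finally, I will divide by $T$ and use the fact that the minimum over $t$ is at most the average. This gives $\mathcal O(T^{\alpha-1}+T^{-\alpha}+T^{\beta-1}+T^{-\beta})$. Setting $\alpha=\beta=1/2$ makes all four terms $T^{-1/2}$, which yields the stated rate.
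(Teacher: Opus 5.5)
Your route is the paper's route. The paper proves this theorem in one sentence: substitute Lemma~\ref{le.imporve bound conclusion} for Lemma~\ref{le.bound conclusion} and repeat the derivation of Theorem~\ref{the.stochastic convergence}. For $\beta<\alpha$ your argument is complete. Put $K:=16\kappa^4(\eta^x/\eta^y)^2$, take $\delta=0$, and take the $T$-independent $C=K^{1/(2\alpha-2\beta)}$. This makes the coefficient $8\kappa^4(\eta^x)^2/((\eta^y)^2C^{2\alpha-2\beta})$ of $\E[\sum_t\|\grad_x f(x_t,y_t)\|^2]$ at most $\tfrac12$, and every other term has the claimed order.

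The gap is exactly the step you flagged, and neither fallback closes it. The restriction $\delta\le\log 2/\log(TG)$ is forced by the choice $\lambda_t=\mu\eta^y/(8(v^y_{t+1})^{\beta-\delta})$ in Lemma~\ref{le.v>C'}, which needs $(v^y_{t+1})^{\delta}\le 2$. So for $\alpha=\beta$ absorption requires $C^{\delta}\ge K$, i.e.\ $C\ge (TG)^{\log_2 K}$ whenever $K>1$. The pre-$t_3$ term from Lemma~\ref{le.v<C} carries the factor $C^{2\beta}$. For $\alpha=\beta=\tfrac12$ it therefore adds $(TG)^{\log_2 K}\log T/T$ to the averaged bound, which is $\mathcal{O}(T^{-1/2})$ only if $K<\sqrt2$. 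The price is polynomial in $T$, not a $\log T$ factor. Likewise, ``the regime where $C=\mathcal{O}(1)$'' is an extra time-scale condition, $K\le 1$, i.e.\ $\eta^x/\eta^y\le 1/(4\kappa^2)$, which the theorem does not assume. The paper's one-line proof passes over the same point by treating $C$ as a constant.

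This is not an artifact of the bookkeeping. Take $\cM_x=\cM_y=\R$ and $f(x,y)=-\tfrac12(x-y)^2$, so that $\mu=L_1=\kappa=1$, $L_2=0$, $y^*(x)=x$ and $\Phi\equiv0$. Run Algorithm~\ref{alg.RSAGDA} with exact gradients, $\alpha=\beta$, $\eta^x=\eta^y$ and $v^x_0=v^y_0$; this is the pattern used for RSAGDA in Section~\ref{sec.experiments}.
\begin{itemize}
\item With $z_t=x_t-y_t$ we have $g^x_t=-z_t$ and $g^y_t=z_t$, so $v^x_t=v^y_t$ and $\eta_t=\gamma_t$.
\item Hence $z_{t+1}=z_t+(\eta_t-\gamma_t)z_t=z_t$, and $\|\grad_x f(x_t,y_t)\|^2+\|\grad_y f(x_t,y_t)\|^2=2z_0^2$ for every $t$.
\item All gradients along the trajectory are bounded by $|z_0|$ and $d(y_t,y_t^*)=|z_0|$. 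That is everything the analysis actually uses; a globally bounded $\grad_y f$ is incompatible with Assumption~\ref{ass.g-convex} on a Hadamard $\cM_y$ in any case.
\end{itemize}
So the $\alpha=\beta$ part of the statement, including the $T^{-1/2}$ rate, needs an extra hypothesis such as $K\le1$; then $C=1$ works and your argument goes through. Without it, what your argument and the paper's actually deliver is the rate for $\beta<\alpha$. For $\alpha,\beta$ near $\tfrac12$ that is $\mathcal{O}(T^{-1/2+\epsilon})$, with constants growing like $K^{2\beta/(2\alpha-2\beta)}$ as $\alpha-\beta\to0$.
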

It is worth mentioning that the condition
$0 < \beta \le \alpha < 1$ in Theorem~\ref{the.improved stochastic convergence} is similar but slightly weaker than
the one used in Euclidean space min-max optimization \citep{li2022tiada}. In particular, \cite{li2022tiada} requires 
$0 < \beta \le \alpha < 1$ and has established convergence rate of
$\mathcal{O}(T^{-1/4+\tilde{\epsilon}})$ (for a small $\tilde{\epsilon}$).
Consequently, Theorem~\ref{the.improved stochastic convergence} even
sharpens the result of \cite{li2022tiada} in Euclidean space
to the exact $\mathcal{O}(T^{-1/4})$.

    \begin{remark}
        The convergence result in Theorem~\ref{the.improved stochastic convergence} implies that
    for any $\epsilon>0$, choosing
    $T = \mathcal{O}(\epsilon^{-4})$ guarantees that
    there exists an iteration index $t \in \{0,\ldots,T-1\}$ such that
    $(x_t,y_t)$ is an $\epsilon$-stationary point as defined in
    Definition~\ref{def.stationary}.
    \end{remark}
\section{Numerical experiments}\label{sec.experiments}
In this section, we compare our methods on several benchmark problems. The experiments are performed on a 13-inch MacBook Pro (2022) with an Apple M2 processor and 8 GB RAM. The experiments comparing RAGDA are implemented in MATLAB using the Manopt toolbox \citep{boumal2014manopt}. Our method uses both the exponential map and a retraction, whereas all baselines employ the exponential map only. The compared algorithms are as follows:
\begin{itemize}
    \item RHM \citep{han2023riemannian}: the Riemannian Hamiltonian method with fixed stepsize.
    \item RCON \citep{han2023riemannian}: the Riemannian consensus method with fixed stepsize.
    \item RGDA \citep{jordan2022first}: the Riemannian gradient descent ascent method with same stepsize for the min and max variables.
    \item RCEG \citep{jordan2022first, zhang2023sion}: the Riemannian corrected extragradient method.
    \item TSRGDA \citep{huang2023gradient}: the Riemannian gradient descent ascent method with different stepsizes for the min and max variables, i.e., timescale separated. The method name in the original paper is also RGDA. We call it TSGDA here to distinguish it from the method in \cite{jordan2022first}. 
    \item AdaRHD \citep{shi2025adaptive}: the Riemannian adaptive bilevel solver for minimax optimization (Algorithm 4 in \cite{shi2025adaptive}). To enhance its performance, we incorporate separate step size parameters, $\eta^x$ and $\eta^y$, for updating the primal and dual variables, respectively, similar to our proposed method. Additionally, we replace the original inner-loop stopping criterion with a fixed number of iterations.
\end{itemize}
To compare our proposed RSAGDA method, we conduct experiments implemented in PyTorch with Geoopt \citep{kochurov2020geoopt}, using retraction-based updates, against the following baselines
\begin{itemize}
    \item RSGDA \citep{huang2023gradient}: the stochastic Riemannian gradient descent ascent method.
    \item Acc-RSGDA \citep{huang2023gradient}: the stochastic Riemannian gradient descent ascent method with momentum.
\end{itemize}
Note that the methods in \cite{huang2023gradient} were not originally designed or analyzed for the general minimax problems \eqref{eq.deterministic problem} and \eqref{eq.stochastic problem}, where $\cM_y$ is a convex subset of Euclidean space. We replace the projection operation with a retraction or exponential map, as described above, when updating the maximization variable. 

All experiments are terminated when either the desired accuracy is achieved or the maximum number of iterations is reached.
\subsection{Regularized Robust maximum likelihood estimation}\label{subsec.like}
In this subsection, we compare RAGDA with baselines on the robust maximum likelihood estimation problem \eqref{eq.like}. 
To better align with our proposed framework, we incorporate a regularization term $c d^2(S, I)$ into the original formulation, leading to the following modified problem:
\begin{equation}\label{eq.regularize like}
    \min_{x\in \mathbb{S}^{d}}\max_{Y\in\mathcal{P}(d+1)}-\frac{n}{2}\log(\det(Y))-\frac{1}{2}\sum_{i=1}^n([a_i,1]^\top-x)^\top Y^{-1}([a_i,1]^\top-x)+cd^2(Y,I).
\end{equation}
where c is a tunable regularization parameter and if $c
<0$, the problem is nonconvex–strongly-geodesically-concave minimax problem. In our experiments, we set $c = -5$, $d = 30$, and $n = 100$. The data $a_i$ are sampled from the standard Gaussian distribution $N(0,I)$.

We report the gradient norm with respect to both wall-clock time and iteration count in Figure~\ref{fig.like}. 
The hyperparameters for different algorithms are configured as follows: 
RHM uses a step size parameter $\eta = 0.01$;
RCON uses a single step size parameter $\eta = 0.01$ and $\gamma=15$;  
RGDA usesadaptive gradient descent ascent, achieving a step size parameter $\eta = 0.0005$; RCEG use a step size parameter $\eta = 0.01$; TSGDA uses step size $\eta^x = 0.02$ for the primal variable and step size $\eta^y = 0.4$ for the dual variable; AdaRHD uses step size $\eta^x = 1$ for the primal variable and step size $\eta^y = 2.5$ for the dual variable, and the inner loop iteration number is $20$; RAGDA uses step size $\eta^x = 0.5$ for the primal variable and step size $\eta^y = 5$ for the dual variable, $v_0^x=v_0^y=10^{-6}$, and $\alpha=\beta=0.5$. 
The results demonstrate that our method converges more rapidly and attains higher accuracy compared to the baselines.
\begin{figure}[htbp]
    \centering
    \begin{subfigure}[b]{0.45\textwidth}
        \includegraphics[width=\textwidth]{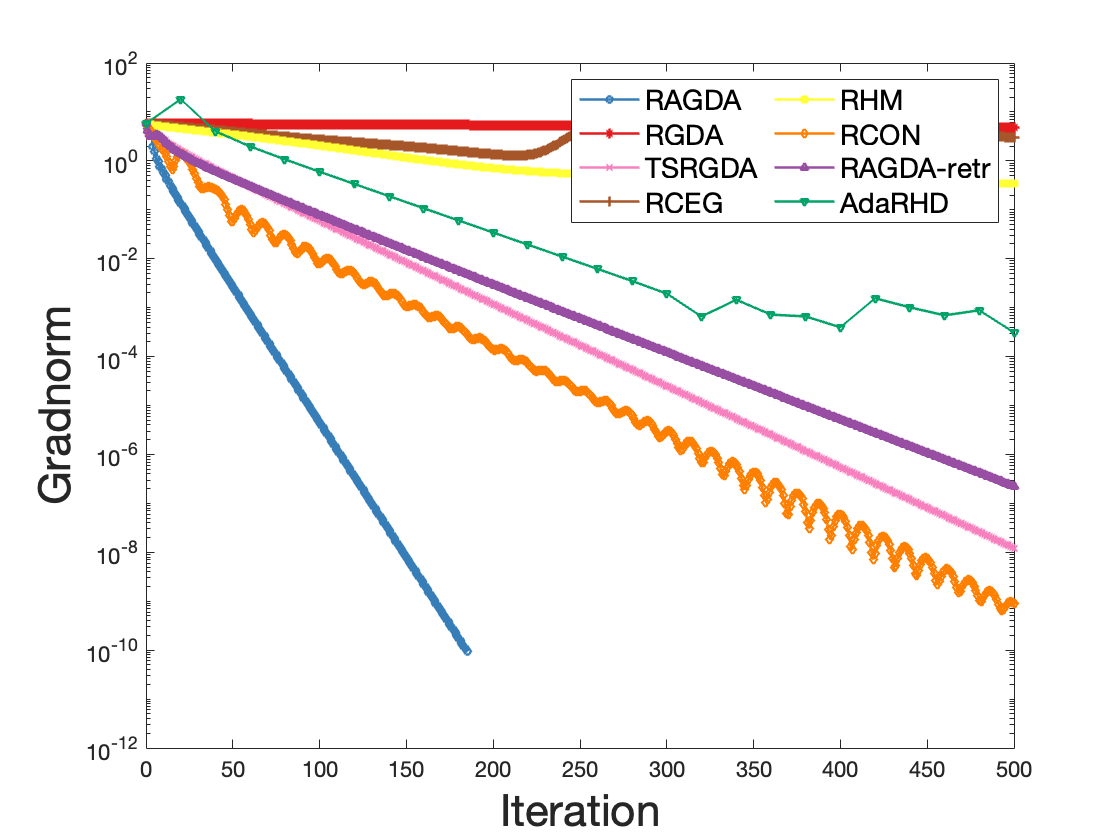}
        \caption{iteration-gradnorm}
        \label{subfig.like-iteration-gradnorm}
    \end{subfigure}
    \hfill
    \begin{subfigure}[b]{0.45\textwidth}
        \includegraphics[width=\textwidth]{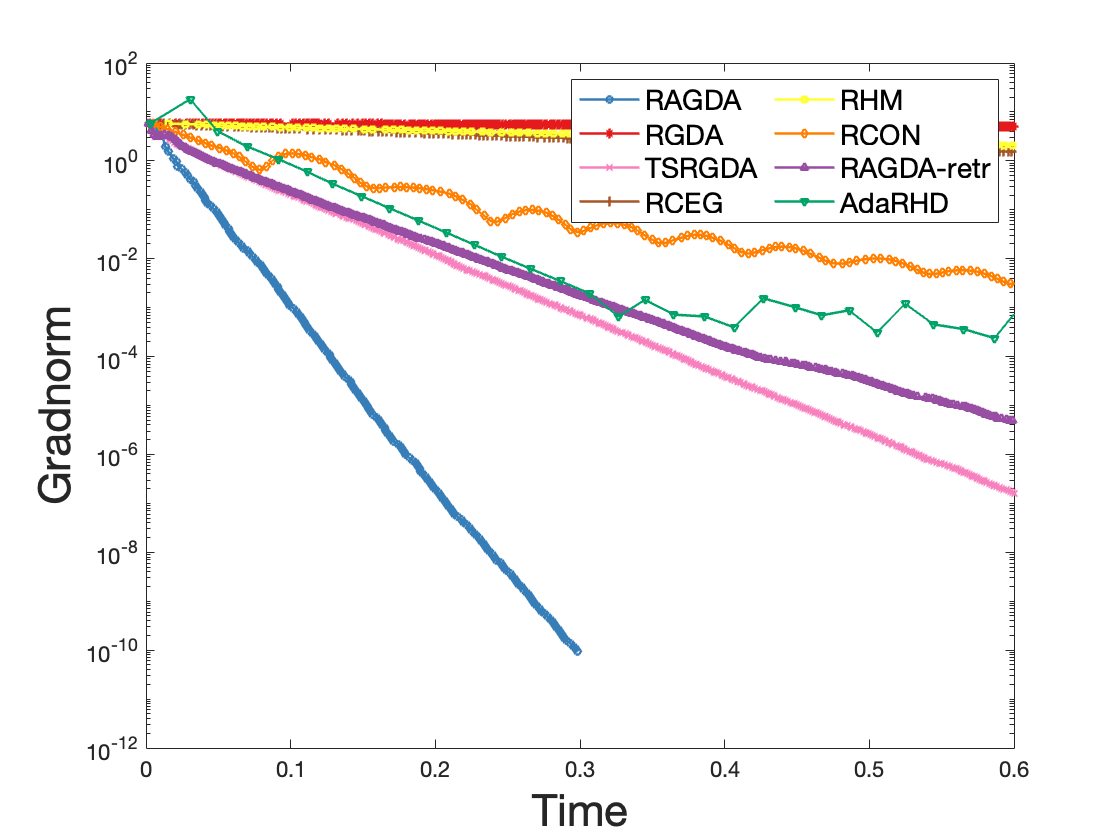}
        \caption{time-gradnorm}
        \label{subfig.like-time-gradnorm}
    \end{subfigure}
    \caption{Experiments on Regularized Robust maximum likelihood estimation \eqref{eq.regularize like}.}
    \label{fig.like}
\end{figure}
\subsection{Robust training of neural networks with orthonormal weights}\label{subsec.robust neural network}
To evaluate the performance of our stochastic method RSAGDA, we conduct experiments on adversarially robust training of deep neural networks \eqref{eq.robust nn} with universal orthonormal perturbation \citep{moosavi2017universal}, which involves solving a Riemannian optimization problem over the Stiefel manifold.

In our experiments, we train a three-layer neural network on the MNIST \citep{lecun2002gradient}, FashionMNIST \citep{xiao2017fashion}, and CIFAR-10 \citep{krizhevsky2009learning} datasets for image classification. The network comprises two hidden layers with orthonormal weight matrices and a fully connected output layer. The input and hidden layer dimensions are 784, 256, and 256, respectively. For MNIST, FashionMNIST, and CIFAR-10, we set $r = 1$, $0.5$, and $0.25$, respectively, and train for 30 epochs with a batch size of 256.
We evaluate three algorithms: RSGDA, Acc-RSGDA, and RSAGDA on the MNIST, FashionMNIST, and CIFAR-10 datasets. RSGDA uses step size $\eta^x = \eta^y = 0.02$ for both primal variable and dual variable across all three datasets.
RSAGDA uses step size $\eta^x = \eta^y = 2.0$, $v_0^x=v_0^y=10^{-6}$, and $\alpha=\beta=0.5$ for both primal variable and dual variable across all three datasets.
Acc-RSGDA uses dataset-specific parameters: for MNIST, $b = 0.2$, $m = 8$, $c_1 = c_2 = 64$, and $\lambda = \gamma = 1$; for FashionMNIST, $b = 0.5$, $m = 8$, $c_1 = c_2 = 64$, and $\lambda = \gamma = 1$; and for CIFAR-10, $b = 0.45$, $m = 16$, $c_1 = c_2 = 64$, and $\lambda = \gamma = 1$.
Training accuracy, test accuracy, and loss across epochs for all compared methods are shown in Figure~\ref{fig.mnist}, Figure~\ref{fig.fashionmnist}, Figure~\ref{fig.cifar10}.


\begin{figure}[htbp]
    \centering
    \begin{subfigure}[b]{0.32\textwidth}
        \includegraphics[width=\textwidth]{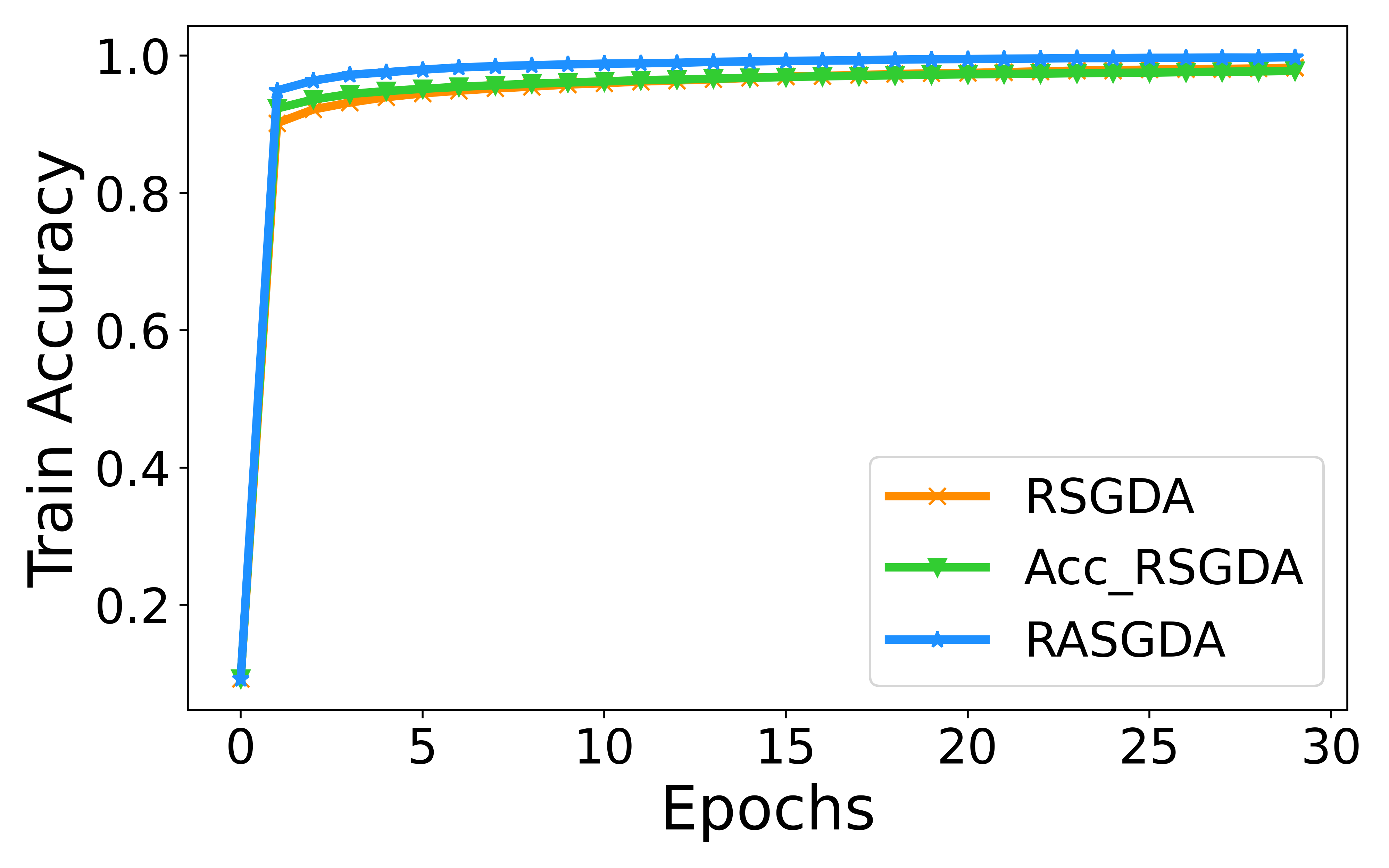}
        \caption{Train accuracy over epochs}
        \label{subfig.mnist train}
    \end{subfigure}
    \hfill
    \begin{subfigure}[b]{0.32\textwidth}
        \includegraphics[width=\textwidth]{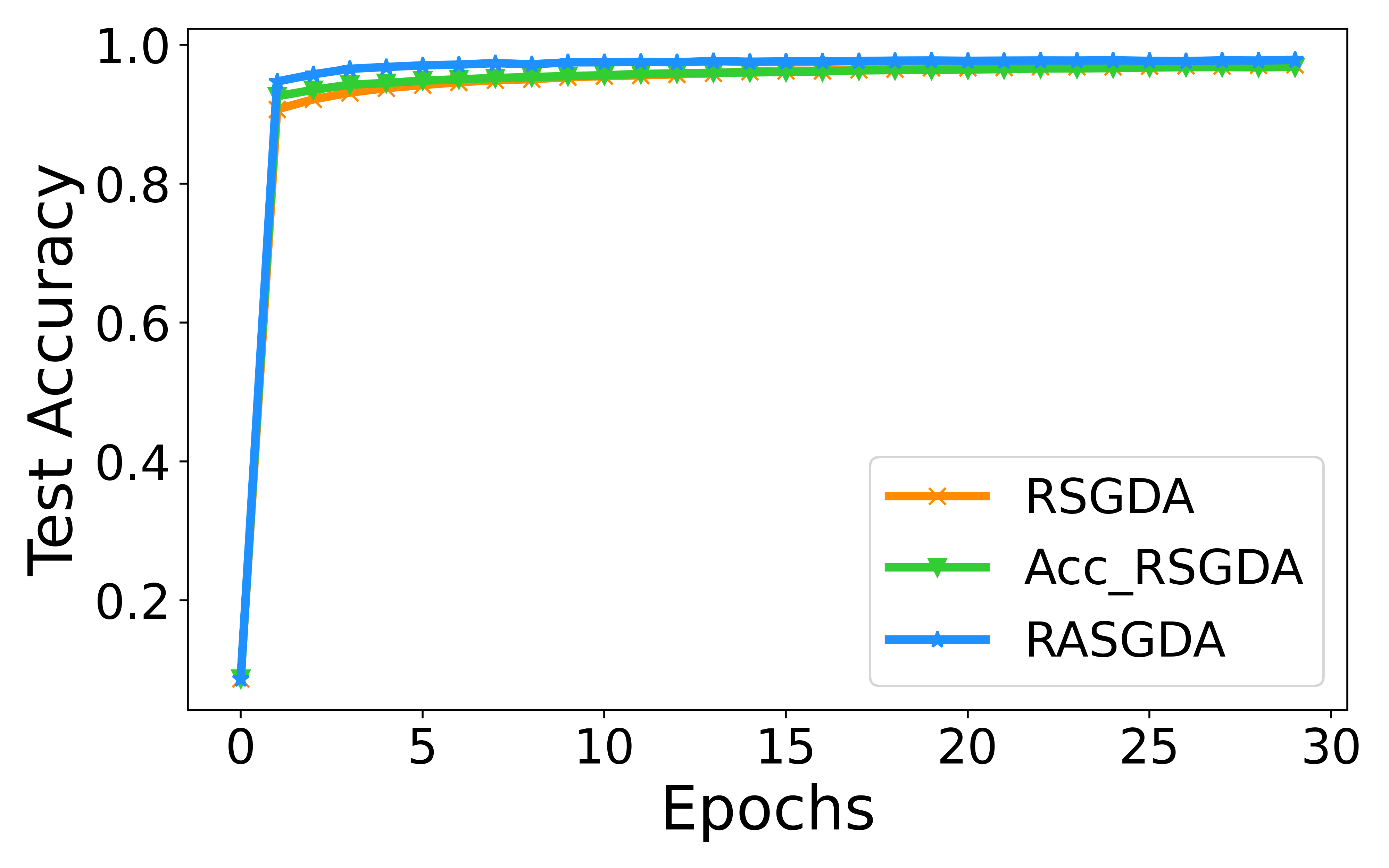}
        \caption{Test accuracy over epochs}
        \label{subfig.mnist test}
    \end{subfigure}
    \hfill
    \begin{subfigure}[b]{0.32\textwidth}
        \includegraphics[width=\textwidth]{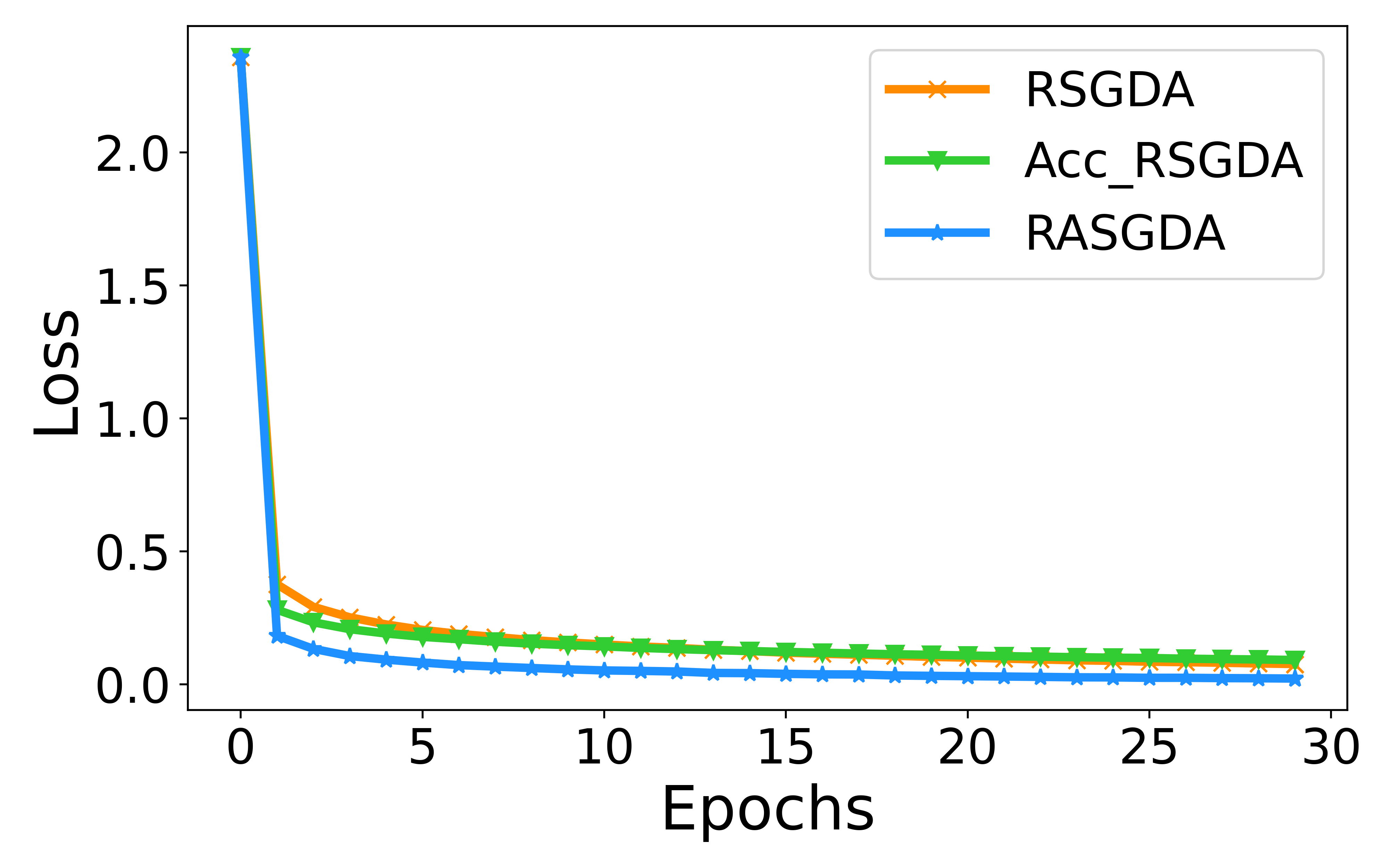}
        \caption{Loss over epochs}
        \label{subfig.mnist loss}
    \end{subfigure}
    \caption{Results of robust training of neural networks with orthonormal weights on the MNIST dataset.}
    \label{fig.mnist}
\end{figure}

\begin{figure}[htbp]
    \centering
    \begin{subfigure}[b]{0.32\textwidth}
        \includegraphics[width=\textwidth]{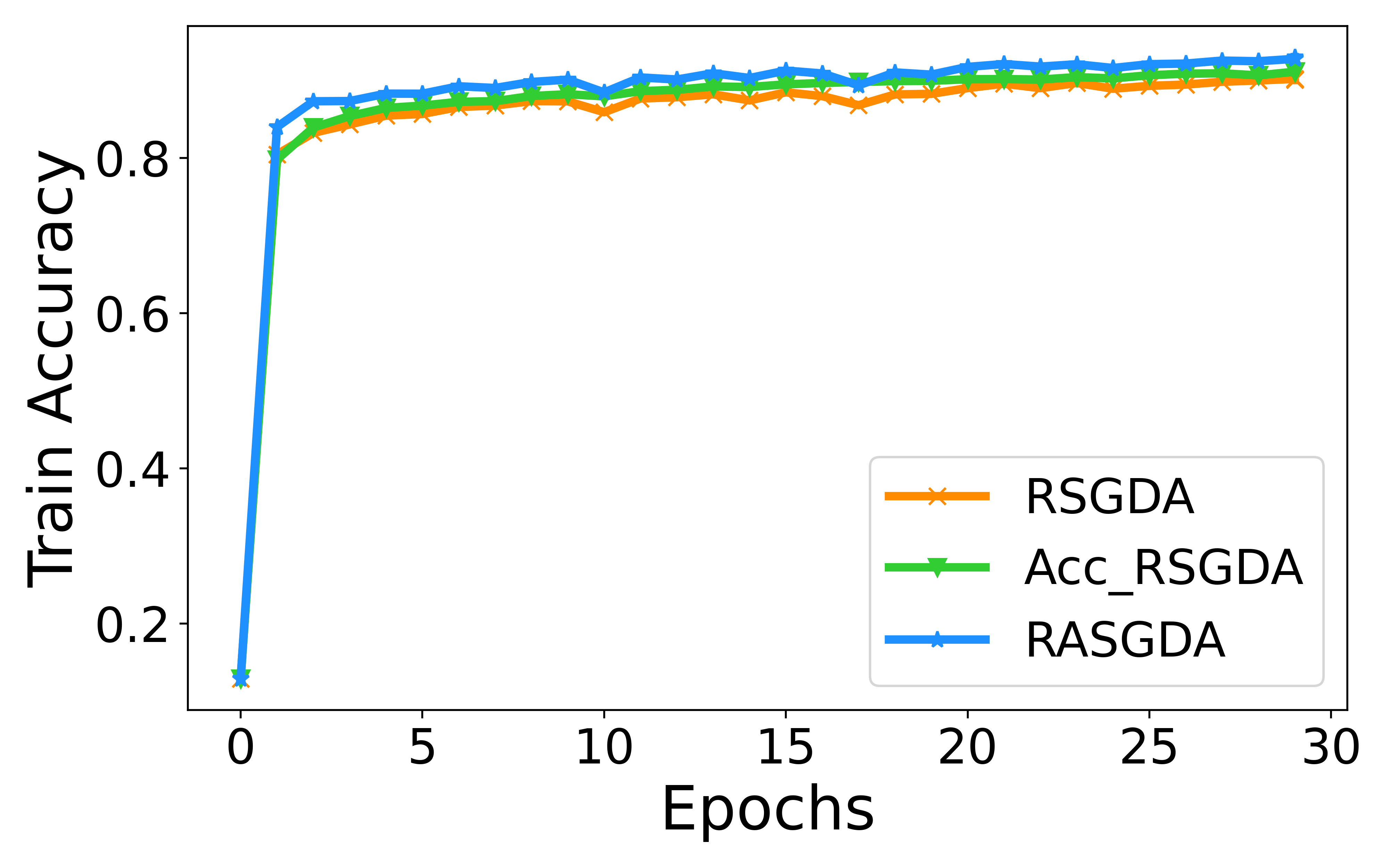}
        \caption{Train accuracy over epochs}
        \label{subfig.fashionmnist train}
    \end{subfigure}
    \hfill
    \begin{subfigure}[b]{0.32\textwidth}
        \includegraphics[width=\textwidth]{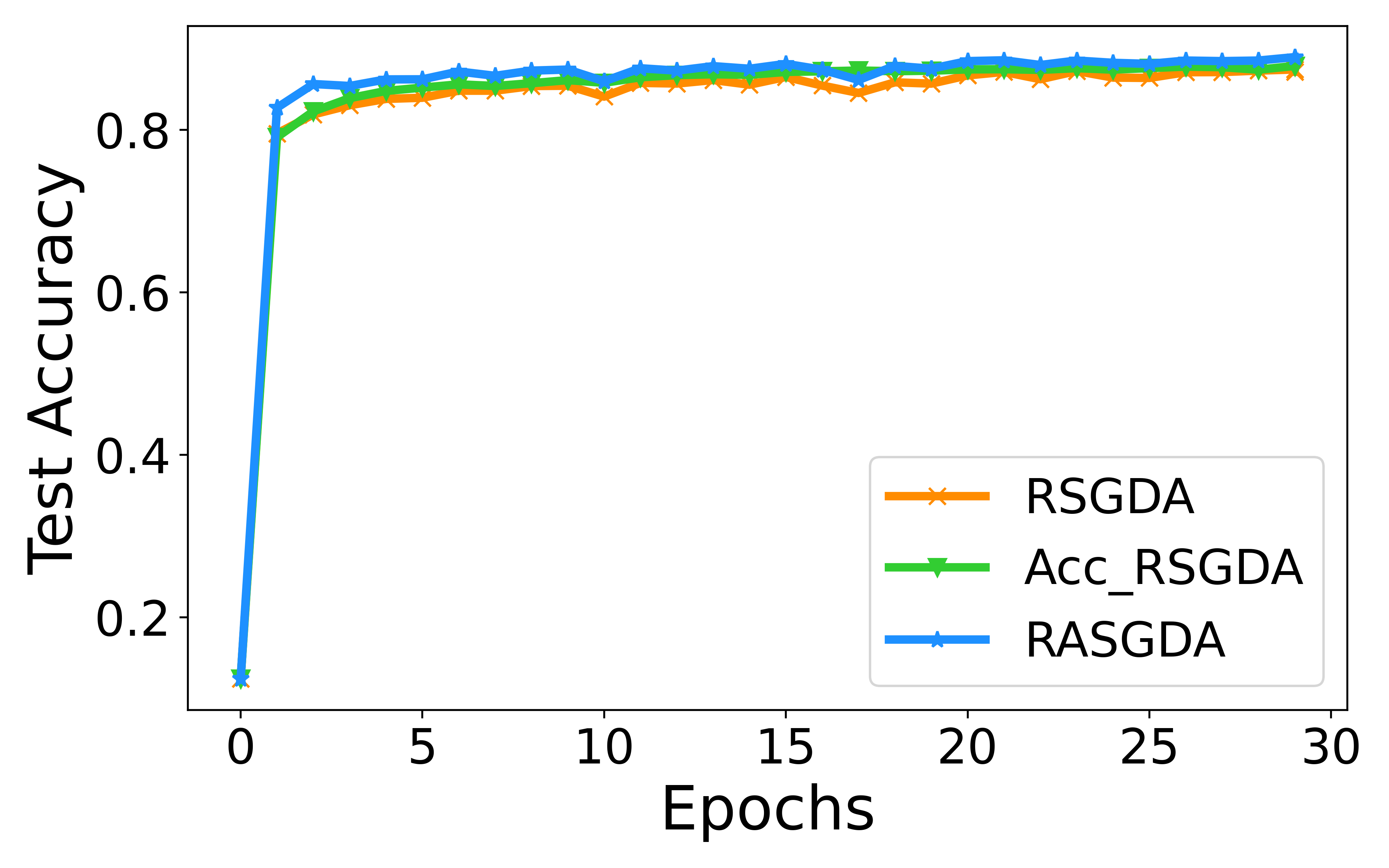}
        \caption{Test accuracy over epochs}
        \label{subfig.test}
    \end{subfigure}
    \hfill
    \begin{subfigure}[b]{0.32\textwidth}
        \includegraphics[width=\textwidth]{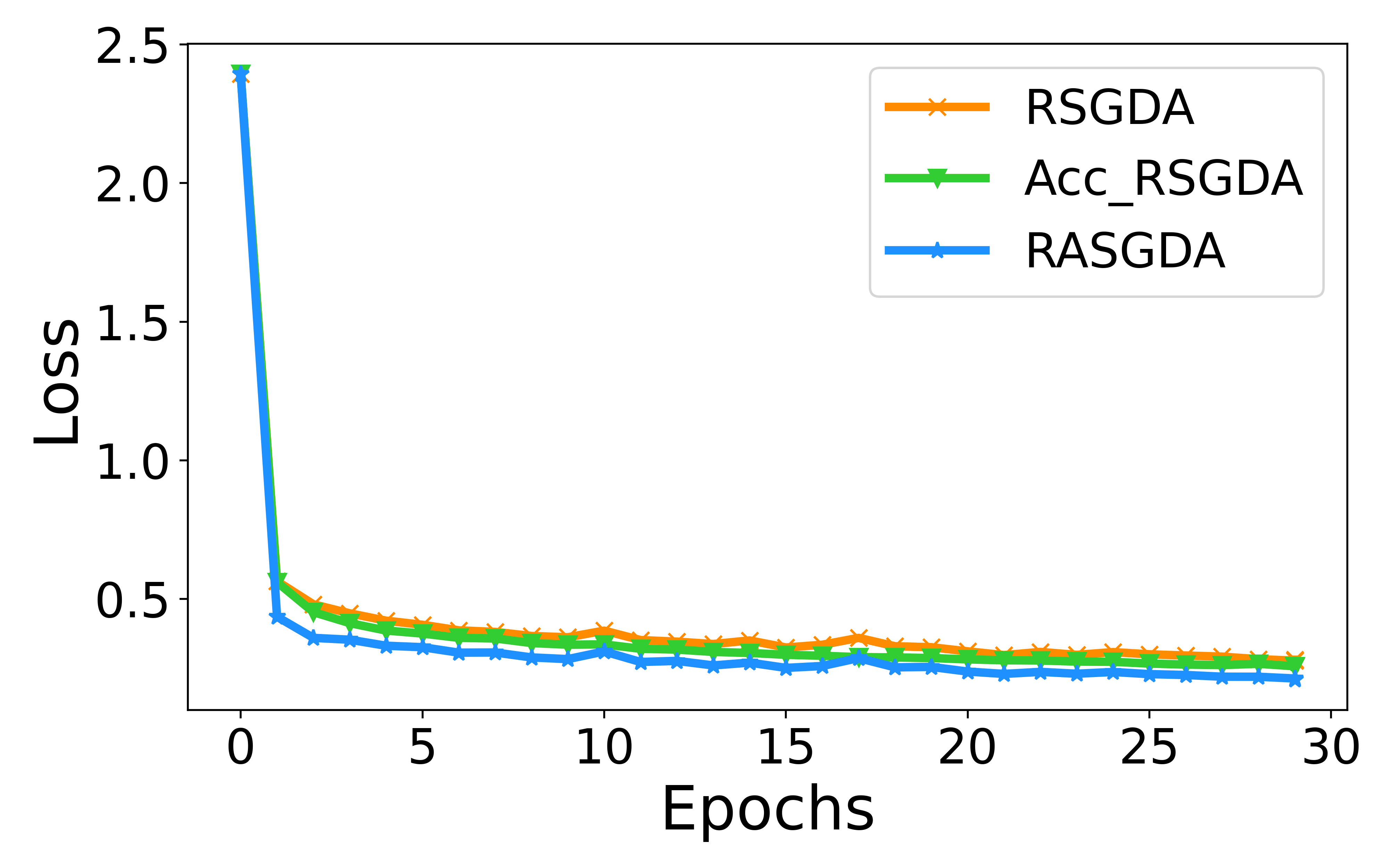}
        \caption{Loss against over epochs}
        \label{subfig.fashionmnist loss}
    \end{subfigure}
    \caption{Results of robust training of neural networks with orthonormal weights on the FashionMNIST dataset.}
    \label{fig.fashionmnist}
\end{figure}

\begin{figure}[htbp]
    \centering
    \begin{subfigure}[b]{0.32\textwidth}
        \includegraphics[width=\textwidth]{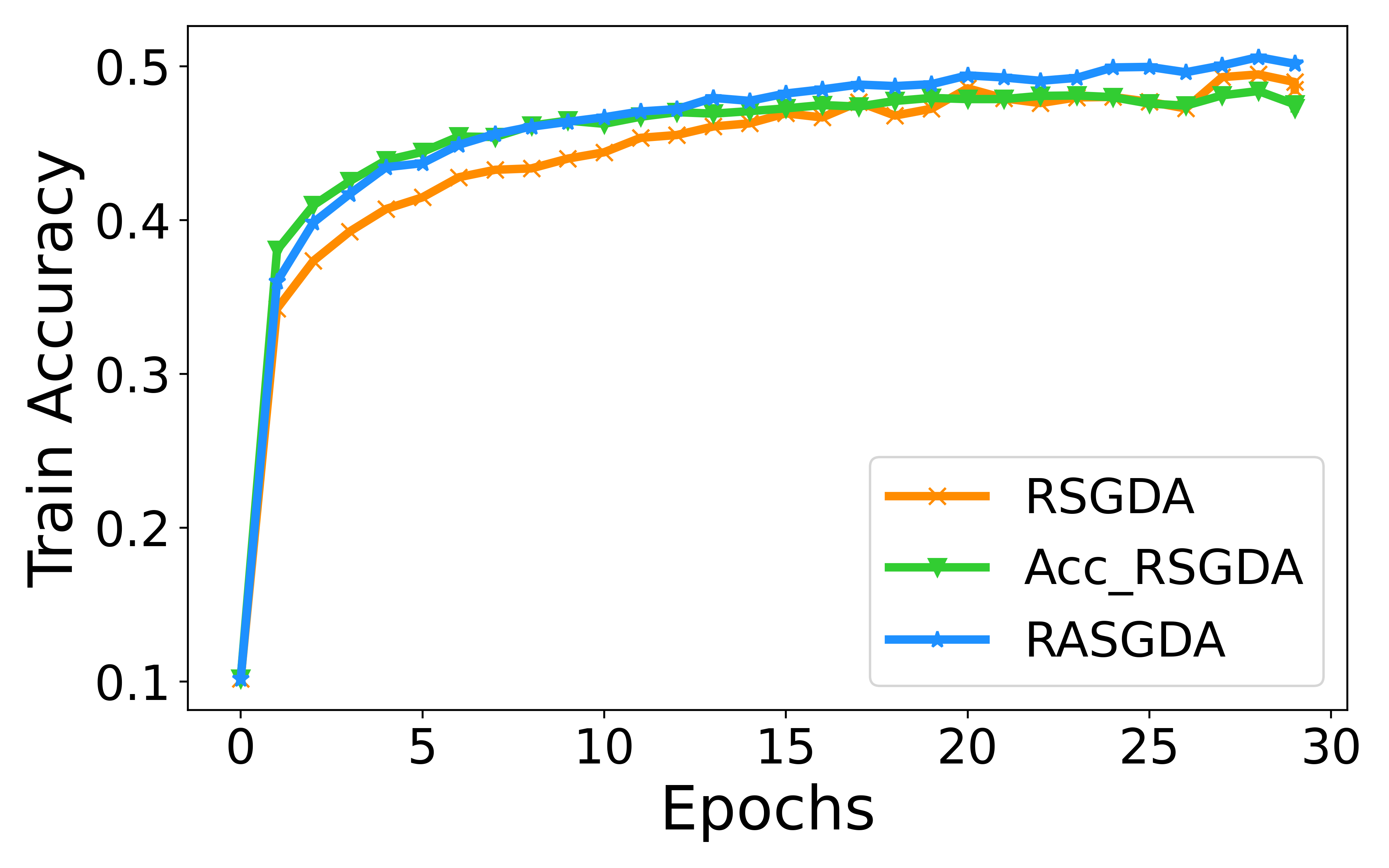}
        \caption{Train accuracy over epochs}
        \label{subfig.cifar10 train}
    \end{subfigure}
    \hfill
    \begin{subfigure}[b]{0.32\textwidth}
        \includegraphics[width=\textwidth]{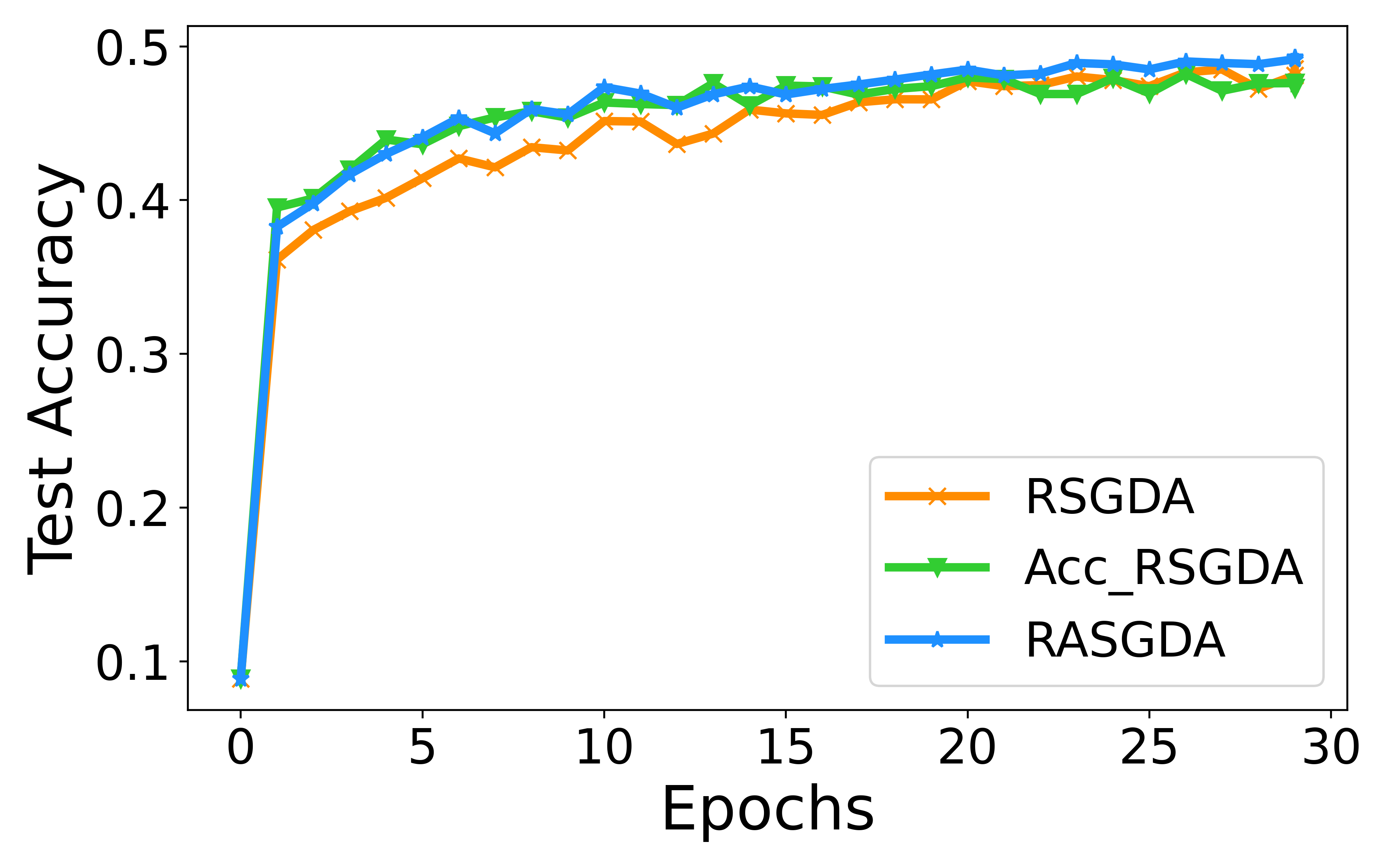}
        \caption{Test accuracy over epochs}
        \label{subfig.cifar10 test}
    \end{subfigure}
    \hfill
    \begin{subfigure}[b]{0.32\textwidth}
        \includegraphics[width=\textwidth]{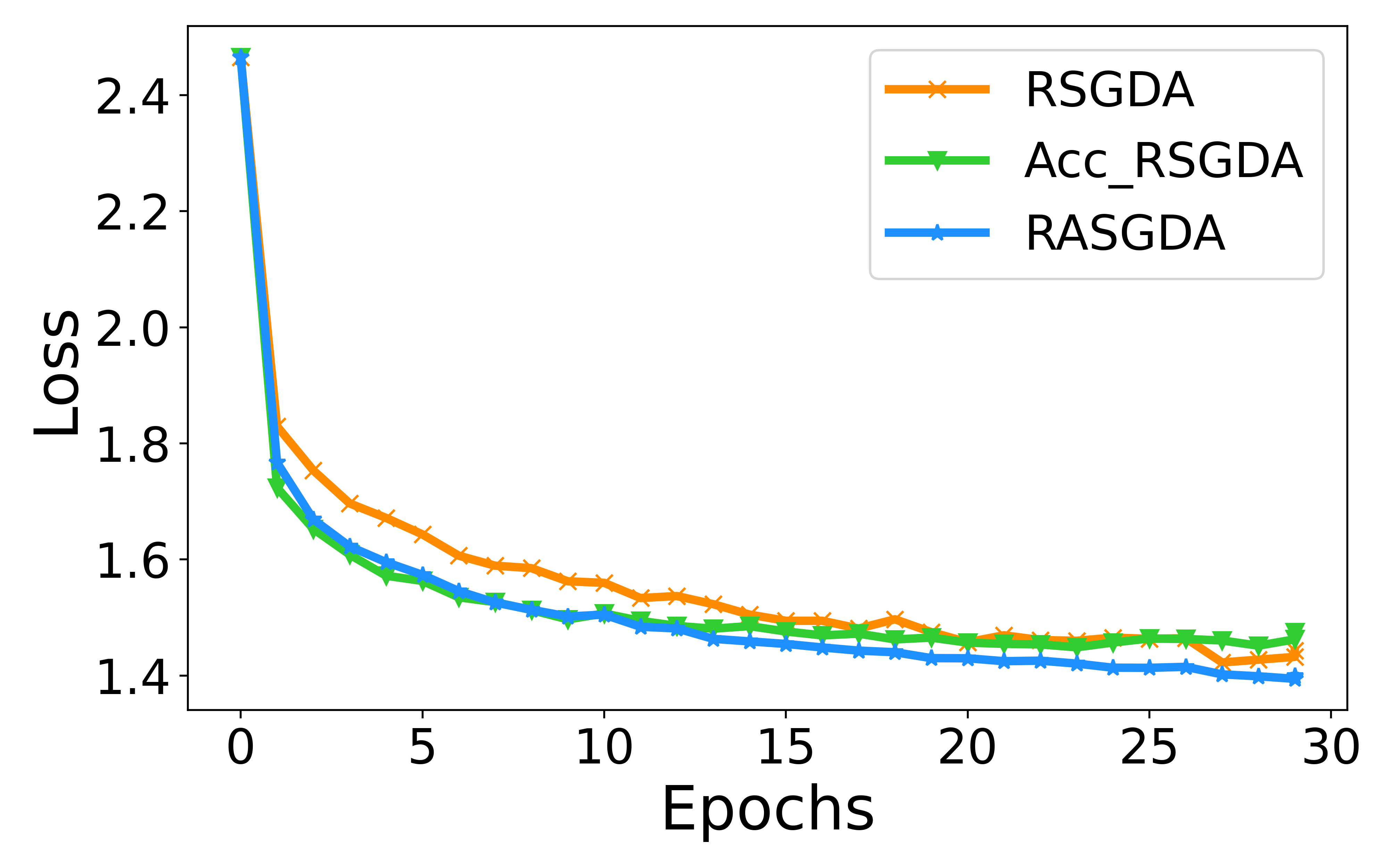}
        \caption{Loss against over epochs}
        \label{subfig.cifar10 loss}
    \end{subfigure}
    \caption{Results of robust training of neural networks with orthonormal weights on the CIFAR10 dataset.}
    \label{fig.cifar10}
\end{figure}

From the results, our method converges faster than the other two baseline methods in terms of the number of gradient calls. Moreover, it consistently achieves higher training and test accuracy throughout the optimization process, indicating better generalization performance. In addition, our method yields a noticeably lower training loss, suggesting a more effective minimization of the objective function. These results highlight the overall superiority of our approach in both convergence speed and final performance.

To evaluate the robustness of the models, we perform adversarial attacks on the test data using Projected Gradient Descent (PGD, 40 steps) attack \citep{kurakin2016adversarial} and Fast Gradient Sign Method (FGSM) attack \citep{goodfellow2014explaining} within norm balls of varying radii. It is worth noting that, although universal perturbations are applied during training, pointwise perturbations are used during testing. The classification accuracies of the models under different attack radii are reported in Tables 1, 2, and 3. From the experimental results, the proposed RSAGDA achieves higher accuracy than both RSGDA and Acc-RSGDA under all attack radii, highlighting the effectiveness of our approach in improving robustness against adversarial perturbations.

\begin{table}[h]
    \centering
    \caption{Classification accuracies ($\%$) on the MNIST dataset under PGD$^{40}$ and FGSM attacks with varying perturbation radii $r$, along with the natural test accuracy.}
    \label{table.minist}
    \resizebox{\textwidth}{!}{
    \begin{tabular}{c|c|ccccc|ccccc}
        \hline
        \multirow{2}{*}{Methods} & \multirow{2}{*}{Natural} 
        & \multicolumn{5}{c|}{PGD$^{40}$} 
        & \multicolumn{5}{c}{FGSM} \\
        \cline{3-12}
        & & $r=0.2$ & $r=0.4$ & $r=0.6$ & $r=0.8$ & $r=1.0$  
        & $r=0.2$ & $r=0.4$ & $r=0.6$ & $r=0.8$ & $r=1.0$ \\
        \hline
        RSGDA     & 97.13\% & 96.18\% & 95.08\% & 93.66\% & 91.60\% & 89.49\% & 96.19\% & 95.08\% & 93.80\% & 91.87\% & 89.97\%\\
        Acc-RSGDA & 96.77\% & 95.82\% & 94.47\% & 93.13\% & 91.45\% & 89.25\% & 95.82\% & 94.49\% & 93.21\% & 91.67\% & 89.66\%\\
        RSAGDA   & 97.66\% & \textbf{97.05\%} & \textbf{96.16\%} & \textbf{94.70\%} & \textbf{93.34\%} & \textbf{90.96\%} & \textbf{97.06\%} & \textbf{96.21\%} & \textbf{94.82\%} & \textbf{93.59\%} & \textbf{91.37\%}\\
        \hline
    \end{tabular}
    }
\end{table}

\begin{table}[h]
    \centering
    \caption{Classification accuracies ($\%$) on the FashionMNIST dataset under PGD$^{40}$ and FGSM attacks with varying perturbation radii $r$, along with the natural test accuracy.}
    \label{table.fashionminist}
    \resizebox{\textwidth}{!}{
    \begin{tabular}{c|c|ccccc|ccccc}
        \hline
        \multirow{2}{*}{Methods} & \multirow{2}{*}{Nat. Img.} 
        & \multicolumn{5}{c|}{PGD$^{40}$} 
        & \multicolumn{5}{c}{FGSM} \\
        \cline{3-12}
        & & $r=0.1$ & $r=0.2$ & $r=0.3$ & $r=0.4$ 
        & $r=0.5$ & $r=0.1$ & $r=0.2$ & $r=0.3$ & $r=0.4$ 
        & $r=0.5$ \\
        \hline
        RSGDA     & 87.16\% & 85.55\% & 83.72\% & 81.55\% & 79.72\% & 77.81\% & 85.56\% & 83.74\% & 81.58\% & 79.85\% & 77.93\%\\
        Acc-RSGDA & 87.71\% & 86.05\% & 84.22\% & 81.97\% & 79.99\% & 77.90\% & 86.06\% & 84.27\% & 82.02\% & 80.21\% & 78.10\%\\
        RSAGDA   & 88.78\% & \textbf{86.97\%} & \textbf{84.97\%} & \textbf{83.04\%} & \textbf{80.68\%} & \textbf{78.29\%} & \textbf{86.97\%} & \textbf{85.01\%} & \textbf{83.11\%} & \textbf{80.79\%} & \textbf{78.58\%}\\
        \hline
    \end{tabular}
    }
\end{table}

\begin{table}[h]
    \centering
    \caption{Classification accuracies ($\%$) on the CIFAR10 dataset under PGD$^{40}$ and FGSM attacks with varying perturbation radii $r$, along with the natural test accuracy.}
    \label{table.cifar10}
    \resizebox{\textwidth}{!}{
    \begin{tabular}{c|c|ccccc|ccccc}
        \hline
        \multirow{2}{*}{Methods} & \multirow{2}{*}{Nat. Img.} 
        & \multicolumn{5}{c|}{PGD$^{40}$} 
        & \multicolumn{5}{c}{FGSM} \\
        \cline{3-12}
        & & $r=0.05$ & $r=0.10$ & $r=0.15$ & $r=0.20$ & $r=0.25$
        & $r=0.05$ & $r=0.10$ & $r=0.15$ & $r=0.20$ & $r=0.25$\\
        \hline
        RSGDA     & 48.50\% & 47.00\% & 45.70\% & 44.38\% & 43.14\% & 41.75\% & 46.97\% & 45.68\% & 44.42\% & 43.19\% & 41.81\%\\
        Acc-RSGDA & 47.31\% & 46.10\% & 44.99\% & 43.89\% & 42.80\% & 41.64\% & 46.08\% & 44.98\% & 43.91\% & 42.86\% & 41.67\%\\
        RSAGDA   & 49.30\% & \textbf{47.95\%} & \textbf{46.64\%} & \textbf{45.31\%} & \textbf{44.17\%} & \textbf{42.86\%} & \textbf{47.94\%} & \textbf{46.63\%} & \textbf{45.34\%} & \textbf{44.21\%} & \textbf{42.91\%}\\
        \hline
    \end{tabular}
    }
\end{table}

\section{Conclusion}
In this work, we proposed two adaptive methods for Riemannian minimax optimization that eliminate the need for problem-specific parameter tuning. For the deterministic setting, we developed Riemannian adaptive gradient descent ascent with an iteration complexity of $\mathcal{O}(\epsilon^{-2})$. For the stochastic setting, we proposed Riemannian stochastic adaptive gradient descent ascent , achieving $\mathcal{O}(\epsilon^{-6})$ complexity, further improved to $\mathcal{O}(\epsilon^{-4})$ under second-order smoothness. Numerical experiments validate the theory and demonstrate the practical effectiveness of our methods. 

Looking ahead, several promising research directions remain. First, it would be valuable to explore variance-reduced \citep{sato2019riemannian, zhang2016riemannian, han2021improved} adaptive methods to further accelerate convergence in stochastic settings. Second, extending our methods to handle non-smooth objectives \citep{hosseini2013nonsmooth, wang2022riemannian} or constraints over product manifolds \citep{becigneul2018riemannian} could broaden their applicability. Finally, integrating these adaptive approaches into large-scale machine learning systems, such as federated learning \citep{wang2025federated, li2022federated} also presents an interesting and promising direction for future research.

\clearpage

\clearpage
\bibliographystyle{abbrvnat}
\bibliography{main}

\newpage
\appendix
\section{Useful lemmas}
\begin{lemma}[\rm Lemma 2 in \cite{yang2022nest}]
\label{le.adaptive bound}
    Let $a_1, \ldots, a_T$ be a sequence of non-negative real numbers, $a_1 > 0$ and $0 < \alpha < 1$. Then we have
\[
\left( \sum_{t=1}^{T} a_t \right)^{1-\alpha}
\le \sum_{t=1}^{T} \frac{a_t}{\left( \sum_{k=1}^{t} a_k \right)^{\alpha}}
\le \frac{1}{1-\alpha} \left( \sum_{t=1}^{T} a_t \right)^{1-\alpha}.
\]

When $\alpha = 1$, we have
\[
\sum_{t=1}^{T} \frac{a_t}{\left( \sum_{k=1}^{t} a_k \right)^{\alpha}} \le 1 + \log\left( \frac{\sum_{t=1}^{T} a_t}{a_1} \right).
\]
\end{lemma}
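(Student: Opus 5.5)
The statement is Lemma~\ref{le.adaptive bound}, a purely scalar inequality. I plan to prove it by comparing each summand with an integral, with no geometry involved. Write $S_t := \sum_{k=1}^{t} a_k$ and $S_0 := 0$. Then $S_t$ is nondecreasing, $S_1 = a_1 > 0$, and $a_t = S_t - S_{t-1}$. Every summand has the form $(S_t - S_{t-1})/S_t^{\alpha}$, so both bounds reduce to estimating this increment against the function $s \mapsto s^{-\alpha}$ on $[S_{t-1}, S_t]$.

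\textbf{Lower bound.} Since $S_k \le S_T$ for every $k \le T$, each summand satisfies $a_t/S_t^{\alpha} \ge a_t/S_T^{\alpha}$. Summing over $t$ gives
\[
\sum_{t=1}^{T} \frac{a_t}{S_t^{\alpha}} \ge \frac{S_T}{S_T^{\alpha}} = S_T^{1-\alpha}.
\]
This uses only $S_T > 0$, which holds because $a_1 > 0$.

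\textbf{Upper bound for $0<\alpha<1$.} The function $s \mapsto s^{-\alpha}$ is decreasing on $(0,\infty)$. For $s \in [S_{t-1}, S_t]$ we therefore have $s^{-\alpha} \ge S_t^{-\alpha}$, and hence
\[
\frac{S_t - S_{t-1}}{S_t^{\alpha}} \le \int_{S_{t-1}}^{S_t} s^{-\alpha}\,ds .
\]
When $t = 1$ the interval is $[0, S_1]$. The integral is still finite there because $\alpha < 1$, so the inequality continues to hold. Summing over $t$ telescopes the integrals into $\int_0^{S_T} s^{-\alpha}\,ds = S_T^{1-\alpha}/(1-\alpha)$. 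This is exactly the claimed upper bound.

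\textbf{Case $\alpha=1$.} Here $\int_0 s^{-1}\,ds$ diverges, so the first term has to be split off. It contributes exactly $a_1/S_1 = 1$. For $t \ge 2$ we have $S_{t-1} \ge S_1 > 0$, and the same comparison gives
\[
\frac{S_t - S_{t-1}}{S_t} \le \int_{S_{t-1}}^{S_t} \frac{ds}{s} = \log S_t - \log S_{t-1}.
\]
Telescoping these terms over $t = 2, \dots, T$ yields $\log(S_T/S_1) = \log\bigl(S_T/a_1\bigr)$. The total is at most $1 + \log\bigl(S_T/a_1\bigr)$.

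The only delicate point is the $t=1$ term, where the integration interval begins at $0$. I handle it by integrability of $s^{-\alpha}$ near $0$ when $\alpha<1$, and by separating the first term when $\alpha=1$. Zero increments $a_t = 0$ contribute nothing to either side and need no special treatment. All other steps are routine.
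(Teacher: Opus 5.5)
Your proof is correct. The lower bound via $S_t \le S_T$, the comparison $\frac{S_t - S_{t-1}}{S_t^{\alpha}} \le \int_{S_{t-1}}^{S_t} s^{-\alpha}\,ds$ (using the convergent improper integral at $t=1$), and splitting off the first term when $\alpha=1$ are all handled properly. The paper gives no proof of its own and imports the lemma from \cite{yang2022nest}, where, as far as I recall, it is proved by this same sum-versus-integral comparison.
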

\begin{lemma}\label{le.lips smooth}
    Under Assumption \ref{ass.l-smooth}, \ref{ass.g-convex} and \ref{ass.optimal exist and stationary condition hold}, there exist constants ${\kappa}$, $L_\phi$ and $L_y$ such that
    \begin{itemize}
        \item [{\rm (1)}] $\dy(y^*(x_1),y^*(x_2))\le {\kappa}\dx(x_1,x_2)$.
        \item [{\rm (2)}]  $\|\operatorname{D}y^*(x)\|\le\kappa$.
        \item [{\rm (3)}] $\|\Gamma_{x_1}^{x_2}\grad \Phi(x_1)-\grad\Phi(x_2)\|\le L_{\phi}d(x_1,x_2)$.
    \end{itemize}
    where ${\kappa}=L_1/\mu$ and $L_{\Phi}=L_1+\kappa L_1$.
\end{lemma}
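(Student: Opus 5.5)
The plan is to prove the three items in order, with (1) carrying most of the work; (2) and (3) then follow from it together with the first-order optimality condition $\grad_y f(x,y^*(x))=0$. That condition holds because Assumption~\ref{ass.optimal exist and stationary condition hold} places $y^*(x)$ in the interior of $\cM_y$.

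\textbf{Item (1).} Fix $x_1,x_2$ and write $y_i^*=y^*(x_i)$. Apply Assumption~\ref{ass.g-convex} for the fixed point $x_2$ twice, once from $y_1^*$ to $y_2^*$ and once from $y_2^*$ to $y_1^*$. Adding the two inequalities cancels the function values and gives the strong monotonicity relation
\[
\mu\, \dy(y_1^*,y_2^*)^2 \le \langle \grad_y f(x_2,y_1^*), \Exp_{y_1^*}^{-1}(y_2^*)\rangle + \langle \grad_y f(x_2,y_2^*), \Exp_{y_2^*}^{-1}(y_1^*)\rangle .
\]
The second inner product vanishes by optimality of $y_2^*$. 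For the first, since $\grad_y f(x_1,y_1^*)=0$, we have
\[
\|\grad_y f(x_2,y_1^*)\| = \|\grad_y f(x_2,y_1^*) - \Gamma_{y_1^*}^{y_1^*}\grad_y f(x_1,y_1^*)\| .
\]
The parallel transport here is trivial because the $y$-argument is the same, and Assumption~\ref{ass.l-smooth} bounds the right-hand side by $L_1\dx(x_1,x_2)$. Cauchy--Schwarz then gives $\mu\,\dy^2 \le L_1\dx(x_1,x_2)\,\dy$. Dividing by $\dy(y_1^*,y_2^*)$ yields $\dy(y_1^*,y_2^*)\le \kappa\,\dx(x_1,x_2)$; if $\dy(y_1^*,y_2^*)=0$ the claim is trivial. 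Inverse exponential maps are well defined on $\cM_y$ because it is Hadamard (Assumption~\ref{ass.manifold}), and we use $\|\Exp^{-1}_{y}(z)\|=\dy(y,z)$.

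\textbf{Item (2).} I would deduce this from (1) rather than from the implicit function theorem, to avoid requiring second derivatives. For $u\in\cT_x\cM_x$ with $\|u\|=1$, let $c(s)=\Exp_x(su)$. Then $\operatorname{D}y^*(x)[u]$ is the velocity of $s\mapsto y^*(c(s))$ at $s=0$, and its norm equals $\lim_{s\to0}\dy(y^*(c(s)),y^*(x))/s$. By (1) this is at most $\kappa\lim_{s\to 0}\dx(c(s),x)/s=\kappa$.

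The step I expect to be the main obstacle is the existence of this derivative, i.e.\ the differentiability of $y^*$. My first attempt is the implicit function theorem applied to $F(x,y)=\grad_y f(x,y)=0$: the Hessian $\hess_y f$ is negative definite with $\|(\hess_y f)^{-1}\|\le 1/\mu$ by strong concavity. As a cross-check, this route also gives $\operatorname{D}y^*=-(\hess_y f)^{-1}\grad^2_{yx}f$, whose norm is at most $L_1/\mu=\kappa$ since $\|\grad^2_{yx}f\|\le L_1$ by Assumption~\ref{ass.l-smooth}. If differentiability cannot be secured under the stated assumptions, (2) should be read as the local Lipschitz-modulus statement, which (1) already supplies.

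\textbf{Item (3).} Use the envelope (Danskin) formula $\grad\Phi(x)=\grad_x f(x,y^*(x))$. This follows from the chain rule on $\Phi(x)=f(x,y^*(x))$ together with $\grad_y f(x,y^*(x))=0$; alternatively, use Danskin's theorem with the unique maximizer when $y^*$ is merely continuous. Then Assumption~\ref{ass.l-smooth} and item (1) give
\[
\|\Gamma_{x_1}^{x_2}\grad\Phi(x_1)-\grad\Phi(x_2)\| \le L_1\big(\dx(x_1,x_2)+\dy(y_1^*,y_2^*)\big)\le (L_1+\kappa L_1)\,\dx(x_1,x_2),
\]
which is the claim with $L_\Phi=L_1+\kappa L_1$.
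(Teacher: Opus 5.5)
Your proposal is correct and follows the paper's proof essentially step for step. In item (1) you add the two strong-concavity inequalities, drop the optimality term, and bound the remaining gradient via Assumption~\ref{ass.l-smooth}; you use $f(x_2,\cdot)$ and $y^*(x_1)$ where the paper uses the mirror image $f(x_1,\cdot)$ and $y^*(x_2)$. Item (2) is the same limit of $\dy(y^*(\Exp_x(su)),y^*(x))/s$ controlled by (1), and item (3) comes from $\grad\Phi(x)=\grad_x f(x,y^*(x))$ together with (1).

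Your caveat is also well taken: Assumptions~\ref{ass.l-smooth}--\ref{ass.optimal exist and stationary condition hold} alone do not imply that $y^*$ is differentiable, and the paper uses $\operatorname{D}y^*$ without establishing it.
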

\begin{proof}
    For any $x_1,x_2\in \cM_x$, the optimality of $y^*(x_1)$ and $y^*(x_2)$ yields
    \[
    \inner{\Exp_{y^*(x_1)}^{-1}(y)}{\grad_y f(x_1, y^*(x_1))}\leq 0, \quad \forall y \in \cM_y,
    \]
    \[
    \inner{\Exp_{y^*(x_2)}^{-1}(y)}{\grad_y f(x_2, y^*(x_2))}\leq 0, \quad \forall y \in \cM_y.
    \]
    By letting \( y = y^*(x_2) \) in the first inequality and \( y = y^*(x_1) \) in second inequality and summing the resulting two inequalities, it follows 
    \begin{align}\label{eq.optimality corollary}
        \inner{\Exp_{y^*(x_1)}^{-1}(y^*(x_2))}{\grad_y f(x_1, y^*(x_1))-\Gamma_{y^*(x_2)}^{y^*(x_1)}\grad_y f(x_2, y^*(x_2))}  \leq 0. 
    \end{align}
Recall that \( f(x_1, \cdot) \) is $\mu$-strongly concave, we have  
\[
    f\left(x_1,y^*(x_1)\right) \leq f(x_1,y^*(x_2)) + \langle \grad_y f(x_1,y^*(x_2)), \Exp_{y^*(x_2)}^{-1}(y^*(x_1)) \rangle - \frac{\mu}{2} d^2_{\cM_y}(y^*(x_1),y^*(x_2))
\]
\[
    f\left(x_1,y^*(x_2)\right) \leq f(x_1,y^*(x_1)) + \langle \grad_y f(x_1,y^*(x_1)), \Exp_{y^*(x_1)}^{-1}(y^*(x_2)) \rangle - \frac{\mu}{2} d^2_{\cM_y}(y^*(x_1),y^*(x_2))
\]
Combining the above two inequalities yeilds
\begin{align}\label{eq.strongly concave corollary}
    \inner{\Exp_{y^*(x_1)}^{-1}(y^*(x_2))}{\Gamma_{y^*(x_2)}^{y^*(x_1)}\grad_y f(x_1, y^*(x_2))-\grad_y f(x_1, y^*(x_1))}+\mu d^2_{\cM_y}(y^*(x_1),y^*(x_2))\le 0.
\end{align}
Then we conclude the desired result by combining \eqref{eq.optimality corollary} and \eqref{eq.strongly concave corollary} with $L_1$-smoothness of $f$, i.e.,  
\begin{align*}
    \mu d^2(y^*(x_2), y^*(x_1)) \leq &\inner{\Exp_{y^*(x_1)}^{-1}(y^*(x_2))}{\Gamma_{y^*(x_2)}^{y^*(x_1)}\grad_y f(x_2, y^*(x_2)) - \Gamma_{y^*(x_2)}^{y^*(x_1)}\grad_y f(x_1, y^*(x_2))}\\
    \le &\|\Exp_{y^*(x_1)}^{-1}(y^*(x_2))\|\|\grad_y f(x_2, y^*(x_2)) - \grad_y f(x_1, y^*(x_2))\|\\
    \leq &L_1 d(y^*(x_2), y^*(x_1))d(x_1, x_2).
\end{align*}
Therefore, we have 
\begin{align}\label{eq.y* Lips}
    d(y^*(x_1), y^*(x_2))\le \kappa d(x_1, x_2).
\end{align}
To prove $\rm (2)$, we have for any $u\in \operatorname{T}_x\cM_x$ 
\begin{align*}
    \|\operatorname{D}y^*(x)(u)\|=\lim_{t\rightarrow 0}\frac{d(y^*(x),y^*(\Exp_x(tu)))}{|t|}\le \lim_{t\rightarrow 0}\frac{\|tu\|}{|t|}=\kappa\|u\|.
\end{align*}
Moreover, from $\grad \Phi(x) = \grad_x f(x, y^*(x))$, it holds
\begin{align*}
    \|\Gamma_{x_1}^{x_2}\grad \Phi(x_1) - \grad \Phi(x_2)\| = &\|\Gamma_{x_1}^{x_2}\grad_x f(x_1, y^*(x_1)) - \grad_x f(x_2, y^*(x_2))\|\\
    \leq &L_1 (d(x_1, x_2) + d(y^*(x_1), y^*(x_2)))\\
    \le &(L_1+L_1 \kappa)d(x_1,x_2).
\end{align*}
where the second inequality is from Assumption \ref{ass.l-smooth} and the last inequality is from \eqref{eq.y* Lips}.
\end{proof}
\begin{lemma}\label{le.l smooth + mu convex}
    If function $g(x):\cM\rightarrow\R$ is $L_1$-smooth and $\mu$ $\mu$-geodesically convex, then we have, for any $x_1,x_2\in\cM$,
    \[
        \inner{\Gamma_{x_2}^{x_1}\grad g(x_2)-\grad g(x_1)}{\Exp^{-1}_{x_1}(x_2)}\ge \frac{\mu L_1}{L_1 + \mu}d^2(x_1,x_2)+\frac{1}{L_1+\mu}\|\Gamma_{x_2}^{x_1}\grad g(x_2)-\grad g(x_1)\|^2.
    \]
\end{lemma}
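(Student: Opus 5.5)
The plan is to transplant the classical Euclidean proof of the ``strong-convexity/smoothness co-coercivity'' inequality (Nesterov, Thm.~2.1.12), carrying every vector back to $\cT_{x_1}\cM$ by parallel transport and replacing $x_2-x_1$ with $\Exp^{-1}_{x_1}(x_2)$. Throughout I write $u:=\Exp^{-1}_{x_1}(x_2)$, so $d(x_1,x_2)=\|u\|$, and $w:=\Gamma_{x_2}^{x_1}\grad g(x_2)-\grad g(x_1)$. The target inequality is
\[
\inner{w}{u}\ge \frac{\mu L_1}{L_1+\mu}\|u\|^2+\frac{1}{L_1+\mu}\|w\|^2 .
\]
The Euclidean argument has two stages. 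First, one shows that the auxiliary function $h=g-\frac{\mu}{2}\|\cdot\|^2$ is convex and $(L_1-\mu)$-smooth. Second, one applies to $h$ the plain co-coercivity inequality $\inner{\nabla h(x_2)-\nabla h(x_1)}{x_2-x_1}\ge \frac{1}{L_1-\mu}\|\nabla h(x_2)-\nabla h(x_1)\|^2$. Expanding $\nabla h=\nabla g-\mu(\cdot)$ and rearranging then gives exactly the displayed bound. I will reproduce this algebra with $u$ and $w$ in place of $x_2-x_1$ and $\nabla g(x_2)-\nabla g(x_1)$.

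The steps, in order, are as follows.

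First, from the $L_1$-Lipschitz gradient (in the parallel-transport sense of Assumption~\ref{ass.l-smooth}) derive the quadratic upper model
\[
g(\Exp_x(v))\le g(x)+\inner{\grad g(x)}{v}+\frac{L_1}{2}\|v\|^2 .
\]
This follows by integrating along $s\mapsto\Exp_x(sv)$, using that parallel transport along a geodesic preserves $\dot\gamma$.

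Second, from $\mu$-geodesic convexity obtain the matching lower model, i.e.\ the inequality of Assumption~\ref{ass.g-convex} with the sign flipped.

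Third, apply these two models along the single geodesic from $x_1$ to $x_2$, and also at the auxiliary points $\Exp_{x_i}\bigl(-\tfrac{1}{L_1-\mu}\,\cdot\bigr)$ that appear in the Euclidean co-coercivity proof. This yields the two ``interpolation'' inequalities
\[
g(x_2)\ge g(x_1)+\inner{\grad g(x_1)}{u}+\frac{1}{2(L_1-\mu)}\bigl\|w-\mu u\bigr\|^2+\frac{\mu}{2}\|u\|^2,
\]
together with its counterpart with the roles of $x_1$ and $x_2$ swapped (written at $x_2$ and transported back to $x_1$).

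Fourth, add the two interpolation inequalities; the function values cancel. Then expand $\|w-\mu u\|^2=\|w\|^2-2\mu\inner{w}{u}+\mu^2\|u\|^2$ and collect terms. Multiplying by $(L_1-\mu)/L_1$ and dividing by $(L_1+\mu)/L_1$ gives the stated constants $\frac{\mu L_1}{L_1+\mu}$ and $\frac{1}{L_1+\mu}$. The degenerate case $L_1=\mu$ is handled separately: there the inequality follows directly from summing the two strong-convexity inequalities and using $\|w\|\le L_1\|u\|$.

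The main obstacle is the third step. In Euclidean space, the auxiliary point $z=x_2-\frac{1}{L_1-\mu}(\nabla h(x_2)-\nabla h(x_1))$ is reached from both $x_1$ and $x_2$ by straight lines, and the squared-distance function has Hessian exactly $I$. On a manifold, $\Exp^{-1}_{x_1}(z)\neq u+\Exp^{-1}_{x_2}(z)$ transported, and $\frac{\mu}{2}d^2(\cdot,x_1)$ is not exactly $\mu$-strongly convex with $\mu$-Lipschitz gradient; its Hessian is squeezed between $I$ and $\zeta I$ on a Hadamard manifold. I would first try to avoid subtracting $\frac{\mu}{2}d^2$ altogether. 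Instead, I would work entirely in $\cT_{x_1}\cM$ via the pulled-back function $\hat g(v)=g(\Exp_{x_1}(v))$ restricted to the line $\{s u\}$, combined with the gradient-difference vector $w$. Along that line, both the one-dimensional smoothness and strong-convexity constants are exactly $L_1$ and $\mu$, because the geodesic is a straight line in the pullback. The component of $w$ orthogonal to $u$ would then be controlled by the Lipschitz bound $\|w\|\le L_1\|u\|$.

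If that splitting does not close with the exact constants, I would fall back on the weaker but sufficient route. That route invokes the known Riemannian version of this inequality for geodesically strongly convex smooth functions on Hadamard manifolds (as used in \citet{zhang2016first,han2024framework}). The same constants hold there because all comparisons are made at $x_1$ after parallel transport, and the curvature factor $\zeta$ only enters through Assumption~\ref{ass.manifold} elsewhere in the analysis.
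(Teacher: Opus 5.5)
Your Steps 1--4 are the route the paper takes. It sets $\Psi(x)=g(x)-\frac{\mu}{2}d^2(x,x_2)$ and tilts it to $\psi(x)=\Psi(x)-\inner{\grad\Psi(x_1)}{\Exp^{-1}_{x_1}(x)}$. It then uses the descent lemma for $\psi$ together with $x_1=\arg\min\psi$, and finally swaps $x_1,x_2$ and sums. You are right that Step 3 does not transfer as is. The paper gets through it only by asserting flat-space facts: $\grad\psi(x)=\grad\Psi(x)-\Gamma_{x_1}^{x}\grad\Psi(x_1)$, $\Gamma_{x_2}^{x}\Gamma_{x_1}^{x_2}=\Gamma_{x_1}^{x}$, and $x_1=\arg\min\psi$. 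Following it would therefore not close your gap. However, your proposal does not replace that step with anything that works.

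Your primary fix cannot close. Write $r=\|u\|$, $a=\inner{w}{u}/r$, and $w_\perp$ for the component of $w$ orthogonal to $u$. The target is then equivalent to $\|w_\perp\|^2\le(a-\mu r)(L_1r-a)$. The restriction to the geodesic only yields $a\in[\mu r,L_1r]$. The Lipschitz bound only yields $\|w_\perp\|^2\le(L_1r-a)(L_1r+a)$, which is strictly weaker. For example, at $a=\mu r$ the target forces $w_\perp=0$, while your two facts allow $\|w_\perp\|=\sqrt{L_1^2-\mu^2}\,r$. The fallback of citing a Riemannian version does not prove the lemma. Its stated justification, that all comparisons are made at $x_1$ after transport, is precisely the point that needs proof.

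The missing idea is that $w$ is the image of $u$ under a self-adjoint operator with spectrum in $[\mu,L_1]$. This is the symmetry information that the Lipschitz bound throws away. Let $\gamma(s)=\Exp_{x_1}(su)$ and let $P_s$ be parallel transport along $\gamma$ from $x_1$ to $\gamma(s)$, so that $\dot\gamma(s)=P_su$. Differentiating $W(s):=P_s^{-1}\grad g(\gamma(s))$ gives $W'(s)=P_s^{-1}\hess g(\gamma(s))P_s\,u$, hence
\[
w=W(1)-W(0)=Au,\qquad A:=\int_0^1 P_s^{-1}\,\hess g(\gamma(s))\,P_s\,ds .
\]
Here $A$ is self-adjoint on $\cT_{x_1}\cM$ with $\mu I\preceq A\preceq L_1 I$. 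Geodesic strong convexity gives $\hess g\succeq\mu I$, and the transported Lipschitz condition gives $\|\hess g\|\le L_1$ by a limit along geodesics. Since $(\lambda-\mu)(L_1-\lambda)\ge 0$ on the spectrum of $A$, we get $\inner{(A-\mu I)u}{(L_1 I-A)u}\ge 0$. This expands to $(L_1+\mu)\inner{w}{u}\ge \mu L_1\|u\|^2+\|w\|^2$, which is exactly the lemma. It needs no curvature constant, no auxiliary points, and no separate case $L_1=\mu$. The argument uses $g\in C^2$ and takes $\Gamma_{x_2}^{x_1}$ along the geodesic defining $\Exp^{-1}_{x_1}(x_2)$; both are implicit in the paper's setting.
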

\begin{proof}
    For fixed $x_2\in\cM$, let $\Phi(x)=g(x)-\frac{\mu}{2}d^2(x,x_2)$. From $\grad \Psi(x)=\grad g(x)+\mu \Exp^{-1}_{x}(x_2)$, it follows 
    \begin{align*}
        &\inner{\grad \Psi(x)-\Gamma_{x_2}^{x}\grad \Psi(x_2)}{-\Exp^{-1}_{x}(x_2)}\\
        =&\inner{\grad g(x)-\Gamma_{x_2}^{x}\grad g(x_2)}{-\Exp^{-1}_{x}(x_2)}+\inner{\mu \Exp^{-1}_{x}(x_2)}{-\Exp^{-1}_{x}(x_2)}\\
        \le & (L_2-\mu)d^2(x,x_2),
    \end{align*}
    where the inequality is from $L_1$-smooth of $g(\cdot)$.
    Moreover, we have 
    $$
        \|\grad \Psi(x)-\Gamma_{x_2}^{x}\grad \Psi(x_2)\|\le (L_1-\mu)d(x,x_2).
    $$
    Let $\psi(x)=\Psi(x)-\inner{\grad \Psi(x_1)}{\Exp^{-1}_{x_1}(x)}$. From $\grad \psi(x)=\grad \Psi(x)-\Gamma_{x_1}^{x} \grad \Psi(x_1)$, it holds
    \begin{align*}
        \|\grad \psi(x)-\Gamma_{x_2}^{x}\grad \psi(x_2)\|=&\|\grad \Psi(x)-\Gamma_{x_1}^{x} \grad \Psi(x_1)-\Gamma_{x_2}^{x}\grad \Psi(x_2)+\Gamma_{x_2}^{x}\Gamma_{x_1}^{x_2}\grad \Psi(x_1)\|\\
        \le &\|\grad \Psi(x)-\Gamma_{x_2}^{x} \grad \Psi(x_2)\| \le (L_1-\mu)d(x,x_2).
    \end{align*}
    Then it holds
    \begin{align*}
        \psi(x)-\psi(x_2)-\inner{\grad \psi(x_2)}{\Exp^{-1}_{x_2}(x)}\le \frac{L_1-\mu}{2}d^2(x,x_2).
    \end{align*}
    From the above inequality, we have 
    \begin{align*}
        \psi(x^*)=&\min_{x\in\cM}\psi(x)\le \min_{x\in\cM}\left\{\psi(x_2)+\inner{\grad \psi(x_2)}{\Exp^{-1}_{x_2}(x)}+\frac{L_1-\mu}{2}d^2(x,x_2)\right\}\\
        &\le \psi(x_2)-\frac{1}{2(L_1-\mu)}\|\grad \psi(x_2)\|^2.
    \end{align*}
    From $x^*=\arg\min_{x\in\cM}\psi(x)=x_1$ and $\psi(x)=\Psi(x)-\inner{\grad \Psi(x_1)}{\Exp^{-1}_{x_1}(x)}$, we can get
    \begin{align*}
        \Psi(x_2)-\Psi(x_1)-\inner{\grad \Psi(x_1)}{\Exp^{-1}_{x_1}(x_2)}\ge \frac{1}{2(L_1-\mu)}\|\grad \psi(x_2)\|^2.
    \end{align*}
    Bringing $\Phi(x)=g(x)-\frac{\mu}{2}d^2(x,x_2)$ to the above inequality yields
    \begin{align*}
        &g(x_2)-g(x_1)+\frac{\mu}{2}d^2(x_1,x_2)-\inner{\grad g(x_1)+\mu \Exp^{-1}_{x_1}(x_2)}{\Exp^{-1}_{x_1}(x_2)}\\
        \ge &\frac{1}{2(L_1 - \mu)}\|\grad g(x_2)-\Gamma_{x_1}^{x_2}(\grad g(x_1)+\mu \Exp^{-1}_{x_1}(x_2))\|^2.
    \end{align*}
    Because we can choose any $x_1,x_2\in\cM$, the above inequality also holds if we exchange the position of $x_1$ and $x_2$. Therefore, we have 
    \begin{align*}
        &g(x_1)-g(x_2)+\frac{\mu}{2}d^2(x_1,x_2)-\inner{\grad g(x_2)+\mu \Exp^{-1}_{x_2}(x_1)}{\Exp^{-1}_{x_2}(x_1)}\\
        \ge &\frac{1}{2(L_1 - \mu)}\|\grad g(x_1)-\Gamma_{x_2}^{x_1}(\grad g(x_2)+\mu \Exp^{-1}_{x_2}(x_1))\|^2.
    \end{align*}
    Summing the above two inequalities and rearranging the new inequalities derives the desired result.
\end{proof}
\begin{lemma}[Trigonometric distance bound 
\citep{petersen2006riemannian, zhang2016riemannian, zhang2016first}
]. \label{le.trigonometric distance bound}
Let $x_a, x_b, x_c \in \mathcal{U} \subseteq \mathcal{M}$ and denote 
$a = d(x_b, x_c)$, $b = d(x_a, x_c)$ and $c = d(x_a, x_b)$ as the geodesic side lengths. Then,
\[
a^2 \leq \zeta b^2 + c^2 - 2 \langle \Exp^{-1}_{x_a}(x_b), \Exp^{-1}_{x_a}(x_c) \rangle_{x_a}
\]
where 
\[
\zeta = 
\begin{cases}
\dfrac{\sqrt{|\tau|} \bar{G}}{\tanh(\sqrt{|\tau|} \bar{G})} & \text{if } \tau < 0 \\
1 & \text{if } \tau \geq 0
\end{cases}
\]
Here, $\bar{G}$ denotes the diameter of $\mathcal{U}$ and $\tau$ denotes the lower bound of the sectional curvature of $\mathcal{U}$.
\end{lemma}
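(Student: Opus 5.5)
The plan is to avoid an explicit triangle comparison. Instead, I will differentiate the squared distance from $x_b$ along the geodesic joining $x_a$ to $x_c$ and bound its second derivative with the Hessian comparison theorem.

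\textbf{Setup.} Let $\gamma:[0,1]\to\mathcal{U}$ be the geodesic with $\gamma(0)=x_a$, $\gamma(1)=x_c$ and $\dot\gamma(0)=\Exp^{-1}_{x_a}(x_c)$, so that $\|\dot\gamma(t)\|\equiv b$. Define
\[
h(t)=\tfrac12 d^2(x_b,\gamma(t)).
\]
I assume, as in the setting where the lemma is used (the Hadamard manifold $\cM_y$ of Assumption~\ref{ass.manifold}), that $\mathcal{U}$ is uniquely geodesic, contains $\gamma$, and has diameter at most $\bar G$. Then $d^2(x_b,\cdot)$ is smooth along $\gamma$, since there are no cut points. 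The endpoint values are
\[
h(0)=\tfrac{c^2}{2},\qquad h(1)=\tfrac{a^2}{2}.
\]
Since the Riemannian gradient of $\tfrac12 d^2(x_b,\cdot)$ at $z$ is $-\Exp^{-1}_z(x_b)$, the first derivative at $0$ is
\[
h'(0)=-\langle \Exp^{-1}_{x_a}(x_b),\Exp^{-1}_{x_a}(x_c)\rangle_{x_a}.
\]

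\textbf{Second-order bound (the main step).} For $\tau<0$, I invoke the Hessian comparison theorem for the distance function under the lower sectional-curvature bound $\tau$. At any point $z$ with $r=d(x_b,z)$ it gives
\[
\hess\big(\tfrac12 d^2(x_b,\cdot)\big)(z)\preceq \frac{\sqrt{|\tau|}\,r}{\tanh(\sqrt{|\tau|}\,r)}\,\mathrm{id}.
\]
The function $s\mapsto s/\tanh s$ is increasing on $(0,\infty)$, and $r\le \bar G$ on $\mathcal{U}$. Because $\gamma$ is a geodesic, $h''(t)$ equals this Hessian applied to $\dot\gamma(t)$, so
\[
h''(t)\le \zeta\|\dot\gamma(t)\|^2=\zeta b^2\qquad\text{for all }t\in[0,1].
\]
Taylor's theorem with integral remainder then yields
\[
h(1)\le h(0)+h'(0)+\tfrac{\zeta}{2}b^2 .
\]
Multiplying by $2$ gives exactly
\[
a^2\le \zeta b^2+c^2-2\langle\Exp^{-1}_{x_a}(x_b),\Exp^{-1}_{x_a}(x_c)\rangle .
\]

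\textbf{Case $\tau\ge 0$.} A lower bound $\tau\ge0$ implies the lower bound $\tau'$ for every $\tau'<0$. Applying the previous case gives the inequality with $\zeta(\tau')=\sqrt{|\tau'|}\bar G/\tanh(\sqrt{|\tau'|}\bar G)$. Letting $\tau'\uparrow 0$ gives $\zeta(\tau')\to1$, which is the stated value.

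\textbf{Expected obstacle.} The main difficulty is justifying the Hessian comparison step. This requires two things. First, $x_b$ must have no conjugate or cut points along $\gamma$, which is automatic on a Hadamard manifold and is the regime in which the lemma is applied. Second, the comparison must be taken in the correct direction: a lower curvature bound yields an upper bound on the Hessian of the distance function, via Rauch comparison with the model space of constant curvature $\tau$, whose distance Hessian is $\sqrt{|\tau|}\,r\coth(\sqrt{|\tau|}\,r)$ in the directions orthogonal to the radial geodesic and $1$ in the radial direction. Since $s\coth s\ge 1$, the uniform bound $\zeta$ dominates both, so the estimate on $h''$ holds for every direction of $\dot\gamma(t)$.
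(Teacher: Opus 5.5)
Your argument is correct, but it takes a different route from the one the paper relies on. The paper gives no proof of its own; it imports the lemma from Petersen and Zhang--Sra. There, Toponogov's hinge comparison replaces the triangle by one in the model plane of constant curvature $\tau$ with the same sides $b,c$ and the same angle $A$ at $x_a$. The inequality is then checked from the hyperbolic law of cosines, and the inner-product form follows from $bc\cos A=\langle \Exp^{-1}_{x_a}(x_b),\Exp^{-1}_{x_a}(x_c)\rangle$.

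You instead Taylor-expand $h(t)=\tfrac12 d^2(x_b,\gamma(t))$ to second order and bound $h''$ with the Hessian comparison theorem. The comparison is used in the right direction: a lower curvature bound gives an upper Hessian bound. The monotonicity of $s\coth s$ is exploited correctly, and the limiting argument for $\tau\ge 0$ is fine; Hessian comparison with model curvature $0$ would also give it directly. Your route is shorter, self-contained, and produces the inner-product form directly with no model-space reduction.

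It costs two things. First, you need $d^2(x_b,\cdot)$ to be smooth along $[x_a,x_c]$. This comes from Cartan--Hadamard (or from working in a totally normal neighborhood), not from uniqueness of minimizing geodesics alone. Toponogov needs no smoothness and holds in any complete manifold with curvature bounded below.

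Second, your constant is governed by $\sup_t d(x_b,\gamma(t))$, which is why the diameter form is what you naturally get. On a Hadamard manifold, convexity of $d(x_b,\cdot)$ gives $\sup_t d(x_b,\gamma(t))\le\max\{a,c\}$, so you do not even need $\gamma\subset\mathcal{U}$. The comparison route instead yields a constant depending only on $c=d(x_a,x_b)$. That is the form actually invoked through Assumption~\ref{ass.manifold}, which bounds $\sqrt{|\tau|}\,d(y_t,y_t^*)/\tanh(\sqrt{|\tau|}\,d(y_t,y_t^*))$ with $x_a=y_t$ and $x_b=y_t^*$. Your version would instead require control of $d(y_t^*,\cdot)$ along the whole step from $y_t$ to $y_{t+1}$.

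For the lemma as stated, with $\bar G$ the diameter of $\mathcal{U}$, your proof suffices.
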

Before proceeding to the proofs of the main theorems, we summarize the notation used the subsequent analysis in Table~\ref{table.notation summary} for ease of reference.
\begin{table}[ht]
\centering
\begin{tabular}{cc}
\hline
\textbf{Notation} & \textbf{Meaning} \\
\hline
\hline
$l$ & Lipschitz constant of $\grad_xf(x,y)$ and $\grad_yf(x,y)$\\
$\mu$ &  geodesic strong convexity constant of $f(x, y)$ in y \\
$\kappa=l/\mu$ & condition number \\
$g^x_t$ & Riemannian gradient $\grad_xf(x_{t},y_{t})$\\
$g^y_t$ & Riemannian gradient $\grad_yf(x_{t},y_{t})$\\
$v_0^x$ &  initial accumulator for accumulated the squared norm of $g^x_t$.\\
$v_t^x$, $t\ge 1$ & $=v_0^x+\sum_{i=0}^{t-1}\|g^x_{i}\|^2$\\
$v_0^y$ &  initial accumulator for accumulated the squared norm of $g^y_t$.\\
$v_t^y$, $t\ge 1$ & $=v_0^y+\sum_{i=0}^{t-1}\|g^y_{i}\|^2$\\
$\eta^x$ & stepsize parameter for $x$-update \\
$\eta^y$ & stepsize parameter for $y$-update\\
$\alpha$ & decay exponent for $x$-update\\
$\beta$ & decay exponent for $y$-update\\
$\eta_t$ & $= \frac{\eta^x}{\max\{v^x_{t+1}, v^y_{t+1}\}^\alpha}$\\
$\gamma_t$ & $= \frac{\eta^y}{(v^y_{t+1})^\beta}$\\
$\zeta$ & curvature constant upper bound\\
$\Phi(x)$ & $= \max_{y\in \cM_y}f(x,y)$\\
$G$ & upper bound of the $\|\grad_x\Phi(x)\|$, $\|\grad_xf(x,y;\xi)\|$ and $\|\grad_yf(x,y;\xi)\|$\\
$\bar{c}$ & distance distortion constant for the retraction
\\
$c_R$ & second-order retraction accuracy constant
\\
$\mathbf{1}_{A}$ & indicator function of event $A$\\
$\Phi^*$ & $=\min_{x \in \cM_x} \Phi(x) > -\infty$\\
$\Delta \Phi$ & $= \Phi(x_0) - \Phi^*$\\
$L_{\Phi}$ & Lipschitz constant of $\grad\Phi(x)$\\
$L_2$ & Lipschitz constant of $\grad_{yx}f(x,y)$ and $\hess_y f(x,y)$\\
\hline
\end{tabular}
\caption{Summary of Notation}
\label{table.notation summary}
\end{table}
\section{Proof of Theorem~\ref{the.deterministic convergence}}\label{app.deterministic convergence}
\begin{lemma}\label{le.bound dist of yt0 and yt0star}
    Suppose Assumption \ref{ass.l-smooth}, \ref{ass.g-convex} and \ref{ass.retraction} hold. Let $t_0$ denote the first iteration such that $(v_{t_0+1}^y)^\beta > c_1$, where
    \[
    c_1 := \max \left\{ \frac{4 \eta^y \mu L_1}{\mu + L_1},\; \eta^y (\mu + L_1)\left(\zeta \bar{c} + \frac{2 G c_R}{\mu} \right) \right\}.
    \]
    Then, the squared geodesic distance between $y_{t_0}$ and its corresponding maximizer $y^*_{t_0}$ satisfies
    \[
        d^2( y_{t_0}, y^*_{t_0})\leq 4 \left( \frac{1}{\mu^2} + \frac{\bar{c}\eta^y}{(v^y_{t_0})^\beta} \right) c_2^{1/\beta} + \frac{2 \kappa^2\bar{c} (v^y_{t_0})^\beta }{(v^y_0)^\beta} \eta_{t_0 - 1}^2\| \grad_x f(x_{t_0 - 1}, y_{t_0 - 1}) \|^2.
    \]
\end{lemma}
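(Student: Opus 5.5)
The idea is to split the distance at the intermediate point $y^*_{t_0-1}$. Since $t_0$ is the \emph{first} index with $(v^y_{t_0+1})^\beta>c_1$, we have $(v^y_{t_0})^\beta\le c_1$. All accumulated dual gradients before $t_0$ are therefore controlled by $c_1$, and in particular $\|\grad_y f(x_{t_0-1},y_{t_0-1})\|^2\le v^y_{t_0}\le c_1^{1/\beta}$. By the triangle inequality and $(a+b)^2\le 2a^2+2b^2$,
\[
d^2(y_{t_0},y^*_{t_0})\le 2d^2(y_{t_0},y^*_{t_0-1})+2d^2(y^*_{t_0-1},y^*_{t_0}).
\]
The second term is handled by Lemma~\ref{le.lips smooth}(1) together with Assumption~\ref{ass.retraction}. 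This gives $d^2(y^*_{t_0-1},y^*_{t_0})\le\kappa^2 d^2(x_{t_0-1},x_{t_0})\le\kappa^2\bar c\,\eta_{t_0-1}^2\|\grad_x f(x_{t_0-1},y_{t_0-1})\|^2$. Multiplying by the harmless factor $(v^y_{t_0})^\beta/(v^y_0)^\beta\ge1$ produces exactly the last term of the claim.

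For the first term, split again at $y_{t_0-1}$:
\[
d^2(y_{t_0},y^*_{t_0-1})\le 2d^2(y_{t_0-1},y^*_{t_0-1})+2d^2(y_{t_0-1},y_{t_0}).
\]
For the step length, Assumption~\ref{ass.retraction} gives $d^2(y_{t_0-1},y_{t_0})\le\bar c\gamma_{t_0-1}^2\|g^y_{t_0-1}\|^2=\bar c(\eta^y)^2\|g^y_{t_0-1}\|^2/(v^y_{t_0})^{2\beta}$. Using $\|g^y_{t_0-1}\|^2\le v^y_{t_0}$ and $(v^y_{t_0})^\beta\le c_1$, this is at most $\frac{\bar c\eta^y}{(v^y_{t_0})^\beta}\cdot\eta^y (v^y_{t_0})^{1-\beta}$. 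Absorbing $\eta^y$ and the power into $c_1^{1/\beta}$-type constants (the statement's constant $c_2$ stands in for this; I would track it as $c_1$), it matches the $\frac{\bar c\eta^y}{(v^y_{t_0})^\beta}c^{1/\beta}$ piece. For the distance to the maximizer, $\mu$-strong concavity plus first-order optimality at $y^*_{t_0-1}$ (the same pairing as in the proof of Lemma~\ref{le.lips smooth}) gives
\[
\mu d(y_{t_0-1},y^*_{t_0-1})\le\|\grad_y f(x_{t_0-1},y_{t_0-1})\|.
\]
Hence $d^2(y_{t_0-1},y^*_{t_0-1})\le\mu^{-2}\|g^y_{t_0-1}\|^2\le\mu^{-2}c_1^{1/\beta}$. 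Collecting the factors $2\cdot2=4$ gives the stated form $4(\mu^{-2}+\bar c\eta^y/(v^y_{t_0})^\beta)c^{1/\beta}$.

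The main obstacle is matching constants. The exponents on $v^y_{t_0}$ in the step-length term must be reconciled with the claimed $c^{1/\beta}$, and this uses $v^y_{t_0}\le c_1^{1/\beta}$ together with the fact that $\eta^y$ factors can be absorbed. If the bound does not close with $c_1$ alone, I would redefine the constant as $\max\{c_1,(\eta^y)^{\beta}c_1\}$-type quantities, consistent with the role $c_2$ plays later. A second point needing care is the boundary case $t_0=0$. There no previous step exists, and I would bound $d^2(y_0,y^*_0)\le\mu^{-2}\|g^y_0\|^2\le\mu^{-2}v^y_1$ directly in the same manner.
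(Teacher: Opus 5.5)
Your proposal is the paper's proof step for step. It uses the same splits, first at $y^*_{t_0-1}$ and then at $y_{t_0-1}$, the retraction bound, $\mu\,d(y_{t_0-1},y^*_{t_0-1})\le\|\grad_y f(x_{t_0-1},y_{t_0-1})\|$, the bound $\|\grad_y f(x_{t_0-1},y_{t_0-1})\|^2\le v^y_{t_0}\le c_1^{1/\beta}$ from the minimality of $t_0$, the $\kappa$-Lipschitz continuity of $y^*$, and the factor $(v^y_{t_0}/v^y_0)^\beta\ge1$. You are also right that the $c_2$ in the statement should read $c_1$.

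The constant mismatch you flag is genuine, and the paper does not address it: it silently replaces $\bar c\gamma_{t_0-1}^2$ by $\bar c\eta^y/(v^y_{t_0})^\beta$. Closing the step-length term with $c_1^{1/\beta}$ needs $\eta^y(v^y_{t_0})^{1-\beta}\le c_1^{1/\beta}$. The bound $v^y_{t_0}\le c_1^{1/\beta}$ gives this only if $\eta^y\le c_1$, i.e.\ only when $(\mu+L_1)(\zeta\bar c+2Gc_R/\mu)\ge1$. In general you should therefore replace $c_1^{1/\beta}$ by $\max\{c_1,\eta^y\}\,c_1^{(1-\beta)/\beta}$. Your suggested $\max\{c_1,(\eta^y)^\beta c_1\}$ is not enough: it falls short whenever $c_1<\min\{1,\eta^y\}$.

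For $t_0=0$, which the paper tacitly excludes, your direct bound gives only the initialization-dependent constant $\mu^{-2}\|\grad_y f(x_0,y_0)\|^2$, because there $(v^y_1)^\beta>c_1$. That is not the stated form, although a constant is all the subsequent argument uses.
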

\begin{proof}
    \begin{align*}
d^2( y_{t_0}, y^*_{t_0})
&\leq 2 d^2( y_{t_0}, y^*_{t_0-1}) + 2 d^2( y^*_{t_0}, y^*_{t_0-1}) \\
&\le 4d^2(y_{t_0-1},y^*_{t_0-1})+4d^2(y_{t_0-1},y_{t_0}) + 2 d^2( y^*_{t_0}, y^*_{t_0 - 1}) \\
&\leq 4 \left( d^2( y_{t_0 - 1}, y^*_{t_0 - 1}) + \bar{c}\gamma_{t_0 - 1}^2\| \grad_y f(x_{t_0 - 1}, y_{t_0 - 1}) \|^2 \right) + 2 d^2( y^*_{t_0}, y^*_{t_0 - 1}) \\
&\leq 4 \left( \frac{1}{\mu^2} \| \grad_y f(x_{t_0 - 1}, y_{t_0 - 1}) \|^2 + \bar{c}\gamma_{t_0 - 1}^2 \| \grad_y f(x_{t_0 - 1}, y_{t_0 - 1}) \|^2 \right) + 2 d^2( y^*_{t_0}, y^*_{t_0 - 1}) \\
&= 4 \left( \frac{1}{\mu^2} + \bar{c}\gamma_{t_0 - 1}^2 \right) \| \grad_y f(x_{t_0 - 1}, y_{t_0 - 1}) \|^2 + 2d^2( y^*_{t_0}, y^*_{t_0 - 1}) \\
&\leq 4 \left( \frac{1}{\mu^2} + \bar{c}\gamma_{t_0-1}^2 \right) v^y_{t_0} + 2d^2( y^*_{t_0}, y^*_{t_0 - 1})\\
&\leq 4 \left( \frac{1}{\mu^2} + \frac{\bar{c}\eta^y}{(v^y_{t_0})^\beta} \right) c_2^{1/\beta} + 2d^2( y^*_{t_0}, y^*_{t_0 - 1}) \\
&\leq 4 \left( \frac{1}{\mu^2} + \frac{\bar{c}\eta^y}{(v^y_{t_0})^\beta} \right) c_2^{1/\beta} + 2 \kappa^2 d^2( x_{t_0}, x_{t_0 - 1}) \\
&\leq 4 \left( \frac{1}{\mu^2} + \frac{\bar{c}\eta^y}{(v^y_{t_0})^\beta} \right) c_2^{1/\beta} + 2 \kappa^2\bar{c} \eta_{t_0 - 1}^2 \| \grad_x f(x_{t_0 - 1}, y_{t_0 - 1}) \|^2 \\
&\leq 4 \left( \frac{1}{\mu^2} + \frac{\bar{c}\eta^y}{(v^y_{t_0})^\beta} \right) c_2^{1/\beta} + \frac{2 \kappa^2\bar{c} (v^y_{t_0})^\beta }{(v^y_0)^\beta} \eta_{t_0 - 1}^2\| \grad_x f(x_{t_0 - 1}, y_{t_0 - 1}) \|^2.
\end{align*}
The first two inequalities follow from the triangle inequality. The third inequality is from Assumption \ref{ass.retraction}. The fourth inequality follows from the $\mu$-strong concavity. We use $\| \grad_y f(x_{t_0 - 1}, y_{t_0 - 1}) \|^2\le v_{t_0}^y$ in the fifth inequality. The sixth inequality is from the definition of $t_0$. We used Lemma \ref{le.lips smooth}, Assumption \ref{ass.retraction}, and the monotonicity of $v_t^y$ in the last three inequalities, respectively.
\end{proof}
\textbf{Proof of Lemma \ref{le.bound sqaure norm y and vTy}}
\begin{proof}
    For any $ t \geq t_0 $ and $\lambda_t>0$, it follows
\begin{align*}
&d^2(y_{t+1}, y_{t+1}^*)\\
\leq &(1 + \lambda_t) d^2(y_{t+1}, y_t^*) + \left( 1 + \frac{1}{\lambda_t} \right)d^2(y_{t+1}^*, y_t^*) \\
\le &(1 + \lambda_t)\left(d^2(y_t, y_t^*) + \zeta d^2(y_{t},y_{t+1}) - 2  \left\langle \Exp_{y_t}^{-1}(y_{t+1}), \Exp_{y_t}^{-1}(y^*_t) \right\rangle\right)+ \left( 1 + \frac{1}{\lambda_t} \right)d^2(y_{t+1}^*, y_t^*) \\
= &(1 + \lambda_t)\bigg(d^2(y_t, y_t^*)+\zeta d^2(y_{t},y_{t+1}) - 2  \left\langle \Exp_{y_t}^{-1}(y_{t+1})-\Retr_{y_t}^{-1}(y_{t+1}), \Exp_{y_t}^{-1}(y^*_t) \right\rangle\\
    &-2\gamma_t\left\langle \grad_y f(x_t, y_t), \Exp_{y_t}^{-1}(y^*_t) \right\rangle\bigg)+\left( 1 + \frac{1}{\lambda_t} \right)d^2(y_{t+1}^*, y_t^*)\\
    \le &(1 + \lambda_t)\bigg(d^2(y_t, y_t^*)+\zeta d^2(y_{t},y_{t+1}) -2\gamma_t\left\langle \grad_y f(x_t, y_t), \Exp_{y_t}^{-1}(y^*_t) \right\rangle\\
    &+ 2 \frac{G}{\mu}\left\|\Exp_{y_t}^{-1}(y_{t+1})-\Retr_{y_t}^{-1}(y_{t+1})\right\| \bigg)+\left( 1 + \frac{1}{\lambda_t} \right)d^2(y_{t+1}^*, y_t^*)\\
\le &(1 + \lambda_t)\bigg(d^2(y_t, y_t^*)+\zeta\bar{c}\gamma_t^2\left\|\grad_y f(x_t, y_t)\right\|^2 -2\gamma_t\left\langle \grad_y f(x_t, y_t), \Exp_{y_t}^{-1}(y^*_t) \right\rangle\\
    &+ 2 \frac{G}{\mu}c_R\gamma_t^2\left\|\grad_y f(x_t, y_t)\right\|^2 \bigg)+\left( 1 + \frac{1}{\lambda_t} \right)d^2(y_{t+1}^*, y_t^*)\\
= &\underbrace{(1 + \lambda_t) \left( d^2(y_t, y_t^*) + \left(\zeta\bar{c}+\frac{2Gc_R}{\mu}\right)\frac{(\eta^y)^2}{(v_{t+1}^y)^{2\beta}} \|\grad_y f(x_t, y_t)\|^2 - \frac{2 \eta^y}{(v_{t+1}^y)^\beta} \langle \Exp^{-1}_{y_t}(y^*_t), \grad_y f(x_t, y_t) \rangle \right)}_{\rm (A)} \\
&+ \left( 1 + \frac{1}{\lambda_t} \right) d^2(y_{t+1}^*, y_t^*),
\end{align*}
Here, the first inequality follows from triangle inequality and Cauchy's inequality. The second inequality uses Lemma \ref{le.trigonometric distance bound}. For the third inequality, we apply the strong convexity condition: $\|\Exp_{y_t}^{-1}(y_t^*)\|\le \frac{1}{\mu}\|\grad_yf(x_t,y_t)\|\le \frac{G}{\mu}$. And we use the Assumption \ref{ass.retraction} in the last inequality.

From Lemma~\ref{le.l smooth + mu convex}, we bound term (A) as
\begin{align*}
\text{Term (A)} &\leq (1 + \lambda_t) \left( \left( 1 - \frac{2 \eta^y \mu L_1}{(\mu + L_1)(v_{t+1}^y)^\beta} \right) d^2(y_t, y_t^*) \right. \\
&\quad \left. + \left( \left(\zeta\bar{c}+\frac{2Gc_R}{\mu}\right)\frac{(\eta^y)^2}{(v_{t+1}^y)^{2\beta}} -\frac{2 \eta^y}{(\mu + L_1)(v_{t+1}^y)^\beta} \right) \|\grad_y f(x_t, y_t)\|^2 \right).
\end{align*}
Let $\lambda_t = \frac{\eta^y \mu L_1}{(\mu + L_1)(v_{t+1}^y)^\beta - 2\eta^y \mu L_1}$, which is positive for $t \ge t_0$. Then,
\begin{align*}
\text{Term (A)} \leq &\left( 1 - \frac{\eta^y \mu L_1}{(\mu + L_1)(v_{t+1}^y)^\beta} \right) d^2(y_t, y_t^*) \\
& + (1 + \lambda_t) \left( \left(\zeta\bar{c}+\frac{2Gc_R}{\mu}\right)\frac{(\eta^y)^2}{(v_{t+1}^y)^{2\beta}} - \frac{2 \eta^y}{(\mu + L_1)(v_{t+1}^y)^\beta} \right) \|\grad_y f(x_t, y_t)\|^2 \\
\leq &d^2(y_t, y_t^*) + \underbrace{(1 + \lambda_t) \left( \left(\zeta\bar{c}+\frac{2Gc_R}{\mu}\right)\frac{(\eta^y)^2}{(v_{t+1}^y)^{2\beta}} - \frac{2 \eta^y}{(\mu + L_1)(v_{t+1}^y)^\beta} \right)}_{\rm (B)} \|\grad_y f(x_t, y_t)\|^2.
\end{align*}
We now use the fact that \(1+\lambda_t \geq 1\) and \(\left(v_{t+1}^y\right)^\beta \geq \eta^y(\mu + L_1)\left(\zeta\bar{c}+\frac{2Gc_R}{\mu}\right)\), which together imply that term (B) is bounded above by \(\leq -\frac{\eta^y}{(\mu + L_1)(v_{t+1}^y)^\beta}\). By combining the above bounds, we obtain
\begin{align*}
&d^2(y_{t+1}, y_{t+1}^*)\\ 
\leq &d^2(y_t, y_t^*) - \frac{\eta^y}{\left(\mu + L_1\right)\left(v_{t+1}^y\right)^\beta}\|\grad_y f(x_t,y_t)\|^2 + \left(1 + \frac{1}{\lambda_t}\right)d^2(y_{t+1}^*, y_t^*) \\
\leq &d^2(y_t, y_t^*) - \frac{\eta^y}{\left(\mu + L_1\right)\left(v_{t+1}^y\right)^\beta}\|\grad_y f(x_t,y_t)\|^2 + \frac{\left(\mu + L_1\right)\left(v_{t+1}^y\right)^\beta}{\eta^y\mu L_1}d^2(y_{t+1}^*, y_t^*) \\
\leq &d^2(y_t, y_t^*) - \frac{\eta^y}{\left(\mu + L_1\right)\left(v_{t+1}^y\right)^\beta}\|\grad_y f(x_t,y_t)\|^2  + \frac{\left(\mu + L_1\right)\kappa^2\left(v_{t+1}^y\right)^\beta}{\eta^y\mu L_1}d^2(x_{t+1}, x_t) \\
\le &d^2(y_t, y_t^*) - \frac{\eta^y}{\left(\mu + L_1\right)\left(v_{t+1}^y\right)^\beta}\|\grad_y f(x_t,y_t)\|^2 + \frac{\left(\mu + L_1\right)\kappa^2\bar{c}\left(v_{t+1}^y\right)^\beta}{\eta^y\mu L_1}\eta_t^2 \|\grad_x f(x_t,y_t)\|^2.
\end{align*}
Here, we use the definition of $\lambda_t$ in the second inequality and the last two inequalities invoke Lemma \ref{le.lips smooth} and Assumption \ref{ass.retraction}.
Summing the above inequality from $t = t_0$ to $T - 1$ yields
\begin{equation}
\begin{split}
    &\sum_{t=t_0}^{T-1} \frac{\eta^y}{(\mu + L_1)(v^y_{t+1})^\beta} \| \grad_y f(x_t, y_t) \|^2\\
\leq &d^2(y_{t_0}, y^*_{t_0})
+ \sum_{t=t_0}^{T-1} \frac{(\mu + L_1) \kappa^2 \bar{c}(v^y_{t+1})^\beta \eta_t^2}{\eta^y \mu L_1} \| \grad_x f(x_t, y_t) \|^2.
\end{split}\nonumber
\end{equation}
Applying Lemma \ref{le.bound dist of yt0 and yt0star} gives
\begin{align*}
\sum_{t=t_0}^{T-1} \frac{\eta^y}{(v_{t+1}^y)^\beta} \| \grad_y f(x_t, y_t) \|^2 
\leq &\underbrace{4(\mu + L_1) \left( \frac{1}{\mu^2} + \frac{\bar{c}\eta^y}{(v_{t_0}^y)^\beta} \right) c_1^{1/\beta}}_{c_2} \\
& + \underbrace{(\mu + L_1) \left( \frac{2\kappa^2\bar{c}}{(v_0^y)^\beta} + \frac{(\mu + L_1)\kappa^2\bar{c}}{\eta^y \mu L_1} \right)}_{c_3} \sum_{t=t_0-1}^{T-1} (v_{t+1}^y)^\beta \eta_t^2 \| \grad_x f(x_t, y_t) \|^2.
\end{align*}
By adding terms from 0 to \( t_0 - 1 \) and \(\frac{\eta^y v_0^y}{(v_0^y)^\beta}\) to both sides, we have
\begin{align*}
&\frac{\eta^y v_0^y}{(v_0^y)^\beta} + \sum_{t=0}^{T-1} \frac{\eta^y}{(v_{t+1}^y)^\beta} \| \grad_y f(x_t, y_t) \|^2 \\
\leq &c_2 + c_3 \sum_{t=0}^{T-1} (v_{t+1}^y)^\beta \eta_t^2 \| \grad_x f(x_t, y_t) \|^2 + \frac{\eta^y v_0^y}{(v_0^y)^\beta} + \sum_{t=0}^{t_0-1} \frac{\eta^y}{(v_{t+1}^y)^\beta} \| \grad_y f(x_t, y_t) \|^2 \\
\leq &c_2 + c_3 \sum_{t=0}^{T-1} (v_{t+1}^y)^\beta \eta_t^2 \| \grad_x f(x_t, y_t) \|^2 + \frac{\eta^y}{1-\beta} v_{t_0}^{1-\beta} \\
\leq &c_2 + c_3 \sum_{t=0}^{T-1} (v_{t+1}^y)^\beta \eta_t^2 \| \grad_x f(x_t, y_t) \|^2 + \frac{\eta^y c_1^{\frac{1-\beta}{\beta}}}{1-\beta} \\
= &\underbrace{c_2 + \frac{\eta^y c_1^{\frac{1-\beta}{\beta}}}{1-\beta}}_{c_4} + c_3 (\eta^x)^2 \sum_{t=0}^{T-1} \frac{(v_{t+1}^y)^\beta}{\max \{v_{t+1}^x, v_{t+1}^y\}^{2\alpha}} \| \grad_x f(x_t, y_t) \|^2 \\
\le &c_4 + c_3 (\eta^x)^2 \sum_{t=0}^{T-1} \frac{1}{(v_{t+1}^x)^{2\alpha - \beta}} \| \grad_x f(x_t, y_t) \|^2 \\
\leq &c_4 + c_3 (\eta^x)^2 \left( \frac{1 + \log v_T^x - \log v_0^x}{(v_0^x)^{2\alpha - \beta - 1}} \cdot \mathbf{1}_{2\alpha - \beta < 1} + \frac{(v_T^x)^{1-2\alpha + \beta}}{1-2\alpha + \beta} \cdot \mathbf{1}_{2\alpha - \beta < 1} \right).
\end{align*}
Here, the second inequality is from Lemma \ref{le.adaptive bound}. The third inequality is due to the definition of $t_0$. And We use Lemma \ref{le.adaptive bound} in the last inequality. 
Finally, from Lemma \ref{le.adaptive bound}, we also know that $\eta^y (v_T^y)^{1 - \beta} \le \text{LHS}$, which completes the proof.
\end{proof}
\begin{lemma}\label{le.bound square norm x}
Under the same assumptions as Lemma~\ref{le.bound sqaure norm y and vTy}, the following inequality holds:
\begin{align*}
        &\sum_{t=0}^{T-1}\eta_t\|\grad_xf(x_t,y_t)\|^2\\
        \le &2 \Delta \Phi + \left(2Gc_R+L_{\Phi}\bar{c}\right) (\eta^{x})^2 \left( \frac{1 + \log v_T^x - \log v_0^x}{(v_0^x)^{2\alpha - 1}} \cdot \mathbf{1}_{2\alpha\ge1} + \frac{(v_T^x)^{1 - 2\alpha}}{1 - 2\alpha}\cdot\mathbf{1}_{2\alpha<1} \right)\\
        &+ c_5c_4+c_5 c_3 (\eta^x)^2 \left( \frac{1 + \log v_T^x - \log v_0^x}{(v_0^x)^{2\alpha - \beta - 1}} \cdot \mathbf{1}_{2\alpha - \beta < 1} + \frac{(v_T^x)^{1-2\alpha + \beta}}{1-2\alpha + \beta} \cdot \mathbf{1}_{2\alpha - \beta < 1} \right)
    \end{align*}
    where $c_5 = \frac{\eta^x \kappa^2}{\eta^y (v_{t_0}^y)^{\alpha - \beta}}$.
\end{lemma}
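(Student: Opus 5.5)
We will derive the bound from a descent inequality for the primal function $\Phi$ along the $x$-iterates. By Lemma~\ref{le.lips smooth}(3), $\grad\Phi$ is $L_\Phi$-Lipschitz. Applying the standard Riemannian descent lemma with $u_t=\Exp_{x_t}^{-1}(x_{t+1})$ gives
\[
\Phi(x_{t+1})\le \Phi(x_t)+\langle \grad\Phi(x_t),u_t\rangle+\tfrac{L_\Phi}{2}d^2(x_t,x_{t+1}).
\]
We then split $u_t=-\eta_t g_t^x+(u_t+\eta_t g_t^x)$. Assumption~\ref{ass.retraction} bounds the correction by $c_R\eta_t^2\|g_t^x\|^2$, and combined with $\|\grad\Phi\|\le G$ (Assumption~\ref{ass.optimal exist and stationary condition hold}) this contributes at most $Gc_R\eta_t^2\|g_t^x\|^2$. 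The same assumption gives $d^2(x_t,x_{t+1})\le \bar c\,\eta_t^2\|g_t^x\|^2$. Together,
\[
\Phi(x_{t+1})\le\Phi(x_t)-\eta_t\langle\grad\Phi(x_t),g_t^x\rangle+\left(Gc_R+\tfrac{L_\Phi\bar c}{2}\right)\eta_t^2\|g_t^x\|^2 .
\]

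Next we write $\grad\Phi(x_t)=\grad_x f(x_t,y_t^*)$ and use $-\langle a,b\rangle\le -\tfrac12\|b\|^2+\tfrac12\|a-b\|^2$ with $b=g_t^x$. Smoothness (Assumption~\ref{ass.l-smooth}) gives $\|\grad_xf(x_t,y_t^*)-g_t^x\|\le L_1 d(y_t,y_t^*)$, and strong concavity gives $d(y_t,y_t^*)\le \|\grad_y f(x_t,y_t)\|/\mu$. Hence
\[
-\eta_t\langle\grad\Phi(x_t),g_t^x\rangle\le -\tfrac{\eta_t}{2}\|g_t^x\|^2+\tfrac{\kappa^2}{2}\eta_t\|\grad_y f(x_t,y_t)\|^2.
\]
Summing over $t$, telescoping, and multiplying by $2$ yields
\[
\sum_t\eta_t\|g_t^x\|^2\le 2\Delta\Phi+(2Gc_R+L_\Phi\bar c)\sum_t\eta_t^2\|g_t^x\|^2+\kappa^2\sum_t\eta_t\|g_t^y\|^2 .
\]

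For the second term we use $\eta_t\le \eta^x/(v_{t+1}^x)^{\alpha}$ and apply Lemma~\ref{le.adaptive bound} with exponent $2\alpha$. The case $2\alpha<1$ gives $(v_T^x)^{1-2\alpha}/(1-2\alpha)$. The case $2\alpha\ge1$ gives the logarithmic term after factoring out $(v_0^x)^{1-2\alpha}$, exactly matching the displayed bracket with coefficient $(\eta^x)^2$.

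The key and most delicate step is the dual coupling term. We convert $\eta_t$ to $\gamma_t$ via
\[
\eta_t=\frac{\eta^x}{\eta^y}\,\frac{(v_{t+1}^y)^\beta}{\max\{v^x_{t+1},v^y_{t+1}\}^\alpha}\,\gamma_t\le\frac{\eta^x}{\eta^y (v_{t+1}^y)^{\alpha-\beta}}\gamma_t .
\]
For $t\ge t_0$ the monotonicity of $v^y$ and $\alpha>\beta$ bound this by $\frac{\eta^x}{\eta^y (v_{t_0}^y)^{\alpha-\beta}}\gamma_t$, so $\kappa^2\sum_t\eta_t\|g_t^y\|^2\le c_5\sum_t\gamma_t\|g_t^y\|^2$. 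Lemma~\ref{le.bound sqaure norm y and vTy} then bounds $\sum_t\gamma_t\|g_t^y\|^2$ by $c_4+c_3(\eta^x)^2(\cdots)$, which produces precisely the last line of the statement.

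The obstacle is the indices $t<t_0$, where $(v_{t+1}^y)^{\alpha-\beta}$ may be smaller than $(v_{t_0}^y)^{\alpha-\beta}$. We would handle them by noting that the summation in Lemma~\ref{le.bound sqaure norm y and vTy} already absorbs the pre-$t_0$ contribution into $c_4$ through the $c_1^{(1-\beta)/\beta}$ term. If necessary, we would instead bound those finitely many terms crudely using $v_{t+1}^y\le v_{t_0}^y$, with the resulting constant folded into $c_4$.
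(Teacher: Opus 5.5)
Your argument follows the paper's proof of this lemma step for step: the $L_\Phi$-descent inequality for $\Phi$, the retraction correction bounded by $Gc_R\eta_t^2\|g_t^x\|^2$, Young's inequality combined with $\|\grad_x f(x_t,y_t^*)-g_t^x\|\le\kappa\|\grad_y f(x_t,y_t)\|$, Lemma~\ref{le.adaptive bound} with exponent $2\alpha$, and finally conversion of $\kappa^2\eta_t$ into $c_5\gamma_t$ followed by Lemma~\ref{le.bound sqaure norm y and vTy}.

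The obstacle you flag for iterations with $t+1<t_0$ is real, and the paper does not resolve it either; it simply asserts $t_0\le t+1$ for every $t$ in the sum. Your first remedy does not work. The $c_1^{(1-\beta)/\beta}$ term in $c_4$ controls the size of $\sum_{t<t_0}\gamma_t\|g_t^y\|^2$. The problem, however, is the conversion factor. When $v_{t+1}^x\le v_{t+1}^y$ one has exactly $\kappa^2\eta_t/\gamma_t=\frac{\eta^x\kappa^2}{\eta^y(v_{t+1}^y)^{\alpha-\beta}}$, which exceeds $c_5$ whenever $v_{t+1}^y<v_{t_0}^y$. Multiplying the bound of Lemma~\ref{le.bound sqaure norm y and vTy} by $c_5$ therefore undercounts those terms.

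Your second remedy is the right idea, but it proves a modified inequality rather than the displayed one. For example, split off the iterations $t\le t_0-2$ and use $\eta_t\le\eta^x/(v_{t+1}^y)^\alpha$, Lemma~\ref{le.adaptive bound}, and $v_t^y\le c_1^{1/\beta}$ for $t\le t_0$. This bounds their contribution by $\frac{\kappa^2\eta^x}{1-\alpha}c_1^{(1-\alpha)/\beta}$, an extra additive constant on the right-hand side that is not part of $c_5c_4$ as defined. Alternatively, replace $v_{t_0}^y$ by $v_0^y$ in $c_5$, which makes the conversion valid for every $t$ but enlarges the constant. Either corrected version is $T$-independent, so it suffices for Lemmas~\ref{le.bound vTx case 1} and \ref{le.bound vTx case 2} and Theorem~\ref{the.deterministic convergence}. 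Strictly speaking, though, neither your argument nor the paper's establishes the inequality with $c_4$ and $c_5$ exactly as defined.
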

\begin{proof}
    From the smoothness of $\Phi(\cdot)$ established in Lemma \ref{le.lips smooth}, we have 
    \begin{align*}
\Phi(x_{t+1})
\leq &\Phi(x_t) + \langle \grad \Phi(x_{t}), \Exp_{x_{t}}^{-1}(x_{t+1}) \rangle + \frac{L_{\Phi}}{2}d^2(x_t,x_{t+1}) \\
=&\Phi(x_t) -\eta_t \langle \grad \Phi(x_{t}), \grad_x f(x_t, y_t) \rangle + \langle \grad \Phi(x_{t}), \Exp_{x_{t}}^{-1}(x_{t+1})-\eta_t\grad_x f(x_t, y_t)  \rangle + \frac{L_{\Phi}}{2}d^2(x_t,x_{t+1})\\
\le &\Phi(x_t) -\eta_t \langle \grad \Phi(x_{t}), \grad_x f(x_t, y_t) \rangle + G\|\Exp_{x_{t}}^{-1}(x_{t+1})-\eta_t\grad_x f(x_t, y_t)\| + \frac{L_{\Phi}}{2}d^2(x_t,x_{t+1})\\
\le &\Phi(x_t) -\eta_t \langle \grad \Phi(x_{t}), \grad_x f(x_t, y_t) \rangle  + \left(Gc_R+\frac{L_{\Phi}\bar{c}}{2}\right)\eta_t^2\|\grad_x f(x_t, y_t)\|^2\\
= &\Phi(x_t) - \eta_t \|\grad_x f(x_t, y_t)\|^2 + \eta_t \langle \grad_x f(x_t, y_t) - \grad \Phi(x_t), \grad_x f(x_t, y_t) \rangle \\
&+ \left(Gc_R+\frac{L_{\Phi}\bar{c}}{2}\right) \eta_t^2 \|\grad_x f(x_t, y_t)\|^2 \\
\leq &\Phi(x_t) - \eta_t \|\grad_x f(x_t, y_t)\|^2 + \frac{\eta_t}{2} \|\grad_x f(x_t, y_t)\|^2 + \frac{\eta_t}{2} \|\grad_x f(x_t, y_t) - \grad \Phi(x_t)\|^2 \\
&+ \left(Gc_R+\frac{L_{\Phi}\bar{c}}{2}\right) \eta_t^2 \|\grad_x f(x_t, y_t)\|^2 \\
= &\Phi(x_t) - \frac{\eta_t}{2} \|\grad_x f(x_t, y_t)\|^2 + \left(Gc_R+\frac{L_{\Phi}\bar{c}}{2}\right) \eta_t^2 \|\grad_x f(x_t, y_t)\|^2 + \frac{\eta_t}{2} \|\grad_x f(x_t, y_t) - \grad \Phi(x_t)\|^2 \\
=& \Phi(x_t) - \frac{\eta_t}{2} \|\grad_x f(x_t, y_t)\|^2 + \left(Gc_R+\frac{L_{\Phi}\bar{c}}{2}\right) \eta_t^2 \|\grad_x f(x_t, y_t)\|^2\\ 
&+ \frac{\eta^x}{2 \max\{ (v_{t+1}^x)^\alpha, (v_{t+1}^y)^\alpha \}} \|\grad_x f(x_t, y_t) - \grad \Phi(x_t)\|^2 \\
\leq &\Phi(x_t) - \frac{\eta_t}{2} \|\grad_x f(x_t, y_t)\|^2 + \left(Gc_R+\frac{L_{\Phi}\bar{c}}{2}\right) \eta_t^2 \|\grad_x f(x_t, y_t)\|^2 \\
&+ \frac{\eta^x}{2 (v_{t_0}^y)^{\alpha - \beta}(v_{t+1}^y)^{\beta}} \|\grad_x f(x_t, y_t) - \grad \Phi(x_t)\|^2\\
\le &\Phi(x_t) - \frac{\eta_t}{2} \|\grad_x f(x_t, y_t)\|^2 + \left(Gc_R+\frac{L_{\Phi}\bar{c}}{2}\right) \eta_t^2 \|\grad_x f(x_t, y_t)\|^2 \\
&+ \frac{\eta^x\kappa^2}{2 (v_{t_0}^y)^{\alpha - \beta}(v_{t+1}^y)^{\beta}} \|\grad_y f(x_t, y_t)\|^2\\
= &\Phi(x_t) - \frac{\eta_t}{2} \|\grad_x f(x_t, y_t)\|^2 + \left(Gc_R+\frac{L_{\Phi}\bar{c}}{2}\right) \eta_t^2 \|\grad_x f(x_t, y_t)\|^2 \\
&+ \frac{\eta^x\kappa^2}{2 \eta^y(v_{t_0}^y)^{\alpha - \beta}(v_{t+1}^y)^{\beta}}\gamma_t \|\grad_y f(x_t, y_t)\|^2.
\end{align*}
Here, the second inequality uses the boundedness of $\|\grad \Phi(x_t)\| \le G$. The third inequality follows from Assumption~\ref{ass.retraction}, while the fourth inequality is a consequence of Cauchy–Schwarz and Young's inequality. The penultimate inequality additionally uses the fact that $0 \le t_0 \le t+1$ and the monotonicity of $v_t$. In the final inequality, we apply the $L_1$-smoothness and strong concavity of $f(x, \cdot)$, which together imply
\(
\|\grad_x f(x_t, y_t) - \grad \Phi(x_t)\| \leq L_1 \|y_t - y_t^*\| \leq 2\kappa \|\grad_y f(x_t, y_t)\|.
\)

By telescoping and rearranging the terms, we have
\begin{equation}\label{eq.sum gradx bound}
    \begin{split}
&\sum_{t=0}^{T-1} \eta_t \|\grad_x f(x_t, y_t)\|^2\\
\le &2\underbrace{(\Phi(x_0) - \Phi^*)}_{\Delta \Phi} + \left(2Gc_R+L_{\Phi}\bar{c}\right) \sum_{t=0}^{T-1} \eta_t^2 \|\grad_x f(x_t, y_t)\|^2 +\underbrace{\frac{\eta^x \kappa^2}{\eta^y (v_{t_0}^y)^{\alpha - \beta}}}_{c_5}\sum_{t=0}^{T-1} \gamma_t\|\grad_y f(x_t, y_t)\|^2 \\
= &2 \Delta \Phi + \sum_{t=0}^{T-1} \frac{\left(2Gc_R+L_{\Phi}\bar{c}\right) (\eta^x)^2}{\max \left\{ (v_{t+1}^x)^2, (v_{t+1}^y)^2 \right\}^{\alpha}} \|\grad_x f(x_t, y_t)\|^2 + c_5 \sum_{t=0}^{T-1} \gamma_t \|\grad_y f(x_t, y_t)\|^2 \\
\leq &2 \Delta \Phi + \sum_{t=0}^{T-1} \frac{\left(2Gc_R+L_{\Phi}\bar{c}\right) (\eta^x)^2}{(v_{t+1}^x)^{2\alpha}} \|\grad_x f(x_t, y_t)\|^2 + c_5 \sum_{t=0}^{T-1} \gamma_t \|\grad_y f(x_t, y_t)\|^2 \\
\leq &2 \Delta \Phi + \left(2Gc_R+L_{\Phi}\bar{c}\right) (\eta^{x})^2 \left( \frac{1 + \log v_T^x - \log v_0^x}{(v_0^x)^{2\alpha - 1}} \cdot \mathbf{1}_{2\alpha\ge1} + \frac{(v_T^x)^{1 - 2\alpha}}{1 - 2\alpha}\cdot\mathbf{1}_{2\alpha<1} \right) + c_5 \sum_{t=0}^{T-1} \gamma_t \|\grad_y f(x_t, y_t)\|^2,
    \end{split}
\end{equation}
where we use Lemma \ref{le.adaptive bound} in the last inequality. Finally, applying Lemma \ref{le.bound sqaure norm y and vTy} to the above inequality completes the proof.
\end{proof}
\textbf{Proof of Lemma \ref{le.bound vTx case 1}}
\begin{proof}
    From Lemma \ref{le.bound square norm x} and $v_T^y\le v_T^x$, we have 
    \begin{equation}\label{eq.first case}
        \begin{split}
            &\sum_{t=0}^{T-1} \| \grad_x f(x_t, y_t) \|^2 \\
\leq &\frac{2 \Delta \Phi (v_T^x)^\alpha}{\eta^x} + \left(2Gc_R+L_{\Phi}\bar{c}\right)\eta^x \left( \frac{(v_T^x)^\alpha (1 + \log v_T^x - \log v_0^x)}{(v_0^x)^{2\alpha-1}} \cdot \mathbf{1}_{2\alpha \geq 1} + \frac{(v_T^x)^{1-\alpha}}{1-2\alpha} \cdot \mathbf{1}_{2\alpha < 1} \right) \\
&+ \frac{c_4 c_5 (v_T^x)^\alpha}{\eta^x} + c_3 c_5 \eta^x \left( \frac{(v_T^x)^\alpha (1 + \log v_T^x - \log v_0^x)}{(v_0^x)^{2\alpha-\beta-1}} \cdot \mathbf{1}_{2\alpha-\beta \geq 1} + \frac{(v_T^x)^{1-\alpha+\beta}}{1-2\alpha+\beta} \cdot \mathbf{1}_{2\alpha-\beta < 1} \right) \\
= &\frac{2 \Delta \Phi (v_T^x)^\alpha}{\eta^x} + \left(2Gc_R+L_{\Phi}\bar{c}\right)\eta^x \left( \frac{(v_T^x)^\alpha (v_T^x)^{\frac{1-\alpha}{2}} (v_T^x)^{\frac{\alpha-1}{2}} (1 + \log v_T^x - \log v_0^x)}{(v_0^x)^{2\alpha-1}} \cdot \mathbf{1}_{2\alpha \geq 1} + \frac{(v_T^x)^{1-\alpha}}{1-2\alpha} \cdot \mathbf{1}_{2\alpha < 1} \right) \\
&+ \frac{c_4 c_5 (v_T^x)^\alpha}{\eta^x} + c_3 c_5 \eta^x \left( \frac{(v_T^x)^\alpha (v_T^x)^{\frac{1-\alpha}{2}} (v_T^x)^{\frac{\alpha-1}{2}} (1 + \log v_T^x - \log v_0^x)}{(v_0^x)^{2\alpha-\beta-1}} \cdot \mathbf{1}_{2\alpha-\beta \geq 1} + \frac{(v_T^x)^{1-\alpha+\beta}}{1-2\alpha+\beta} \cdot \mathbf{1}_{2\alpha-\beta < 1} \right) \\
\leq &\frac{2 \Delta \Phi (v_T^x)^\alpha}{\eta^x} + \left(2Gc_R+L_{\Phi}\bar{c}\right)\eta^x \left( \frac{2e^{(1-\alpha)(1-\log v_0^x)/2} (v_T^x)^{\frac{1+\alpha}{2}}}{e(1-\alpha)(v_0^x)^{2\alpha-1}} \cdot \mathbf{1}_{2\alpha \geq 1} + \frac{(v_T^x)^{1-\alpha}}{1-2\alpha} \cdot \mathbf{1}_{2\alpha < 1} \right) \\
&+ \frac{c_4 c_5 (v_T^x)^\alpha}{\eta^x} + c_3 c_5 \eta^x \left( \frac{2e^{(1-\alpha)(1-\log v_0^x)/2} (v_T^x)^{\frac{1+\alpha}{2}}}{e(1-\alpha)(v_0^x)^{2\alpha-\beta-1}} \cdot \mathbf{1}_{2\alpha-\beta \geq 1} + \frac{(v_T^x)^{1-\alpha+\beta}}{1-2\alpha+\beta} \cdot \mathbf{1}_{2\alpha-\beta < 1} \right),
        \end{split}
    \end{equation}
where we used \( x^{-m}(c + \log x) \leq \frac{e^{cm}}{em} \) for \( x > 0 \), \( m > 0 \) and \( c \in \mathbb{R} \) in the last inequality. Finally, applying the inequality:  
if \( x \le \sum_{i=1}^n b_i x^{\alpha_i} \) for \( x > 0 \), \( 0 < \alpha_i < 1 \), and \( b_i > 0 \),  
then \( x \le n \sum_{i=1}^n b_i^{1/(1 - \alpha_i)} \),  
to the inequality \eqref{eq.first case} (whose LHS equals \( v_T^x - v_0^x \)), we arrive at the bound \eqref{eq.vTx bound case 1}.
\end{proof}
\textbf{Proof of Lemma \ref{le.bound vTx case 2}}
\begin{proof}
    Since $v_T^x < v_T^y$, we derive an upper bound for $v_T^x$ by leveraging the upper bound of $v_T^y$ given in \eqref{eq.vTy bound}. The proof proceeds by considering two separate cases:
\begin{itemize}
    \item [(1)] $2\alpha < 1 + \beta$: In this case, we have
    \begin{align*}
&\sum_{t=0}^{T-1} \|\grad_x f(x_t, y_t)\|^2 \\
\leq &\left( \frac{2\Delta \Phi + c_4 c_5}{\eta^x} + \left(2Gc_R+L_{\Phi}\bar{c}\right)\eta^x \left( \frac{1 + \log v_T^x - \log v_0^x}{(v_0^x)^{2\alpha-1}} \cdot \mathbf{1}_{2\alpha \geq 1} + \frac{(v_T^x)^{1-2\alpha}}{1-2\alpha} \cdot \mathbf{1}_{2\alpha < 1} \right)\right.\\
&\left.+ \frac{c_3 c_5 \eta^x (v_T^x)^{1-2\alpha+\beta}}{1-2\alpha+\beta} \right) \left( \frac{c_4}{\eta^y} + \frac{c_3 (\eta^x)^2 (v_T^x)^{1-2\alpha+\beta}}{\eta^y(1-2\alpha+\beta)} \right)^{\frac{\alpha}{1-\beta}} \\
\leq &\left( \frac{2\Delta \Phi + c_4 c_5}{\eta^x (v_0^x)^{1-2\alpha+\beta}} + \left(2Gc_R+L_{\Phi}\bar{c}\right)\eta^x \left( \frac{1 + \log v_T^x - \log v_0^x}{(v_0^x)^{2\alpha-1} (v_T^x)^{1-2\alpha+\beta}} \cdot \mathbf{1}_{2\alpha \geq 1} + \frac{1}{(1-2\alpha)(v_0^x)^\beta} \cdot \mathbf{1}_{2\alpha < 1} \right)\right.\\
&\left.+ \frac{c_3 c_5 \eta^x}{1-2\alpha+\beta} \right)
\left( \frac{c_4}{\eta^y(v_0^x)^{1-2\alpha+\beta}} + \frac{c_3 (\eta^x)^2}{\eta^y(1-2\alpha+\beta)} \right)^{\frac{\alpha}{1-\beta}} \cdot (v_T^x)^{1-2\alpha+\beta+ \frac{(1-2\alpha+\beta)\alpha}{1-\beta}} \\
\leq &\left( \frac{2\Delta \Phi + c_4 c_5}{\eta^x (v_0^x)^{1-2\alpha+\beta}} + \left(2Gc_R+L_{\Phi}\bar{c}\right)\eta^x \left( \frac{e^{(1-2\alpha+\beta)(1-\log v_0^x)}}{e(1-2\alpha+\beta)(v_0^x)^{2\alpha-1}} \cdot \mathbf{1}_{2\alpha \geq 1} + \frac{1}{(1-2\alpha)(v_0^x)^\beta} \cdot \mathbf{1}_{2\alpha < 1} \right)\right.\\
&\left.+ \frac{c_3 c_5 \eta^x}{1-2\alpha+\beta} \right)  \left( \frac{c_4}{\eta^y(v_0^x)^{1-2\alpha+\beta}} + \frac{c_3 (\eta^x)^2}{\eta^y(1-2\alpha+\beta)} \right)^{\frac{\alpha}{1-\beta}} \cdot (v_T^x)^{1-2\alpha+\beta+ \frac{(1-2\alpha+\beta)\alpha}{1-\beta}},
\end{align*}
where we use \eqref{eq.vTy bound} in the first inequality. 
Noting that $\alpha > \beta$, we have
\[
1 - 2\alpha + \beta + \frac{(1-2\alpha+\beta)\alpha}{1-\beta} \leq \frac{(1-\alpha)\alpha}{1-\beta} + 1 - \alpha = 1 + \frac{\alpha(\beta-\alpha)}{1-\beta} < 1.
\]
Applying the same reasoning as in \eqref{eq.vTx bound case 1}, we obtain
\begin{equation}\label{eq.vTx bound case 2-1}
    \begin{split}
v_T^x 
\leq &2 \left[\left( \frac{2\Delta \Phi + c_4 c_5}{\eta^x (v_0^x)^{1-2\alpha+\beta}} + \left(2Gc_R+L_{\Phi}\bar{c}\right)\eta^x \left( \frac{e^{(1-2\alpha+\beta)(1-\log v_0^x)}}{e(1-2\alpha+\beta)(v_0^x)^{2\alpha-1}} \cdot \mathbf{1}_{2\alpha \geq 1} + \frac{1}{(1-2\alpha)(v_0^x)^\beta} \cdot \mathbf{1}_{2\alpha < 1} \right)\right.\right.\\
&\left.\left.+ \frac{c_3 c_5 \eta^x}{1-2\alpha+\beta} \right)  \left( \frac{c_4}{\eta^y(v_0^x)^{1-2\alpha+\beta}} + \frac{c_3 (\eta^x)^2}{\eta^y(1-2\alpha+\beta)} \right)^{\frac{\alpha}{1-\beta}}\right]^{ \frac{1}{1-(1-2\alpha+\beta)\left(1+\frac{\alpha}{1-\beta}\right)}} + 2v_0^x ,
    \end{split}
\end{equation}
which gives us a constant RHS.
\item [(2)] $2\alpha\ge 1+\beta$: In this case, it holds that
\begin{align*}
&\sum_{t=0}^{T-1} \|\grad_x f(x_t, y_t)\|^2 \\
\leq &\left( \frac{2\Delta \Phi + c_4 c_5}{\eta^x} + \frac{\left(2Gc_R+L_{\Phi}\bar{c}\right)\eta^x (1 + \log v_T^x - \log v_0^x)}{(v_0^x)^{2\alpha-1}} + \frac{c_3 c_5 \eta^x (1 + \log v_T^x - \log v_0^x)}{(v_0^x)^{2\alpha-\beta-1}} \right) \\
& \left( \frac{c_4}{\eta^y} + \frac{c_3 (\eta^x)^2 (1 + \log v_T^x - \log v_0^x)}{\eta^y(v_0^x)^{2\alpha-\beta-1}} \right)^{\frac{\alpha}{1-\beta}} \\
\leq &\left( \frac{2\Delta \Phi + c_4 c_5}{\eta^x (v_0^x)^{1/4}} + \frac{\left(2Gc_R+L_{\Phi}\bar{c}\right)\eta^x (1 + \log v_T^x - \log v_0^x)}{(v_0^x)^{2\alpha-1} (v_T^x)^{1/4}} + \frac{c_3 c_5 \eta^x (1 + \log v_T^x - \log v_0^x)}{(v_0^x)^{2\alpha-\beta-1} (v_T^x)^{1/4}} \right) \\
& \left( \frac{c_4}{\eta^y(v_0^x)^{\frac{(1-\beta)}{4\alpha}}} + \frac{c_3 (\eta^x)^2 (1 + \log v_T^x - \log v_0^x)}{\eta^y(v_0^x)^{2\alpha-\beta-1} (v_T^x)^{\frac{(1-\beta)}{4\alpha}}} \right)^{\frac{\alpha}{1-\beta}} \cdot (v_T^x)^{1/2} \\
\leq &\left( \frac{2\Delta \Phi + c_4 c_5}{\eta^x (v_0^x)^{1/4}} + \frac{\left(8Gc_R+4L_{\Phi}\bar{c}\right)\eta^x e^{(1-\log v_0^x)/4}}{e(v_0^x)^{2\alpha-1}} + \frac{4c_3 c_5 \eta^x e^{(1-\log v_0^x)/4}}{e(v_0^x)^{2\alpha-\beta-1}} \right) \\
& \left( \frac{c_4}{\eta^y(v_0^x)^{\frac{(1-\beta)}{4\alpha}}} + \frac{4c_3 \alpha (\eta^x)^2 e^{(1-\beta)(1-\log v_0^x)/(4\alpha)}}{\eta^ye(1-\beta)(v_0^x)^{2\alpha-\beta-1}} \right)^{\frac{\alpha}{1-\beta}} \cdot (v_T^x)^{1/2},
\end{align*}
where we use \eqref{eq.vTy bound} in the first inequality. Using the same argument as in \eqref{eq.vTx bound case 1}, we conclude 
\begin{equation}\label{eq.vTx bound case 2-2'}
    \begin{split}
v_T^x
\leq &2 \left[ \left( \frac{2\Delta \Phi + c_1 c_5}{\eta^x (v_0^x)^{1/4}} + \frac{\left(8Gc_R+4L_{\Phi}\bar{c}\right)\eta^x e^{(1-\log v_0^x)/4}}{e(v_0^x)^{2\alpha-1}} + \frac{4c_1 c_4 \eta^x e^{(1-\log v_0^x)/4}}{e(v_0^x)^{2\alpha-\beta-1}} \right)\right. \\
&\left. \left( \frac{c_5}{\eta^y(v_0^x)^{\frac{(1-\beta)}{4\alpha}}} + \frac{4c_4 \alpha (\eta^x)^2 e^{(1-\beta)(1-\log v_0^x)/(4\alpha)}}{\eta^ye(1-\beta)(v_0^x)^{2\alpha-\beta-1}} \right)^{\frac{\alpha}{1-\beta}}\right]^2 + 2v_0^x.
    \end{split}
\end{equation}
\end{itemize}
Hence, combining the two cases, the desired bound follows, which completes the proof of the lemma.
\end{proof}
\begin{theorem}[Restatement of Theorem~\ref{the.deterministic convergence}]\label{the.restate of deterministic}
    Under Assumptions~\ref{ass.l-smooth}, \ref{ass.g-convex}, \ref{ass.optimal exist and stationary condition hold}, \ref{ass.manifold}, and \ref{ass.retraction}, suppose Algorithm~\ref{alg.RAGDA} is run with deterministic gradient oracles. Then for any \( 0 < \beta < \alpha < 1 \), after \( T \) iterations, it holds that
\begin{equation}\label{eq.convergence x}
\sum_{t=0}^{T-1} \|\grad_x f(x_t, y_t)\|^2 \leq \max\{7C_1, 2C_2\},
\end{equation}
\begin{equation}\label{eq.convergence y}
\sum_{t=0}^{T-1} \|\grad_y f(x_t, y_t)\|^2 \leq C_4
\end{equation}
where the constants \( C_1 \), \( C_2 \), \( C_3 \), and \( C_4 \) are defined as follows:
\begin{equation}
    \begin{split}
        C_1 = &v_0^x + \left( \frac{2 \Delta \Phi}{\eta^x} \right)^{\frac{1}{1-\alpha}} + \left( \frac{\left(4Gc_R+2L_{\Phi}\bar{c}\right)\eta^x e^{(1-\alpha)(1-\log v_0^x)/2}}{e(1-\alpha)(v_0^x)^{2\alpha-1}} \right)^{\frac{2}{1-\alpha}} \cdot \mathbf{1}_{2\alpha \geq 1}\\
   &+ \left( \frac{\left(2Gc_R+L_{\Phi}\bar{c}\right)\eta^x}{1-2\alpha} \right)^{\frac{1}{\alpha}} \cdot \mathbf{1}_{2\alpha < 1}+ \left( \frac{c_4 c_5}{\eta^x} \right)^{\frac{1}{1-\alpha}} \\
&+ \left( \frac{2c_3 c_5 \eta^x e^{(1-\alpha)(1-\log v_0^x)/2}}{e(1-\alpha)\left(v_0^x\right)^{2\alpha-\beta-1}} \right)^{\frac{2}{1-\alpha}} \cdot \mathbf{1}_{2\alpha-\beta \ge 1} 
+\left( \frac{c_3 c_5 \eta^x}{1-2\alpha+\beta} \right)^{\frac{1}{\alpha-\beta}} \cdot \mathbf{1}_{2\alpha-\beta < 1},
    \end{split}
\end{equation}
\begin{equation}
    \begin{split}
        C_2 = &v_0^x + \left[\left( \frac{2\Delta \Phi + c_1 c_5}{\eta^x (v_0^x)^{1-2\alpha+\beta}} + \left(2Gc_R+L_{\Phi}\bar{c}\right)\eta^x \left( \frac{e^{(1-2\alpha+\beta)(1-\log v_0^x)}}{e(1-2\alpha+\beta)(v_0^x)^{2\alpha-1}} \cdot \mathbf{1}_{2\alpha \geq 1} + \frac{1}{(1-2\alpha)(v_0^x)^\beta} \cdot \mathbf{1}_{2\alpha < 1} \right)\right.\right.\\
&\left.\left.+ \frac{c_1 c_4 \eta^x}{1-2\alpha+\beta} \right)  \left( \frac{c_5}{\eta^y(v_0^x)^{1-2\alpha+\beta}} + \frac{c_4 (\eta^x)^2}{\eta^y(1-2\alpha+\beta)} \right)^{\frac{\alpha}{1-\beta}}\right]^{ \frac{1}{1-(1-2\alpha+\beta)\left(1+\frac{\alpha}{1-\beta}\right)}}\cdot\mathbf{1}_{2\alpha-\beta<1} \\
&+ \left[ \left( \frac{2\Delta \Phi + c_1 c_5}{\eta^x (v_0^x)^{1/4}} + \frac{\left(8Gc_R+4L_{\Phi}\bar{c}\right)\eta^x e^{(1-\log v_0^x)/4}}{e(v_0^x)^{2\alpha-1}} + \frac{4c_1 c_4 \eta^x e^{(1-\log v_0^x)/4}}{e(v_0^x)^{2\alpha-\beta-1}} \right)\right. \\
&\left. \left( \frac{c_5}{\eta^y(v_0^x)^{\frac{(1-\beta)}{4\alpha}}} + \frac{4c_4 \alpha (\eta^x)^2 e^{(1-\beta)(1-\log v_0^x)/(4\alpha)}}{\eta^ye(1-\beta)(v_0^x)^{2\alpha-\beta-1}} \right)^{\frac{\alpha}{1-\beta}}\right]^2\cdot\mathbf{1}_{2\alpha-\beta\ge1},
    \end{split}
\end{equation}
\begin{equation}
    C_3 = \max\{7C_1,2C_2\},
\end{equation}
\begin{equation}
    C_4 = \left(\frac{c_5}{\eta^y} + \frac{c_4}{\eta^y} (\eta^x)^2 \left( \frac{1 + \log C_3 - \log v_0^x}{(v_0^x)^{2\alpha - \beta - 1}} \cdot \mathbf{1}_{2\alpha - \beta < 1} + \frac{(C_3)^{1-2\alpha + \beta}}{1-2\alpha + \beta} \cdot \mathbf{1}_{2\alpha - \beta < 1} \right)\right)^{\frac{1}{1-\beta}},
\end{equation}
with the auxiliary constants defined as:
\[
\Delta \Phi = \Phi(x_0) - \Phi^*, \quad
c_1 = \max \left\{ \frac{4 \eta^y\mu L_1}{\mu + L_1}, \eta^y (\mu + L_1)/\left(\zeta\bar{c}+\frac{2Gc_R}{\mu}\right) \right\},
\]
\[
c_2 = 4(\mu + L_1) \left( \frac{1}{\mu^2} + \frac{\bar{c}\eta^y}{(v_{t_0}^y)^\beta} \right) c_2^{1/\beta}, \quad
c_3 = (\mu + L_1) \left( \frac{2\kappa^2\bar{c}}{(v_0^y)^\beta} + \frac{(\mu + L_1)\kappa^2\bar{c}}{\eta^y \mu L_1} \right),
\]
\[
c_4 = c_2 + \frac{\eta^y c_1^{\frac{1-\beta}{\beta}}}{1-\beta}, \quad
c_5 = \frac{\eta^x \kappa^2}{\eta^y (v_{t_0}^y)^{\alpha - \beta}}.
\]
\end{theorem}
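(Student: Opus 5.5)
The restated theorem follows by assembling Lemmas~\ref{le.bound sqaure norm y and vTy}, \ref{le.bound vTx case 1} and \ref{le.bound vTx case 2}. The approach is to bound the accumulator $v_T^x$ uniformly in $T$. Since $\sum_{t=0}^{T-1}\|\grad_x f(x_t,y_t)\|^2 = v_T^x - v_0^x \le v_T^x$, this gives \eqref{eq.convergence x}. That bound is then fed back into the dual estimate to obtain \eqref{eq.convergence y}.

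First, fix $T$ and split according to which accumulator dominates at time $T$. If $v_T^y\le v_T^x$, Lemma~\ref{le.bound vTx case 1} applies directly (we are in the regime $0<\beta<\alpha<1$), and its right-hand side is exactly $7C_1$. If $v_T^x<v_T^y$, Lemma~\ref{le.bound vTx case 2} applies, and its right-hand side is $2C_2$: the two indicator branches $2\alpha-\beta<1$ and $2\alpha-\beta\ge1$ correspond to the two bracketed terms defining $C_2$. In either case $v_T^x\le\max\{7C_1,2C_2\}=C_3$, which proves \eqref{eq.convergence x}.

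For the dual side, the second line of \eqref{eq.vTy bound} in Lemma~\ref{le.bound sqaure norm y and vTy} bounds $v_T^y$ by an expression that is nondecreasing in $v_T^x$. Both the $\log v_T^x$ term and the $(v_T^x)^{1-2\alpha+\beta}$ term are increasing, the latter since $1-2\alpha+\beta>0$ on its branch. Substituting $v_T^x\le C_3$ therefore gives $v_T^y\le C_4$, and $\sum_t\|\grad_y f(x_t,y_t)\|^2\le v_T^y$ gives \eqref{eq.convergence y}. One point needs care: Lemma~\ref{le.bound sqaure norm y and vTy} only applies for $T\ge t_0+1$, where $t_0$ is the first index with $(v^y_{t_0+1})^\beta>c_1$. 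If no such index exists before $T$, then $v_T^y\le c_1^{1/\beta}$ trivially, and that constant is absorbed into $C_4$; the $c_1^{(1-\beta)/\beta}$ term in $c_4$ already reflects this.

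Finally, Theorem~\ref{the.deterministic convergence} follows by averaging. Let $S_T$ denote the bounded total $\sum_{t=0}^{T-1}\bigl(\|\grad_x f(x_t,y_t)\|^2+\|\grad_y f(x_t,y_t)\|^2\bigr)\le C_3+C_4$. Then
\[
\min_t\bigl(\|\grad_x f\|+\|\grad_y f\|\bigr)^2\le 2\min_t\bigl(\|\grad_x f\|^2+\|\grad_y f\|^2\bigr)\le \frac{2\,S_T}{T}\le \frac{2(C_3+C_4)}{T},
\]
so $\min_t(\|\grad_x f\|+\|\grad_y f\|)=\mathcal{O}(T^{-1/2})$.

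The main obstacle is confirming that the constants in the two lemmas are truly $T$-independent. In particular, $c_2$ and $c_5$ depend on $v^y_{t_0}$, which is fixed once $t_0$ is, and is bounded below by $v_0^y$, so $c_5\le \eta^x\kappa^2/(\eta^y (v_0^y)^{\alpha-\beta})$. I would check this before asserting uniformity, and also reconcile the indicator typos in \eqref{eq.vTy bound} so that the correct branch is used in each regime.
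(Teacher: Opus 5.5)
Your proposal is correct and follows the paper's proof. You split on whether $v_T^y\le v_T^x$ or $v_T^x<v_T^y$ and invoke Lemma~\ref{le.bound vTx case 1} or Lemma~\ref{le.bound vTx case 2} to get $v_T^x\le\max\{7C_1,2C_2\}$; you then substitute this into the monotone bound of Lemma~\ref{le.bound sqaure norm y and vTy} for $v_T^y$, and note that each gradient sum is dominated by its accumulator. Your handling of $T\le t_0$ and of the $T$-independence of $c_2,c_5$ (via $v_{t_0}^y\ge v_0^y$) makes explicit what the paper leaves implicit. Beyond the indicator typos in \eqref{eq.vTy bound}, the constants inside $C_2$ and $C_4$ as stated are also mislabeled relative to Lemmas~\ref{le.bound sqaure norm y and vTy} and~\ref{le.bound vTx case 2} (e.g.\ $c_1c_5$ for $c_4c_5$). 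So your claim that the right-hand side is \emph{exactly} $2C_2$ holds only after that relabeling.
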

\begin{proof}
    Combing Lemma \ref{le.bound sqaure norm y and vTy}, Lemma \ref{le.bound vTx case 1} and Lemma \ref{le.bound vTx case 2}, we obtain the desired bounds on \( v_T^x \) and \( v_T^y \). Since by definition
    \[
    \sum_{t=0}^{T-1} \|\grad_x f(x_t, y_t)\|^2 \leq v_T^x \quad \text{and} \quad \sum_{t=0}^{T-1} \|\grad_y f(x_t, y_t)\|^2 \leq v_T^y,
    \]
    the result follows immediately.   
\end{proof}
\section{Proof of Theorem \ref{the.stochastic convergence}}\label{app.stochastic convergence}
\begin{lemma}\label{le.stochastic auxiliary lemma}
Suppose Assumptions \ref{ass.g-convex}, \ref{ass.manifold}, \ref{ass.retraction}, and \ref{ass.grad bound} hold. For any integers $t_1 < t_2$, suppose that for all $t \in [t_1, t_2 - 1]$, and any constants $\lambda_t > 0$ and $S_t \ge 0$, the following inequality holds:
\[d^2(y_{t+1}, y_{t+1}^*) \leq (1 + \lambda_t) d^2(y_{t+1}, y_t^*) + S_t,\]

the cumulative expected dual suboptimality over $[t_1, t_2)$ satisfies:
\begin{align*}
    \mathbb{E} \left[ \sum_{t=t_1}^{t_2-1} (f(x_t, y_t^*) - f(x_t, y_t)) \right] &\leq \mathbb{E} \left[ \sum_{t=t_1+1}^{t_2-1} \left( \frac{1 - \gamma_t \mu}{2\gamma_t} d^2(y_t , y_t^*) - \frac{1}{2\gamma_t (1 + \lambda_t)} d^2(y_{t+1}, y_{t+1}^*) \right) \right]\\
    &+ \mathbb{E} \left[ \sum_{t=t_1}^{t_2-1} \frac{(\zeta\bar{c}+\frac{2G}{\mu}c_R)\gamma_t}{2} \left\| \grad_y \tilde{f}(x_t, y_t) \right\|^2 \right] + \mathbb{E} \left[ \sum_{t=t_1}^{t_2-1} \frac{S_t}{2\gamma_t (1 + \lambda_t)} \right].
\end{align*}
\end{lemma}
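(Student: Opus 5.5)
The plan is to derive a one-step inequality that bounds $f(x_t,y_t^*)-f(x_t,y_t)$ by differences of squared distances to the maximizer. I will then take conditional expectations and sum over $t\in[t_1,t_2)$. Throughout, $\grad_y\tilde f(x_t,y_t)$ denotes the stochastic gradient $g_t^y$, and $\mathcal F_t$ denotes the $\sigma$-field generated by the history up to $(x_t,y_t)$. The points $x_t$, $y_t$ and $y_t^*=y^*(x_t)$ are $\mathcal F_t$-measurable.

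\textbf{Step 1 (strong concavity).} By Assumption~\ref{ass.g-convex}, applied at $y_1=y_t$ and $y_2=y_t^*$,
\[
f(x_t,y_t^*)-f(x_t,y_t)\le \langle \grad_y f(x_t,y_t),\Exp_{y_t}^{-1}(y_t^*)\rangle-\tfrac{\mu}{2}d^2(y_t,y_t^*).
\]
By unbiasedness (Assumption~\ref{ass.grad bound}), the first term equals $\E[\langle g_t^y,\Exp_{y_t}^{-1}(y_t^*)\rangle\mid\mathcal F_t]$. It is therefore enough to bound $\langle g_t^y,\Exp_{y_t}^{-1}(y_t^*)\rangle$ pathwise. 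The key point is that I will divide by $\gamma_t$ \emph{before} taking expectations. This avoids the correlation between $\gamma_t$, which depends on $g_t^y$ through $v_{t+1}^y$, and $g_t^y$ itself.

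\textbf{Step 2 (trigonometric bound with retraction error).} Apply Lemma~\ref{le.trigonometric distance bound} to the triangle $(y_t,y_{t+1},y_t^*)$:
\[
d^2(y_{t+1},y_t^*)\le d^2(y_t,y_t^*)+\zeta d^2(y_t,y_{t+1})-2\langle\Exp_{y_t}^{-1}(y_{t+1}),\Exp_{y_t}^{-1}(y_t^*)\rangle.
\]
Write $\Exp_{y_t}^{-1}(y_{t+1})=\gamma_t g_t^y+e_t$. Assumption~\ref{ass.retraction} gives $\|e_t\|\le c_R\gamma_t^2\|g_t^y\|^2$ and $d^2(y_t,y_{t+1})\le\bar c\gamma_t^2\|g_t^y\|^2$. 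Strong concavity together with Assumption~\ref{ass.grad bound} gives $\|\Exp_{y_t}^{-1}(y_t^*)\|\le G/\mu$, exactly as in the proof of Lemma~\ref{le.bound sqaure norm y and vTy}. Rearranging and dividing by $2\gamma_t$ yields
\[
\langle g_t^y,\Exp_{y_t}^{-1}(y_t^*)\rangle\le \frac{d^2(y_t,y_t^*)-d^2(y_{t+1},y_t^*)}{2\gamma_t}+\frac{(\zeta\bar c+\frac{2G}{\mu}c_R)\gamma_t}{2}\|g_t^y\|^2 .
\]

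\textbf{Step 3 (shift the anchor and sum).} The hypothesis $d^2(y_{t+1},y_{t+1}^*)\le(1+\lambda_t)d^2(y_{t+1},y_t^*)+S_t$ gives
\[
-d^2(y_{t+1},y_t^*)\le -\frac{d^2(y_{t+1},y_{t+1}^*)}{1+\lambda_t}+\frac{S_t}{1+\lambda_t}.
\]
Substituting this into Step 2, combining with Step 1, and merging the $-\frac\mu2 d^2(y_t,y_t^*)$ term gives the coefficient $\frac{1-\gamma_t\mu}{2\gamma_t}$ on $d^2(y_t,y_t^*)$. I then take total expectations, using the tower property for the inner-product term, and sum over $t=t_1,\dots,t_2-1$. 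This produces precisely the three groups of terms in the statement.

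\textbf{Main obstacle.} The delicate point is the measurability bookkeeping in Step 1. The inequality must be arranged so that the only random quantity paired with the unbiasedness identity is $\langle g_t^y,\Exp_{y_t}^{-1}(y_t^*)\rangle$ with no $\gamma_t$ factor. All $\gamma_t$-dependent terms must appear as pathwise bounds, which is why the division by $\gamma_t$ is performed before taking expectations.

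A second point concerns the index range: the statement's first sum starts at $t_1+1$. The plan is to check whether the $t=t_1$ telescoping term $\frac{1-\gamma_{t_1}\mu}{2\gamma_{t_1}}d^2(y_{t_1},y_{t_1}^*)$ is meant to be absorbed elsewhere, for instance because it is controlled separately by the later choice of $t_1$. If it is not, I would simply keep it in the sum, which starts the first group at $t_1$.
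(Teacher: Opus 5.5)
Your proposal is correct and follows the paper's proof essentially step for step. The shared steps are: the trigonometric bound on the triangle $(y_t,y_{t+1},y_t^*)$, with the retraction error absorbed via $\|\Exp_{y_t}^{-1}(y_t^*)\|\le G/\mu$; the hypothesis used to replace $d^2(y_{t+1},y_t^*)$ by $d^2(y_{t+1},y_{t+1}^*)$; a pathwise division by $\gamma_t$ (the paper divides by $\gamma_t(1+\lambda_t)$), so that unbiasedness is applied only to the $\gamma_t$-free inner product $\langle g_t^y,\Exp_{y_t}^{-1}(y_t^*)\rangle$; and strong concavity to pass to $f(x_t,y_t^*)-f(x_t,y_t)$.

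On the index range, the paper's own summation also runs from $t=t_1$, and nothing in the argument allows discarding the $t=t_1$ term. The lower limit $t_1+1$ in the statement is evidently a typo, and keeping that term, as you propose, is exactly what the paper proves and what its later applications (sums from $t=0$ and from $t=t_3$) use.
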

\begin{proof}
By the assumption of the lemma, we have
\begin{align*}
    &d^2(y_{t+1}, y_{t+1}^*)\\
    \leq &(1 + \lambda_t) d^2(y_{t+1}, y_t^*) + S_t\\
    \le &(1 + \lambda_t) \left(d^2(y_t, y_t^*) + \zeta d^2(y_{t},y_{t+1}) - 2  \left\langle \Exp_{y_t}^{-1}(y_{t+1}), \Exp_{y_t}^{-1}(y^*_t) \right\rangle \right) + S_t\\
    \le &(1 + \lambda_t)\bigg(d^2(y_t, y_t^*)+\zeta\bar{c}\gamma_t^2\left\|\grad_y \tilde{f}(x_t, y_t)\right\|^2 - 2  \left\langle \Exp_{y_t}^{-1}(y_{t+1})-\Retr_{y_t}^{-1}(y_{t+1}), \Exp_{y_t}^{-1}(y^*_t) \right\rangle\\
    &+2\gamma_t\left\langle \grad_y \tilde{f}(x_t, y_t), \Exp_{y_t}^{-1}(y^*_t) \right\rangle\bigg)+S_t\\
    \le &(1 + \lambda_t)\bigg(d^2(y_t, y_t^*)+\zeta\bar{c}\gamma_t^2\left\|\grad_y \tilde{f}(x_t, y_t)\right\|^2 +2\gamma_t\left\langle \grad_y \tilde{f}(x_t, y_t), \Exp_{y_t}^{-1}(y^*_t) \right\rangle\\
    &+ 2 \frac{G}{\mu}\left\|\Exp_{y_t}^{-1}(y_{t+1})-\Retr_{y_t}^{-1}(y_{t+1})\right\| \bigg)+S_t\\
    \le& (1 + \lambda_t) \bigg(d^2(y_t, y_t^*) + (\zeta\bar{c}+\frac{2G}{\mu}c_R)\gamma_t^2 \left\| \grad_y \tilde{f}(x_t, y_t) \right\|^2 + 2 \gamma_t \left\langle \grad_y \tilde{f}(x_t, y_t), \Exp_{y_t}^{-1}(y^*_t) \right\rangle\\
    &+\gamma_t \mu d^2(y_t, y_t^*) - \gamma_t \mu d^2(y_t, y_t^*) \bigg) + S_t.
\end{align*}
Here, the second inequality follows from Lemma \ref{le.trigonometric distance bound}. The third and fifth inequalities use Assumption \ref{ass.retraction}. The fourth inequality follows from the strong concavity of \( f \) in \( y \).

Dividing both sides by \( \gamma_t(1 + \lambda_t) \) and rearranging terms yields
\begin{align*}
    &2 \left\langle \grad_y \tilde{f}(x_t, y_t), \Exp_{y_t}^{-1}(y^*_t)\right\rangle - \mu d^2(y_t, y_t^*)\\
    \le &\frac{1 - \gamma_t \mu}{\gamma_t} d^2(y_t, y_t^*) - \frac{1}{\gamma_t (1 + \lambda_t)} d^2(y_{t+1}, y_{t+1}^*) + (\zeta\bar{c}+\frac{2G}{\mu}c_R)\gamma_t \left\| \grad_y \tilde{f}(x_t, y_t) \right\|^2 + \frac{S_t}{\gamma_t (1 + \lambda_t)}.
\end{align*}

By summing both sides from \( t = t_1 \) to \( t_2 - 1 \), we obtain
\begin{align*}
    &\sum_{t=t_1}^{t_2-1} \left( \left\langle \grad_y \tilde{f}(x_t, y_t), \Exp_{y_t}^{-1}(y^*_t) \right\rangle - \frac{\mu}{2} d^2(y_t, y_t^*) \right)\\
    \leq &\sum_{t=t_1}^{t_2-1} \left( \frac{1 - \gamma_t \mu}{2 \gamma_t} d^2(y_t, y_t^*) - \frac{1}{2 \gamma_t (1 + \lambda_t)} d^2(y_{t+1}, y_{t+1}^*) \right) + \sum_{t=t_1}^{t_2-1} \frac{(\zeta\bar{c}+\frac{2G}{\mu}c_R)\gamma_t}{2} \left\| \grad_y \tilde{f}(x_t, y_t) \right\|^2\\
    &+ \sum_{t=t_1}^{t_2-1} \frac{S_t}{2 \gamma_t (1 + \lambda_t)}.
\end{align*}
By taking expectation on both sides, we get
\begin{align*}
    \mathbb{E} [{\rm LHS}] \geq &\mathbb{E} \left[ \sum_{t=t_1}^{t_2-1} \mathbb{E}_{\xi_t^y} \left[ \left( \left\langle \grad_y \tilde{f}(x_t, y_t), \Exp_{y_t}^{-1}(y^*_t) \right\rangle - \frac{\mu}{2} d^2(y_t, y_t^*) \right) \right] \right]\\
    = &\mathbb{E} \left[ \sum_{t=t_1}^{t_2-1} \left( \left\langle \grad_y f(x_t, y_t), \Exp_{y_t}^{-1}(y^*_t) \right\rangle - \frac{\mu}{2}d^2(y_t, y_t^*) \right) \right]\\
    \ge & \mathbb{E} \left[ \sum_{t=t_1}^{t_2-1} \left( f(x_t, y_t^*) - f(x_t, y_t) \right) \right]
\end{align*}
where the last inequality uses the strong concavity of \( f \) in \( y \). Combining the above bounds completes the proof.
\end{proof}
\begin{lemma}\label{le.v<C}
    Suppose Assumptions~\ref{ass.l-smooth}, \ref{ass.g-convex}, \ref{ass.optimal exist and stationary condition hold}, \ref{ass.manifold}, \ref{ass.retraction} and \ref{ass.grad bound} hold. For any constant $C>0$, if $t_3$ is the first iteration such that \( v_{t}^y > C \), it follows
\begin{align*}
&\mathbb{E}\left[ \sum_{t=0}^{t_3-1} (f(x_t, y_t^*) - f(x_t, y_t)) \right] \\
\leq &\mathbb{E} \left[ \sum_{t=0}^{t_3-1} \left( \frac{1 - \gamma_t \mu}{2 \gamma_t} d^2(y_t, y_t^*) - \frac{1}{\gamma_t (2 + \mu \gamma_t)} d^2(y_{t+1},y_{t+1}^*) \right) \right] + \mathbb{E} \left[ \sum_{t=0}^{t_3-1} \frac{(\zeta\bar{c}+\frac{2G}{\mu}c_R)\gamma_t}{2} \| \grad_y \tilde{f}(x_t, y_t) \|^2 \right] \\
& + \frac{\kappa^2\bar{c} (\mu \eta^y C^\beta + 2C^{2\beta}) (\eta^x)^2}{2\mu (\eta^y)^2} \mathbb{E} \left[ \frac{1 + \log v_{t_3}^x - \log v_{0}^x}{(v_{0}^x)^{2\alpha - 1}} \cdot 1_{\alpha > 0.5} + \frac{(v_{t_3}^x)^{1-2\alpha}}{1-2\alpha} \cdot 1_{\alpha < 0.5} \right].
\end{align*}
\end{lemma}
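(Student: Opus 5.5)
The plan is to apply Lemma~\ref{le.stochastic auxiliary lemma} on the window $[t_1,t_2)=[0,t_3)$ with a specific choice of $\lambda_t$ and $S_t$, and then to bound the extra drift term $\sum S_t/(2\gamma_t(1+\lambda_t))$ via the definition of $t_3$.

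\textbf{Step 1: the recursion.} For each $t<t_3$, I will start from Young's inequality together with the triangle inequality:
\[
d^2(y_{t+1},y_{t+1}^*)\le (1+\lambda_t)d^2(y_{t+1},y_t^*)+\Bigl(1+\tfrac{1}{\lambda_t}\Bigr)d^2(y_{t+1}^*,y_t^*).
\]
I take $\lambda_t=\mu\gamma_t/2$, so that $2\gamma_t(1+\lambda_t)=\gamma_t(2+\mu\gamma_t)$. This reproduces exactly the coefficient $\frac{1}{\gamma_t(2+\mu\gamma_t)}$ in the target. The drift is controlled by Lemma~\ref{le.lips smooth}(1) and Assumption~\ref{ass.retraction}:
\[
S_t=\Bigl(1+\tfrac{2}{\mu\gamma_t}\Bigr)\kappa^2\bar c\,\eta_t^2\|g_t^x\|^2 .
\]
Substituting these into Lemma~\ref{le.stochastic auxiliary lemma} gives the first two terms of the claim directly. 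The telescoping bracket of that lemma starts at $t_1+1$, while the target starts at $0$. Adding the $t=0$ term only requires $\frac{1-\gamma_0\mu}{2\gamma_0}d^2(y_0,y_0^*)\ge0$, or else the index convention can be adjusted. I would check this carefully, but it should only cost a nonnegative term.

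\textbf{Step 2: the drift term.} Next I bound
\[
\frac{S_t}{\gamma_t(2+\mu\gamma_t)}=\frac{(\mu\gamma_t+2)\kappa^2\bar c\,\eta_t^2\|g_t^x\|^2}{\mu\gamma_t^2(2+\mu\gamma_t)}=\frac{\kappa^2\bar c\,\eta_t^2\|g_t^x\|^2}{\mu\gamma_t^2}.
\]
The cancellation is exact, so I would be careful in reconciling the constant. The stated constant $(\mu\eta^yC^\beta+2C^{2\beta})/(2\mu(\eta^y)^2)$ suggests the authors used the separate split
\[
\Bigl(1+\tfrac{2}{\mu\gamma_t}\Bigr)\frac{1}{2\gamma_t}\le \frac{1}{2\gamma_t}+\frac{1}{\mu\gamma_t^2}.
\]
This bound is looser but matches their form, so I would follow it.

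\textbf{Step 3: using the definition of $t_3$.} For $t\le t_3-1$ we have $v_{t+1}^y\le C$ by definition of $t_3$, with care at the boundary index where $v_{t_3}^y$ may exceed $C$; this is handled by the first-exceedance convention. Hence
\[
\frac{1}{\gamma_t}=\frac{(v_{t+1}^y)^\beta}{\eta^y}\le \frac{C^\beta}{\eta^y}.
\]
The two pieces then become
\[
\frac{C^\beta}{2\eta^y}+\frac{C^{2\beta}}{\mu(\eta^y)^2}=\frac{\mu\eta^yC^\beta+2C^{2\beta}}{2\mu(\eta^y)^2},
\]
which multiplies $\kappa^2\bar c\,\eta_t^2\|g_t^x\|^2$.

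\textbf{Step 4: summing the primal steps.} Finally I use
\[
\eta_t^2\le \frac{(\eta^x)^2}{(v_{t+1}^x)^{2\alpha}},
\]
which holds because the denominator of $\eta_t$ is a max. Lemma~\ref{le.adaptive bound} with exponent $2\alpha$ then gives
\[
\sum_{t<t_3}\frac{\|g_t^x\|^2}{(v_{t+1}^x)^{2\alpha}}\le
\begin{cases}
\dfrac{1+\log v_{t_3}^x-\log v_0^x}{(v_0^x)^{2\alpha-1}}, & 2\alpha\ge1 \text{ (after factoring } (v_0^x)^{1-2\alpha}\text{)},\\[2mm]
\dfrac{(v_{t_3}^x)^{1-2\alpha}}{1-2\alpha}, & 2\alpha<1.
\end{cases}
\]
Taking expectations yields the claim. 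Since $t_3$ is a stopping time, the expectation step in Lemma~\ref{le.stochastic auxiliary lemma} still works: $\gamma_t$ and $\xi_t^y$ interact, but the lemma has already handled this.

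\textbf{Main obstacle.} The delicate points are the boundary index $t_3-1$ relative to the threshold $C$, and the mismatch of the summation start. The algebra itself is routine.
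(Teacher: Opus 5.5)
Your route is the paper's proof step for step: Young's split with $\lambda_t=\mu\gamma_t/2$, then Lemma~\ref{le.stochastic auxiliary lemma}, then $d^2(y_{t+1}^*,y_t^*)\le\kappa^2\bar c\,\eta_t^2\|g_t^x\|^2$, the threshold bound on $1/\gamma_t$, and finally Lemma~\ref{le.adaptive bound} with exponent $2\alpha$. Your exact cancellation is correct and even gives the sharper prefactor $\kappa^2\bar c\,C^{2\beta}(\eta^x)^2/(\mu(\eta^y)^2)$. The paper's looser constant comes from $\frac{1}{\gamma_t(2+\mu\gamma_t)}\le\frac{1}{2\gamma_t}$; your displayed ``split'' is actually an identity.

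The problem is that the boundary step fails as you argue it. With the first-exceedance definition, $v^y_{t_3}>C$ \emph{always}. So at $t=t_3-1$ the bound $1/\gamma_t=(v^y_{t_3})^\beta/\eta^y\le C^\beta/\eta^y$ is false: the convention causes the problem rather than handling it. The paper's own proof makes the same slip when it asserts $v^y_{t+1}\le C$ for all $t\le t_3-1$. The repair is easy. For $t\le t_3-1$ you have $v^y_t\le C$, hence $v^y_{t+1}\le C+G^2$ by Assumption~\ref{ass.grad bound}. The argument then goes through with $C+G^2$ in place of $C$ in the last constant, which is harmless downstream.

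Do not repair it instead by redefining $t_3$ as the first $t$ with $v^y_{t+1}>C$. Then the event $\{t<t_3\}$ depends on $\xi^y_t$, and the step in Lemma~\ref{le.stochastic auxiliary lemma} that replaces $\grad_y\tilde f(x_t,y_t)$ by $\grad_y f(x_t,y_t)$ under the expectation is no longer justified. With the stated definition, $\{t<t_3\}$ is fixed before $\xi^y_t$ is drawn, which is exactly what that step needs.

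Your patch for the summation start is also insufficient. The $t=0$ summand is $\frac{1-\gamma_0\mu}{2\gamma_0}d^2(y_0,y_0^*)-\frac{1}{\gamma_0(2+\mu\gamma_0)}d^2(y_1,y_1^*)$, which can be negative, and $1-\gamma_0\mu$ need not even be nonnegative. The correct observation is that the proof of Lemma~\ref{le.stochastic auxiliary lemma} sums the one-step inequality over all $t\in[t_1,t_2)$, so its telescoping sum genuinely starts at $t_1$. The $t_1+1$ in its statement is a typo, and nothing has to be added.
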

\begin{proof}
    By Young's inequality, we have
\[
d^2(y_{t+1}, y_{t+1}^*) \leq (1 + \lambda_t)d^2(y_{t+1}, y_t^*) + \left( 1 + \frac{1}{\lambda_t} \right) d^2(y_{t+1}^*, y_t^*).
\]

By choosing \(\lambda_t = \frac{\mu \gamma_t}{2}\) and applying Lemma \ref{le.stochastic auxiliary lemma}, we have
\begin{align*}
&\mathbb{E} \left[ \sum_{t=0}^{t_3-1} (f(x_t, y_t^*) - f(x_t, y_t)) \right] \\
\leq &\mathbb{E} \left[ \sum_{t=0}^{t_3-1} \left( \frac{1 - \gamma_t \mu}{2 \gamma_t} d^2(y_t , y_t^*) - \frac{1}{\gamma_t (2 + \mu \gamma_t)} d^2(y_{t+1}, y_{t+1}^*) \right) \right] \\
& + \mathbb{E} \left[ \sum_{t=0}^{t_3-1} \frac{(\zeta\bar{c}+\frac{2G}{\mu}c_R)\gamma_t}{2} \| \grad_y \tilde{f}(x_t, y_t) \|^2 \right] + \mathbb{E} \left[ \sum_{t=0}^{t_3-1} \frac{(1 + \frac{2}{\mu \gamma_t})}{\gamma_t (2 + \mu \gamma_t)} d^2(y_{t+1}^*, y_t^*) \right].
\end{align*}
We now proceed to bound the last term. Observe that
\begin{align*}
&\mathbb{E} \left[ \sum_{t=0}^{t_3-1} \frac{(1 + \frac{2}{\mu \gamma_t})}{\gamma_t (2 + \mu \gamma_t)} d^2(y_{t+1}^*, y_t^*) \right] \\
\leq &\mathbb{E} \left[ \sum_{t=0}^{t_3-1} \frac{(1 + \frac{2}{\mu \gamma_t})}{2 \gamma_t} d^2(y_{t+1}^*, y_t^*) \right] \\
= &\mathbb{E} \left[ \sum_{t=0}^{t_3-1} \frac{\mu \eta^y (v_{t+1}^y)^{\beta} + 2 (v_{t+1}^y)^{2\beta}}{2\mu (\eta^y)^2} d^2(y_{t+1}^*, y_t^*) \right] \\
\leq &\frac{\mu \eta^y C^\beta + 2C^{2\beta}}{2\mu (\eta^y)^2} \mathbb{E} \left[ \sum_{t=0}^{t_3-1}d^2(y_{t+1}^*, y_t^*) \right],
\end{align*}
where the last inequality uses the bound \( v_{t+1}^y \leq C \) for all \( t = 0, \dots, t_3 - 1 \). By Lemma \ref{le.lips smooth} and Assumption \ref{ass.retraction}, we have
\begin{align*}
    &\sum_{t=0}^{t_3-1} d^2(y_{t+1}^*, y_t^*) \\
    \leq &\kappa^2 \bar{c}\sum_{t=0}^{t_3-1} \eta_t^2\|\grad_x\tilde{f}(x_t,y_t)\|^2\\
   = &\kappa^2\bar{c} (\eta^x)^2 \sum_{t=0}^{t_3-1} \frac{1}{\max \{ v_{t+1}^x, v_{t+1}^y \}^{2 \alpha}} \left\| \grad_x \tilde{f}(x_t, y_t) \right\|^2 \\
   \leq &\kappa^2\bar{c} (\eta^x)^2 \sum_{t=0}^{t_3-1} \frac{1}{(v_{t+1}^x)^{2 \alpha}} \left\| \grad_x \tilde{f}(x_t, y_t) \right\|^2 \\
   \leq &\kappa^2\bar{c} (\eta^x)^2 \left( \frac{v_0^x}{(v_0^x)^{2 \alpha}} + \sum_{t=0}^{t_3-1} \frac{1}{(v_{t+1}^x)^{2 \alpha}} \left\| \grad_x \tilde{f}(x_t, y_t) \right\|^2 \right) \\
   \leq &\kappa^2\bar{c} (\eta^x)^2 \left( \frac{1 + \log v_{t_3}^x - \log v_0^x}{(v_0^x)^{2\alpha-1}} \cdot 1_{\alpha \geq 0.5} + \frac{(v_{t_3}^x)^{1-2\alpha}}{1-2\alpha} \cdot 1_{\alpha < 0.5} \right)
\end{align*}
where the final bound follows from Lemma \ref{le.adaptive bound}.
Substituting this estimate back into the previous inequality completes the proof.
\end{proof}
\begin{lemma}\label{le.v>C}
    Suppose the assumptions in Lemma \ref{le.v<C} hold. Let \( t_3 \) be defined as in Lemma \ref{le.v<C} and $0< 2\beta\le \alpha<1$, then we have
\begin{align*}
&\mathbb{E} \left[ \sum_{t=t_3}^{T-1} (f(x_t, y_t^*) - f(x_t, y_t)) \right] \\
\leq &\mathbb{E} \left[ \sum_{t=t_3}^{T-1} \frac{1 - \gamma_t \mu}{2\gamma_t} d^2( y_t , y_t^* ) - \frac{1}{\gamma_t (2 + \mu \gamma_t)} d^2(y_{t+1}, y_{t+1}^*) \right] + \mathbb{E} \left[ \sum_{t=t_3}^{T-1} \frac{(\zeta\bar{c}+\frac{2G}{\mu}c_R)\gamma_t}{2} \| \grad_y \tilde{f}(x_t, y_t) \|^2 \right] \\
& + \left(\zeta\bar{c}{\kappa}^2 + \frac{(2\kappa\sqrt{\bar{c}}(v_0^y)^{\alpha}+\kappa Gc_R\eta^x)^2(\eta^x)^{\beta/\alpha}}{\mu \eta^y (v_0^y)^{2\alpha}}\right)
\frac{(\eta^x)^{2-\beta/\alpha}}{2(1 - \alpha)\eta^y (v_0^y)^{\alpha - 2\beta}} \mathbb{E} \left[ (v_T^x)^{1-\alpha} \right] \\
& + \frac{2 \kappa^2 (\eta^x)^2}{\mu (\eta^y)^2 C^{2\alpha - 2\beta}} \mathbb{E} \left[ \sum_{t=t_3}^{T-1} \| \grad_x f(x_t, y_t) \|^2 \right] + \left( \frac{1}{\mu} + \frac{\eta^y}{(v_0^y)^\beta} \right) \frac{4 \kappa \eta^x G^2}{\eta^y (v_0^y)^{\alpha}} \mathbb{E} \left[ (v_T^y)^\beta \right].
\end{align*}
\end{lemma}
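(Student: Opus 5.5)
Starting from the Young-type inequality
\[
d^2(y_{t+1},y_{t+1}^*)\le(1+\lambda_t)d^2(y_{t+1},y_t^*)+\bigl(1+\tfrac{1}{\lambda_t}\bigr)d^2(y_{t+1}^*,y_t^*),
\]
I take $\lambda_t=\mu\gamma_t/2$, exactly as in Lemma~\ref{le.v<C}. Applying Lemma~\ref{le.stochastic auxiliary lemma} on $[t_3,T)$ with $S_t=(1+\frac{2}{\mu\gamma_t})d^2(y_{t+1}^*,y_t^*)$ then yields the first two terms of the claim directly. It remains to bound
\[
\mathbb{E}\Bigl[\sum_{t=t_3}^{T-1}\frac{S_t}{2\gamma_t(1+\lambda_t)}\Bigr]\le \mathbb{E}\Bigl[\sum_{t=t_3}^{T-1}\Bigl(\frac{1}{2\gamma_t}+\frac{1}{\mu\gamma_t^2}\Bigr)d^2(y_{t+1}^*,y_t^*)\Bigr].
\]
The difference from Lemma~\ref{le.v<C} is that $v_{t+1}^y$ is no longer bounded by $C$. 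A crude bound $d^2(y_{t+1}^*,y_t^*)\le\kappa^2\bar c\eta_t^2\|g_t^x\|^2$ would leave a factor $(v^y_{t+1})^{2\beta}$ against $\max\{v^x,v^y\}^{2\alpha}$, and this is where $2\beta\le\alpha$ enters.

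I split $d^2(y_{t+1}^*,y_t^*)$ into a ``deterministic'' part and a ``noise'' part. Writing $u_t=\Exp_{x_t}^{-1}(x_{t+1})$, Lemma~\ref{le.lips smooth}(2) and Assumption~\ref{ass.retraction} give
\[
y_{t+1}^*\approx\Exp_{y_t^*}\bigl(\operatorname{D}y^*(x_t)[u_t]\bigr),\qquad u_t=-\eta_tg_t^x+O(c_R\eta_t^2G^2).
\]
For the $1/(2\gamma_t)$ coefficient I use $d^2\le\kappa^2\bar c\eta_t^2\|g^x_t\|^2$. Together with $\eta_t^2/\gamma_t\le(\eta^x)^2(v^y_{t+1})^\beta/(\eta^y\max\{\cdot\}^{2\alpha})$, $\beta\le\alpha$, $v^y_{t+1}\ge v^y_0$ and Lemma~\ref{le.adaptive bound} applied to $\sum\|g_t^x\|^2/(v^x_{t+1})^{\alpha}\le(v_T^x)^{1-\alpha}/(1-\alpha)$, this gives the $\zeta\bar c\kappa^2$-type part of the $\mathbb{E}[(v_T^x)^{1-\alpha}]$ term. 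The interpolation exponent $(\eta^x)^{2-\beta/\alpha}$ comes from trading part of $\eta_t$ via $(v^y)^{\beta}\le(\eta^x/\eta_t)^{\beta/\alpha}$.

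The $1/(\mu\gamma_t^2)$ coefficient is the main obstacle, since it carries $(v^y_{t+1})^{2\beta}/\max\{\cdot\}^{2\alpha}$. I handle it by expanding $\|g_t^x\|^2$ around the true gradient: $\mathbb{E}_t\|g^x_t\|^2\le 2\|\grad_xf(x_t,y_t)\|^2+(\text{noise})$. To decouple the stepsize from the current sample, I replace $\eta_t,\gamma_t$ by their predecessors $\eta_{t-1},\gamma_{t-1}$, which are measurable before $\xi_t$. The error incurred is controlled by $\eta_{t-1}-\eta_t$ and $\gamma_t^{-1}-\gamma_{t-1}^{-1}$, which telescope. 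Using $G$-boundedness, these give the $\bigl(\frac1\mu+\frac{\eta^y}{(v_0^y)^\beta}\bigr)\frac{4\kappa\eta^xG^2}{\eta^y(v_0^y)^\alpha}\mathbb{E}[(v_T^y)^\beta]$ term, since $\sum_t(\gamma_t^{-1}-\gamma_{t-1}^{-1})\le (v_T^y)^\beta/\eta^y$.

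For the true-gradient part, on $t\ge t_3$ I use $v_{t+1}^y>C$ and $2\alpha\ge 2\beta$:
\[
\frac{(v^y_{t+1})^{2\beta}}{\max\{\cdot\}^{2\alpha}}\le C^{-(2\alpha-2\beta)}.
\]
This gives $\frac{2\kappa^2(\eta^x)^2}{\mu(\eta^y)^2C^{2\alpha-2\beta}}\mathbb{E}\sum\|\grad_xf\|^2$.

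The remaining retraction-correction cross term is $\kappa\sqrt{\bar c}\,\eta_t\|g_t^x\|\cdot\kappa Gc_R\eta_t^2\|g^x_t\|^2$, squared out into the $(2\kappa\sqrt{\bar c}(v_0^y)^\alpha+\kappa Gc_R\eta^x)^2$ coefficient. Here $\max\{\cdot\}^{\alpha}\ge(v_0^y)^{\alpha}$ converts extra $\eta_t$ factors into constants, and $2\beta\le\alpha$ is used again so that $(v^y)^{2\beta}/\max^{\alpha}\le (v_0^y)^{2\beta-\alpha}$. This leaves $\sum\|g^x_t\|^2/(v^x_{t+1})^\alpha$, bounded once more by Lemma~\ref{le.adaptive bound}. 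Collecting the pieces and tracking constants gives the stated inequality.
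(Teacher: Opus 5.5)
Your opening step is sound: with $\lambda_t=\mu\gamma_t/2$ and $S_t=(1+\frac{2}{\mu\gamma_t})\,d^2(y_{t+1}^*,y_t^*)$, Lemma~\ref{le.stochastic auxiliary lemma} gives the first two terms.

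The gap is in how you treat the $1/(\mu\gamma_t^2)$ part. Splitting $\mathbb{E}_t\|g_t^x\|^2=\|\grad_x f(x_t,y_t)\|^2+\mathbb{E}_t\|g_t^x-\grad_x f(x_t,y_t)\|^2$ leaves a variance term with weight $\kappa^2\bar c\,\eta_t^2/(\mu\gamma_t^2)$. This term is nonnegative and has no zero conditional mean. Replacing $\eta_t,\gamma_t$ by $\eta_{t-1},\gamma_{t-1}$ and telescoping does nothing to it, because that device kills a \emph{linear} martingale-difference term, not a second moment. You never say where this variance goes.

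The terms you then try to reproduce belong to a different decomposition. The paper applies Lemma~\ref{le.trigonometric distance bound} at $y_t^*$, which is where $\zeta$ comes from. It keeps the cross term $-2\langle\Exp^{-1}_{y_t^*}(y_{t+1}),\Exp^{-1}_{y_t^*}(y_{t+1}^*)\rangle$ and linearizes it as $\operatorname{D}y^*(x_t)[-\eta_t g_t^x]$ plus a remainder (D). Only in that setup do the following happen:
\begin{itemize}
\item the noise enters linearly (Term (G));
\item the true gradient acquires the $C^{-(2\alpha-2\beta)}$ factor via Young with $\lambda_t=\mu\eta^y/(4(v^y_{t+1})^\beta)$;
\item the Young split of (D) with weights $\eta_t^{\beta/\alpha}$ and $\eta_t^{2-\beta/\alpha}$ produces the $(2\kappa\sqrt{\bar c}(v_0^y)^\alpha+\kappa Gc_R\eta^x)^2$ coefficient and the exponent $2-\beta/\alpha$.
\end{itemize}
Once you start from $(1+1/\lambda_t)\,d^2(y^*_{t+1},y^*_t)$, the stochastic gradient appears only squared. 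So your argument contains no linear noise term and no retraction-correction cross term, and your last three paragraphs do not apply to your own decomposition.

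The repair is simpler than either route, and you already wrote the key inequality in your last paragraph. With $\lambda_t=\mu\gamma_t/2$ the weight is exactly $\frac{S_t}{\gamma_t(2+\mu\gamma_t)}=\frac{1}{\mu\gamma_t^2}\,d^2(y^*_{t+1},y^*_t)$, with no separate $1/(2\gamma_t)$ piece. Bound $d^2(y^*_{t+1},y^*_t)\le\kappa^2\bar c\,\eta_t^2\|g_t^x\|^2$ and use $2\beta\le\alpha$:
\[
\frac{\eta_t^2}{\gamma_t^2}=\frac{(\eta^x)^2(v^y_{t+1})^{2\beta}}{(\eta^y)^2\max\{v^x_{t+1},v^y_{t+1}\}^{2\alpha}}\le\frac{(\eta^x)^2}{(\eta^y)^2(v_0^y)^{\alpha-2\beta}(v^x_{t+1})^{\alpha}}.
\]
Lemma~\ref{le.adaptive bound} then gives $\frac{\kappa^2\bar c(\eta^x)^2}{\mu(1-\alpha)(\eta^y)^2(v_0^y)^{\alpha-2\beta}}\mathbb{E}[(v_T^x)^{1-\alpha}]$, with no noise splitting at all. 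Since $(2\kappa\sqrt{\bar c}(v_0^y)^\alpha+\kappa Gc_R\eta^x)^2\ge 4\kappa^2\bar c(v_0^y)^{2\alpha}$, this is at most half of the second summand of the stated $\mathbb{E}[(v_T^x)^{1-\alpha}]$ coefficient. Every other term on the right-hand side is nonnegative, so the lemma follows.

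Under $2\beta\le\alpha$ this is shorter than the paper's proof. The paper's linearization pays off in Lemma~\ref{le.v>C'}: combined with second-order smoothness, it leaves only a $1/\gamma_t$ weight on the squared stochastic gradient, which is what permits $\beta\le\alpha$.
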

\begin{proof}
From Lemma \ref{le.trigonometric distance bound}, we have
    \begin{equation}\label{eq.equation contain term D}
        \begin{split}
            d^2( y_{t+1}, y_{t+1}^*) \le &d^2( y_{t+1}, y_t^*) + \zeta d^2( y_t^*, y_{t+1}^*) - 2 \langle \Exp^{-1}_{y^*_t}(y_{t+1}), \Exp^{-1}_{y^*_t}(y_{t+1}^*) \rangle \\
    \leq &d^2( y_{t+1}, y_t^*) + \zeta\kappa^2\bar{c} \eta_t^2 \| \grad_x \tilde{f}(x_t, y_t) \|^2 - 2 \langle \Exp^{-1}_{y^*_t}(y_{t+1}), \Exp^{-1}_{y^*_t}(y_{t+1}^*) \rangle \\
    = &d^2(y_{t+1}, y_t^*) + \zeta\bar{c}\kappa^2 \eta_t^2 \| \grad_x \tilde{f}(x_t, y_t) \|^2 \underbrace{-2\left\langle\Exp^{-1}_{y^*_t}(y_{t+1}), \operatorname{D} y^*(x_t)[-\eta_t\grad_x\tilde{f}(x_t,y_t)]\right\rangle}_{\rm (C)} \\
    &\underbrace{-2 \left\langle \Exp^{-1}_{y^*_t}(y_{t+1}), \Exp^{-1}_{y^*_t}(y_{t+1}^*)  - \operatorname{D}y^*(x_t)[-\eta_t\grad_x\tilde{f}(x_t,y_t)] \right\rangle}_{\rm (D)},
        \end{split}
    \end{equation}
    where the second inequality uses Lemma \ref{le.lips smooth} and Assumption \ref{ass.retraction}. 
    For the inner product Term \text{(C)}, we apply Lemma \ref{le.lips smooth} and Cauchy‘s inequality, yielding for any $\lambda_t > 0$
    \begin{equation}\label{eq.bound Term C}
        \begin{split}
            &-2\left\langle\Exp^{-1}_{y^*_t}(y_{t+1}), \operatorname{D} y^*(x_t)[-\eta_t\grad_x\tilde{f}(x_t,y_t)]\right\rangle\\
        =&2\eta_t\left\langle\Exp^{-1}_{y^*_t}(y_{t+1}), \operatorname{D} y^*(x_t)[\grad_x f(x_t,y_t)]\right\rangle+2\eta_t\left\langle\Exp^{-1}_{y^*_t}(y_{t+1}), \operatorname{D} y^*(x_t)[\grad_x\tilde{f}(x_t,y_t)-\grad_x f(x_t,y_t)]\right\rangle\\
        \le &2\eta_td(y_{t+1}, y^*_t)\|\operatorname{D}y^*(x_t)\|\|\grad_xf(x_t,y_t)\|+2\eta_t\left\langle\Exp^{-1}_{y^*_t}(y_{t+1}), \operatorname{D} y^*(x_t)[\grad_x\tilde{f}(x_t,y_t)-\grad_x f(x_t,y_t)]\right\rangle\\
        \le &2d(y_{t+1},y_t^*)\kappa\eta_t\|\grad_xf(x_t,y_t)\|+2\eta_t\left\langle\Exp^{-1}_{y^*_t}(y_{t+1}), \operatorname{D} y^*(x_t)[\grad_x\tilde{f}(x_t,y_t)-\grad_x f(x_t,y_t)]\right\rangle\\
        \le &\lambda_td^2(y_{t+1},y^*_t)+\frac{\kappa^2\eta_t^2}{\lambda_t}\|\grad_xf(x_t,y_t)\|^2+2\eta_t\left\langle\Exp^{-1}_{y^*_t}(y_{t+1}), \operatorname{D} y^*(x_t)[\grad_x\tilde{f}(x_t,y_t)-\grad_x f(x_t,y_t)]\right\rangle.
        \end{split}
    \end{equation}
    For the Term \text{(D)}, applying Young's inequality, we have for any $\tau > 0$
\begin{align*}
    &-2 \left\langle \Exp^{-1}_{y^*_t}(y_{t+1}), \Exp^{-1}_{y^*_t}(y_{t+1}^*)  - \operatorname{D}y^*(x_t)[-\eta_t\grad_x\tilde{f}(x_t,y_t)] \right\rangle\\
    \le &2d(y_{t+1},y_t^*)\left\|\Exp^{-1}_{y^*_t}(y_{t+1}^*)  - \operatorname{D}y^*(x_t)[-\eta_t\grad_x\tilde{f}(x_t,y_t)] \right\|\\
    \le
    &2d(y_{t+1},y_t^*)\left(\left\|\Exp^{-1}_{y^*_t}(y_{t+1}^*)  - \operatorname{D}y^*(x_t)[\Exp_{x_t}^{-1}(x_{t+1})] \right\|+\left\|\operatorname{D}y^*(x_t)[\Exp_{x_t}^{-1}(x_{t+1})-\eta_t\grad_x\tilde{f}(x_t,y_t)]\right\|\right)\\
    \le 
    &2d(y_{t+1},y_t^*)\left(\kappa d(x_t,x_{t+1})+\|\operatorname{D}y^*(x_t)\|d(x_t,x_{t+1})+\|\operatorname{D}y^*(x_t)\|\|\Exp_{x_t}^{-1}(x_{t+1})-\eta_t\grad_x\tilde{f}(x_t,y_t)\|\right)\\
    \le &2d(y_{t+1},y_t^*)\left(2\kappa\sqrt{\bar{c}}\eta_t\|\grad_x \tilde{f}(x_t, y_t)\|+\kappa c_R\eta_t^2\|\grad_x \tilde{f}(x_t, y_t)\|^2\right)\\
    \le &(2\kappa\sqrt{\bar{c}}+\kappa Gc_R\eta_t)\eta_td(y_{t+1},y_t^*)\|\grad_x\tilde{f}(x_t,y_t)\|\\
    \le &\frac{\tau (2\kappa\sqrt{\bar{c}}+\kappa Gc_R\eta_t)^2\eta_t^{\beta/\alpha}}{2}d^2(y_{t+1}, y_t^*) + \frac{\eta_t^{2-\beta/\alpha}}{2 \tau} \| \grad_x \tilde{f}(x_t, y_t)\|^2.
\end{align*}
where the fourth inequality is from Assumption \ref{ass.retraction} and Lemma \ref{le.lips smooth}.

Combining the previous results leads to the inequality
\begin{align*}
d^2(y_{t+1}, y_{t+1}^*) \leq &\left(1 + \lambda_t + \frac{\tau (2\kappa\sqrt{\bar{c}}+\kappa Gc_R\eta_t)^2\eta_t}{2}\right) d^2(y_{t+1}, y_t^*)  + \left(\zeta\bar{c}{\kappa}^2\eta_t + \frac{1}{2 \tau}\right) \eta_t \| \grad_x \tilde{f}(x_t, y_t)\|^2 \\
& + \frac{{\kappa}^2 \eta_t^2}{\lambda_t} \| \grad_x f(x_t, y_t)\|^2 +2\eta_t\left\langle\Exp^{-1}_{y^*_t}(y_{t+1}), \operatorname{D} y^*(x_t)[\grad_x\tilde{f}(x_t,y_t)-\grad_x f(x_t,y_t)]\right\rangle.
\end{align*}
We further note that the adaptive learning rate \(\eta_t\) satisfies
\[
\eta_t = \frac{\eta^x}{\max \{v_{t+1}^x, v_{t+1}^y\}^{\alpha}} \leq \frac{\eta^x}{(v_{t+1}^y)^{\alpha}} \leq \frac{\eta^x}{(v_0^y)^{\alpha}},
\]
and
\[
\eta_t \leq \frac{\eta^x}{(v_{t+1}^y)^{\alpha}} = \frac{\eta^x}{(v_{t+1}^y)^{\alpha - 2\beta} (v_{t+1}^y)^{2\beta}} \leq \frac{\eta^x}{(v_0^y)^{\alpha - 2\beta} (v_{t+1}^y)^{2\beta}}.
\]
Plugging the these into the previous result implies
\begin{align*}
&d^2(y_{t+1}, y_{t+1}^*)\\
\leq &\left(1 + \lambda_t + \frac{\tau (2\kappa\sqrt{\bar{c}}(v_0^y)^{\alpha}+\kappa Gc_R\eta^x)^2(\eta^x)^{\beta/\alpha}}{2(v_0^y)^{2\alpha}(v^y_{t+1})^\beta}\right) d^2(y_{t+1}, y_t^*)  + \left(\frac{\zeta\bar{c}{\kappa}^2\eta^x}{(v^y_0)^{\alpha}}+ \frac{1}{2 \tau}\right) \eta_t^{2-\beta/\alpha} \| \grad_x \tilde{f}(x_t, y_t)\|^2 \\
& + \frac{{\kappa}^2 \eta_t^2}{\lambda_t} \| \grad_x f(x_t, y_t)\|^2 +2\eta_t\left\langle\Exp^{-1}_{y^*_t}(y_{t+1}), \operatorname{D} y^*(x_t)[\grad_x\tilde{f}(x_t,y_t)-\grad_x f(x_t,y_t)]\right\rangle.
\end{align*}
We now choose the parameters \(\lambda_t = \frac{\mu \eta^y}{4(v_{t+1}^y)^{\beta}}\) and \(\tau = \frac{\mu \eta^y (v_0^y)^{2\alpha}}{2(2\kappa\sqrt{\bar{c}}(v_0^y)^{\alpha}+\kappa Gc_R\eta^x)^2(\eta^x)^{\beta/\alpha}}\). Substituting these into the above inequality gives
\begin{align*}
&d^2(y_{t+1}, y_{t+1}^*)\\
\leq &\left(1 + \frac{\mu \eta^y}{2(v_{t+1}^y)^{\beta}}\right) d^2(y_{t+1}, y_t^*)  + \left(\zeta\bar{c}{\kappa}^2 + \frac{(2\kappa\sqrt{\bar{c}}(v_0^y)^{\alpha}+\kappa Gc_R\eta^x)^2(\eta^x)^{\beta/\alpha}}{\mu \eta^y (v_0^y)^{2\alpha}}\right) \eta_t^{2-\beta/\alpha} \| \grad_x \tilde{f}(x_t, y_t)\|^2\\
&+\frac{4{\kappa}^2 \left( v_{t+1}^y \right)^{\beta} \eta_t^2}{\mu \eta^y} \| \grad_x f(x_t, y_t) \|^2 + 2\eta_t\left\langle\Exp^{-1}_{y^*_t}(y_{t+1}), \operatorname{D} y^*(x_t)[\grad_x\tilde{f}(x_t,y_t)-\grad_x f(x_t,y_t)]\right\rangle.
\end{align*}
We now apply Lemma \ref{le.stochastic auxiliary lemma} to obtain
\begin{align*}
&\mathbb{E} \left[ \sum_{t=t_3}^{T-1} \left( f(x_t, y_t^*) - f(x_t, y_t) \right) \right] \\
\leq &\mathbb{E} \left[ \sum_{t=t_3}^{T-1} \left( \frac{1 - \gamma_t \mu}{2 \gamma_t}d^2(y_t, y_t^*) - \frac{1}{\gamma_t (2 + \mu \gamma_t)}d^2( y_{t+1}, y_{t+1}^*) \right) \right] + \mathbb{E} \left[ \sum_{t=t_3}^{T-1} \frac{(\zeta\bar{c}+\frac{2G}{\mu}c_R)\gamma_t}{2} \| \grad_y \tilde{f}(x_t, y_t) \|^2 \right] \\
&+ \underbrace{\mathbb{E} \left[ \sum_{t=t_3}^{T-1} \frac{1}{\gamma_t (2 + \mu \gamma_t)} \left(\zeta\bar{c}{\kappa}^2 + \frac{(2\kappa\sqrt{\bar{c}}(v_0^y)^{\alpha}+\kappa Gc_R\eta^x)^2(\eta^x)^{\beta/\alpha}}{\mu \eta^y (v_0^y)^{2\alpha}}\right) \eta_t^{2-\beta/\alpha} \| \grad_x \tilde{f}(x_t, y_t) \|^2 \right]}_{\rm (E)} \\ 
&+ \underbrace{\mathbb{E} \left[ \sum_{t=t_3}^{T-1} \frac{4{\kappa}^2 \left( v_{t+1}^y \right)^{\beta} \eta_t^2}{\gamma_t (2 + \mu \gamma_t) \mu \eta^y} \| \grad_x f(x_t, y_t) \|^2 \right]}_{\rm (F)} \\
& + \underbrace{\mathbb{E} \left[ \sum_{t=t_3}^{T-1} \frac{2\eta_t}{\gamma_t (2 + \mu \gamma_t)}\left\langle\Exp^{-1}_{y^*_t}(y_{t+1}), \operatorname{D} y^*(x_t)[\grad_x\tilde{f}(x_t,y_t)-\grad_x f(x_t,y_t)]\right\rangle\right]}_{\rm (G)}
\end{align*}
We now derive upper bounds for each of the remaining terms.

For the Term {\rm (E)}
\begin{align*}
\text{\rm Term (E)} \leq &\left(\zeta\bar{c}{\kappa}^2 + \frac{(2\kappa\sqrt{\bar{c}}(v_0^y)^{\alpha}+\kappa Gc_R\eta^x)^2(\eta^x)^{\beta/\alpha}}{\mu \eta^y (v_0^y)^{2\alpha}}\right) \mathbb{E} \left[ \sum_{t=t_3}^{T-1} \frac{\eta_t^{2-\beta/\alpha}}{2 \gamma_t} \| \grad_x \tilde{f}(x_t, y_t) \|^2 \right] \\
= &\left(\zeta\bar{c}{\kappa}^2 + \frac{(2\kappa\sqrt{\bar{c}}(v_0^y)^{\alpha}+\kappa Gc_R\eta^x)^2(\eta^x)^{\beta/\alpha}}{\mu \eta^y (v_0^y)^{2\alpha}}\right) \mathbb{E} \left[ \sum_{t=t_3}^{T-1} \frac{(\eta^x)^{2-\beta/\alpha} (v_{t+1}^y)^\beta}{2 \eta^y \max \{ v_{t+1}^x, v_{t+1}^y \}^{2\alpha-\beta}} \| \grad_x \tilde{f}(x_t, y_t) \|^2 \right] \\
\leq &\left(\zeta\bar{c}{\kappa}^2 + \frac{(2\kappa\sqrt{\bar{c}}(v_0^y)^{\alpha}+\kappa Gc_R\eta^x)^2(\eta^x)^{\beta/\alpha}}{\mu \eta^y (v_0^y)^{2\alpha}}\right) \mathbb{E} \left[ \sum_{t=t_3}^{T-1} \frac{(\eta^x)^{2-\beta/\alpha} (v_{t+1}^y)^\beta}{2 \eta^y (v_{t+1}^y)^{\beta}(v_{t+1}^y)^{\alpha-2\beta}(v_{t+1}^x)^{\alpha}} \| \grad_x \tilde{f}(x_t, y_t) \|^2 \right] \\
\leq &\left(\zeta\bar{c}{\kappa}^2 + \frac{(2\kappa\sqrt{\bar{c}}(v_0^y)^{\alpha}+\kappa Gc_R\eta^x)^2(\eta^x)^{\beta/\alpha}}{\mu \eta^y (v_0^y)^{2\alpha}}\right) \mathbb{E} \left[ \sum_{t=t_3}^{T-1} \frac{(\eta^x)^{2-\beta/\alpha}}{2 \eta^y (v_0^y)^{\alpha-2\beta}(v_{t+1}^x)^{\alpha}} \| \grad_x \tilde{f}(x_t, y_t) \|^2 \right]\\
\leq &\left(\zeta\bar{c}{\kappa}^2 + \frac{(2\kappa\sqrt{\bar{c}}(v_0^y)^{\alpha}+\kappa Gc_R\eta^x)^2(\eta^x)^{\beta/\alpha}}{\mu \eta^y (v_0^y)^{2\alpha}}\right)
\mathbb{E} \left[\frac{(\eta^x)^{2-\beta/\alpha}}{2\eta^y(v_0^y)^{\alpha-2\beta}} \left( \frac{v_t^x}{(v_0^x)^\alpha} + \sum_{t = 0}^{T - 1} \frac{1}{(v_{t+1}^x)^\alpha} \left\| \grad_x \tilde{f}(x_t, y_t) \right\|^2 \right) \right]\\
\leq &\left(\zeta\bar{c}{\kappa}^2 + \frac{(2\kappa\sqrt{\bar{c}}(v_0^y)^{\alpha}+\kappa Gc_R\eta^x)^2(\eta^x)^{\beta/\alpha}}{\mu \eta^y (v_0^y)^{2\alpha}}\right)
\frac{(\eta^x)^{2-\beta/\alpha}}{2(1 - \alpha)\eta^y (v_0^y)^{\alpha - 2\beta}} \mathbb{E} \left[ (v_T^x)^{1 - \alpha} \right]
\end{align*}
where we used Lemma \ref{le.adaptive bound} in the last step.

For the term {\rm (F)}, by using bounds on $\eta_t$ and monotonicity of $v_t^y$, we obtain:
\begin{equation}\label{eq.bound term F}
    \begin{split}
        \text{\rm Term (F)}  \leq &\mathbb{E} \left[ \sum_{t=t_3}^{T-1} \frac{2{\kappa}^2 \left( v_{t+1}^y \right)^\beta \eta_t^2}{\gamma_t \mu \eta^y} \| \grad_x f(x_t, y_t) \|^2 \right] \\
= &\frac{2{\kappa}^2 (\eta^x)^2}{\mu (\eta^y)^2} \mathbb{E} \left[ \sum_{t=t_3}^{T-1} \frac{(v_{t+1}^y)^{2\beta}}{\max \{ v_{t+1}^x, v_{t+1}^y \}^{2\alpha}} \| \grad_x f(x_t, y_t) \|^2 \right] \\
\leq &\frac{2{\kappa}^2 (\eta^x)^2}{\mu (\eta^y)^2} \mathbb{E} \left[ \sum_{t=t_3}^{T-1} \frac{(v_{t+1}^y)^{2\beta}}{(v_{t+1}^y)^{2\alpha}} \| \grad_x f(x_t, y_t) \|^2 \right] \\
\leq &\frac{2{\kappa}^2 (\eta^x)^2}{\mu (\eta^y)^2} \mathbb{E} \left[ \frac{1}{(v_{t_3 + 1}^y)^{2\alpha - 2\beta}} \sum_{t=t_3}^{T-1} \| \grad_x f(x_t, y_t) \|^2 \right] \\
\leq &\frac{2{\kappa}^2 (\eta^x)^2}{\mu (\eta^y)^2 C^{2\alpha - 2\beta}} \mathbb{E} \left[ \sum_{t=t_3}^{T-1} \| \grad_x f(x_t, y_t) \|^2 \right].
    \end{split}
\end{equation}

For the term {\rm (G)}, denote 
\(
m_t := \frac{2}{\gamma_t (2+\mu \gamma_t)} \left\langle\Exp^{-1}_{y^*_t}(y_{t+1}), \operatorname{D} y^*(x_t)[\grad_x\tilde{f}(x_t,y_t)-\grad_x f(x_t,y_t)]\right\rangle.
\)
Since \( y^*(\cdot) \) is \({\kappa}\)-Lipschitz as in Lemma \ref{le.lips smooth}, we can bound \( |m_t| \) as
\begin{align*}
|m_t| &\leq \frac{1}{\gamma_t} d(y_{t+1},y_t^*)\| \operatorname{D} y^*(x_t) \| \left( \| \grad_x \tilde{f}(x_t, y_t) \| + \| \grad_x f(x_t, y_t) \| \right) \\
&\leq \frac{{\kappa}}{\gamma_t}d(y_{t+1}, y_t^*) \left( \| \grad_x \tilde{f}(x_t, y_t) \| + \| \grad_x f(x_t, y_t) \| \right) \\
&\leq \frac{{\kappa}}{\gamma_t}\left(d(y_{t},y_t^*)+d(y_{t+1},y_{t})\right) \left( \| \grad_x \tilde{f}(x_t, y_t) \| + \| \grad_x f(x_t, y_t) \| \right) \\
&\leq \frac{{\kappa}}{\gamma_t} \left( \frac{1}{\mu} \| \grad_y f(x_t, y_t) \| + \| \gamma_t \grad_y \tilde{f}(x_t, y_t) \| \right) \left( \| \grad_x \tilde{f}(x_t, y_t) \| + \| \grad_x f(x_t, y_t) \| \right) \\
&\leq \underbrace{\frac{2G {\kappa}}{\gamma_{T-1}} \left( \frac{G}{\mu} + \frac{\eta^y G}{(v_0^y)^\beta} \right)}_{M}.
\end{align*}
Moreover, since \(\gamma_t\) and \(y_{t+1}\) are independent of \(\xi_t^x\), it holds that \(\mathbb{E}_{\xi_t^x} [m_t] = 0\). Next, We now evaluate Term (G).
\begin{equation}\label{eq.bound term G}
    \begin{split}
        \text{Term } (G) &= \mathbb{E} \left[ \sum_{t=t_3}^{T-1} \eta_t m_t \right] \\
&= \mathbb{E} \left[ \eta_{t_3} m_{t_3} + \sum_{t=t_3+1}^{T-1} \eta_{t-1} m_t + \sum_{t=t_3+1}^{T-1} (\eta_t - \eta_{t-1}) m_t \right] \\
&\leq \mathbb{E} \left[ \frac{\eta^x}{(v_0^y)^\alpha} M + \sum_{t=t_3+1}^{T-1} \eta_{t-1} \mathbb{E}_{\xi_t^x} [m_t] + \sum_{t=t_3+1}^{T-1} (\eta_{t-1} - \eta_t) (-m_t) \right]\\
&\leq \mathbb{E} \left[ \frac{\eta^x}{(v_0^y)^\alpha} M + \sum_{t=t_3+1}^{T-1} (\eta_{t-1} - \eta_t) M \right] \\
&\leq \mathbb{E} \left[ \frac{2\eta^x}{(v_0^y)^\alpha} M \right] \\
&= \left( \frac{1}{\mu} + \frac{\eta^y}{(v_0^y)^\beta} \right) \frac{4{\kappa} \eta^x G^2}{\eta^y (v_0^y)^\alpha} \mathbb{E} \left[ (v_T^y)^\beta \right].
    \end{split}
\end{equation}
Combining all the bounds derived above concludes the proof.
\end{proof}
\begin{lemma}\label{le.4}
Suppose the assumptions in Lemma~\ref{le.v<C} hold. Then the following inequality holds:
\[
\mathbb{E} \left[ \sum_{t=0}^{T-1} \left( \frac{1 - \gamma_t \mu}{2 \gamma_t}d^2( y_t, y_t^*) - \frac{1}{\gamma_t (2 + \mu \gamma_t)}  d^2(y_{t+1}, y_{t+1}^*) \right) \right] 
\leq \frac{(v_0^y)^\beta G^2}{2 \mu^2 \eta^y} + \frac{(2 \beta G)^{\frac{1}{1-\beta} + 2} G^2}{4 \mu^{\frac{1}{1-\beta} + 3} (\eta^y)^{\frac{1}{1-\beta} + 2} (v_0^y)^{2 - 2\beta}}.
\]
\end{lemma}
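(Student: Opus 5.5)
The plan is to telescope the sum by re-indexing the negative terms. Set $D_t := d^2(y_t,y_t^*)$. Shifting the index in the second part, the sum equals
\[
\frac{1-\gamma_0\mu}{2\gamma_0}D_0 + \sum_{t=1}^{T-1}\left(\frac{1-\gamma_t\mu}{2\gamma_t} - \frac{1}{\gamma_{t-1}(2+\mu\gamma_{t-1})}\right)D_t - \frac{1}{\gamma_{T-1}(2+\mu\gamma_{T-1})}D_T .
\]
The last term is nonpositive and will be dropped. For the first term, $\gamma_0 = \eta^y/(v_1^y)^\beta$ and strong concavity give $D_0\le \|\grad_y f(x_0,y_0)\|^2/\mu^2\le G^2/\mu^2$. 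Using $1-\gamma_0\mu\le 1$ and replacing $(v_1^y)^\beta$ by the relevant initial-level quantity, this yields the constant $\frac{(v_0^y)^\beta G^2}{2\mu^2\eta^y}$, up to the bookkeeping of $v_0^y$ versus $v_1^y$. If needed, that discrepancy is absorbed into the second constant.

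The main work is the middle coefficient. Using $\frac{1}{\gamma(2+\mu\gamma)} \ge \frac{1}{2\gamma}-\frac{\mu}{4}$, it is at most
\[
\frac{1}{2\gamma_t}-\frac{1}{2\gamma_{t-1}} - \frac{\mu}{2} + \frac{\mu}{4} = \frac{(v_{t+1}^y)^\beta-(v_t^y)^\beta}{2\eta^y} - \frac{\mu}{4}.
\]
Thus each term is nonpositive unless the increment $(v_{t+1}^y)^\beta-(v_t^y)^\beta$ exceeds $\mu\eta^y/2$. By concavity of $s\mapsto s^\beta$, this increment is at most $\beta\|g_t^y\|^2/(v_t^y)^{1-\beta}\le \beta G^2/(v_t^y)^{1-\beta}$. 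It can therefore exceed $\mu\eta^y/2$ only while $(v_t^y)^{1-\beta}\le 2\beta G^2/(\mu\eta^y)$, that is, only while $v_t^y\le (2\beta G^2/(\mu\eta^y))^{1/(1-\beta)}$.

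On these \emph{bad} indices I will bound the positive part by $\frac{\beta G^2}{2\eta^y (v_0^y)^{1-\beta}}$ and bound $D_t\le G^2/\mu^2$ (again from strong concavity and Assumption~\ref{ass.grad bound}). I then need the number of bad indices, or more precisely the sum of their increments. The cleanest route is to sum the increments directly rather than count indices: $\sum_{\text{bad }t}\big((v_{t+1}^y)^\beta-(v_t^y)^\beta\big)$ telescopes to at most $(v^y_{\text{last bad}+1})^\beta$. That last value is bounded by the threshold level plus one step, roughly $(2\beta G^2/(\mu\eta^y))^{\beta/(1-\beta)}$ plus an additive $G$-dependent term. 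Multiplying by $G^2/(2\eta^y\mu^2)$ and tidying powers, while using $v_0^y$ to normalize the first step, should produce the stated form $\frac{(2\beta G)^{\frac{1}{1-\beta}+2}G^2}{4\mu^{\frac{1}{1-\beta}+3}(\eta^y)^{\frac{1}{1-\beta}+2}(v_0^y)^{2-2\beta}}$. Matching the exact exponents here, particularly the $(v_0^y)^{2-2\beta}$ and the extra $(2\beta G)^2/(\mu\eta^y)^2$ factor, is the step I expect to be fiddly. I would first try bounding the number of bad steps by $\frac{1}{v_0^y}\big(2\beta G^2/(\mu\eta^y)\big)^{1/(1-\beta)}$, since each step adds at least\ldots{} (not guaranteed). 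Because of that gap, I will fall back on the increment-telescoping bound with crude constants, which suffices for the $\mathcal{O}$-level use in Theorem~\ref{the.stochastic convergence}.

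Since everything holds pathwise, taking expectations concludes.
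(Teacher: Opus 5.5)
Your skeleton is the paper's. You re-index and use $\frac{1}{\gamma(2+\mu\gamma)}\ge\frac{1}{2\gamma}-\frac{\mu}{4}$, which is the paper's $\frac{2(v_t^y)^{2\beta}}{2(v_t^y)^\beta+\mu\eta^y}\ge(v_t^y)^\beta-\frac{\mu\eta^y}{2}$. Only indices with $(v_{t+1}^y)^\beta-(v_t^y)^\beta>\mu\eta^y/2$ can then contribute, and you finish them with Bernoulli and $d^2(y_t,y_t^*)\le G^2/\mu^2$.

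The divergence is the last step, where the paper \emph{counts} bad indices. The fact you were missing at ``(not guaranteed)'' is this. The same inequality $\beta\|g_t^y\|^2/(v_t^y)^{1-\beta}>\mu\eta^y/2$ that caps $v_t^y$ below $V:=\bigl(2\beta G^2/(\mu\eta^y)\bigr)^{1/(1-\beta)}$ also gives a lower bound: $\|g_t^y\|^2>\frac{\mu\eta^y}{2\beta}(v_t^y)^{1-\beta}\ge\frac{\mu\eta^y}{2\beta}(v_0^y)^{1-\beta}$. So each bad step raises $v^y$ by at least $\frac{\mu\eta^y}{2\beta}(v_0^y)^{1-\beta}$, not $v_0^y$. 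This is how the paper bounds the number of bad steps by $V\big/\bigl(\frac{\mu\eta^y}{2\beta}(v_0^y)^{1-\beta}\bigr)$ (strictly, one plus this). Multiplying by your per-step bound $\frac{\beta G^2}{2\eta^y(v_0^y)^{1-\beta}}\cdot\frac{G^2}{\mu^2}$ gives exactly the $(v_0^y)^{2-2\beta}$ and the extra factor $(2\beta G^2)^2/(4\mu^3(\eta^y)^2)$ you were trying to match. The result is $\frac{(2\beta G^2)^{\frac{1}{1-\beta}+2}}{4\mu^{\frac{1}{1-\beta}+3}(\eta^y)^{\frac{1}{1-\beta}+2}(v_0^y)^{2-2\beta}}$. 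That is what the paper's proof actually derives; the $(2\beta G)^{\frac{1}{1-\beta}+2}G^2$ in the displayed statement does not follow from it.

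Your telescoping finish is nevertheless valid, and cleaner. Increments are nonnegative, so the bad-index sum is at most $(v^y_{t_{\max}+1})^\beta-(v_0^y)^\beta$, with $v^y_{t_{\max}+1}<V+G^2$. Your $v_0^y$ versus $v_1^y$ concern is also justified: the paper's first line uses $(v_0^y)^\beta$ for $1/\gamma_0$, although $\gamma_0=\eta^y/(v_1^y)^\beta$. The right repair is $\frac{(v_1^y)^\beta}{2\eta^y}-\frac{\mu}{2}\le\frac{(v_0^y)^\beta}{2\eta^y}+\bigl[\frac{(v_1^y)^\beta-(v_0^y)^\beta}{2\eta^y}-\frac{\mu}{4}\bigr]$, treating $t=0$ as one more candidate bad index. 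Combined with your telescoping, this bounds the whole left-hand side by $\frac{G^2}{2\mu^2\eta^y}\bigl(v_0^y+V+G^2\bigr)^\beta$. That bound needs no counting and has no $(v_0^y)^{-(2-2\beta)}$ blow-up as $v_0^y\to0$.

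Lemma~\ref{le.bound conclusion} and Theorem~\ref{the.stochastic convergence} only need a $T$-independent constant here, so your bound suffices downstream. What it does not give is the literal constant of the statement; for that you need the counting step above.
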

\begin{proof}
From $\frac{2(v_t^y)^{2\beta}}{2(v_t^y)^\beta +  \mu \eta^y}\ge (v_t^y)^\beta-\frac{\mu \eta^y}{2}$, it follows
\begin{align*}
    &\mathbb{E} \left[ \sum_{t=0}^{T-1} \left( \frac{1 - \gamma_t \mu}{2 \gamma_t} d^2( y_t, y_t^*) - \frac{1}{\gamma_t (2 + \mu \gamma_t)} d^2(y_{t+1}, y_{t+1}^*) \right) \right] \\
\leq &\left( \frac{(v_0^y)^\beta}{2 \eta^y} - \frac{\mu}{2} \right) d^2( y_0, y_0^*) 
+ \frac{1}{2 \eta^y} \sum_{t=1}^{T-1} \left( (v_{t+1}^y)^\beta - \mu \eta^y - \frac{2(v_t^y)^{2\beta}}{2(v_t^y)^\beta +  \mu \eta^y} \right) d^2(y_t, y_t^*)\\
\leq &\frac{(v_0^y)^\beta G^2}{2 \mu^2 \eta^y} + \frac{1}{2 \eta^y} \underbrace{\sum_{t=1}^{T-1} \left( (v_{t+1}^y)^\beta - \frac{\mu \eta^y}{2} - (v_t^y)^\beta \right)d^2(y_t, y_t^*)}_{\rm (H)}.
\end{align*}
To bound Term {\rm (H)}, we follow the strategy of~
\cite{yang2022nest, li2022tiada}
, which hinges on the observation that the quantity $(v_{t+1}^y)^\beta - \frac{\mu \eta^y}{2} - (v_t^y)^\beta$ can be positive for only a constant number of iterations. Specifically, if the term is positive at iteration $t$, then
\begin{equation}\label{eq.positive upper bound}
    \begin{split}
        0 < &(v_{t+1}^y)^\beta - (v_t^y)^\beta - \frac{\mu \eta^y}{2}\\
        = &\left( v_t^y + \| \grad_y \tilde{f}(x_t, y_t) \|^2 \right)^\beta - (v_t^y)^\beta - \frac{\mu \eta^y}{2}\\
        \leq &(v_t^y)^\beta \left( 1 + \frac{ \| \grad_y \tilde{f}(x_t, y_t) \|^2 }{v_t^y} \right)^\beta-(v_t^y)^\beta - \frac{\mu \eta^y}{2}\\
        \le &(v_t^y)^\beta \left( 1 + \frac{\beta \| \grad_y \tilde{f}(x_t, y_t) \|^2 }{v_t^y} \right)-(v_t^y)^\beta - \frac{\mu \eta^y}{2}\\
        = & \frac{\beta \| \grad_y \tilde{f}(x_t, y_t) \|^2 }{(v_t^y)^{1-\beta}}- \frac{\mu \eta^y}{2},
    \end{split}
\end{equation}
where the last inequality follows from Bernoulli’s inequality. Rearranging this inequality yields two conditions:
$$
    \begin{cases}
        \left\lVert \grad_y \tilde{f}(x_t, y_t) \right\rVert^2 > \frac{\mu \eta^y}{2 \beta} (v_t^y)^{1 - \beta} \geq \frac{\mu \eta^y}{2 \beta} (v_0^y)^{1 - \beta} \\
(v_t^y)^{1 - \beta} < \frac{2 \beta}{\mu \eta^y} \left\lVert \grad_y \tilde{f}(x_t, y_t) \right\rVert^2 \leq \frac{2 \beta G^2}{\mu \eta^y},
    \end{cases}
$$
This implies that whenever the term is positive, the gradient norm must be sufficiently large while the accumulated gradient norm \( v_{t+1}^y \) remains sufficiently small. Consequently, the number of such iterations is at most
\(
\frac{\left( \frac{2 \beta G^2}{\mu \eta^y} \right)^{\frac{1}{1 - \beta}}}{\frac{\mu \eta^y}{2 \beta} (v_0^y)^{1 - \beta}}.
\)
Moreover, when the term is positive, it is upper bounded by Equation \eqref{eq.positive upper bound}
\begin{align*}
\left( (v_{t+1}^y)^\beta - \frac{\mu \eta^y}{2} - (v_t^y)^\beta \right) d^2( y_t, y_t^*) 
&\leq \frac{\beta \left\lVert \grad_y \tilde{f}(x_t, y_t) \right\rVert^2}{(v_t^y)^{1 - \beta}} d^2(y_t, y_t^*) \\
&\leq \frac{\beta G^2}{(v_0^y)^{1 - \beta}} d^2(y_t, y_t^*) \\
&\leq \frac{\beta G^2}{\mu^2 (v_0^y)^{1 - \beta}} \| \grad_y f(x_t, y_t) \|^2 \\
&\leq \frac{\beta G^4}{\mu^2 (v_0^y)^{1 - \beta}}
\end{align*}
which is a constant. Thus, the total contribution of Term \text{(H)} is bounded by
\[
    \frac{(2 \beta G^2)^{\frac{1}{1 - \beta} + 2}}{2 \mu^{\frac{1}{1 - \beta} + 3} (\eta^y)^{\frac{1}{1 - \beta} + 1} (v_0^y)^{2 - 2 \beta}}.
\]
Substituting this into the earlier bound yields the desired result.
\end{proof}
\textbf{Proof of Lemma \ref{le.bound conclusion}}
\begin{proof}
By Lemma \ref{le.v<C} and Lemma \ref{le.v>C}, we have for any constant \(C\),
\begin{align*}
&\mathbb{E}\left[\sum_{t=0}^{T-1}\left(f(x_{t},y^{*}_{t})-f(x_{t}, y_{t})\right)\right] \\
\leq&\mathbb{E}\left[\sum_{t=0}^{T-1}\left(\frac{1-\gamma_{t}\mu}{2 \gamma_{t}}\|y_{t}-y^{*}_{t}\|^{2}-\frac{1}{\gamma_{t}(2+\mu\gamma_{t})}\|y_{t+1}-y^{*}_{t+1}\|^{2}\right)\right] \\
& +\mathbb{E}\left[\sum_{t=0}^{T-1}\frac{(\zeta\bar{c}+\frac{2G}{\mu}c_R)\gamma_{t}}{2}\Big{\|}\grad_{y}\widetilde{f}(x_{t},y_{t})\Big{\|}^{2}\right]+\frac{2{\kappa}^{2}\left(\eta^{x}\right)^{2}}{\mu\left(\eta^{y}\right)^{2}C^{2\alpha-2\beta}}\mathbb{E}\left[\sum_{t=0}^{T-1}\|\grad_x f(x_{t},y_{t})\|^{2}\right] \\
&+\frac{{\kappa}^{2}\bar{c}\left(\mu\eta^{y}C^{\beta}+2C^{2\beta}\right)\left(\eta^{x}\right)^{2}}{2\mu\left(\eta^{y}\right)^{2}}\mathbb{E}\left[\frac{1+\log v_{T}^{x}-\log v_{0}^{x}}{\left(v_{0}^{x}\right)^{2\alpha-1}}\cdot\mathbf{1}_{\alpha\geq 0.5}+\frac{\left(v_{T}^{x}\right)^{1-2\alpha}}{1-2\alpha}\cdot\mathbf{1}_{\alpha<0.5}\right] \\
&+\left(\zeta\bar{c}{\kappa}^2 + \frac{(2\kappa\sqrt{\bar{c}}(v_0^y)^{\alpha}+\kappa Gc_R\eta^x)^2(\eta^x)^{\beta/\alpha}}{\mu \eta^y (v_0^y)^{2\alpha}}\right)
\frac{(\eta^x)^{2-\beta/\alpha}}{2(1 - \alpha)\eta^y (v_0^y)^{\alpha - 2\beta}}\mathbb{E}\left[\left(v_{T}^{x}\right)^{1-\alpha}\right] \\
& +\left(\frac{1}{\mu}+\frac{\eta^{y}}{\left(v_{0}^{y}\right)^{\beta}}\right)\frac{4\kappa\eta^{x}G^{2}}{\eta^{y}\left(v_{0}^{y}\right)^{\alpha}}\mathbb{E}\left[\left(v_{T}^{y}\right)^{\beta}\right].
\end{align*}

The first term can be bounded by Lemma \ref{le.4}. For the second term, we have
\begin{align*}
\mathbb{E}&\left[\sum_{t=0}^{T-1}\frac{(\zeta\bar{c}+\frac{2G}{\mu}c_R)\gamma_{t}}{2}\Big{\|}\grad_{y}\widetilde{f}(x_{t},y_{t})\Big{\|}^{2}\right] \\
&= \mathbb{E}\left[\sum_{t=0}^{T-1}\frac{(\zeta\bar{c}+\frac{2G}{\mu}c_R)\eta^{y}}{2\left(v_{t+1}^{y}\right)^{\beta}}\Big{\|}\grad_{y}\widetilde{f}(x_{t},y_{t})\Big{\|}^{2}\right] \\
&\leq \frac{(\zeta\bar{c}+\frac{2G}{\mu}c_R)\eta^{y}}{2}\mathbb{E}\left[\frac{v_{0}^{y}}{\left(v_{0}^{y}\right)^{\beta}}+\sum_{t=0}^{T-1}\frac{1}{\left(v_{t+1}^{y}\right)^{\beta}}\Big{\|}\grad_{y}\widetilde{f}(x_{t},y_{t})\Big{\|}^{2}\right] \\
&\leq \frac{(\zeta\bar{c}+\frac{2G}{\mu}c_R)\eta^{y}}{2(1-\beta)}\mathbb{E}\left[\left(v_{T}^{y}\right)^{1-\beta}\right],
\end{align*}
where the last inequality follows from Lemma \ref{le.adaptive bound}. Then the proof is completed.
\end{proof}
\textbf{Proof of Lemma \ref{le.bound EvTy}}
\begin{proof}
    Note that since \( f(x, y) \) is \( L_1 \)-smooth and concave in \( y \), we have
\[
\|\grad_y f(x_t, y_t)\|^2 \le 2L_1 \left( f(x_t, y_t^*) - f(x_t, y_t) \right).
\]
Combine the above inequality with Lemma~\ref{le.bound conclusion}, yields the desired result.
\end{proof}
\textbf{Proof of Lemma \ref{le.bound EvTx}}
\begin{proof}
    From Lemma \ref{le.lips smooth}, we have
\begin{align*}
    \Phi(x_{t+1}) - \Phi(x_t) \leq & \left\langle \grad \Phi(x_t), \Exp_{x_t}^{-1}(x_{t+1}) \right\rangle + \frac{L_{\Phi}}{2} d^2(x_t,x_{t+1})\\
    \le & -\eta_t \left\langle \grad \Phi(x_t), \grad_x \widetilde{f}(x_t, y_t) \right\rangle+\frac{L_{\Phi}\bar{c}\eta_t^2}{2}\left\| \grad_x \widetilde{f}(x_t, y_t) \right\|^2 \\
    &+\left\langle \grad \Phi(x_t), \Exp_{x_t}^{-1}(x_{t+1})-\eta_t\grad_x \widetilde{f}(x_t, y_t) \right\rangle\\
    \le & -\eta_t \left\langle \grad \Phi(x_t), \grad_x \widetilde{f}(x_t, y_t) \right\rangle+\frac{L_{\Phi}\bar{c}\eta_t^2}{2}\left\| \grad_x \widetilde{f}(x_t, y_t) \right\|^2 \\
    &+\| \grad \Phi(x_t)\|\| \Exp_{x_t}^{-1}(x_{t+1})-\eta_t\grad_x \widetilde{f}(x_t, y_t)\|\\
    \le & -\eta_t \left\langle \grad \Phi(x_t), \grad_x \widetilde{f}(x_t, y_t) \right\rangle+\frac{(L_{\Phi}\bar{c}+2Gc_R)\eta_t^2}{2}\left\| \grad_x \widetilde{f}(x_t, y_t) \right\|^2 \\
\end{align*}
where the second inequality follows from Assumption \ref{ass.retraction}, and the last inequality uses Assumption \ref{ass.retraction} again together with the bound \( \| \grad \Phi(x_t) \| \le G \).

By multiplying both sides by \( \frac{1}{\eta_t} \) and taking expectation over the stochasticity at iteration \( t \), we obtain
\begin{align*}
    &\mathbb{E}\left[\frac{\Phi(x_{t+1}) - \Phi(x_t)}{\eta_t}\right]\\
    \leq &-\left\langle \grad \Phi(x_t), \grad_x f(x_t, y_t) \right\rangle + \frac{(L_{\Phi}\bar{c}+2Gc_R)}{2}\mathbb{E}\left[\eta_t \left\| \grad_x \widetilde{f}(x_t, y_t) \right\|^2 \right]\\
    = &-\left\| \grad_x f(x_t, y_t) \right\|^2 + \left\langle \grad_x f(x_t, y_t) - \grad \Phi(x_t), \grad_x f(x_t, y_t) \right\rangle + \frac{(L_{\Phi}\bar{c}+2Gc_R)}{2} \mathbb{E}\left[\eta_t \left\| \grad_x \widetilde{f}(x_t, y_t) \right\|^2 \right]\\
    \le & -\left\| \grad_x f(x_t, y_t) \right\|^2 + \frac{1}{2} \left\| \grad_x f(x_t, y_t) \right\|^2 + \frac{1}{2} \left\| \grad_x f(x_t, y_t) - \grad \Phi(x_t) \right\|^2\\
    &+ \frac{(L_{\Phi}\bar{c}+2Gc_R)}{2} \mathbb{E}\left[\eta_t \left\| \grad_x \widetilde{f}(x_t, y_t) \right\|^2 \right]\\
    =&-\frac{1}{2} \left\| \grad_x f(x_t, y_t) \right\|^2 + \frac{1}{2} \left\| \grad_x f(x_t, y_t) - \grad \Phi(x_t) \right\|^2 + \frac{(L_{\Phi}\bar{c}+2 Gc_R)}{2} \mathbb{E}\left[\eta_t \left\| \grad_x \widetilde{f}(x_t, y_t) \right\|^2 \right].
\end{align*}

Summing over \( t = 0 \) to \( T - 1 \) and taking the total expectation yields
\begin{align*}
    &\mathbb{E}\left[\sum_{t=0}^{T-1} \left\| \grad_x f(x_t, y_t) \right\|^2 \right]\\  
\leq &\underbrace{2 \mathbb{E} \left[ \sum_{t=0}^{T-1} \frac{\Phi(x_t) - \Phi(x_{t+1})}{\eta_t} \right]}_{\rm (I)}
+ \underbrace{(L_{\Phi}\bar{c}+2 Gc_R) \mathbb{E} \left[ \sum_{t=0}^{T-1} \eta_t \left\| \grad_x \widetilde{f}(x_t, y_t) \right\|^2 \right]}_{\rm (J)}\\
&+ \underbrace{\mathbb{E} \left[ \sum_{t=0}^{T-1} \left\| \grad_x f(x_t, y_t) - \grad \Phi(x_t) \right\|^2 \right]}_{\rm (K)}.     
\end{align*}
For the term {\rm (I)}, using the definition of the adaptive step size $\eta_t$, we have 
\begin{align*}
    2\mathbb{E} \left[ \sum_{t=0}^{T-1} \frac{\Phi(x_t) - \Phi(x_{t+1})}{\eta_t} \right]
= &2\mathbb{E} \left[ \frac{\Phi(x_0)}{\eta_0}-\frac{\Phi(x_T)}{\eta_{T-1}} + \sum_{t=1}^{T-1} \Phi(x_t) \left( \frac{1}{\eta_t} - \frac{1}{\eta_{t-1}} \right) \right]\\
\leq &2\mathbb{E} \left[ \frac{\Phi_{\max}}{\eta_0}-\frac{\Phi^*}{\eta_{T-1}} + \sum_{t=1}^{T-1} \Phi_{\max} \left( \frac{1}{\eta_t} - \frac{1}{\eta_{t-1}} \right) \right]\\
= &2\mathbb{E} \left[ \frac{\Delta \Phi}{\eta_{T-1}} \right]
= 2\mathbb{E} \left[ \frac{\Delta \Phi}{\eta^x} \max \left\{ (v_T^x)^\alpha, (v_T^y)^\alpha \right\} \right].
\end{align*}
To bound term {\rm (J)}, applying Lemma \ref{le.stochastic auxiliary lemma} yields
\begin{align*}
    (L_{\Phi}\bar{c}+2Gc_R) \mathbb{E} \left[ \sum_{t=0}^{T-1} \eta_t \left\| \grad_x \widetilde{f}(x_t, y_t) \right\|^2 \right]
= &(L_{\Phi}\bar{c}+2Gc_R) \mathbb{E} \left[ \sum_{t=0}^{T-1} \frac{\eta^x}{\max \left\{ (v_{t+1}^x)^\alpha, (v_{t+1}^y)^\alpha \right\}} \left\| \grad_x \widetilde{f}(x_t, y_t) \right\|^2 \right]\\
\leq &(L_{\Phi}\bar{c}+2Gc_R) \eta^x \mathbb{E} \left[ \sum_{t=0}^{T-1} \frac{1}{(v_{t+1}^x)^\alpha} \left\| \grad_x \widetilde{f}(x_t, y_t) \right\|^2 \right]\\
\leq &(L_{\Phi}\bar{c}+2Gc_R) \eta^x \mathbb{E} \left[ \left( \frac{v_T^x}{(v_0^x)^\alpha} + \sum_{t=0}^{T-1} \frac{1}{(v_{t+1}^x)^\alpha} \right) \left\| \grad_x \widetilde{f}(x_t, y_t) \right\|^2 \right]\\
\leq &\frac{(L_{\Phi}\bar{c}+2Gc_R) \eta^x}{1 - \alpha} \mathbb{E} \left[ (v_T^x)^{1 - \alpha} \right].
\end{align*}
For the Term \( \rm{(K)} \), using the smoothness and concavity in \( y \), we have
\[
\mathbb{E} \left[ \sum_{t=0}^{T-1} \left\| \grad_x f(x_t, y_t) - \grad \Phi(x_t) \right\|^2 \right]
\leq L_1^2 \mathbb{E} \left[ \sum_{t=0}^{T-1}  d^2( y_t, y_t^* ) \right]
\leq 2L_1\kappa \mathbb{E} \left[ \sum_{t=0}^{T-1} \left( f(x_t, y_t^*) - f(x_t, y_t) \right) \right],
\]
Now define
\(C = \left( \frac{8L_1{\kappa}^3 (\eta^x)^2}{\mu (\eta^y)^2} \right)^{\frac{1}{2\alpha - 2\beta}}\)
and apply Lemma~\ref{le.bound conclusion}. Putting all terms together, we obtain
\begin{align*}
    &\mathbb{E} \left[ \sum_{t=0}^{T-1} \left\| \grad_x f(x_t, y_t) \right\|^2 \right]\\
\leq &\frac{1}{2} \mathbb{E} \left[ \sum_{t=0}^{T-1} \left\| \grad_x f(x_t, y_t) \right\|^2 \right]
+ 2\mathbb{E} \left[ \frac{\Delta \Phi}{\eta^x} \max \left\{ (v_T^x)^\alpha, (v_T^y)^\alpha \right\} \right]
+ \frac{(L_{\Phi}\bar{c}+2Gc_R) \eta^x}{1 - \alpha} \mathbb{E} \left[ (v_T^x)^{1 - \alpha} \right]\\
& + \frac{L_1\kappa (\zeta\bar{c}+\frac{2G}{\mu}c_R)\eta^y}{1 - \beta} \mathbb{E} \left[ (v_T^y)^{1 - \beta} \right]
+ \left( \frac{1}{\mu} + \frac{\eta^y}{(v_0^y)^\beta} \right) \cdot \frac{8L_1\kappa^2\eta^x G^2}{\eta^y (v_0^y)^\alpha} \mathbb{E} \left[ (v_T^y)^\beta \right]\\
&+ \frac{\kappa^4\bar{c} ( \eta^y C^\beta + 2C^{2\beta}) (\eta^x)^2}{\mu(\eta^y)^2}\mathbb{E} \left[ \frac{1 + \log v_T^x - \log v_0^x}{(v_0^x)^{2\alpha - 1}} \cdot \mathbb{I}_{\alpha \geq 0.5}
+ \frac{(v_T^x)^{1 - 2\alpha}}{1 - 2\alpha} \cdot \mathbb{I}_{\alpha < 0.5} \right]\\
&+ \left(\zeta\bar{c}{\kappa}^2 + \frac{(2\kappa\sqrt{\bar{c}}(v_0^y)^{\alpha}+\kappa Gc_R\eta^x)^2(\eta^x)^{\beta/\alpha}}{\mu \eta^y (v_0^y)^{2\alpha}}\right)
\frac{L_1\kappa(\eta^x)^{2-\beta/\alpha}}{(1 - \alpha)\eta^y (v_0^y)^{\alpha - 2\beta}} \mathbb{E} \left[ (v_T^x)^{1 - \alpha} \right]
+ \frac{\kappa^2 (v_0^y)^\beta G^2}{\mu \eta^y}\\
&+ \frac{L_1 \kappa \left( 2 \beta G \right)^{\frac{1}{1 - \beta} + 2} G^2}
{2 \mu^{\frac{1}{1 - \beta} + 3} \left( \eta^y \right)^{\frac{1}{1 - \beta} + 2} \left( v_0^y \right)^{2 - 2 \beta}}.
\end{align*}
Rearranging the above inequality yields the desired result.
\end{proof}
\begin{theorem}[Restatement of Theorem \ref{the.stochastic convergence}]
    Under Assumptions~\ref{ass.l-smooth}, \ref{ass.g-convex},  \ref{ass.optimal exist and stationary condition hold}, \ref{ass.manifold}, \ref{ass.retraction} and \ref{ass.grad bound}, for any $0<2\beta \le \alpha<1$, the following bounds hold
    \begin{align*}
&\mathbb{E}\left[\frac{1}{T}\sum_{t=0}^{T-1}\|\grad_x f(x_t,y_t)\|^2\right] \\
\leq& \frac{4\Delta\Phi G^{2\alpha}}{\eta^x T^{1-\alpha}}+\left(\frac{4(L_{\Phi}\bar{c}+2Gc_R)\eta^x}{1-\alpha}+\left(\zeta\bar{c}{\kappa}^2 + \frac{(2\kappa\sqrt{\bar{c}}(v_0^y)^{\alpha}+\kappa Gc_R\eta^x)^2(\eta^x)^{\beta/\alpha}}{\mu \eta^y (v_0^y)^{2\alpha}}\right)
\frac{2L_1\kappa(\eta^x)^{2-\beta/\alpha}}{(1 - \alpha)\eta^y (v_0^y)^{\alpha - 2\beta}}\right)\frac{G^{2(1-\alpha)}}{T^\alpha} \\
& + \frac{2L_1\kappa(\zeta\bar{c}+\frac{2G}{\mu}c_R)\eta^y G^{2(1-\beta)}}{(1-\beta)T^\beta} + \left(\frac{1}{\mu} + \frac{\eta^y}{\left(v_0^y\right)^\beta}\right)\frac{16L_1\kappa^2\eta^x G^{2(1+\beta)}}{\eta^y\left(v_0^y\right)^\alpha T^{1-\beta}} \\
& + \frac{2\kappa^4\bar{c}\left(\mu\eta^y C^\beta + 2C^{2\beta}\right)\left(\eta^x\right)^2}{\left(\eta^y\right)^2}\left(\frac{1+\log(G^2T) - \log v_0^x}{\left(v_0^x\right)^{2\alpha-1}T}\cdot\mathbf{1}_{\alpha\geq 0.5} + \frac{G^{2(1-2\alpha)}}{(1-2\alpha)T^{2\alpha}}\cdot\mathbf{1}_{\alpha<0.5}\right)\\
&+ \frac{2\kappa^2\left(v_0^y\right)^\beta G^2}{\mu\eta^y T} + \frac{L_1\kappa\left(2\beta G\right)^{\frac{1}{1-\beta}+2}G^2}{\mu^{\frac{1}{1-\beta}+3}\left(\eta^y\right)^{\frac{1}{1-\beta}+2}\left(v_0^y\right)^{2-2\beta}T},
\end{align*}
and
\begin{align*}
&\mathbb{E}\left[ \frac{1}{T}\sum_{t=0}^{T-1}\|\grad_y f(x_t,y_t)\|^2 \right] \\
\leq &\frac{4\kappa^3\left(\eta^x\right)^2}{\left(\eta^y\right)^2C^{2\alpha-2\beta}}\mathbb{E}\left[ \frac{1}{T}\sum_{t=0}^{T-1}\|\grad_x f(x_t,y_t)\|^2 \right] + \frac{L_1(\zeta\bar{c}+\frac{2G}{\mu}c_R)\eta^y G^{2-2\beta}}{(1-\beta)T^\beta} + \left(\frac{1}{\mu} + \frac{\eta^y}{\left(v_0^y\right)^\beta}\right)\frac{8L_1{\kappa}\eta^x G^{2+2\beta}}{\eta^y\left(v_0^y\right)^\alpha T^{1-\beta}} \\
&+ \frac{\kappa^3\bar{c}\left(\mu\eta^y C^\beta + 2C^{2\beta}\right)\left(\eta^x\right)^2}{\left(\eta^y\right)^2}\left(\frac{1+\log TG^2 - \log v_0^x}{\left(v_0^x\right)^{2\alpha-1}T}\cdot\mathbf{1}_{\alpha\geq 0.5} + \frac{G^{2-4\alpha}}{(1-2\alpha)T^{2\alpha}}\cdot\mathbf{1}_{\alpha<0.5}\right)\\
&+ \left(\zeta\bar{c}{\kappa}^2 + \frac{(2\kappa\sqrt{\bar{c}}(v_0^y)^{\alpha}+\kappa Gc_R\eta^x)^2(\eta^x)^{\beta/\alpha}}{\mu \eta^y (v_0^y)^{2\alpha}}\right)\frac{L_1\left(\eta^x\right)^{2-\beta/\alpha}G^{2-2\alpha}}{(1-\alpha)\eta^y\left(v_0^y\right)^{\alpha-2\beta}T^\alpha} + \frac{\kappa\left(v_0^y\right)^\beta G^2}{\mu\eta^y T}\\
&+ \frac{2L_1\left(2\beta G\right)^{\frac{1}{1-\beta}+2}G^2}{4\mu^{\frac{1}{1-\beta}+3}\left(\eta^y\right)^{\frac{1}{1-\beta}+2}\left(v_0^y\right)^{2-2\beta}T}.
\end{align*}
\end{theorem}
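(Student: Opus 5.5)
The plan is to turn the expectation bounds of Lemma~\ref{le.bound EvTx} and Lemma~\ref{le.bound EvTy} into explicit rates. The key fact is that, by Assumption~\ref{ass.grad bound}, the accumulators are deterministically bounded. Every stochastic gradient has norm at most $G$, so
\[
v_T^x \le v_0^x + G^2 T, \qquad v_T^y \le v_0^y + G^2 T.
\]
For $T$ large these are $\mathcal{O}(G^2T)$; I will absorb $v_0$ into the constants, in line with the displayed bounds, which write $G^2T$ throughout. Hence each moment appearing on the right-hand sides is bounded by the corresponding power of $G^2T$:
\[
\mathbb{E}[(v_T^x)^{1-\alpha}] \le (G^2T)^{1-\alpha},\qquad \mathbb{E}[(v_T^y)^{1-\beta}] \le (G^2T)^{1-\beta},\qquad \mathbb{E}[(v_T^y)^{\beta}] \le (G^2T)^{\beta}.
\]
Similarly, $\mathbb{E}[\max\{(v_T^x)^\alpha,(v_T^y)^\alpha\}] \le (G^2T)^\alpha$, the logarithm satisfies $\log v_T^x \le \log(G^2T)$, and $(v_T^x)^{1-2\alpha} \le (G^2T)^{1-2\alpha}$ when $\alpha<1/2$.

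\textbf{Primal bound.} I start from Lemma~\ref{le.bound EvTx}, with $C$ fixed as in its proof, $C=\big(8L_1\kappa^3(\eta^x)^2/(\mu(\eta^y)^2)\big)^{1/(2\alpha-2\beta)}$. I substitute the deterministic bounds above term by term and divide by $T$. The $\Delta\Phi$ term gives $4\Delta\Phi G^{2\alpha}/(\eta^x T^{1-\alpha})$. The two $\mathbb{E}[(v_T^x)^{1-\alpha}]$ terms combine into a coefficient times $G^{2(1-\alpha)}/T^{\alpha}$. The $(v_T^y)^{1-\beta}$ term gives a $T^{-\beta}$ contribution, and the $(v_T^y)^{\beta}$ term gives a $T^{\beta-1}$ contribution. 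The log/power term gives $\log(G^2T)/T$ when $\alpha\ge 1/2$, and $T^{-2\alpha}$ when $\alpha<1/2$. The remaining constants give $\mathcal{O}(1/T)$. This yields the first displayed inequality, up to harmless constant factors.

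\textbf{Dual bound.} Next I take Lemma~\ref{le.bound EvTy}, divide by $T$, and apply the same substitutions. I keep the term $\frac{4\kappa^3(\eta^x)^2}{(\eta^y)^2C^{2\alpha-2\beta}}\mathbb{E}[\frac1T\sum\|\grad_x f\|^2]$ explicit, since the primal bound just obtained controls it. This gives the second inequality.

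\textbf{Main obstacle and conclusion.} The only delicate point is bookkeeping: every term must decay. This needs $1-\alpha>0$, $\alpha>0$, $\beta>0$ and $1-\beta>0$, all of which hold under $0<2\beta\le\alpha<1$; the log term is $\tilde{\mathcal{O}}(1/T)$, which is dominated. A second point needs care: the constant $C$ must not depend on $T$, and it does not. Also, the condition $2\beta\le\alpha$ must be respected when Lemma~\ref{le.bound conclusion} is invoked, and it is. Summing the two bounds and using
\[
\min_t \mathbb{E}[\cdot] \le \frac1T\sum_t \mathbb{E}[\cdot]
\]
gives the rate $\mathcal{O}(T^{\alpha-1}+T^{-\alpha}+T^{\beta-1}+T^{-\beta})$. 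Setting $\alpha=2/3$ and $\beta=1/3$ makes the exponents $-1/3,-2/3,-2/3,-1/3$, so the rate is $\mathcal{O}(T^{-1/3})$.
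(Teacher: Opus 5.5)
Your proposal follows the paper's proof: bound the accumulators deterministically via Assumption~\ref{ass.grad bound}, substitute these bounds into Lemmas~\ref{le.bound EvTx} and~\ref{le.bound EvTy}, and divide by $T$. The paper writes $(TG)^m$, so your $G^2T$ is the form consistent with the displayed statement. The looseness you flag, replacing $v_0+G^2T$ by $G^2T$, is also present in the paper. It only costs extra lower-order terms of size about $v_0^m/T$, via $(v_0+G^2T)^m\le v_0^m+(G^2T)^m$ for $m\in(0,1)$, so the rate is unaffected.
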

\begin{proof}
    From assumption \ref{ass.grad bound}, we can get
   \[ \max\{(v_T^x)^m, (v_T^y)^m\} \le (TG)^m \] completes the proof of the first inequality.  By substituting the above inequality into the inequalities of Lemma \ref{le.bound EvTy}, \ref{le.bound EvTx} and dividing both sides by $T$, we obtain the desired result.  
\end{proof}
\section{Proof of Theorem \ref{the.improved stochastic convergence}}\label{app.improved stochastic convergence}
The proof of the following lemma can be found in 
\cite{han2024framework}
. We include it here for the sake of completeness.
\begin{lemma}[Lemma 4 in 
\cite{han2024framework}
]\label{le.ystar lips smooth}
Suppose Assumptions \ref{ass.l-smooth}, \ref{ass.g-convex}, \ref{ass.optimal exist and stationary condition hold} and \ref{ass.second smooth} hold. Then the mapping \( y^*(x) \) is Lipschitz smooth. Specifically, for any \( x_1, x_2 \in \cM_x \), we have
\[
\left\| \grad y^*(x_1) - \Gamma_{y^*(x_2)}^{y^*(x_1)} \grad y^*(x_2) \Gamma_{x_1}^{x_2} \right\| \leq L_y \, d(x_1, x_2),
\]
where the Lipschitz constant is given by
\[
L_y = \frac{\kappa^2 L_2}{\mu} + \frac{2 \kappa L_2}{\mu} + \frac{L_2}{\mu}.
\]
\end{lemma}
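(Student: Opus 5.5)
We prove Lemma~\ref{le.ystar lips smooth} by the standard implicit-function route. First we obtain an explicit formula for the differential of $y^*$, and then we bound the variation of each factor in that formula using Assumption~\ref{ass.second smooth} and the Lipschitz continuity of $y^*$ from Lemma~\ref{le.lips smooth}.

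\textbf{Step 1: the derivative formula.} By Assumption~\ref{ass.optimal exist and stationary condition hold}, $y^*(x)$ is an interior maximizer, so the first-order condition $\grad_y f(x,y^*(x))=0$ holds for every $x$. Since $\hess_y f(x,y^*(x))\preceq -\mu\,\mathrm{id}$ by Assumption~\ref{ass.g-convex}, the Riemannian implicit function theorem applies. Differentiating the identity along any $u\in\cT_x\cM_x$ (the covariant derivative of a vector field vanishing identically is zero, and the connection terms drop because the field is zero at $y^*(x)$) gives
\[
\grad_{yx}^2 f(x,y^*(x))[u]+\hess_y f(x,y^*(x))[\operatorname{D}y^*(x)[u]]=0 .
\]
Hence $\grad y^*(x):=\operatorname{D}y^*(x)=-H(x)^{-1}J(x)$, where $H(x)=\hess_y f(x,y^*(x))$ and $J(x)=\grad_{yx}^2 f(x,y^*(x))$. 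We also record the bounds $\|H(x)^{-1}\|\le 1/\mu$ and $\|J(x)\|\le L_1$; the latter follows from Assumption~\ref{ass.l-smooth}.

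\textbf{Step 2: splitting the difference.} Write $P=\Gamma_{y^*(x_2)}^{y^*(x_1)}$ and $Q=\Gamma_{x_1}^{x_2}$, and use the fact that parallel transport is an isometry, so $PH_2^{-1}=(PH_2P^{-1})^{-1}P$. This gives
\[
\grad y^*(x_1)-P\grad y^*(x_2)Q=-H_1^{-1}\bigl(J_1-PJ_2Q\bigr)-\bigl(H_1^{-1}-(PH_2P^{-1})^{-1}\bigr)PJ_2Q .
\]
For the second term we use the resolvent identity $A^{-1}-B^{-1}=A^{-1}(B-A)B^{-1}$, which bounds it by $\mu^{-2}L_1\|H_1-PH_2P^{-1}\|$.

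\textbf{Step 3: bounding the Hessian and cross-derivative differences.} Both differences go through the intermediate point $(x_1,y^*(x_2))$. We first move in $y$ with $x$ fixed, using the first and third inequalities of Assumption~\ref{ass.second smooth}. We then move in $x$ with $y$ fixed, using the second and fourth inequalities. Each step costs $L_2$ times the distance travelled, and $d(y^*(x_1),y^*(x_2))\le\kappa\, d(x_1,x_2)$ by Lemma~\ref{le.lips smooth}. Therefore
\[
\|J_1-PJ_2Q\|\le L_2(1+\kappa)\,d(x_1,x_2), \qquad \|H_1-PH_2P^{-1}\|\le L_2(1+\kappa)\,d(x_1,x_2).
\]
Combining these with Step 2 yields
\[
\frac{L_2(1+\kappa)}{\mu}+\frac{\kappa L_2(1+\kappa)}{\mu}=\frac{L_2}{\mu}\bigl(1+2\kappa+\kappa^2\bigr),
\]
which is exactly $L_y$.

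\textbf{Main obstacle.} The hard part is the bookkeeping of parallel transports in Step 3. The transport $\Gamma_{y^*(x_2)}^{y^*(x_1)}$ must commute correctly with moving the base point in $x$ while $y$ is held fixed. The cross term also has to be handled with transports on both manifolds, composed consistently with the conventions in Assumption~\ref{ass.second smooth}. I would fix the intermediate point $(x_1,y^*(x_2))$ explicitly and transport only along $\cM_y$-geodesics, which avoids holonomy issues. A secondary technical point is justifying the differentiation of the optimality condition, that is, that $y^*$ is smooth. This follows from invertibility of $H$ via the implicit function theorem, as in \cite{han2024framework}.
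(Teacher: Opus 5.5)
Your proposal is correct and follows essentially the same route as the paper: the paper also uses the formula $\operatorname{D}y^*(x)=-\hess_y f(x,y^*(x))^{-1}\circ\grad_{yx}f(x,y^*(x))$ (cited from \cite{han2024framework} rather than derived via the implicit function theorem as you do), a product-rule split with $\|A^{-1}-B^{-1}\|\le\|A^{-1}\|\|A-B\|\|B^{-1}\|$, the bounds $\|\grad_{yx}f\|\le L_1$ and $\|\hess_y f^{-1}\|\le 1/\mu$, and $d(y^*(x_1),y^*(x_2))\le\kappa\, d(x_1,x_2)$, arriving at $\frac{L_2}{\mu}(1+\kappa)^2=L_y$. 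The differences are cosmetic. You pair $\|H_1^{-1}\|$ with the cross-derivative difference and $\|J_2\|$ with the Hessian difference, whereas the paper uses the mirror pairing. You also make explicit the intermediate point $(x_1,y^*(x_2))$ through which the four inequalities of Assumption~\ref{ass.second smooth} are applied, a step the paper leaves implicit.
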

\begin{proof}
    From Assumption~\ref{ass.l-smooth}, for any \( v \in \operatorname{T}_x\cM_x \), we have
    \begin{align*}
        \|\grad_{yx} f(x, y)[v]\|_y 
        &= \|\operatorname{D}_x \grad_y f(x, y)[v]\|_y \\
        &\le \lim_{t \to 0} \frac{\|\grad_y f(\Exp_x(tv), y) - \grad_y f(x, y)\|_y}{|t|} \\
        &\le \lim_{t \to 0} \frac{L_1 \|tv\|_x}{|t|} = L_1 \|v\|_x.
    \end{align*}
    Hence, \( \|\grad_{yx} f(x, y)\| \le L_1 \). 

    According to Proposition 1 in~\cite{han2024framework}
    , we have
    \[
    \operatorname{D}y^*(x) = -\hess_y f(x, y^*(x))^{-1} \circ \grad_{yx} f(x, y^*(x)).
    \]
    Then, it follows that
    \begin{align*}
        &\|\operatorname{D}y^*(x_1)-\Gamma^{y^*(x_1)}_{y^*(x_2)}\operatorname{D}y^*(x_2)\Gamma^{x_2}_{x_1}\|_{y^*(x_1)}\\
        \le & \|\grad_{yx}f(x_1,y^*(x_1))\|\|\hess_y^{-1}f(x_1,y^*(x_1))-\Gamma^{y^*(x_1)}_{y^*(x_2)}\hess_y^{-1}f(x_2,y^*(x_2))\Gamma^{y^*(x_2)}_{y^*(x_1)}\|\\
        &+\|\hess^{-1}_yf(x_2,y^*(x_2))\|\|\Gamma^{y^*(x_2)}_{y^*(x_1)}\grad_{yx}f(x_1,y^*(x_1))-\grad_{yx}f(x_2,y^*(x_2))\Gamma_{x_1}^{x_2}\|\\
        \le &\frac{L_1L_2}{\mu^2}(d(x_1,x_2)+d(y^*(x_1),y^*(x_2)))+\frac{L_2}{\mu}(d(x_1,x_2)+d(y^*(x_1),y^*(x_2)))\\
        \le &(\frac{L_1^2L_2}{\mu^3}+\frac{2L_1L_2}{\mu^2}+\frac{L_2}{\mu})d(x_1,x_2)
    \end{align*}
    where the second inequality uses Assumptions~\ref{ass.l-smooth}, \ref{ass.g-convex}, and~\ref{ass.second smooth}, as well as the identity \( \|A^{-1} - B^{-1}\| \le \|A^{-1}\| \cdot \|A - B\| \cdot \|B^{-1}\| \), and the last inequality applies Lemma~\ref{le.lips smooth}.
\end{proof}
\begin{lemma}\label{le.v>C'}
    Suppose the assumptions in Lemma \ref{le.v<C} and Assumption \ref{ass.second smooth} hold. Let \( t_3 \) be defined as in Lemma \ref{le.v<C} and $0<\beta\le\alpha<1$, then we have
\begin{align*}
&\mathbb{E}\left[ \sum_{t=t_3}^{T-1} (f(x_t, y_t^*) - f(x_t, y_t)) \right] \\
\leq &\mathbb{E} \left[ \sum_{t=t_3}^{T-1} \frac{1 - \gamma_t \mu}{2\gamma_t} d^2( y_t , y_t^* ) - \frac{1}{\gamma_t (2 + \mu \gamma_t)} d^2(y_{t+1}, y_{t+1}^*) \right] + \mathbb{E} \left[ \sum_{t=t_3}^{T-1} \frac{(\zeta\bar{c}+\frac{2G}{\mu}c_R)\gamma_t}{2} \| \grad_y \tilde{f}(x_t, y_t) \|^2 \right] \\
& + \left( \zeta\bar{c}\kappa^2 + \frac{(L_y\bar{c}+2c_R\kappa)^2 G^2 (\eta^x)^2}{\mu \eta^y (v_0^y)^{2\alpha - \beta}} \right) \frac{(\eta^x)^2}{2(1 - \alpha) \eta^y (v_0^y)^{\alpha - \beta}} \mathbb{E} \left[ (v_T^x)^{1-\alpha} \right] \\
& + \frac{4 \kappa^2 (\eta^x)^2}{\mu (\eta^y)^2 C^{2\alpha - 2\beta+\delta}} \mathbb{E} \left[ \sum_{t=t_3}^{T-1} \| \grad_x f(x_t, y_t) \|^2 \right] + \left( \frac{1}{\mu} + \frac{\eta^y}{(v_0^y)^\beta} \right) \frac{4 \kappa \eta^x G^2}{\eta^y (v_0^y)^{\alpha}} \mathbb{E} \left[ (v_T^y)^\beta \right],
\end{align*}
for any $\delta\le \log(2)/\log(TG)$.
\end{lemma}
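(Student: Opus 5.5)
The plan is to follow the proof of Lemma~\ref{le.v>C} line by line, changing only the treatment of Term (D) in \eqref{eq.equation contain term D} and of Term (F). The start is unchanged: the trigonometric bound (Lemma~\ref{le.trigonometric distance bound}) splits $d^2(y_{t+1},y_{t+1}^*)$ into $d^2(y_{t+1},y_t^*)$, the $\zeta\bar c\kappa^2\eta_t^2\|\grad_x\tilde f\|^2$ term, Term (C) and Term (D). Term (C) is handled as in \eqref{eq.bound Term C}, with a Young parameter $\lambda_t$, and its martingale piece (G) is handled exactly as in \eqref{eq.bound term G}. This already reproduces the final $\mathbb{E}[(v_T^y)^\beta]$ term of the statement.

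The new ingredient is Term (D). Using the smoothness of $y^*$ from Lemma~\ref{le.ystar lips smooth}, a second-order expansion along the geodesic from $x_t$ to $x_{t+1}$ gives
\[
\|\Exp^{-1}_{y_t^*}(y_{t+1}^*)-\operatorname{D}y^*(x_t)[\Exp^{-1}_{x_t}(x_{t+1})]\|\le \tfrac{L_y}{2}d^2(x_t,x_{t+1})\le \tfrac{L_y\bar c}{2}\eta_t^2\|\grad_x\tilde f\|^2 .
\]
The retraction error is $\kappa c_R\eta_t^2\|\grad_x\tilde f\|^2$ by Assumption~\ref{ass.retraction}. So Term (D) is at most $(L_y\bar c+2c_R\kappa)\eta_t^2\|\grad_x\tilde f\|^2\, d(y_{t+1},y_t^*)$. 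This is quadratic in $\eta_t$ instead of linear, which is why the constraint $2\beta\le\alpha$ can be dropped.

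Next apply Young with weight $\tau\eta_t$-type scaling: bound $\eta_t^2 G\|\grad_x\tilde f\|\,d$ by $\frac{\mu\eta^y}{4(v_{t+1}^y)^\beta}d^2(y_{t+1},y_t^*)+\frac{(v_{t+1}^y)^\beta}{\mu\eta^y}(L_y\bar c+2c_R\kappa)^2G^2\eta_t^4\|\grad_x\tilde f\|^2$. Use $\eta_t^2\le(\eta^x)^2/(v_0^y)^{2\alpha-\beta}(v_{t+1}^y)^{\beta}$ to absorb $(v_{t+1}^y)^\beta$. The $d^2$ coefficient stays at most $1+\mu\gamma_t/2$, so Lemma~\ref{le.stochastic auxiliary lemma} applies as before. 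Term (E) becomes $\sum \eta_t^2\|\grad_x\tilde f\|^2/\gamma_t$. With $\eta_t^2/\gamma_t\le (\eta^x)^2(v^y_{t+1})^\beta/(\eta^y(v^y_{t+1})^{\alpha}(v^x_{t+1})^{\alpha})\le(\eta^x)^2/(\eta^y(v_0^y)^{\alpha-\beta}(v^x_{t+1})^\alpha)$, valid since $\beta\le\alpha$, Lemma~\ref{le.adaptive bound} yields the stated $\mathbb{E}[(v_T^x)^{1-\alpha}]$ coefficient.

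The main obstacle is Term (F), where the power $C^{2\alpha-2\beta+\delta}$ must appear and the case $\alpha=\beta$ must not degenerate. I would first split the sum at $v^y_{t+1}\le C$ versus $v^y_{t+1}>C$; on $t\ge t_3$ only the latter occurs. Then write $(v^y_{t+1})^{2\beta-2\alpha}\le C^{2\beta-2\alpha-\delta}(v^y_{t+1})^{\delta}$. Since $v^y_{t+1}\le TG$ (up to $v_0^y$), the restriction $\delta\le\log 2/\log(TG)$ gives $(v^y_{t+1})^\delta\le 2$. This accounts for the factor $4$ replacing $2$ in front of $\kappa^2(\eta^x)^2/(\mu(\eta^y)^2)$. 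Collecting (C)–(G) then gives the claimed inequality. The $\delta$ is presumably needed downstream so that $C$ can be chosen finite when $\alpha=\beta$.
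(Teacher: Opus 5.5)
Your proposal is correct and follows the paper's proof. Term (D) becomes quadratic in $\eta_t$ via Lemma~\ref{le.ystar lips smooth} and Assumption~\ref{ass.retraction}; the same Young split (the paper's choice of $\tau$), together with $\eta_t^2/\gamma_t\le(\eta^x)^2/(\eta^y(v_0^y)^{\alpha-\beta}(v^x_{t+1})^\alpha)$, gives the stated $\mathbb{E}[(v_T^x)^{1-\alpha}]$ coefficient; and Terms (C) and (G) are reused from Lemma~\ref{le.v>C}.

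The only difference is where $\delta$ is spent. The paper sets $\lambda_t=\mu\eta^y/(8(v^y_{t+1})^{\beta-\delta})$ and needs $(v^y_{t+1})^\delta\le 2$ to keep the $d^2$ coefficient at most $1+\mu\gamma_t/2$. You keep $\lambda_t=\mu\eta^y/(4(v^y_{t+1})^{\beta})$ and use $(v^y_{t+1})^\delta\le 2$ only inside Term (F). Both give the constant $4$. Your placement also makes it transparent that the $\delta$-form is just a relaxation of the original bound $2C^{-(2\alpha-2\beta)}$, since $C<v^y_{t+1}$ forces $C^\delta\le 2$ on that range. So $\delta$ by itself buys no extra smallness at $\alpha=\beta$.
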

\begin{proof}
To bound Term (D) in \eqref{eq.equation contain term D}, we consider an arbitrary $\tau > 0$ and proceed as follows
\begin{align*}
    &-2 \left\langle \Exp^{-1}_{y^*_t}(y_{t+1}), \Exp^{-1}_{y^*_t}(y_{t+1}^*)  - \operatorname{D}y^*(x_t)[-\eta_t\grad_x\tilde{f}(x_t,y_t)] \right\rangle\\
    \le &2d(y_{t+1},y_t^*)\left\|\Exp^{-1}_{y^*_t}(y_{t+1}^*)  - \operatorname{D}y^*(x_t)[-\eta_t\grad_x\tilde{f}(x_t,y_t)] \right\|\\
    \le
    &2d(y_{t+1},y_t^*)\left(\left\|\Exp^{-1}_{y^*_t}(y_{t+1}^*)  - \operatorname{D}y^*(x_t)[\Exp_{x_t}^{-1}(x_{t+1})] \right\|+\left\|\operatorname{D}y^*(x_t)[\Exp_{x_t}^{-1}(x_{t+1})-\eta_t\grad_x\tilde{f}(x_t,y_t)]\right\|\right)\\
    \le 
    &2d(y_{t+1},y_t^*)\left(\frac{L_y}{2}\|\Exp_{x_t}^{-1}(x_{t+1})\|^2+\|\operatorname{D}y^*(x_t)\|\|\Exp_{x_t}^{-1}(x_{t+1})-\eta_t\grad_x\tilde{f}(x_t,y_t)\|^2\right)\\
    \le &2d(y_{t+1},y_t^*)\left(\frac{L_y\eta_t^2\bar{c}}{2}\|\grad_x \tilde{f}(x_t, y_t)\|^2+\kappa c_R\eta_t^2\|\grad_x \tilde{f}(x_t, y_t)\|\right)\\
    \le &(L_y\bar{c}+2c_R\kappa)\eta_t^2d(y_{t+1},y_t^*)G\|\grad_x\tilde{f}(x_t,y_t)\|\\
    \le &\frac{\tau (L_y\bar{c}+2c_R\kappa) G^2 \eta_t^2}{2}d^2(y_{t+1}, y_t^*) + \frac{(L_y\bar{c}+2c_R\kappa) \eta_t^2}{2 \tau} \| \grad_x \tilde{f}(x_t, y_t)\|^2.
\end{align*}
Here, the third inequality uses Lemma \ref{le.ystar lips smooth}, the fourth follows from Assumption \ref{ass.retraction}, and the final step applies Young’s inequality.

By combining the above inequality with \eqref{eq.equation contain term D} and \eqref{eq.bound Term C}, we obtain
\begin{align*}
d^2(y_{t+1}, y_{t+1}^*) \leq &\left(1 + \lambda_t + \frac{\tau (L_y\bar{c}+2c_R{\kappa}) G^2 \eta_t^2}{2}\right) d^2(y_{t+1}, y_t^*)  + \left(\zeta\bar{c}{\kappa}^2 + \frac{L_y\bar{c}+2c_R{\kappa}}{2 \tau}\right) \eta_t^2 \| \grad_x \tilde{f}(x_t, y_t)\|^2 \\
& + \frac{{\kappa}^2 \eta_t^2}{\lambda_t} \| \grad_x f(x_t, y_t)\|^2 +2\eta_t\left\langle\Exp^{-1}_{y^*_t}(y_{t+1}), \operatorname{D} y^*(x_t)[\grad_x\tilde{f}(x_t,y_t)-\grad_x f(x_t,y_t)]\right\rangle.
\end{align*}
To bound \(\eta_t\), we note that
\[
\eta_t = \frac{\eta^x}{\max \{v_{t+1}^x, v_{t+1}^y\}^{\alpha}} \leq \frac{\eta^x}{(v_{t+1}^y)^{\alpha}} \leq \frac{\eta^x}{(v_0^y)^{\alpha}},
\]
and
\[
\eta_t \leq \frac{\eta^x}{(v_{t+1}^y)^{\alpha}} = \frac{\eta^x}{(v_{t+1}^y)^{\alpha - \beta} (v_{t+1}^y)^{\beta}} \leq \frac{\eta^x}{(v_0^y)^{\alpha - \beta} (v_{t+1}^y)^{\beta}}.
\]
Substituting these bounds into the previous inequality yields
\begin{align*}
d^2(y_{t+1}, y_{t+1}^*) \leq &\left(1 + \lambda_t + \frac{\tau (L_y\bar{c}+2c_R{\kappa}) G^2 (\eta^x)^2}{2(v_0^y)^{2\alpha - \beta} (v_{t+1}^y)^{\beta}}\right) d^2(y_{t+1}, y_t^*) + \left(\zeta\bar{c}{\kappa}^2 + \frac{(L_y\bar{c}+2c_R{\kappa})}{2 \tau}\right) \eta_t^2 \| \grad_x \tilde{f}(x_t, y_t)\|^2 \\
& + \frac{{\kappa}^2 \eta_t^2}{\lambda_t} \| \grad_x f(x_t, y_t)\|^2  +2\eta_t\left\langle\Exp^{-1}_{y^*_t}(y_{t+1}), \operatorname{D} y^*(x_t)[\grad_x\tilde{f}(x_t,y_t)-\grad_x f(x_t,y_t)]\right\rangle.
\end{align*}
Now, let we choose \(\lambda_t = \frac{\mu \eta^y}{8(v_{t+1}^y)^{\beta-\delta}}\) with $\delta\le \log(2)/\log(TG)$ and \(\tau = \frac{\mu \eta^y (v_0^y)^{2\alpha - \beta}}{2(L_y\bar{c}+2c_R{\kappa}) G^2 (\eta^x)^2}\). Substituting them gives
\begin{align*}
&d^2(y_{t+1}, y_{t+1}^*)\\
\leq &\left(1 + \frac{\mu \eta^y}{2(v_{t+1}^y)^{\beta}}\right) d^2(y_{t+1}, y_t^*)  + \left(\zeta\bar{c}{\kappa}^2 + \frac{(L_y\bar{c}+2c_R{\kappa})^2 G^2 (\eta^x)^2}{\mu \eta^y (v_0^y)^{2\alpha - \beta}}\right) \eta_t^2 \| \grad_x \tilde{f}(x_t, y_t)\|^2\\
&+\frac{8{\kappa}^2 \left( v_{t+1}^y \right)^{\beta-\delta} \eta_t^2}{\mu \eta^y} \| \grad_x f(x_t, y_t) \|^2 + 2\eta_t\left\langle\Exp^{-1}_{y^*_t}(y_{t+1}), \operatorname{D} y^*(x_t)[\grad_x\tilde{f}(x_t,y_t)-\grad_x f(x_t,y_t)]\right\rangle.
\end{align*}
By applying Lemma \ref{le.stochastic auxiliary lemma}, we arrive at
\begin{align*}
&\mathbb{E} \left[ \sum_{t=t_3}^{T-1} \left( f(x_t, y_t^*) - f(x_t, y_t) \right) \right] \\
\leq &\mathbb{E} \left[ \sum_{t=t_3}^{T-1} \left( \frac{1 - \gamma_t \mu}{2 \gamma_t}d^2(y_t, y_t^*) - \frac{1}{\gamma_t (2 + \mu \gamma_t)}d^2( y_{t+1}, y_{t+1}^*) \right) \right] + \mathbb{E} \left[ \sum_{t=t_3}^{T-1} \frac{(\zeta\bar{c}+\frac{2G}{\mu}c_R)\gamma_t}{2} \| \grad_y \tilde{f}(x_t, y_t) \|^2 \right] \\
&+ \underbrace{\mathbb{E} \left[ \sum_{t=t_3}^{T-1} \frac{1}{\gamma_t (2 + \mu \gamma_t)} \left( \zeta\bar{c}{\kappa}^2 + \frac{(L_y\bar{c}+2c_R{\kappa})^2 G^2 (\eta^x)^2}{\mu \eta^y (v_0^y)^{2\alpha - \beta}} \right) \eta_t^2 \| \grad_x \tilde{f}(x_t, y_t) \|^2 \right]}_{\rm (E)} \\ 
&+ \underbrace{\mathbb{E} \left[ \sum_{t=t_3}^{T-1} \frac{8{\kappa}^2 \left( v_{t+1}^y \right)^{\beta-\delta} \eta_t^2}{\gamma_t (2 + \mu \gamma_t) \mu \eta^y} \| \grad_x f(x_t, y_t) \|^2 \right]}_{\rm (F)} \\
& + \underbrace{\mathbb{E} \left[ \sum_{t=t_3}^{T-1} \frac{2\eta_t}{\gamma_t (2 + \mu \gamma_t)}\left\langle\Exp^{-1}_{y^*_t}(y_{t+1}), \operatorname{D} y^*(x_t)[\grad_x\tilde{f}(x_t,y_t)-\grad_x f(x_t,y_t)]\right\rangle\right]}_{\rm (G)}
\end{align*}
For the term {\rm (E)}
\begin{align*}
\text{\rm Term (E)} \leq &\left( \zeta\bar{c}{\kappa}^2 + \frac{(L_y\bar{c}+2c_R{\kappa})^2 G^2 (\eta^x)^2}{\mu \eta^y (v_0^y)^{2\alpha - \beta}} \right) \mathbb{E} \left[ \sum_{t=t_3}^{T-1} \frac{\eta_t^2}{2 \gamma_t} \| \grad_x \tilde{f}(x_t, y_t) \|^2 \right] \\
= &\left( \zeta\bar{c}{\kappa}^2 + \frac{(L_y\bar{c}+2c_R{\kappa})^2 G^2 (\eta^x)^2}{\mu \eta^y (v_0^y)^{2\alpha - \beta}} \right) \mathbb{E} \left[ \sum_{t=t_3}^{T-1} \frac{(\eta^x)^2 (v_{t+1}^y)^\beta}{2 \eta^y \max \{ v_{t+1}^x, v_{t+1}^y \}^{2\alpha}} \| \grad_x \tilde{f}(x_t, y_t) \|^2 \right] \\
\leq &\left( \zeta\bar{c}{\kappa}^2 + \frac{(L_y\bar{c}+2c_R{\kappa})^2 G^2 (\eta^x)^2}{\mu \eta^y (v_0^y)^{2\alpha - \beta}} \right) \mathbb{E} \left[ \sum_{t=t_3}^{T-1} \frac{(\eta^x)^2 (v_{t+1}^y)^\beta}{2 \eta^y (v_{t+1}^y)^{\beta}(v_{t+1}^y)^{\alpha-\beta}(v_{t+1}^x)^{\alpha}} \| \grad_x \tilde{f}(x_t, y_t) \|^2 \right] \\
\leq &\left( \zeta\bar{c}{\kappa}^2 + \frac{(L_y\bar{c}+2c_R{\kappa})^2 G^2 (\eta^x)^2}{\mu \eta^y (v_0^y)^{2\alpha - \beta}} \right) \mathbb{E} \left[ \sum_{t=t_3}^{T-1} \frac{(\eta^x)^2}{2 \eta^y (v_0^y)^{\alpha-\beta}(v_{t+1}^x)^{\alpha}} \| \grad_x \tilde{f}(x_t, y_t) \|^2 \right]\\
\leq &\left( \zeta\bar{c}{\kappa}^2 + \frac{(L_y\bar{c}+2c_R{\kappa})^2 G^2 (\eta^x)^2}{\mu \eta^y (v_0^y)^{2\alpha - \beta}} \right)
\mathbb{E} \left[\frac{(\eta^x)^2}{2\eta^y(v_0^y)^{\alpha-\beta}} \left( \frac{v_t^x}{(v_0^x)^\alpha} + \sum_{t = 0}^{T - 1} \frac{1}{(v_{t+1}^x)^\alpha} \left\| \grad_x \tilde{f}(x_t, y_t) \right\|^2 \right) \right]\\
\leq &\left( \zeta\bar{c}{\kappa}^2 + \frac{(L_y\bar{c}+2c_R{\kappa})^2 G^2 (\eta^x)^2}{\mu \eta^y (v_0^y)^{2\alpha - \beta}} \right)
\frac{(\eta^x)^2}{2(1 - \alpha)\eta^y (v_0^y)^{\alpha - \beta}} \mathbb{E} \left[ (v_T^x)^{1 - \alpha} \right]
\end{align*}
where we used Lemma \ref{le.adaptive bound} in the last step.

For the term {\rm (F)}, by using bounds on $\eta_t$ and monotonicity of $v_t^y$, we obtain:
\begin{equation}\label{eq.bound term F'}
    \begin{split}
        \text{\rm Term (F)}  \leq &\mathbb{E} \left[ \sum_{t=t_3}^{T-1} \frac{4{\kappa}^2 \left( v_{t+1}^y \right)^{\beta-\delta} \eta_t^2}{\gamma_t \mu \eta^y} \| \grad_x f(x_t, y_t) \|^2 \right] \\
= &\frac{4{\kappa}^2 (\eta^x)^2}{\mu (\eta^y)^2} \mathbb{E} \left[ \sum_{t=t_3}^{T-1} \frac{(v_{t+1}^y)^{2\beta-\delta}}{\max \{ v_{t+1}^x, v_{t+1}^y \}^{2\alpha}} \| \grad_x f(x_t, y_t) \|^2 \right] \\
\leq &\frac{4{\kappa}^2 (\eta^x)^2}{\mu (\eta^y)^2} \mathbb{E} \left[ \sum_{t=t_3}^{T-1} \frac{(v_{t+1}^y)^{2\beta-\delta}}{(v_{t+1}^y)^{2\alpha}} \| \grad_x f(x_t, y_t) \|^2 \right] \\
\leq &\frac{4{\kappa}^2 (\eta^x)^2}{\mu (\eta^y)^2} \mathbb{E} \left[ \frac{1}{(v_{t_3 + 1}^y)^{2\alpha - 2\beta+\delta}} \sum_{t=t_3}^{T-1} \| \grad_x f(x_t, y_t) \|^2 \right] \\
\leq &\frac{4{\kappa}^2 (\eta^x)^2}{\mu (\eta^y)^2 C^{2\alpha - 2\beta+\delta}} \mathbb{E} \left[ \sum_{t=t_3}^{T-1} \| \grad_x f(x_t, y_t) \|^2 \right].
    \end{split}
\end{equation}
Term {\rm (G)} can be bounded using inequality and \eqref{eq.bound term G}, which completes the proof.
\end{proof}
\textbf{Proof of Lemma \ref{le.imporve bound conclusion}}
\begin{proof}
    By combining Lemmas \ref{le.v<C}, \ref{le.4} and \ref{le.v>C'}, and following a similar argument as in the proof of Lemma \ref{le.bound conclusion}, we obtain the bound.
\end{proof}
\begin{theorem}[Restatement of Theorem \ref{the.improved stochastic convergence}]
    Under Assumptions~\ref{ass.l-smooth}, \ref{ass.g-convex}, \ref{ass.optimal exist and stationary condition hold}, \ref{ass.manifold}, \ref{ass.retraction}, \ref{ass.grad bound} and \ref{ass.second smooth}, for any $0<\beta\le\alpha<1$, the following bounds hold
    \begin{align*}
&\mathbb{E}\left[\frac{1}{T}\sum_{t=0}^{T-1}\|\grad_x f(x_t,y_t)\|^2\right] \\
\leq& \frac{4\Delta\Phi G^{2\alpha}}{\eta^x T^{1-\alpha}}+\left(\frac{4(L_{\Phi}\bar{c}+2Gc_R)\eta^x}{1-\alpha}+\left(\zeta\bar{c}{\kappa}^2+\frac{(L_y\bar{c}+2c_R{\kappa})^2G^2\left(\eta^x\right)^2}{\mu\eta^y\left(v_0^y\right)^{2\alpha-\beta}}\right)\frac{2L_1\kappa\left(\eta^x\right)^2}{(1-\alpha)\eta^y\left(v_0^y\right)^{\alpha-\beta}}\right)\frac{G^{2(1-\alpha)}}{T^\alpha} \\
& + \frac{2L_1\kappa(\zeta\bar{c}+\frac{2G}{\mu}c_R)\eta^y G^{2(1-\beta)}}{(1-\beta)T^\beta} + \left(\frac{1}{\mu} + \frac{\eta^y}{\left(v_0^y\right)^\beta}\right)\frac{16L_1\kappa^2\eta^x G^{2(1+\beta)}}{\eta^y\left(v_0^y\right)^\alpha T^{1-\beta}} \\
& + \frac{2\kappa^4\bar{c}\left(\mu\eta^y C^\beta + 2C^{2\beta}\right)\left(\eta^x\right)^2}{\left(\eta^y\right)^2}\left(\frac{1+\log(G^2T) - \log v_0^x}{\left(v_0^x\right)^{2\alpha-1}T}\cdot\mathbf{1}_{\alpha\geq 0.5} + \frac{G^{2(1-2\alpha)}}{(1-2\alpha)T^{2\alpha}}\cdot\mathbf{1}_{\alpha<0.5}\right)\\
&+ \frac{2\kappa^2\left(v_0^y\right)^\beta G^2}{\mu\eta^y T} + \frac{L_1\kappa\left(2\beta G\right)^{\frac{1}{1-\beta}+2}G^2}{\mu^{\frac{1}{1-\beta}+3}\left(\eta^y\right)^{\frac{1}{1-\beta}+2}\left(v_0^y\right)^{2-2\beta}T},
\end{align*}
and
\begin{align*}
&\mathbb{E}\left[ \frac{1}{T}\sum_{t=0}^{T-1}\|\grad_y f(x_t,y_t)\|^2 \right] \\
\leq &\frac{8\kappa^3\left(\eta^x\right)^2}{\left(\eta^y\right)^2C^{2\alpha-2\beta+\delta}}\mathbb{E}\left[ \frac{1}{T}\sum_{t=0}^{T-1}\|\grad_x f(x_t,y_t)\|^2 \right] + \frac{L_1(\zeta\bar{c}+\frac{2G}{\mu}c_R)\eta^y G^{2-2\beta}}{(1-\beta)T^\beta} + \left(\frac{1}{\mu} + \frac{\eta^y}{\left(v_0^y\right)^\beta}\right)\frac{8L_1{\kappa}\eta^x G^{2+2\beta}}{\eta^y\left(v_0^y\right)^\alpha T^{1-\beta}} \\
&+ \frac{\kappa^3\bar{c}\left(\mu\eta^y C^\beta + 2C^{2\beta}\right)\left(\eta^x\right)^2}{\left(\eta^y\right)^2}\left(\frac{1+\log TG^2 - \log v_0^x}{\left(v_0^x\right)^{2\alpha-1}T}\cdot\mathbf{1}_{\alpha\geq 0.5} + \frac{G^{2-4\alpha}}{(1-2\alpha)T^{2\alpha}}\cdot\mathbf{1}_{\alpha<0.5}\right)\\
&+ \left(\zeta\bar{c}{\kappa}^2 + \frac{(L_y\bar{c}+2c_R{\kappa})^2G^2\left(\eta^x\right)^2}{\mu\eta^y\left(v_0^y\right)^{2\alpha-\beta}}\right)\frac{L_1\left(\eta^x\right)^2G^{2-2\alpha}}{(1-\alpha)\eta^y\left(v_0^y\right)^{\alpha-\beta}T^\alpha} + \frac{\kappa\left(v_0^y\right)^\beta G^2}{\mu\eta^y T} + \frac{2L_1\left(2\beta G\right)^{\frac{1}{1-\beta}+2}G^2}{4\mu^{\frac{1}{1-\beta}+3}\left(\eta^y\right)^{\frac{1}{1-\beta}+2}\left(v_0^y\right)^{2-2\beta}T}.
\end{align*}
\end{theorem}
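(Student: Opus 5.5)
The plan mirrors the proof of the restated Theorem~\ref{the.stochastic convergence}, with Lemma~\ref{le.imporve bound conclusion} used in place of Lemma~\ref{le.bound conclusion}. The argument has three steps.

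\textbf{Step 1: improved versions of Lemmas~\ref{le.bound EvTy} and~\ref{le.bound EvTx}.}
\begin{itemize}
\item For the $y$-side, combine Lemma~\ref{le.imporve bound conclusion} with the inequality $\|\grad_y f(x_t,y_t)\|^2\le 2L_1(f(x_t,y_t^*)-f(x_t,y_t))$. This gives the analogue of Lemma~\ref{le.bound EvTy}. The only changes are the coefficient $8\kappa^3(\eta^x)^2/((\eta^y)^2C^{2\alpha-2\beta+\delta})$ in front of $\E[\sum_t\|\grad_x f(x_t,y_t)\|^2]$, and the new $(L_y\bar c+2c_R\kappa)$ term multiplying $\E[(v_T^x)^{1-\alpha}]$.
\item For the $x$-side, repeat the proof of Lemma~\ref{le.bound EvTx} verbatim. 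Terms (I) and (J) are unchanged.
\item Term (K) is bounded by $2L_1\kappa\,\E[\sum_t(f(x_t,y_t^*)-f(x_t,y_t))]$, and we now insert Lemma~\ref{le.imporve bound conclusion} there.
\item To absorb the resulting $\E[\sum_t\|\grad_x f\|^2]$ term into the left-hand side with factor $\tfrac12$, choose
\[
C=\Bigl(\tfrac{16L_1\kappa^3(\eta^x)^2}{\mu(\eta^y)^2}\Bigr)^{1/(2\alpha-2\beta+\delta)}.
\]
Since $\delta>0$, this exponent is finite even when $\alpha=\beta$. This is exactly the point of the $\delta$ in Lemma~\ref{le.imporve bound conclusion}.
\end{itemize}

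\textbf{Step 2: uniform control of $C$.} We need $C^{\beta}$ and $C^{2\beta}$ to stay bounded in $T$, since they appear in the $\log$ and $(v_T^x)^{1-2\alpha}$ coefficients. When $\alpha>\beta$ this is immediate. When $\alpha=\beta$ the exponent is $1/\delta$, which blows up as $\delta\to0$.

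The first thing to try is taking $\delta$ as large as allowed, $\delta=\log 2/\log(TG)$. The only place $\delta$ enters the estimates is through factors $(v^y_{t+1})^{\delta}\le (TG)^{\delta}=2$, which cost just a constant 2. If the base $16L_1\kappa^3(\eta^x)^2/(\mu(\eta^y)^2)$ exceeds 1, then $C^{2\beta}$ grows like a power of $T$, and this is the main obstacle. I would handle it by the following observation. The condition defining $t_3$ only uses $C$ as a threshold, and the region $v^y\le C$ is harmless because $v^y\le TG^2$ always. So one may replace $C$ by $\min\{C,TG^2\}$. Alternatively, in the regime $\alpha=\beta$, one would accept the constant dependence on $\eta^x/\eta^y$ that the statement's $\mathcal{O}(\cdot)$ hides.

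\textbf{Step 3: from $v_T$ to rates.} Use Assumption~\ref{ass.grad bound} to bound every moment by a power of $T$:
\[
\max\{(v_T^x)^m,(v_T^y)^m\}\le (TG^2)^m .
\]
Substitute these into the two bounds from Step 1 and divide by $T$. The terms become:
\begin{itemize}
\item $\Delta\Phi\max\{v^x_T,v^y_T\}^\alpha/T$ gives $T^{\alpha-1}$;
\item $(v_T^x)^{1-\alpha}/T$ gives $T^{-\alpha}$;
\item $(v_T^y)^{1-\beta}/T$ gives $T^{-\beta}$;
\item $(v_T^y)^\beta/T$ gives $T^{\beta-1}$;
\item the $\log$ and $T^{-2\alpha}$ terms, and all constant terms divided by $T$, are of lower order.
\end{itemize}
The $y$-bound inherits the $x$-bound through its $\E[\sum_t\|\grad_x f\|^2]$ term. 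Adding the two bounds and using $\min_t\le$ average yields $\mathcal{O}(T^{\alpha-1}+T^{-\alpha}+T^{\beta-1}+T^{-\beta})$, and setting $\alpha=\beta=1/2$ gives $\mathcal{O}(T^{-1/2})$.
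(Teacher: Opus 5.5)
Your Steps 1 and 3 are exactly the paper's proof, which just reruns Lemmas~\ref{le.bound EvTy} and~\ref{le.bound EvTx} with Lemma~\ref{le.imporve bound conclusion} and substitutes $v_T\lesssim TG^2$. Your absorption constant is in fact the correct one, given the factor $4\kappa^2$ in that lemma. You are also right that at $\alpha=\beta$ this leaves $C=B^{1/\delta}$, with $B:=16L_1\kappa^3(\eta^x)^2/(\mu(\eta^y)^2)$, depending on $T$. The paper passes over this by treating $C$ as a constant.

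Neither fix in your Step 2 works, because the threshold $C$ has two opposing roles.

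\emph{Regime $t\ge t_3$.} At $\alpha=\beta$, absorption needs $C^\delta\ge B$. This regime is nonempty only if $C<v_T^y\le v_0^y+TG^2$. The constraint on $\delta$ exists in Lemma~\ref{le.v>C'} only to guarantee $(v^y_{t+1})^\delta\le 2$, and it then forces $C^\delta\lesssim 2$. So the $\delta$ device buys at most a factor of about $2$. For $B>2$, absorption is impossible unless $C$ exceeds $v_T^y$. For $1<B\le 2$, it forces $C\ge (TG)^{\log_2 B}$, a positive power of $T$.

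\emph{Regime $t<t_3$.} Lemma~\ref{le.v<C} contributes $C^{2\beta}$ times $1+\log v_T^x-\log v_0^x$ (or $(v_T^x)^{1-2\alpha}/(1-2\alpha)$). Capping $C$ at $TG^2$ only gives $C^{2\beta}\le (TG^2)^{2\beta}$, which is not bounded in $T$. At $\alpha=\beta=\tfrac12$, the resulting bound on this contribution divided by $T$ is of order $\log T$; for $\alpha=\beta<\tfrac12$ it is of order $1$. The region $v^y\le C$ is therefore not harmless: it is exactly where the $C^{2\beta}$ factor comes from. Your second option fails too, since $B^{1/\delta}$ with $\delta\to0$ is not a constant that $\mathcal{O}(\cdot)$ may hide unless $B\le 1$.

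So your argument, like the paper's, yields the explicit inequalities but not the rate at $\alpha=\beta$, and in particular not the $\mathcal{O}(T^{-1/2})$ claim. Closing it needs an extra ingredient, and there are two options:
\begin{enumerate}
\item Keep $\beta<\alpha$ strictly. With $\delta=0$, the constant $C=B^{1/(2\alpha-2\beta)}$ is independent of $T$, and your Steps 1 and 3 go through. This gives $\mathcal{O}(T^{-1/2+\epsilon})$ for $\alpha,\beta$ close to $\tfrac12$.
\item At $\alpha=\beta$, assume explicitly $16L_1\kappa^3(\eta^x)^2\le\mu(\eta^y)^2$. Then the coefficient of $\E[\sum_t\|\grad_x f(x_t,y_t)\|^2]$ is at most $\tfrac12$ with $\delta=0$, and no threshold is needed. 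But this is a time-scale-separation condition involving $L_1$ and $\mu$, which the adaptive scheme was meant to avoid.
\end{enumerate}
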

\begin{proof}
Replacing Lemma \ref{le.bound conclusion} with Lemma \ref{le.imporve bound conclusion} and following the same derivation as in the proof of Theorem~\ref{the.stochastic convergence} yields the desired result.
\end{proof}

\end{document}